\newtheorem{theorem}{Theorem}
\newtheorem{lemma}{Lemma}
\newtheorem{corollary}{Corollary}
\newtheorem{assumption}{Assumption O.\hspace{-3pt}}
\newtheorem{proposition}{Proposition}
\newtheorem{example}{Example}
\newcommand{\va}{\boldsymbol{a}}
\newcommand{\vb}{\boldsymbol{b}}
\newcommand{\ve}{\boldsymbol{e}}
\newcommand{\vu}{\boldsymbol{u}}
\newcommand{\vv}{\boldsymbol{v}}
\newcommand{\vw}{\boldsymbol{w}}
\newcommand{\vx}{\boldsymbol{x}}
\newcommand{\vy}{\boldsymbol{y}}
\newcommand{\vz}{\boldsymbol{z}}
\newcommand{\vA}{\boldsymbol{A}}
\newcommand{\vB}{\boldsymbol{B}}
\newcommand{\vD}{\boldsymbol{D}}
\newcommand{\vH}{\boldsymbol{H}}
\newcommand{\vI}{\boldsymbol{I}}
\newcommand{\vM}{\boldsymbol{M}}
\newcommand{\vQ}{\boldsymbol{Q}}
\newcommand{\vT}{\boldsymbol{T}}
\newcommand{\vU}{\boldsymbol{U}}
\newcommand{\vV}{\boldsymbol{V}}
\newcommand{\vW}{\boldsymbol{W}}
\newcommand{\vX}{\boldsymbol{X}}
\newcommand{\vbeta}{\boldsymbol{\beta}}
\newcommand{\vepsilon}{\boldsymbol{\epsilon}}
\newcommand{\vxi}{\boldsymbol{\xi}}
\newcommand{\vsigma}{\boldsymbol{\sigma}}
\newcommand{\vGamma}{\boldsymbol{\Gamma}}
\newcommand{\vDelta}{\boldsymbol{\Delta}}
\newcommand{\vSigma}{\boldsymbol{\Sigma}}
\newcommand{\bbE}{\mathbb{E}}
\newcommand{\bbI}{\mathbb{I}}
\newcommand{\bbN}{\mathbb{N}}
\newcommand{\bbR}{\mathbb{R}}
\newcommand{\vzero}{\boldsymbol{0}}
\newcommand\dotp[1]{\left\langle #1 \right\rangle}
\newcommand{\argmin}{\operatorname{arg\,min}}
\renewcommand\t{{\ensuremath{\scriptscriptstyle{\top}}}}
\newcommand{\n}{\nonumber}
\newcommand{\BE}{\begin{eqnarray}}
\newcommand{\EE}{\end{eqnarray}}
\newcommand{\BEQ}{\begin{equation}}
\newcommand{\EEQ}{\end{equation}}
\newcommand{\BS}{\begin{split}}
\newcommand{\ES}{ \end{split} }
\newcommand{\BI}{\begin{itemize}}
\newcommand{\EI}{\end{itemize}}
\newcommand{\hvb}{\hat{\vbeta}}
\newcommand{\hb}{\hat{\beta}}
\newcommand{\tvepsilon}{\tilde{\vepsilon}}
\newcommand{\bvepsilon}{\bar{\vepsilon}}
\newcommand{\XI}{\vX_{/i}}
\newcommand{\yi}{\vy_{/i}}
\newcommand{\alo}{{\rm ALO}_{\lambda}}
\newcommand{\lo}{{\rm LO}_{\lambda}}
\newcommand{\extra}{{\rm Err}_{{\rm out},\lambda}}
\newcommand{\wampr}{{{\rm AMP}}_{{\rm risk},\lambda}}
\newcommand \loovb[1]{\tilde{\vbeta}^{\backslash#1}} 
\newcommand \lamloovb[1]{\tilde{\vbeta}_{\lambda}^{\backslash#1}} 
\newcommand \loob[1]{\tilde{\beta}^{\backslash #1}}
\newcommand \lopvb[1]{\bar{\vbeta}^{\backslash#1}}
\newcommand \lopb[1]{\bar{\beta}^{\backslash #1}}
\newcommand \lopvy[1]{\bar{\vy}^{\backslash #1}}
\newcommand \lopy[1]{\bar{y}^{\backslash #1}}
\newcommand \lopvtb[1]{\bar{\vbeta}^{\backslash #1}_0}
\newcommand \lophvb[1]{\hat{\vbeta}^{\backslash #1}}
\newcommand \lophb[1]{\hat{\beta}^{\backslash #1}}
\newcommand \lootvb[1]{\breve{\vbeta}^{\backslash #1}}
\newcommand \lootb[1]{\breve{\beta}^{\backslash #1}}
\newcommand \loptvb[1]{\acute{\vbeta}^{\backslash #1}}
\newcommand \loptib[1]{\acute{\beta}^{\backslash #1}}
\newcommand \eloptvb[1]{\check{\vbeta}^{\backslash #1}}
\newcommand \eloptb[1]{\check{\beta}^{\backslash #1}}
\newcommand{\hX}{\hat{\vX}}
\newcommand{\XIb}{\bar {\vX}_{/i}}
\newcommand{\hQ}{\hat{\vQ}}
\newcommand{\bQ}{\bar{\vQ}}
\newcommand{\bD}{\bar{\vD}}
\newcommand{\ide}{\ \stackrel{d}{=}\ }
\newcommand{\spaceequal}{\ =\ } 
\newcommand{\kleq} {\ \leq\ }
\newcommand{\qleq }{\ \leq\ }
\newcommand{\qgeq} {\ \geq \ }
\newcommand{\qand}{\quad \text{and} \quad}
\newcommand{\tw}{\text{weak}}
\newcommand{\tr}{\tilde{r}}
\newcommand\trace{\operatorname{Tr}} 
\newcommand\diag[1]{\operatorname{diag}\left(#1\right)}
\newcommand{\sign}{\operatorname{sgn}}
\newcommand{\fl}{l}
\newcommand{\dfl}{\fl}
\newcommand{\reg}{R}
\newcommand{\dreg}{R}
\newcommand{\cdreg}{\reg}
\newcommand{\creg}{\reg}
\newcommand{\hfl}{\hat{\fl}}
\newcommand{\dhfl}{\hat{\dfl}}
\newcommand{\dbfl}{\bar{\boldsymbol{\fl}}}
\newcommand{\bdfl}{\bar{\boldsymbol{\fl}}}
\newcommand \vrx[1]{\vx_{#1 *}} 
\newcommand \vcx[1]{\vx_{* #1}} 
\newcommand \hvcx[1]{\hat{\vx}_{\cdot #1}}
\newcommand \bvcx[1]{\bar{\vx}_{\cdot #1}}
\newcommand \op[1]{O_p\left(#1\right)} 
\newcommand{\plog}{\operatorname{poly\,log}}
\newcommand{\plogn}{\plog(n)}
\newcommand{\ine}{\rm{Err}_{{\rm in},\lambda}}
\newcommand{\bl}{\bm{\hat \beta}_{\lambda}}
\newcommand{\oA}{\tilde{\vA}}
\newcommand{\pA}{\hat{\vA}}
\newcommand{\pB}{\hat{\vA}}
\newcommand{\bA}{\bar{\vA}}
\newcommand{\cDelta}{\check{\Delta}}
\newcommand{\sd}{\sigma_{\delta}}
\newcommand{\ac}{\kappa_l}
\newcommand{\bc}{c_n}
\newcommand{\new}{\operatorname{new}}
\begin{document}

\title{Consistent Risk Estimation in Moderately High-Dimensional Linear Regression}

\author{ Ji Xu, Arian Maleki, Kamiar Rahnama Rad, and Daniel Hsu

\thanks{J. Xu and D. Hsu are with the Department of Computer Science, Columbia University, New York, USA (e-mails: jixu@cs.columbia.edu, djhsu@cs.columbia.edu).}
\thanks{A. Maleki is with the Department of Statistics, Columbia University, New York, USA (e-mail: arian@stat.columbia.edu).}
\thanks{K. Rahnama Rad is with Department of Information Systems and Statistics, Baruch College, City University of New York, New York, USA (e-mail: kamiar.rahnamarad@baruch.cuny.edu).}
}

\maketitle

\begin{abstract}
Risk estimation is at the core of many learning systems. The importance of this problem has motivated researchers to propose different schemes, such as cross validation, generalized cross validation, and Bootstrap. The theoretical properties of such estimates have been extensively studied in the low-dimensional settings, where the number of predictors $p$ is much smaller than the number of observations $n$. However, a unifying methodology accompanied with a rigorous theory is lacking in high-dimensional settings.  This paper studies the problem of risk estimation under the moderately high-dimensional asymptotic setting $n,p \rightarrow \infty$ and $n/p \rightarrow \delta>1$ ($\delta$ is a fixed number), and proves the consistency of three risk estimates that have been successful in numerical studies, i.e., leave-one-out cross validation (LOOCV), approximate leave-one-out (ALO), and approximate message passing (AMP)-based techniques. A corner stone of our analysis is a bound that we obtain on the discrepancy of the `residuals' obtained from AMP and LOOCV. This connection not only enables us to obtain a more refined information on the estimates of AMP, ALO, and LOOCV, but also offers an upper bound on the convergence rate of each estimate.

%
%

\end{abstract}

\section{Introduction}


\subsection{Objectives}
\label{sec:obj}
In many applications, a dataset $\mathcal{D}=\{(\vrx{1},y_1),(\vrx{2},y_2),\ldots, (\vrx{n},y_n)\}$ with $\vrx{i}\in \bbR^{p}$ and $y_i\in \bbR$ is modeled as
\[
	y_i 
	\spaceequal 
	\vrx{i}^\top\vbeta_0+w_i
	,
\]
where $\vbeta_0\in\bbR^p$ denotes the vector of unknown parameters, and $w_i$ denotes the error or noise. $\vbeta_0$ is typically estimated by the solution to the following optimization problem
\BE
	\hvb_{\lambda}
	\spaceequal
	\argmin_{\vbeta \in \bbR^{p}}\sum_{i=1}^n \fl(y_i-\vrx{i}^{\t}\vbeta)+\lambda \sum_{i=1}^p \reg(\beta_i)
	.	\label{eq:model}
\EE
$\fl$ is the loss function, $\reg$ is  the regularizer, and $\lambda>0$ is a tuning parameter. The performance of $\hvb_{\lambda}$ depends heavily on  $\lambda$. Hence, finding the `optimal'  $\lambda$ is of major interest in machine learning and statistics.  In most applications, one would ideally like to find the  $\lambda$ that minimizes the out-of-sample prediction error:
\[
	\extra 
	\ \triangleq \ 
	\bbE[\fl(y_{\new}-\vrx{\new}^{\t}\hvb_{\lambda})\big|\mathcal{D}]
	,
\]
where $(\vrx{\new},y_{\new})$ is a new data point generated (independently of $\mathcal{D}$) from the same distribution as $\mathcal{D}$.

The problem of estimating $\extra$ from $\mathcal{D}$ has been  studied for (at least) the past 50 years, and the corresponding literature is too vast to be covered here. Methods such as cross validation (CV) \cite{S74,G75}, Allen's PRESS statistic \cite{A74}, generalized cross validation (GCV) \cite{CW79,GHW79}, and bootstrap \cite{E83} are seminal ways to estimate $\extra$. 

Since the past studies have focused on the data regime $n \gg p$, reliable risk estimates supported by rigorous theory are lacking in high-dimensional settings. In this paper, we study the problem of risk estimation under a moderately high-dimensional asymptotic setting where both the number of features and observation go to infinity, while their ratio remains constant, i.e., $n/p=\delta>1$  as $n,p \rightarrow \infty$.\footnote{We should emphasize that we do not have any sparsity (or other structures) assumption on $\vbeta_0$. Hence, $\delta>1$ ensures that when there is no noise in the observations, $\vbeta_0$ can be recovered exactly.} We will call $\delta$ the sample-feature ratio. Under this asymptotic setting, the optimal $\lambda$ that achieves the best sample prediction error converges to a non-zero constant as $n,p\rightarrow \infty$ (See e.g. \cite{dobriban2018high} for ridge regression and \cite{mousavi2018consistent} for LASSO). Therefore in this paper, we consider $\lambda=O(1)$ as $n,p\rightarrow \infty$. Suppose that $\widehat{\rm Err}_{{\rm out}, \lambda}$ is an estimate of $\extra$ obtained from dataset $\mathcal{D}$. The fundamental consistency property we want for an estimate $\widehat{\rm Err}_{{\rm out}, \lambda}$ of $\extra$ is:

\begin{itemize}
\item[($\mathcal{P}_0$)] \begin{center}$\ \ \ \ \ \ \ \ \  \ \ \ |\widehat{\rm Err}_{{\rm out}, \lambda}-\extra| \rightarrow 0$ in probability, as $n,p \rightarrow \infty$ and $n/p = \delta>1$. \end{center}
\end{itemize}
As is clear from Figure \ref{fig:1}, standard techniques such as $3$-fold and $5$-fold cross validation exhibit large biases and do not satisfy $\mathcal{P}_0$. 

The first contribution of this paper is to prove that the following three risk estimation techniques, which have been successful in numerical studies, satisfy $\mathcal{P}_0$: 
\begin{enumerate}
\item Leave-one-out cross validation (LOOCV): Given its negligible bias shown in Figure \ref{fig:loovscv5}, it is expected that $\lo$, the estimate given by LOOCV, satisfies $\mathcal{P}_0$. It is noted that LOOCV is computationally demanding and hence impractical in many applications. 

\item Approximate leave-one-out (ALO): The high computational complexity of $\lo$ prompted several authors to adapt the existing heuristic arguments \cite{Stone77Asymptotic,A74} to approximate $\lo$ and obtain another risk estimate called $\alo$ \cite{beirami2017optimal,rad2018scalable,giordano2018return,wang2018approximate,RZM20}. We formally present ALO in Section \ref{sec:galo}

\item Approximate message passing (AMP): Assuming that $\fl (u, y) = \frac{1}{2}(u-y)^2$,  estimators of the out-sample prediction error  have been presented using the approximate message passing (AMP) framework  \cite{weng2018overcoming,MMB13Asymptotic,OK16Cross,DM16High,bayati2013estimating}. In particular  \cite{MMB13Asymptotic,bayati2013estimating} showed that AMP-based estimate satisfies $\mathcal{P}_0$ for squared loss and bridge regularizers. In this paper, we first generalize AMP-based method to other loss functions and regularizers in Section \ref{sec:reAMP}.  Then, we prove that this estimate satisfies $\mathcal{P}_0$.

\end{enumerate}
 
The consistency is a minimum requirement a risk estimate should satisfy in high-dimensional settings; if the convergence $|\widehat{\rm Err}_{{\rm out}, \lambda}-\extra| \rightarrow 0$ is slow, then the risk estimate will not be useful in practice. This leads us to the next question we would like to address in this paper:

\begin{itemize}
\item Is the convergence  $|\widehat{\rm Err}_{{\rm out}, \lambda}-\extra| \rightarrow 0$ fast? 
\end{itemize}
The second contribution of this paper is to answer this question for the three estimates mentioned above.  To answer this question, we develop tools which are expected to be used in the study of other risk estimates or in other applications. For instance, the connection we derive between the residuals of the leave-one-out and AMP has provided a more refined information on the estimates that are obtained from AMP. Such connections can be useful for the analysis of estimates that are obtained from the empirical risk minimization \cite{donoho2015variance}.

 \subsection{Related work} 
 
 \subsubsection{Out-of-sample prediction error}
 
 The asymptotic regime of this paper was first considered in \cite{huber1973robust}, but only received a considerable attention in the past fifteen years \cite{donoho2015variance,el2013robust,bean2013optimal,el2018impact,sur2017likelihood,weng2018overcoming,johnstone2001distribution,bayati2012lasso,thrampoulidis2015regularized,amelunxen2014living,chandrasekaran2012convex,cai2016geometric}. The inaccuracy of the standard estimates of $\extra$ in high-dimensional settings has been recently noticed by many researchers, see e.g. \cite{rad2018scalable} and the references therein. Hence, several new estimates have been proposed from different perspectives. For instance, \cite{beirami2017optimal,rad2018scalable,giordano2018return,wang2018approximate,RZM20} used different approximations of the leave-one-out cross validation to obtain computationally efficient risk estimation techniques. In another line of work, \cite{obuchi2018accelerating,bayati2013estimating,mousavi2018consistent} used either statistical physics heuristic arguments or the framework of message passing to obtain more accurate risk estimates. 

While most of these proposals have been successfully used in empirical studies, their theoretical properties have not been studied. The only exceptions are \cite{bayati2013estimating,mousavi2018consistent}, in which the authors have shown that the AMP-based estimate satisfy $\mathcal{P}_0$ when $\fl (u-y) = \frac{1}{2}(u-y)^2$ and the regularizer is bridge, i.e., $ R(\beta) = |\beta|^q$ for $q\geq 1$.  Furthermore, the convergence rate is not known for any risk estimate when $n/p\rightarrow \delta>1$. In this paper, we study the three most promising proposals, and present a detailed theoretical analysis under the asymptotic $n,p \rightarrow \infty$ and $n/p \rightarrow \delta>1$. The tools we develop here are expected to be used in the study of other risk estimates or in other applications. For instance, the connection we derive between the leave-one-out estimate and that of approximate message passing has enabled the message passing framework to provide more refined information (such as convergence rate) for different estimates.

In another line of work, \cite{HM17} studies $K$-fold CV when $K=o(n)$; in other words, their results  do not apply to the $n$-fold CV problem considered in our paper. Moreover,  \cite{HM14} prove the risk consistency of lasso when the smoothing parameter is chosen via cross-validation, assuming strong sparsity, namely $\| \bm{\beta^*} \|_0 = O(1)$. In our paper, we make no sparsity assumptions, and hence, the results of  \cite{HM14}  do not apply to the moderate-high dimensional setting considered in this paper.

Before we move on to the next section, we would like to comment on the performance of $K$-fold cross validation schemes given that they are probably the most popular risk estimation technique in high-dimensional settings. As illustrated in Figure \ref{fig:1} and Figure \ref{fig:loovscv5}, in  high-dimensional settings, we empirically observe that $K$-fold cross validation risk estimate is a biased estimator (for fixed values $K$). Furthermore, the optimal values of $\lambda$ selected by $K$-fold cross validation depend on $K$ in general and do not achieve the best prediction risk. The bias of such risk estimates can be explained by the moderately high-dimensional setting $n/p\rightarrow \delta>1$ studied in this paper. Intuitively speaking, under this setting, the $K$-fold cross validation risk estimate is an unbiased risk estimator for the expected prediction risk $\bbE_{\mathcal{D}}\extra$ with a smaller sample feature ratio, $\frac{K-1}{K}\delta$. In the case of ridge regression, one can characterize the exact asymptotic formula for the out-of-sample prediction error $\extra$ for every fixed regularization parameter $\lambda>0$ and sample-to-feature ratio $\delta>0$ \cite{dobriban2018high, belkin2019two, hastie2019surprises, wu2020optimal}. These asymptotic formulas change as delta changes which proves the bias of the $K$-fold cross validation. Note that leave-one-out risk estimate makes minimal changes to the sample-to-feature ratio, and hence the bias vanishes asymptotically as will be clarified in the paper. In addition, we can see from the asymptotic formulas that the optimal value of $\lambda$ is a non-trivial function of $\delta$ except for the cases when the features have isotropic covariance matrix or the true coefficients are generated from an isotropic prior, in which cases the optimal value of $\lambda$ will not depend on $K$ (hence the biases do not change the model we select in the particular case of isotropic features).

\subsubsection{In-sample prediction error} Another approach for obtaining the best value of $\lambda$  is to use the notion of in-sample prediction error instead of the out-of-sample prediction error. The in-sample prediction error is defined as
\begin{eqnarray*}
\ine &\triangleq&\frac{1}{n} \sum_{i=1}^n \bbE_{y^{\rm{new}}_i} \bigl [  l(y_i^{\rm{new}} -  \bm{x_{i*}}^\top \bl)  \big | \mathcal{D} \bigr],
\end{eqnarray*}
where  $y_i^{\rm{new}}$ indicates a hypothetical new data point with the same distribution but independent of the original data point $y_i$. Many strategies have been proposed in the literature to obtain good estimates of $\ine$. Mallow's $C_p$ \cite{M73}, Akaike's Information Criterion (AIC) \cite{Ak74,HT89}, Stein's Unbiased Risk Estimate (SURE) \cite{S81}, Efron's Covariance Penalty \cite{E86}, and Generalized Information Criterion (GIC) \cite{ZLT10} belong to this class of model selection criteria that approximate the in-sample prediction error. There is a vast literature studying the performance of GIC type of risk estimators, eg.  \cite{FHS13,KKC12,Y07,YWF15}. The conclusions made based on studying the performance of GIC type of risk estimators can be extended to leave-one-out CV in the $p/n\rightarrow 0$ regime. When $n$ is much larger than $p$,  the in-sample prediction error is expected to be close to the out-of-sample prediction error. However, this intuition is certainly violated in high-dimensional settings, where $n$ is of the same order as (or even smaller than) $p$. To summarize,  in the $p/n \rightarrow 0$ regime, using Taylor expansions, it can be shown that (approximate) leave-one-out cross-validation is (nearly) equal to GIC-type of estimators but in this paper, we make no assumption about sparsity, and consider the $p/n$-fixed regime where such an expansion is not accurate, and hence GIC-type of conclusions do not extend to leave-one-out CV.

\subsection{Notations}\label{sec:notation}
Let $\ve_j \in \bbR^p$ stand for a  vector filled with zeros except for the $j$th element which is one. Let $\vrx{i}^{\t} \in \bbR^{1 \times p}$ stand for the $i$th row of $\vX \in \bbR^{n \times p}$. Let $\yi \in \bbR^{(n-1) \times 1}$ and $\XI \in \bbR^{(n-1) \times p}$ stand for $\vy$ and $\vX$, excluding the $i$th entry $y_i$ and the $i$th row $\vrx{i}^\top$, respectively. Moreover, let $\vcx{i} \in \bbR^{1 \times n}$ stand for the $i$th column of $\vX$ and $\XIb \in \bbR^{n \times (p-1)}$ stand for $\vX$, excluding the $i$th column. Further let $\lopvtb{i}$ denote the corresponding vectors $\vbeta_0$ without $i$th component and $\lopvy{i}= \XIb\lopvtb{i}+\vw=\vy-\beta_{0,i}\vcx{i}$. For a vector $\va$, we use  $a_i$ to denote its $i$th entry. For any function $\psi(\cdot)$, we use $\psi(\va)$ to indicate the vector $\left [ \psi(a_1),\cdots, \psi(a_d) \right]^{\t}$. The vector $\ve_i$ is filled with zeros except for the $i$th entry which is one. The diagonal matrix whose diagonal elements are $\va$ is referred to as $\diag{\va}$. The component-wise ratio of two vectors $\va$ and $\vb$ is denoted by $\va / \vb$. Moreover, $\dotp{\va}$ stands for the mean of the components of $\va$. We define
\BE
	\hvb_{\lambda} 
	&\triangleq&  
	\underset{\vbeta \in \bbR^p}{\argmin}  \Bigl \{   n\dotp{ \fl\left( \vy  - \vX \vbeta \right)} +\lambda p \dotp{\reg(\vbeta)}   \Bigr \} 
	,	\label{eq:bl}\\
	\lamloovb{i} 
	&\triangleq&  
	\underset{\vbeta \in \bbR^p}{\argmin}  \Bigl \{   (n-1)\dotp{ \fl\left( \yi  - \XI \vbeta \right)} +\lambda p \dotp{\reg(\vbeta)}   \Bigr \} 
	,	\label{eq:bli}\\
	\lopvb{i}_{\lambda} 
	&\triangleq&  
	\underset{\vbeta \in \bbR^{p-1}}{\argmin}  \Bigl \{   n \dotp{ \fl\left( \lopvy{i}  - \XIb \vbeta \right)} +\lambda (p-1) \dotp{\reg(\vbeta)}   \Bigr \} 
	,	\label{eq:blib}
\EE
where $\hvb_{\lambda}$ is the full model and data estimate, and $\lamloovb{i}$ is the leave observation-$i$ out estimate. We refer to $\lopvb{i}_{\lambda}$ as the leave predictor-$i$ out estimate. We may omit subscript $\lambda$ from $\hvb_{\lambda}$ or $\lopvb{i}_{\lambda}$ for simplification reasons. Further, $\fl'(x)$, $\fl''(x)$, $\reg'(x)$, $\reg''(x)$ stand for the first and second derivatives for $\fl$ and $\reg$ respectively. Finally, $\sigma_{\max}(\vM)$ and $\sigma_{\min}(\vM)$ denote the maximum and minimum eigenvalues of a matrix $\vM$ respectively.



\begin{figure}
\begin{center}
        \includegraphics[width=0.8\textwidth]{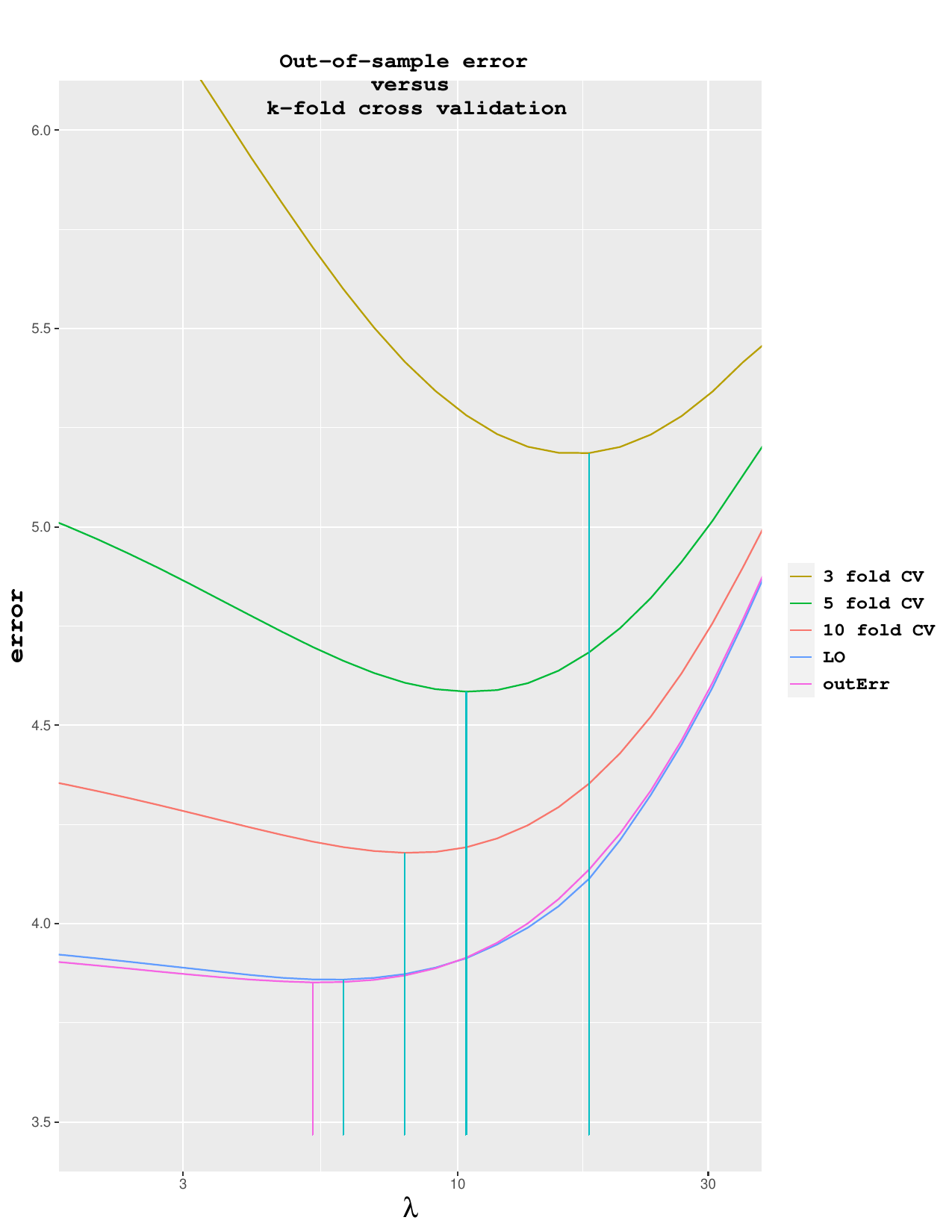}
          \caption{ Comparison of  $K$-fold cross validation (for $K=2,3,5$) and leave-one-out cross validation with the true (oracle-based)  out-of-sample error for the smoothed-$\ell_1$ problem where $\fl( y-\vx^{\t}\hvb )=\frac{1}{2}(y-\vx^{\t} \hvb)^2$ and $\reg(\hvb)=\sum_{i=1}^p g_u(\beta_i)$ (defined in \eqref{eq:slasso}) for $u=100$. In high-dimensional settings the upward bias of $K$-fold CV clearly decreases as number of folds increase. Data is $\vy \sim \mathcal{N}(\vX\vbeta^*, 2\vI)$ where $\vX \in \mathbb{R}^{n \times p}$. Here $n=800$ and $p=400$, and all the components of true coefficients $\vbeta_0$ are set to $\frac{1}{6\sqrt{2}}$. The entries of $\vX$ are independent zero mean unit variance Gaussian random variables. Out-of-sample test data is $y_{\rm new} \sim \mathcal{N}(\bm{x}_{\rm new}^\top \bm{\beta}^*, 2)$ where the entries of $\bm{x}_{\rm new}$ are independent zero mean unit variance Gaussian random variables. The true (oracle-based) out-of-sample prediction error is $\extra =  \mathbb{E} [  ( y_{\rm new}-\bm{x}_{\rm new}^\top \bm{\hat{\beta}} )^2 | \bm{y,X} ]$.  All depicted quantities are averages based on 50 random independent samples. Vertical lines indicates the minimums of the corresponding risk estimates. Clearly, as the number of folds increase the corresponding minimums get closer to the minimum of the vtrue (oracle-based) out-of-sample prediction error.}
          \label{fig:1}
          \end{center}
\end{figure}

\begin{figure}
\begin{center}
        \includegraphics[width=0.8\textwidth]{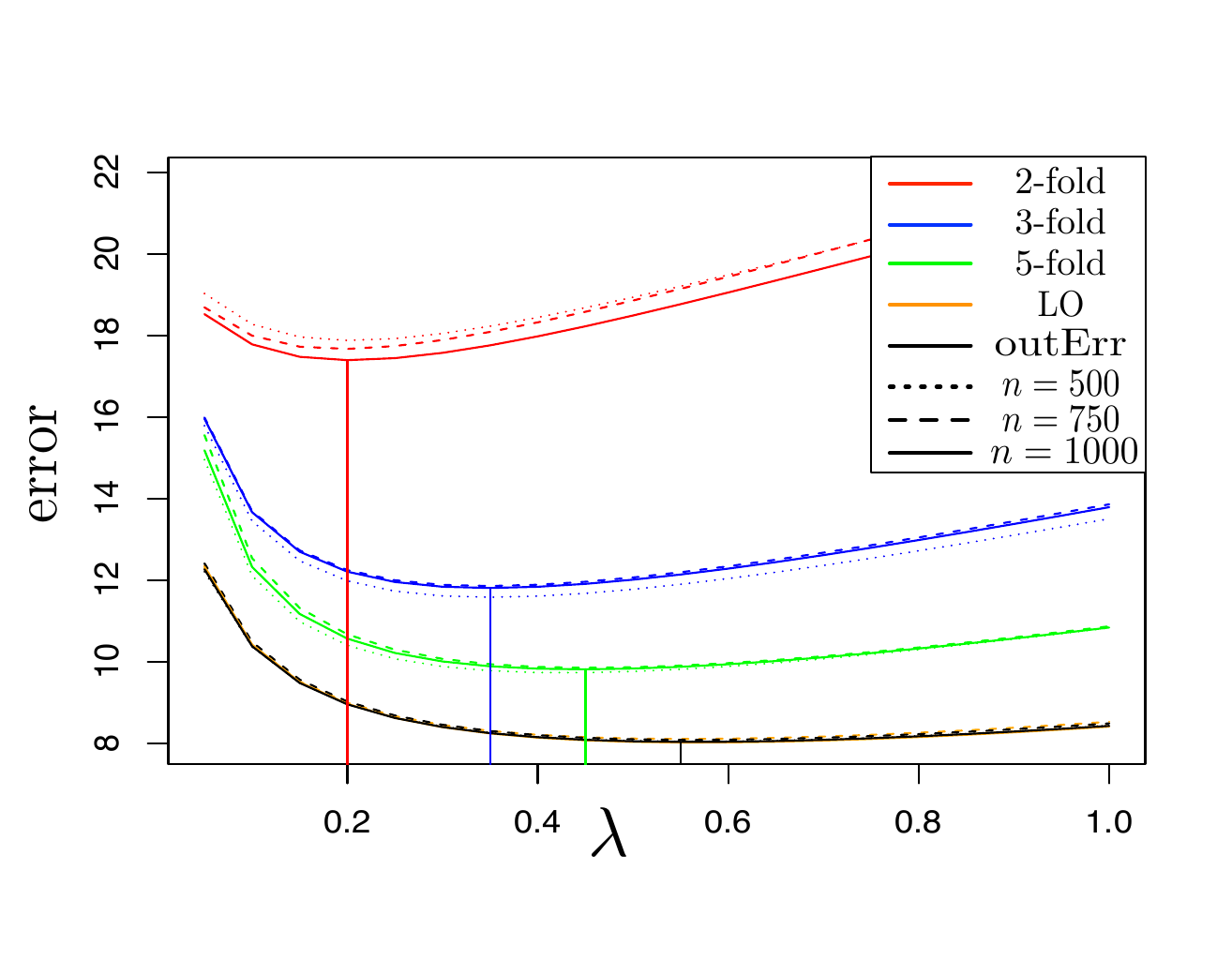}
          \caption{ Comparison of  $K$-fold cross validation (for $K=2,3,5$) and leave-one-out cross validation with the true (oracle-based)  out-of-sample error for the ridge problem where $\fl( y-\vx^{\t}\hvb )=\frac{1}{2}(y-\vx^{\t} \hvb)^2$ and $\reg(\hvb)=\frac{1}{2}\|\hvb\|^2_2$. In high-dimensional settings the upward bias of $K$-fold CV clearly decreases as number of folds increase. Data is $\vy \sim \mathcal{N}(\vX\vbeta^*, 2\vI)$ where $\vX \in \mathbb{R}^{n \times p}$. The components of true coefficients $\vbeta_0$ and the rows of $\sqrt{n}\vX$ independently follow $\mathcal{N}(\vzero,\vSigma)$ where $\vSigma$ is a diagonal matrix with first half of the diagonal elements being 4 and the second half being 1. Dimensions are $(n,p)=(500,400), (750, 600), (1000,800)$ represented by the dot lines, dash lines and solid lines respectively. Out-sample test data is $y_{\rm new} \sim \mathcal{N}(\bm{x}_{\rm new}^\top \bm{\beta}^*, 2)$ where  $\bm{x}_{\rm new} \sim N(\bm{0}, \frac{1}{n}\bm{\Sigma})$. The true (oracle-based) out-of-sample prediction error is $\extra =  \mathbb{E} [  ( y_{\rm new}-\bm{x}_{\rm new}^\top \bm{\hat{\beta}} )^2 | \bm{y,X} ]$. Vertical lines indicates the minimums of the corresponding risk estimates. All depicted quantities are averages based on 50 random independent samples. } 
          \label{fig:loovscv5}
          \end{center}
\end{figure}

\subsection{ The estimates $\lo$ and $\alo$}\label{sec:galo}
Leave-one-out cross validation ($\lo$) offers the following estimate for $\extra$:
\[
	\lo 	\ := \ 	\frac{1}{n} \sum_{i=1}^n \fl(y_i-\vrx{i}^{\t}\lamloovb{i}), 
\]
where 
\BE
	\lamloovb{i}
	\ :=\ 
	\argmin_{\vbeta \in \bbR^{p}}\sum_{j\neq i}^n \fl(y_j-\vrx{j}^{\t}\vbeta)+\lambda \sum_{i=1}^p \reg(\beta_i)
	.	\label{eq:modelLO}
\EE
$\lo$ is computationally  infeasible when both $n$ and $p$ are large. To alleviate this problem, \cite{rad2018scalable} used the following single step of the Newton algorithm (with initialization $\hvb_{\lambda}$)  to approximate the solution of \eqref{eq:bli}:
\BE
	\lamloovb{i}
	&\approx& 
	\hvb_{\lambda} - \fl'(y_i-\vrx{i}^{\t}\hvb_{\lambda})\left(\sum_{j\neq i}^n \vrx{j} \vrx{j}^\top \fl''(y_j-\vrx{j}^{\t}\hvb_{\lambda})+\lambda \diag{\reg''(\hvb_{\lambda})}\right)^{-1} \vrx{i}
	.	\n\\
	\label{eq:aloformula1}
\EE
Then, using Woodbury matrix inversion lemma, in \cite{rad2018scalable} the following approximate leave-one-out formula was derived:
\BE
	\alo
	&:=&
	\frac{1}{n} \sum_{i=1}^n \fl \left(y_i-\vrx{i}^{\t}\hvb_{\lambda}+\frac{\fl'(y_i-\vrx{i}^{\t}\hvb_{\lambda})}{\fl''(y_i-\vrx{i}^{\t}\hvb_{\lambda})}\cdot \frac{H_{ii}}{1-H_{ii}}\right)
	,	\label{eq:looeq}
\EE
where $H_{ii}$ is the $i$th diagonal element of $\vH$ defined by:
\[
	\vX\left(\vX^{\t}\diag{\dfl''(\vy-\vX\hvb_{\lambda})}\vX+\lambda \diag{\dreg''(\hvb_{\lambda})}\right)^{-1}\vX^{\t}\diag{\dfl''(\vy-\vX\hvb_{\lambda})}.
\]

\subsection{Risk estimation with AMP}\label{sec:reAMP}

Besides $\lo$ and $\alo$, another risk estimation technique we study in this paper is based on the approximate message passing (AMP) framework \cite{M11PhD}. For squared loss $\fl (u, y) = \frac{1}{2}(u-y)^2$  and bridge regularizers, \cite{mousavi2018consistent}  used the AMP framework to obtain consistent estimates of $\extra$. In this section, we explain how an estimate $\wampr$ of $\extra$ can be obtained for the more general class of estimators we consider in \eqref{eq:model}. The heuristic approach that leads to the following construction of $\wampr$ is explained in Appendix \ref{sec:construct}. 

\begin{enumerate}

\item Compute $\hvb_{\lambda}$ from \eqref{eq:model}.

\item Find $\hat{\tau}$ that satisfies the following equation:  
\BE
	\lambda
	&=&
	\dotp{\frac{\fl''(\vy-\vX\hvb_{\lambda})}{\frac{1}{\hat{\tau}}+\frac{1}{\delta\lambda}\dotp{\frac{\hat{\vsigma}^2}{\hat{\vsigma}^2+\hat{\tau}\reg''(\hvb_{\lambda})}}\cdot \fl''(\vy-\vX\hvb_{\lambda})}}
	,	\label{eq:taudef}
\EE
where the $i$th component of $\hat{\vsigma}$ is $\|\vcx{i}\|$.

\item Using $\hvb_{\lambda}$ from step 1, and  $\hat{\tau}$ from step 2, define
\BE
	\hat{\theta}
	&:=&
	\frac{1}{\delta\lambda}\dotp{\frac{\hat{\tau}\hat{\vsigma}^2}{\hat{\vsigma}^2+\hat{\tau} \reg''(\hvb_{\lambda})}}.
		\label{eq:thetadef}
\EE

\item Finally, the AMP-based risk estimator is given by 
\BE
	\wampr 
	&:=&
	\frac{1}{n}\sum_{i=1}^n\fl \left(y_i-\vrx{i}^{\t}\hvb_\lambda+ \hat{\theta}\cdot\fl'(y_i-\vrx{i}^{\t} \hvb_{\lambda}) \right)
	.	\label{eq:amprhat}
\EE

\end{enumerate} 
To ensure the existence of $\hat{\tau}$ in step 2, we will show in Lemma \ref{lem:AMP=GLM1} that there is a one-to-one relationship between $\lambda$ and $\hat{\tau}$. Further, we will show in Section \ref{ssec:discamp_lo} that  $y_i-\vrx{i}^{\t}\hvb+ \hat{\theta}\cdot\fl'(y_i-\vrx{i}^{\t} \hvb_{\lambda})$ is close to $y_i-\vrx{i}^{\t}\loovb{i}$. In other words, the term $\hat{\theta} \cdot \fl'(\vy-\vX\hvb_{\lambda})$ corrects the optimistic training error $y_i-\vrx{i}^{\t}\hvb_{\lambda}$, and pushes it closer to the out-of-sample error.



\section{Main results}


\subsection{Assumptions}\label{sec:assume}
In this section, we present and discuss the assumptions used in this paper. Note that we do not require all of the assumptions for any individual result, and some of the assumptions can be weakened or replaced by other assumptions, as we also discuss below. The first few assumptions are about the structural properties of the loss function and regularizer. 

\begin{assumption}\label{ass:Convex}
Loss function $\fl(\cdot)$ and regularizer $\reg(\cdot)$ are convex and have continuous second order derivatives. Moreover, the minimizer of $\reg(\cdot)$ is finite.
\end{assumption}

\begin{assumption}\label{ass:Holder}\label{ass:Smoothness2}
(H{\"o}lder Assumption) The second derivatives of the loss function $\fl$ and regularizer $\reg$ are H{\" o}lder continuous: there exists constants $\alpha\in (0,1]$ and $C_l,C_r>0$ such that for all $|x-x'|\leq 1$, we have
\[
	|\fl''(x)-\fl''(x')|
	\qleq 
	C_l|x-x'|^{\alpha} \qand |\reg''(x)-\reg''(x')|\qleq C_r|x-x'|^{\alpha}
	.
\]
This implies that there exists constants $C>0$ and $\rho\in [0,1]$ such that for all $x\in \bbR$, we have
\BE
	\max \{\fl''(x),\reg''(x)\}
	&\leq& 
	C(1+|x|^\rho)
	,	\n\\ 
	\max \{\fl'(x),\reg'(x)\}
	&\leq& 
	C(1+|x|^{\rho+1})
	,	\n\\
	\max \{\fl(x),\reg(x)\}
	&\leq& 
	C(1+|x|^{\rho+2})
	.	\n
\EE
\end{assumption}

Assumption O.\ref{ass:Holder} ensures that the second derivatives are locally smooth. Given that the original assumptions in the derivation of $\alo$ and AMP are twice differentiability of the loss function and regularizer, Assumption O.\ref{ass:Holder} is only slightly stronger than the twice differentiability assumptions that were used in deriving $\alo$ formula \eqref{eq:looeq}. Note that for non-differentiable cases, one can apply a smoothing scheme similar to the ones proposed in \cite{koh2017understanding,mousavi2018consistent}, and still use these risk estimates. We will explain the smoothing in a few examples below. 

\begin{assumption} \label{ass:Smoothness}
There exists $\ac>0$ such that
\[
	\inf_{x\in \bbR} \fl''(x)
	\qgeq 
	\ac
	.
\]
\end{assumption}
Assumption O.\ref{ass:Smoothness} ensures the uniqueness of the solutions of our optimization problems. In that vain, even if we replace Assumption O.\ref{ass:Smoothness} with $\inf_{x\in \bbR} \lambda\reg''(x) \geq \ac$, then most of our results will still hold. The only exceptions is Lemma \ref{lem:G}. As will become clear, the proof of Lemma \ref{lem:G} requires  $\frac{1}{n} \sum_{i=1}^n \fl''(y_i-\vrx{i}^{\t}\hvb_\lambda) \geq \kappa_\ell$. This in turn, only requires that a constant fraction of the residual fall in the regions at which the curvature of $\ell$ is positive.  Below, we will show several examples in which Assumption O.\ref{ass:Smoothness} is violated, but we still have all our results hold. 

Finally, we should emphasize that since in our proofs we calculate the curvature at and around $\lamloovb{i}$, we only require a lower bound in a neighborhood of these estimates. Furthermore, if the curvature in such neighborhoods goes to zero `slowly', still our risk estimates will be consistent. We will keep the dependency of our bounds on $\ac$ for the readers who are interested in the cases where $\ac$ is not constant and goes to zero. However, for notational simplicity we have considered a global lower bound for the curvature in Assumption O.\ref{ass:Smoothness}, and in all the results will see $\ac$ as a constant. 

 Below we mention several well-known examples that satisfy our assumptions. Note that in many applications non-smooth losses and regularizers, such as LASSO, seem to offer better performance. Given that the constructions of both ALO and AMP-based risk estimates are using the smoothness of the loss and regularizer to apply these formulas we can smooth-out the loss and/or regularizer. Smoothing of the function have also been used extensively for solving such non-differentiable problems \cite{becker2011templates}. For instance, as suggested in \cite{schmidt2007fast}, one can use the following smooth approximation for $|\cdot|$:
 \begin{equation} \label{eq:slasso}
 g_u(x) = (\ln(1+e^{ux}))+\ln(1+e^{-ux}))/u.
 \end{equation}
 It is straightforward to check that $\sup_x |g_u(x)-|x| | \rightarrow 0$ as $u \rightarrow \infty$. 
 
\begin{example}(Smoothed elastic-net) Consider the case were $\ell(x)= \frac{1}{2}x^2$ and $\reg(x) =  \gamma x^2 + (1- \gamma)g_u(x)$. It is clear that both the loss function and regularizer are convex and have continuous second order derivatives and achieve one unique minimizer at $0$. Further, note that 
\begin{eqnarray}
\frac{\partial^2g_u(x)}{\partial x^2}&=& \frac{2u\cdot e^{ux}}{(1+e^{ux})^2} > 0\n\\
\left|\frac{\partial^3g_u(x)}{\partial x^3}\right| & = & \left|\frac{2u^2e^{ux}(1-e^{ux})}{(1+e^{ux})^3}\right| \kleq  \left|\frac{2u^2e^{ux}}{(1+e^{ux})^2}\right|\kleq \frac{u^2}{2}.\label{eq:g3}
\end{eqnarray}
Hence, it is straightforward to check that Assumption O.\ref{ass:Convex} holds. Furthermore, Assumption O.\ref{ass:Holder} holds with constant $C_r = u^2/2, \alpha = 1$ and any positive constant $C_l>0$. Finally, it is clear that Assumption O.\ref{ass:Smoothness} holds due to $\ell''(u)= 1$.
 \end{example}
 
 \begin{example} (Smoothed-bridge estimators) Consider an estimation problem  with $\ell(x) = \frac{1}{2}x^2$ and $\reg(x) = g_u^q(x)$, where $q>1$. Note that $R(x)$ is a smooth approximation for bridge estimators. Similar to Example 1, Assumption O.\ref{ass:Convex} and Assumption O.\ref{ass:Smoothness} hold. Furthermore, Assumption O.\ref{ass:Holder} holds with constant $C_r = u^2/2, \alpha = 1$ and any positive constant $C_l>0$. 
 \end{example}
 
 \begin{example}\label{ex:PH_ELAS} (Pseudo-Huber loss and elastic-net) Consider the estimation problem $\ell(x) = h_{v}(x)$ and $\reg(x) =  \gamma x^2 + (1- \gamma)g_u(x)$, where $h_{v}(x)$ is the Pseudo-Huber loss with parameter $v$, i.e., 
 $$h_{v}(x):=v^2(\sqrt{1+(x/v)^2}-1).$$
 The Pseudo-Huber loss function is used in robust estimation and is s smooth approximation of the Huber loss function. The second and third derivatives of $h_{v}(x)$ are given by
 \begin{eqnarray}
\frac{\partial^2 h_{v}(x)}{\partial x^2} &=& \left(1+(x/v)^2\right)^{-\frac{3}{2}}, \n\\
\frac{\partial^3 h_{v}(x)}{\partial x^3} &=& -\frac{3}{v}\frac{x/v}{\left(1+(x/v)^2\right)^{\frac{5}{2}}}.\n
\end{eqnarray} 
Hence, combining these results with \eqref{eq:g3}, we conclude that Assumptions O.\ref{ass:Convex} and O.\ref{ass:Holder} hold with constants $C_l = \frac{3}{2v}, C_r = u^2/2, \alpha = 1$. Note that $\frac{\partial^2 h_{v}(x)}{\partial x^2} \rightarrow 0$ as $x\rightarrow \infty$, and therefore Pseudo-Huber loss function does not directly satisfy Assumption O.\ref{ass:Smoothness}. However, since $\reg''(x) \geq 2\gamma>0$, as mentioned in previous discussion, our theorems hold when there exists a constant $\kappa_\ell>0$ such that $\frac{1}{n} \sum_{i=1}^n \fl''(y_i-\vrx{i}^{\t}\hvb_\lambda) \geq \kappa_\ell$. It is clear that this additional assumption holds for this example when a non-zero fraction of residuals are bounded. We will verify this claim heuristically using AMP framework in Appendix \ref{sec:construct}.   
\end{example}
 
 \begin{example}\label{ex:LAD_ELAS}
 (Smoothed least absolute deviation and elastic-net) Consider an estimation problem with $\ell(x) = g_{v}(x)$ and $\reg(x) =  \gamma x^2 + (1- \gamma)g_u(x)$. Assumption O.\ref{ass:Convex} and Assumption O.\ref{ass:Holder} hold with constant $C_l = C_r = u^2/2$ and $\alpha = 1$. Note that from \eqref{eq:g3}, $\frac{\partial^2g_u(x)}{\partial x^2}\rightarrow 0$ as $x\rightarrow \infty$, therefore the loss function does not directly satisfy Assumption O.\ref{ass:Smoothness}. However, similar to Example 3, we can replace Assumption O.\ref{ass:Smoothness} with the assumption that a fraction of residuals $y_i-\vrx{i}^{\t}\hvb_\lambda$ are bounded and we will verify this heuristically in Appendix \ref{sec:construct}.    
 \end{example}

So far, the assumptions have been concerned with the geometric properties of the loss function and the regularizer. The rest of our assumptions are about the statistical properties of the problem.

\begin{assumption}\label{ass:True}
Let $\vbeta_0, \vw$ and $\vX$ be mutually independent random variables. Furthermore, we assume each data point $(y_i,\vrx{i})$ is i.i.d.~generated, $y_i \spaceequal \vrx{i}^\top\vbeta_0+w_i$, and the $j$th element of $\vrx{i}$ is an independent mean $0$ random variables with variance $\sigma_j^2/n$. Let $\vSigma=\diag{\sigma_1^2,\ldots,\sigma_p^2}$ and we assume there exists absolute constants $c_l,c_u>0$ such that $c_l\leq \sigma_i^2\leq c_u$ for all $i$. We assume that the entries of $\vw, \vX(\vSigma/n)^{-\frac{1}{2}}$ are subGaussian random variables respectively, i.e., there exists a constant $C$ such that for any fixed $r\geq 1$, and for all $i,j$, we have
\[
	(\bbE |w_i|^r)^{\frac{1}{r}}
	\qleq C \sqrt{r} \qand
	(\bbE |x_{i,j}|^r)^{\frac{1}{r}}
	\qleq C \sigma_j\sqrt{\frac{r}{n}}.
\]
Finally, we make the following assumption on the true coefficients $\vbeta_0$:
\begin{itemize}
\item When $\vbeta_0(p)$ is a deterministic sequence indexed with $p$, we assume that 
\[
\sup_i|\beta_{0,i}|=O\left(\plogn\right), \quad \text{as~}p,n\rightarrow \infty,
\] 
and $\frac{1}{p}\sum_{i}\beta_{0,i}^{2(\rho+1)}\leq C$ holds for a universal constant $C$ for all $p>0$ where $\rho$ is the constant in Assumption O.\ref{ass:Holder}.
\item When $\vbeta_0$ is a random vector, we assume that the entries of $\vbeta_0$ are i.i.d.~subGaussian random variables, i.e., there exists a constant $C$ such that for any fixed $r\geq 1$ and $i$, we have $(\bbE |\beta_{0,i}|^r)^{\frac{1}{r}}\leq C \sqrt{r}$.
\end{itemize} 
\end{assumption}

Assumption O.\ref{ass:True} is a standard assumption  in the high-dimensional asymptotic analysis of  regularized estimators \cite{donoho2015variance,bradic2015robustness,el2013robust,bean2013optimal,el2018impact,sur2017likelihood,weng2018overcoming,johnstone2001distribution,bayati2012lasso,thrampoulidis2015regularized,amelunxen2014living,chandrasekaran2012convex,cai2016geometric}. Note that extensive empirical results presented elsewhere \cite{weng2018overcoming}, \cite{mousavi2018consistent} have confirmed that the conclusions obtained from this framework are also accurate even when the elements of the design matrix $\vX$ are weakly dependent.   Also, using the techniques proposed in \cite{bayati2012lasso} the assumption of independence of $\vbeta_0$ can be weakened to the assumption that the empirical CDF of the regression coefficients converge weakly to a valid CDF.

Suppose that we have a sequence of problem instances $(\vbeta_0(p),\vw(p), \vX(p))$ indexed with $p$ (with fixed $n/p=\delta >1$), and each problem instance satisfies Assumption O.\ref{ass:True}. Then, solving \eqref{eq:model} for the sequence of problem instances $(\vbeta_0(p),\vw(p), \vX(p))$  leads to a sequence of estimates $\hvb_{\lambda}(p)$. Our last assumption is about this sequence.

\begin{assumption}\label{ass:boundhvb}
Every component of $\hvb_{\lambda}(p)$ remains bounded by a sufficiently small power of $n$. More specifically, 
\[
	\sup_{i=1,\cdots,p} |\ve_i^{\t}\hvb_{\lambda} (p)|^\rho 
	\qleq 
	\op{\bc}
	,
\]
where $\bc$ is a constant that satisfies $c_n = o(n^{\alpha^2/4})$. $\alpha, \rho$ are the constants stated in O.\ref{ass:Holder}. 
\end{assumption}

Note that in this paper, we use $\hvb_{\lambda}$ and $\hvb_{\lambda}(p)$ interchangeably. Assumption O.\ref{ass:boundhvb} requires every component of the original estimate $\hvb_{\lambda}$ to be bounded. One can heuristically argue that this assumption holds given Assumption O.\ref{ass:Convex}-O.\ref{ass:True}. Let us mention a heuristic argument here. Suppose that Assumptions O.\ref{ass:Convex}-O.\ref{ass:True} hold. We can show that (See \eqref{eq:add_tue_2.33pm} and \eqref{eq:add_tue_2.35pm} in the proof for Lemma \ref{lem:supnormDi} in Section \ref{sec:supnormDi}) 
\[
	\frac{1}{\sqrt{n}}\|\hvb_{\lambda}-\vbeta_0\|
	\spaceequal
	\op{\frac{1}{\ac}}
	.
\]
Hence, on average, the component-wise distance between $\hvb_{\lambda}$ and $\vbeta_0$ should be $\op{\frac{1}{\ac}}$. Note that according to Assumption O.\ref{ass:True}, every component of the true signal $\vbeta_0$ can be bounded by $\op{\plogn}$ (See Lemma \ref{lem:xwnorm}). Therefore, intuitively, every component of $\hvb_{\lambda}$ should be bounded by $\op{\frac{\plogn}{\ac}}$ as well. In fact, we can show that O.\ref{ass:boundhvb} holds with $\bc=\op{\frac{\plogn}{\ac^{15\rho}}}$, if we assume O.\ref{ass:Convex}-O.\ref{ass:True} hold, and the regularizer satisfies an extra condition. This is described in the following lemma: 

\begin{lemma}\label{lem:ABOUTASSUMPO5}
Suppose that Assumptions O.\ref{ass:Convex}-O.\ref{ass:True} are satisfied. Furthermore, suppose that the regularizer satisfies at least one of the following conditions: 
\BI
\item[(a)] $\reg''(x)$ is Lipchitz and $\sup_i\reg''(\ve_i^{\t}\hvb_{\lambda})=\op{\plogn}$.
\item[(b)] There exists constant $c>0$ such that $\sup_{x\in \bbR}\reg''(x)<c$.
\item[(c)] There exists constant $c>0$ such that $\inf_{x\in \bbR}\lambda\reg''(x)>c$.
\EI
Then, $\sup_i|\ve_i^{\t}\hvb_{\lambda} (p)|^\rho  =\op{\frac{\plogn}{\ac^{15\rho}}}$.  
\end{lemma}
Since the proof of this lemma uses some of the results we will prove in later sections, we postpone it to Appendix \ref{sec:disbi}.

\subsection{Main results} \label{sec:results}

In this section, we address the questions about the convergence rate mentioned in the introduction for  $\lo$, $\alo$ and $\wampr$. Our first result bounds the discrepancy of $\wampr$ and $\lo$.
\begin{theorem}\label{THM:MSE} Assuming O.\ref{ass:Convex}-O.\ref{ass:boundhvb}, for any fixed $\lambda>0$, we have
\[
	\left|\lo-\wampr\right|
	\spaceequal
	\op{\frac{\plogn \cdot \bc^{1+\alpha}}{n^{\frac{\alpha^2}{2}}\cdot \ac^{72\rho+22}}}
	.
\] 
\end{theorem}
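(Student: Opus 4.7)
The plan is to reduce $|\lo-\wampr|$ to a uniform-in-$i$ bound on the discrepancy between the two ``residuals'' $\tr_i := y_i-\vrx{i}^{\t}\lamloovb{i}$ (leave-one-out) and $\hat r_i := y_i-\vrx{i}^{\t}\hvb_\lambda + \hat\theta\,\fl'(y_i-\vrx{i}^{\t}\hvb_\lambda)$ (AMP). Writing $\lo-\wampr = \frac{1}{n}\sum_i\bigl[\fl(\tr_i)-\fl(\hat r_i)\bigr]$ and applying a Taylor expansion of $\fl$, the polynomial growth bounds on $\fl,\fl',\fl''$ from Assumption O.\ref{ass:Holder}, combined with the a priori residual envelope $\max_i(|\tr_i|,|\hat r_i|)=\op{\bc\cdot\plogn/\ac^{O(1)}}$ (which follows from $\|\hvb_\lambda-\vbeta_0\|/\sqrt{n}=\op{1/\ac}$ together with Assumption O.\ref{ass:boundhvb} and Lemma \ref{LEM:ABOUTASSUMPO5}), reduces the task to a quantitative control on $\max_i|\tr_i-\hat r_i|$, with polynomial envelopes in $\bc^{1+\alpha}$, $\plogn$ and $\ac^{-1}$ absorbed into the final rate.

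For the residual discrepancy itself, the bridge is the ALO derivation \eqref{eq:aloformula1}. The full-data optimality $\vX^{\t}\fl'(\vy-\vX\hvb_\lambda)=\lambda\dreg'(\hvb_\lambda)$ makes the gradient of the leave-$i$-out objective at $\hvb_\lambda$ equal to $\vrx{i}\,\fl'(y_i-\vrx{i}^{\t}\hvb_\lambda)$; a single Newton step from $\hvb_\lambda$ using the leave-$i$-out Hessian $\vM_{/i}:=\XI^{\t}\diag{\fl''(\yi-\XI\hvb_\lambda)}\XI+\lambda\diag{\dreg''(\hvb_\lambda)}$ therefore yields
\[
\tr_i \= y_i-\vrx{i}^{\t}\hvb_\lambda + A_i\,\fl'(y_i-\vrx{i}^{\t}\hvb_\lambda) + \varepsilon_i,\qquad A_i := \vrx{i}^{\t}\vM_{/i}^{-1}\vrx{i},
\]
where the Newton remainder $|\varepsilon_i|$ is bounded, via the H{\"o}lder exponent $\alpha$ of $\fl''$ and $\dreg''$, by a constant multiple of $\|\lamloovb{i}-\hvb_\lambda\|^{1+\alpha}$. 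Strong convexity (Assumption O.\ref{ass:Smoothness}) combined with standard leave-one-out perturbation bounds gives $\|\lamloovb{i}-\hvb_\lambda\|=\op{n^{-1/2}\plogn/\ac^{O(1)}}$, producing a factor $n^{-(1+\alpha)/2}$ for this step.

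The second ingredient is to show $A_i\approx\hat\theta$ uniformly in $i$. Because $\vrx{i}$ is entangled with $\hvb_\lambda$, $\fl''(\vy-\vX\hvb_\lambda)$ and $\dreg''(\hvb_\lambda)$ through $\vM_{/i}$, the natural plan is to first swap $\vM_{/i}$ for an oracle variant $\bvM_i$ in which $\hvb_\lambda$ is replaced by the leave-predictor-out estimate $\lopvb{i}_\lambda$, making $\vrx{i}$ independent of $\bvM_i^{-1}$. Gaussian concentration (Hanson--Wright) then yields $A_i = \frac{1}{n}\trace(\bvM_i^{-1}) + \op{n^{-1/2}\plogn}$, with the swap error controlled via H{\"o}lder continuity of $\fl''$ and $\dreg''$ applied to $\|\hvb_\lambda-\lopvb{i}_\lambda\|$. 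The fixed-point equation \eqref{eq:taudef} for $\hat\tau$ is precisely the Silverstein-type self-consistency condition under which the deterministic equivalent of $\frac{1}{n}\trace(\bvM_i^{-1})$ equals $\hat\theta$ from \eqref{eq:thetadef}; matching the two quantities uses Lemma \ref{lem:AMP=GLM1} (the one-to-one map between $\lambda$ and $\hat\tau$) together with a perturbation argument showing the scalar fixed-point equation is stable under empirical fluctuations of its inputs.

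The main obstacle is exactly this final step: transferring the self-consistent scalar equation \eqref{eq:taudef} into a quantitative bound $|A_i-\hat\theta|=\op{n^{-\alpha/2}\cdot\mathrm{poly}(\plogn,\bc,\ac^{-1})}$ \emph{uniformly} over $i\in\{1,\dots,n\}$. Three perturbations must run simultaneously --- the leave-one-out swap $\vM_{/i}\to\bvM_i$, the Hanson--Wright concentration, and stability of the scalar fixed point --- and each invokes $\sigma_{\min}(\vM_{/i})\gtrsim\ac$ to control matrix inverses, accumulating to the $\ac^{72\rho+22}$ denominator. The $n^{-\alpha^2/2}$ rate emerges by cascading H{\"o}lder-$\alpha$ estimates: the Newton-remainder and the oracle swap each yield residual differences of order $n^{-\alpha/2}$, which then propagate through the H{\"o}lder-continuous $\fl''$ in the final Taylor expansion of $\fl$, producing the $\alpha^2$ exponent. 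The $\bc^{1+\alpha}$ factor simply tracks the polynomial-growth envelope on $\fl'$ and $\fl''$ at residuals of magnitude $\op{\bc\cdot\plogn}$.
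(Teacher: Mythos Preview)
Your overall architecture is right---reduce to $\sup_i|\tr_i-\hat r_i|$, linearize the leave-one-out residual via a Newton step, then match the resulting quadratic form to $\hat\theta$---and this is essentially the paper's route. But two of your intermediate steps are miswired in a way that would block the argument.

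\textbf{Wrong leave-out for the concentration step.} You propose to break the dependence between $\vrx{i}$ and $\vM_{/i}$ by replacing $\hvb_\lambda$ with the \emph{leave-predictor-out} estimate $\lopvb{i}_\lambda$. This does not work: $\lopvb{i}_\lambda$ removes the $i$-th \emph{column} $\vcx{i}$ but still uses all $n$ rows, so it depends on $\vrx{i}$ (through the entries $x_{ij}$, $j\neq i$). The matrix $\bvM_i$ you build from it is therefore still entangled with $\vrx{i}$, and Hanson--Wright cannot be applied directly. The paper instead evaluates the Hessian at the \emph{leave-observation-out} estimate $\loovb{i}$, defining
\[
\oA_i \;=\; \XI^{\t}\diag{\dfl''(\yi-\XI\loovb{i})}\XI+\lambda\,\diag{\dreg''(\loovb{i})},
\]
which is genuinely independent of $\vrx{i}$, so that $\vrx{i}^{\t}\oA_i^{-1}\vrx{i}\approx\frac{1}{n}\trace(\oA_i^{-1})$ follows immediately from Gaussian concentration.

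\textbf{The fixed-point matching is not a black-box Silverstein identity.} Your sketch asserts that \eqref{eq:taudef} is ``precisely the Silverstein-type self-consistency condition'' for the deterministic equivalent of $\frac{1}{n}\trace(\bvM_i^{-1})$, and leaves it at that. In the paper this is the bulk of the work (Lemmas \ref{lem:G} and \ref{lem:trxbx}): one computes $\frac{1}{n}\trace(\hX\pB_c^{-1}\hX^{\t})$ \emph{twice}, once by summing over rows (which re-uses the leave-observation-out decoupling) and once by summing over columns. The column expansion produces quadratic forms $\hvcx{i}^{\t}\hQ_i^{-1}\hvcx{i}$, and \emph{this} is where the leave-predictor-out estimate $\lopvb{i}$ enters---to decouple $\vcx{i}$ from $\hQ_i$. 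Equating the two expressions yields $G\bigl(\frac{1}{n}\trace(\pB_c^{-1})\bigr)\approx 1=G(\hat\theta)$, and a derivative lower bound on $G$ closes the gap. None of this is automatic from random-matrix heuristics because the diagonal weights $\fl''(\vy-\vX\hvb_\lambda)$ and $\reg''(\hvb_\lambda)$ are data-dependent, which is exactly why both leave-out procedures are needed, each in its own direction.

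In short: you invoked leave-predictor-out where leave-observation-out is required, and you omitted the two-sided trace identity where leave-predictor-out is actually the key tool.
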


A proof sketch of the theorem is presented in Section \ref{ssec:discamp_lo} and details are in Section \ref{sec:MSE}. Next, we obtain an upper bound for the discrepancy between $\lo$ and $\alo$. 

\begin{theorem}\label{thm:jixu}
Under Assumptions O.\ref{ass:Convex}-O.\ref{ass:True}, we have 
\BE
	\left|\lo-\alo\right|
	\spaceequal
	\op{\frac{\plogn}{n^{\frac{\alpha^2}{2}}\cdot \ac^{13\rho+12}}}
	.	\label{eq:Con_eq_1}
\EE 
\end{theorem}
The proof of Theorem \ref{thm:jixu} is presented in Section \ref{ssec:disclo_lao}. Finally, we find an upper bound for $|\lo-\extra|$.

The following result provides an upper bound on the difference between $\lo$ and $\extra$. 

\begin{theorem}\label{thm:LOconsistency}
Under Assumptions O.\ref{ass:Convex}-O.\ref{ass:True}, and $\vrx{i}\stackrel{\text{i.i.d.}}{\sim}\mathcal{N}(\boldsymbol{0},\frac{1}{n}\vSigma)$, we have 
\[
	|\lo-\extra |
	\spaceequal
	\left|\frac{1}{n}\sum_{i=1}^n\fl(y_i-\vrx{i}^{\t}\loovb{i}_{\lambda})-\bbE[\fl(y_{new}-\vrx{new}^{\t}\hvb_{\lambda})\big|\mathcal{D}]\right|
	\spaceequal
	\op{\frac{\plog n}{\ac^{4\rho+5}\sqrt{n}}}
	.	
\]
\end{theorem}

A proof sketch of Theorem \ref{thm:LOconsistency} is presented in Section \ref{ssec:disc_lo_extra}.  Note that Theorem \ref{thm:LOconsistency} requires $\vrx{i}$ follow the Gaussian distribution. There is one place that this assumption is required in the proof and that is in Lemma \ref{lem:socomp}. If one finds a way to prove that $\bbE (\sigma_{\min}(\vX^{\t}\vX))^{-r}$ can be upper bounded by a constant for all $r>0$ for subgaussian matrices (which is expected to hold), then we can obtain the same result for subgaussian matrices.

\subsection{Tightness of the results}

First let us discuss the tightness of Theorem \ref{thm:LOconsistency}. Suppose that after obtaining $\hvb_{\lambda}$ an oracle would give us $n$ independent new samples for estimating the risk. It is then straightforward to use the central limit theorem and argue that even if we use $n$ new samples the error of our risk estimate will be $O_p(\frac{1}{\sqrt{n}})$.  Hence, the result of Theorem \ref{thm:LOconsistency} is tight up to a logarithmic factor.   
Regarding Theorem \ref{thm:jixu} first note that if for instance the loss function and the regularizer are three times continuously differentiable, then we have
\[
\left|\lo-\alo\right| = 	\op{\frac{\plogn}{n^{\frac{1}{2}}}}
\] 
Note that since the difference of $\lo$ and out-of-sample prediction error is $O_p(\frac{\plogn}{\sqrt{n}})$, the error of our approximation is at the same order as the error of $\lo$. Hence, the approximation is as good as we want it to be. That said, we should emphasize that this argument is not claiming that the result we obtain for $\left|\lo-\alo\right|$ is tight for all three times differentiable losses and regularizers. In order to obtain sharp results one should make some assumptions about the third order derivatives of the loss and regularizer as well. Note that the closer the function is to the quadratic, e.g. the closer the third derivative is to zero, we expect the approximation error of $\alo$ to be smaller than $\op{\frac{\plogn}{n^{\frac{1}{2}}}}$. For instance, if the loss and regularizer are quadratic, $\left|\lo-\alo\right|=0$. Given that obtaining such accurate results require more assumptions and do not offer any particular gain, we did not pursue that direction.  
The result of Theorem \ref{THM:MSE} is also similar to the result of Theorem \ref{thm:jixu} and a similar argument can be given about the tightness of the result. Hence, we do not repeat the argument here.

\subsection{Proof sketch of the main results}
\subsubsection{Proof sketch of Theorem \ref{THM:MSE}}\label{ssec:discamp_lo}

Below, we sketch the proof of Theorem \ref{THM:MSE}. Details are in Section \ref{sec:MSE}. As the first step, in Lemma \ref{lem:AMP=GLM1}, we show that $\hat{\tau}$, introduced in \eqref{eq:taudef}, is uniquely defined. Hence, the heuristic recipe we mentioned in Section \ref{sec:reAMP} leads to a well-defined estimate for $\extra$. Note that Lemma \ref{ssec:discamp_lo} does not provide any information on the quality of this estimate.  

\begin{lemma}\label{lem:AMP=GLM1}
Under Assumption O.\ref{ass:Convex}, for any $(\vX,\vy,\vbeta)$, 
\BE
	\gamma
	\spaceequal
	\dotp{\frac{\fl''(\vy-\vX\vbeta)}{\frac{1}{\tau}+\frac{1}{\delta\gamma}\dotp{\frac{\hat{\vsigma}^2}{\hat{\vsigma}^2+\tau\reg''(\vbeta)}}\cdot \fl''(\vy-\vX\vbeta)}},
	\label{eq:amp=glm_eq1}
\EE
defines a one-to-one mapping between $\gamma\in \bbR^{+}$ and $\tau\in \bbR^{+}$.
\end{lemma}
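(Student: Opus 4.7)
The plan is to recast the defining equation so that two simple monotonicity properties immediately yield injectivity in both directions. First I would set $a_i=\fl''(y_i-\vrx{i}^{\t}\vbeta)$ and $b_j=\reg''(\beta_j)$, both nonnegative by Assumption O.\ref{ass:Convex}, define $S(\tau):=\dotp{1/(1+\tau\reg''(\vbeta))}=\frac{1}{p}\sum_{j}\frac{1}{1+\tau b_j}$, and rewrite \eqref{eq:amp=glm_eq1} as $\gamma=F(\gamma,\tau)$ with
\[
F(\gamma,\tau)\ :=\ \frac{1}{n}\sum_{i=1}^n\frac{a_i}{\frac{1}{\tau}+\frac{S(\tau)}{\delta\gamma}a_i}.
\]
The degenerate case $a_i\equiv 0$ makes $F\equiv 0$ and gives no positive solutions, so I will restrict attention to the nondegenerate situation in which at least one $a_i$ is positive.

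The main algebraic move is the identity
\[
\frac{F(\gamma,\tau)}{\gamma}\ =\ \frac{1}{n}\sum_{i=1}^n\frac{a_i}{\frac{\gamma}{\tau}+\frac{S(\tau)}{\delta}a_i},
\]
from which it is transparent that for each fixed $\tau>0$ the map $\gamma\mapsto F(\gamma,\tau)/\gamma$ is strictly decreasing on $(0,\infty)$: every summand's denominator is strictly increasing in $\gamma$ and at least one summand is nonzero. Hence the equation $F(\gamma,\tau)/\gamma=1$ has at most one positive root in $\gamma$ for each $\tau$. Symmetrically, differentiating the denominator $\frac{1}{\tau}+\frac{S(\tau)a_i}{\delta\gamma}$ with respect to $\tau$ yields $-\frac{1}{\tau^2}+\frac{S'(\tau)a_i}{\delta\gamma}\le-\frac{1}{\tau^2}<0$, because $S'(\tau)=-\frac{1}{p}\sum_j b_j/(1+\tau b_j)^2\le 0$; therefore $\tau\mapsto F(\gamma,\tau)$ is strictly increasing on $(0,\infty)$ for each fixed $\gamma>0$, so $F(\gamma,\tau)=\gamma$ also has at most one positive root in $\tau$ for each $\gamma$.

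Putting the two one-sided uniqueness statements together yields strict monotonicity of the implicit correspondence $\tau\leftrightarrow\gamma$: if $(\gamma_1,\tau_1)$ and $(\gamma_2,\tau_2)$ both solve \eqref{eq:amp=glm_eq1} with $\tau_1<\tau_2$, then $F(\gamma_1,\tau_2)>F(\gamma_1,\tau_1)=\gamma_1$, so $F(\gamma_1,\tau_2)/\gamma_1>1$, and the strict decrease of $F/\gamma$ in $\gamma$ forces $\gamma_2>\gamma_1$. This is exactly the one-to-one conclusion claimed. I do not anticipate a real obstacle here: the only conceptual step is the division by $\gamma$ that makes the monotonicity transparent, and the rest is routine algebra combined with the nonnegativity of $\fl''$ and $\reg''$ afforded by convexity.
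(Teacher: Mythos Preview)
Your monotonicity argument is correct and follows the same idea as the paper's proof: both establish strict monotonicity in $\gamma$ for fixed $\tau$ and in $\tau$ for fixed $\gamma$, using only the nonnegativity of $\fl''$ and $\reg''$ from convexity. The paper first multiplies both sides by $\frac{S(\tau)}{\delta\gamma}$ to reach an equivalent equation before reading off the monotonicities, whereas your division by $\gamma$ is a slightly more direct route to the same conclusion.

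The gap is that you only show \emph{uniqueness} (``at most one root'') in each direction, not existence. The lemma is invoked in the paper precisely to guarantee that $\hat\tau$ exists for every $\lambda>0$ (this is what makes the AMP risk estimate well defined), so ``one-to-one mapping between $\bbR^{+}$ and $\bbR^{+}$'' here means a bijection, and surjectivity has to be checked. The paper closes this via the intermediate value theorem by computing the endpoint limits of its monotone functions. In your setup the corresponding checks are
\[
\lim_{\gamma\to0^+}\frac{F(\gamma,\tau)}{\gamma}=\frac{\delta}{S(\tau)}\ge\delta>1,\qquad \lim_{\gamma\to\infty}\frac{F(\gamma,\tau)}{\gamma}=0,
\]
so a root in $\gamma$ exists for each $\tau$, and
\[
\lim_{\tau\to0^+}F(\gamma,\tau)=0,\qquad \lim_{\tau\to\infty}F(\gamma,\tau)>\gamma,
\]
so a root in $\tau$ exists for each $\gamma$. (The first limit above uses that all $a_i>0$; you flagged the fully degenerate case $a_i\equiv0$, but the limit computation also needs the nondegenerate $a_i>0$ for every $i$, which the paper implicitly assumes as well.) These limit checks are routine once you have the monotonicity, but they are not optional given how the lemma is used.
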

The proof  is given in Section \ref{sec:uniquesolution}.  According to this lemma, a unique value of $\hat{\tau}$ satisfies \eqref{eq:taudef}. Using this unique value we can calculate the unique $\hat{\theta}$ that satisfies \eqref{eq:thetadef}, and obtain the following estimate of $\extra$:
\begin{equation}\label{eq:wampr_estimate}
\wampr = \sum_{i=1}^n \ell(y_i-\vrx{i}^{\t}\hvb_{\lambda}+ \hat{\theta}\cdot\fl'(y_i-\vrx{i}^{\t} \hvb_{\lambda}))
\end{equation}
 To compare this risk estimate with $\lo$, we first simplify $\lo$ in the following proposition.

\begin{proposition}\label{thm:looo}
Under Assumptions O.\ref{ass:Convex}-O.\ref{ass:True}, we have 
\[
	\sup_{i,j}\left|\ve_j^{\t}(\hvb_{\lambda}-\loovb{i}_{\lambda})\right|
	\spaceequal
	\op{\frac{\plogn }{n^{\frac{\alpha}{2}}\cdot \ac^{8\rho+7}}}
	.
\]
Moreover, if we define
\[ \tvepsilon^i
	:=
	\loovb{i}_{\lambda} - \hvb_{\lambda} + \fl'(y_i-\vrx{i}^{\t}\hvb_{\lambda})\oA_i^{-1}\vrx{i}
	,
\]
where 
\[
	\oA_i
	:=
	\vX^{\t}\diag{\dfl''(\vy-\vX\loovb{i}_{\lambda})}\vX+\lambda\diag{\dreg''(\loovb{i}_{\lambda})}-\fl''(y_i-\vrx{i}^{\t}\loovb{i}_{\lambda})\vrx{i}\vrx{i}^{\t}
	,
\] 
then,
\[
	\sup_i\|\tvepsilon^i\|
	\spaceequal
	\op{\frac{\plogn }{n^{\frac{\alpha}{2}}\cdot \ac^{9\rho+10}}}.
\]
\end{proposition}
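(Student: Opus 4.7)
My approach is a Newton linearization of the leave-one-out optimality conditions, followed by a Gaussian-concentration plus Hölder refinement. The scaffolding is the exact identity obtained by subtracting the KKT equations for $\hvb$ and $\loovb{i}$ and applying the integral form of the mean value theorem to $\fl'$ and $\reg'$:
\[
\bA_i\,(\hvb - \loovb{i}) \;=\; \fl'(y_i - \vrx{i}^{\t}\hvb)\,\vrx{i}, \qquad \bA_i := \sum_{j\neq i} \bar c_{ij}\,\vrx{j}\vrx{j}^{\t} + \lambda\,\diag{\bar{\vr}_i},
\]
where $\bar c_{ij},\,\bar{\vr}_i$ are second derivatives of $\fl$ and $\reg$ averaged along the segment from $\loovb{i}$ to $\hvb$. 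The matrix $\oA_i$ in the statement is the analogue with derivatives frozen at $\loovb{i}$; crucially, $\oA_i$ is measurable with respect to $(\XI,\yi,w_i)$ and therefore \emph{independent of $\vrx{i}$}, which is the structural ingredient that unlocks Gaussian concentration in what follows.

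Before the refined step I would pin down two uniform background facts: (i) $\sigma_{\min}(\bA_i)\wedge\sigma_{\min}(\oA_i)\gtrsim\ac$ with high probability, which follows from Assumption O.\ref{ass:Smoothness} together with a Bai--Yin / Davidson--Szarek lower bound on $\sigma_{\min}(\XI^{\t}\XI)$ that uses only $n/p=\delta>1$; and (ii) $\sup_i|\fl'(y_i-\vrx{i}^{\t}\hvb)|=\op{\plogn\cdot\bc^{O(\rho)}}$, from the polynomial growth in Assumption O.\ref{ass:Holder}, the subGaussianity of $w_i,\beta_{0,i}$, and Assumption O.\ref{ass:boundhvb}. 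Plugged into the identity together with $\sup_i\|\vrx{i}\|=O_p(1)$, these yield the \emph{coarse} bound
\[
\sup_i\|\hvb-\loovb{i}\|_2 \;=\; \op{\plogn\,\bc^{O(\rho)}\,\ac^{-O(1)}},
\]
which will feed the refined argument.

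For the component-wise bound I decompose
\[
\ve_j^{\t}(\hvb-\loovb{i}) \;=\; \fl'(y_i-\vrx{i}^{\t}\hvb)\,\ve_j^{\t}\oA_i^{-1}\vrx{i} \;+\; \ve_j^{\t}\oA_i^{-1}(\oA_i-\bA_i)(\hvb-\loovb{i}).
\]
Conditionally on $(\XI,\yi)$ the first summand is a mean-zero Gaussian scalar with variance $\le\|\oA_i^{-1}\ve_j\|^2/n=\op{1/(n\ac^2)}$, so a union bound over $(i,j)$ multiplied by $|\fl'|$ places this term at rate $\plogn/\sqrt n$, already inside the target. For the correction summand I bound $\|\bA_i-\oA_i\|_{\mathrm{op}}$ using the Hölder continuity of $\fl''$ and $\reg''$ (Assumption O.\ref{ass:Holder}):
\[
\|\bA_i-\oA_i\|_{\mathrm{op}} \;\lesssim\; \sup_{j\neq i}|\vrx{j}^{\t}(\hvb-\loovb{i})|^{\alpha}\,\|\vX^{\t}\vX\|_{\mathrm{op}} \;+\; \lambda\,\|\hvb-\loovb{i}\|_{\infty}^{\alpha}.
\]
To control $\sup_{i,j}|\vrx{j}^{\t}(\hvb-\loovb{i})|$ I apply the identity a second time — leaving out $j$ as well — so as to expose a bilinear form $\vrx{j}^{\t}\oA_{ij}^{-1}\vrx{i}$ in which the doubly-left-out matrix $\oA_{ij}$ is independent of both Gaussians; conditioning on $\vrx{j}$ and $\oA_{ij}$ turns the scalar into a one-dimensional Gaussian of variance $\le\|\oA_{ij}^{-1}\vrx{j}\|^2/n$, yielding $\sup_{i,j}|\vrx{j}^{\t}(\hvb-\loovb{i})|=\op{\plogn/\sqrt n\cdot\ac^{-O(1)}\bc^{O(\rho)}}$. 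Substituting gives $\|\bA_i-\oA_i\|_{\mathrm{op}}=\op{\plogn^{\alpha}\,n^{-\alpha/2}\,\ac^{-O(\rho)}\,\bc^{O(\rho)}}$, and multiplying by $\|\oA_i^{-1}\|$ together with the coarse $\ell_2$ bound produces the advertised component-wise rate with the exponent $\ac^{8\rho+7}$. Finally, subtracting the identity from the definition of $\tvepsilon^i$ shows
\[
\tvepsilon^i \;=\; \loovb{i}-\hvb+\fl'(y_i-\vrx{i}^{\t}\hvb)\,\oA_i^{-1}\vrx{i} \;=\; \oA_i^{-1}(\bA_i-\oA_i)(\hvb-\loovb{i}),
\]
so $\|\tvepsilon^i\|_2\le\|\oA_i^{-1}\|\cdot\|\bA_i-\oA_i\|_{\mathrm{op}}\cdot\|\hvb-\loovb{i}\|_2$, and the same estimates deliver the second claim at the cost of one extra factor $\|\oA_i^{-1}\|=\op{1/\ac}$, accounting for the jump from $8\rho+7$ to $9\rho+10$.

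The principal obstacle is the uniform control of $\sup_{i,j}|\vrx{j}^{\t}(\hvb-\loovb{i})|^{\alpha}$: neither $\vrx{i}$ nor $\vrx{j}$ is independent of $\hvb$ or of $\bA_i$, so one must iterate the leave-one-out identity twice — dropping both rows $i$ and $j$ — to produce an inverse $\oA_{ij}^{-1}$ independent of both Gaussian vectors before invoking bilinear concentration. A secondary but delicate obstacle is bookkeeping: each operator-norm inversion contributes a factor $\ac^{-1}$ and each use of the polynomial growth of $\fl',\fl''$ contributes a factor $\bc^{O(\rho)}$; stringing these through the two-stage argument while keeping only a single $\plogn$ factor in the final bound is what produces the explicit exponents $\ac^{8\rho+7}$ and $\ac^{9\rho+10}$ stated in the proposition.
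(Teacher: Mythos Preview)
Your identity $\bA_i(\hvb-\loovb{i})=\fl'(y_i-\vrx{i}^{\t}\hvb)\,\vrx{i}$ and the formula $\tvepsilon^i=\oA_i^{-1}(\bA_i-\oA_i)(\hvb-\loovb{i})$ are correct, and decomposing $\ve_j^{\t}(\hvb-\loovb{i})$ into a Gaussian main term $\fl'\cdot\ve_j^{\t}\oA_i^{-1}\vrx{i}$ plus a H{\"o}lder correction has the right shape. The gap is in closing the correction. Bounding $\|(\bA_i-\oA_i)(\hvb-\loovb{i})\|$ via H{\"o}lder produces a quantity of order $\sqrt n\bigl(\sup_{j\neq i}|\vrx{j}^{\t}(\hvb-\loovb{i})|^{1+\alpha}+\|\hvb-\loovb{i}\|_\infty^{1+\alpha}\bigr)$, and these sup-norms are exactly the targets of the proposition. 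Writing $Q$ for their maximum, your argument delivers only the self-referential inequality $Q\le A+BQ^{1+\alpha}$ with $A=\op{\plogn/\sqrt n}$ and $B=\op{\sqrt n/\ac}$; the coarse $\ell_2$ input $Q\le\|\hvb-\loovb{i}\|_2=\op{\plogn/\ac^{\rho+2}}$ does not place $Q$ on the small branch of this fixed-point inequality (plugging it in gives a right side of order $\sqrt n\cdot\plogn^{1+\alpha}$), so the bootstrap never closes. Your double leave-one-out does not repair this: for the Gaussian term $\vrx{j}^{\t}\oA_i^{-1}\vrx{i}$ you only need $(\oA_i,\vrx{j})$ jointly independent of $\vrx{i}$, which already holds, so no second leave-out is needed there; the circularity lives in the H{\"o}lder remainder, where the averaged coefficients $\bar c_{ij}$ depend on $\hvb$, and comparing $\bA_i$ to any frozen matrix---whether $\oA_i$ or a doubly-left-out $\oA_{ij}$---reintroduces $\hvb-\loovb{i}$.

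The paper breaks the circularity by constructing an explicit surrogate $\lootvb{i}=\loovb{i}+\fl'(\tr_i)\,\oA_i^{-1}\vrx{i}$ for $\hvb$, where $\tr_i$ solves a one-dimensional proximal equation chosen so that $\tr_i=y_i-\vrx{i}^{\t}\lootvb{i}$. Because $\lootvb{i}-\loovb{i}$ is by construction proportional to $\oA_i^{-1}\vrx{i}$, the quantities $\sup_{j\neq i}|\vrx{j}^{\t}(\lootvb{i}-\loovb{i})|$ and $\sup_j|\ve_j^{\t}(\lootvb{i}-\loovb{i})|$ are bounded \emph{non-circularly} at rate $\op{\plogn/\sqrt n}$ by Gaussian concentration (using only $\oA_i\perp\vrx{i}$). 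One then controls $\|\lootvb{i}-\hvb\|$ via the full-data score $L(\vbeta)=-\vX^{\t}\fl'(\vy-\vX\vbeta)+\lambda\reg'(\vbeta)$: the choice of $\lootvb{i}$ makes the first-order terms in $L(\lootvb{i})$ cancel against the KKT equation for $\loovb{i}$, leaving only a H{\"o}lder remainder in the already-controlled difference $\lootvb{i}-\loovb{i}$. This surrogate construction is the missing ingredient. (Minor: the proposition assumes only O.\ref{ass:Convex}--O.\ref{ass:True}, so $\bc$ from O.\ref{ass:boundhvb} should not appear; $\sup_i|y_i-\vrx{i}^{\t}\hvb|$ is handled by Lemma~\ref{lem:supnormDi} without it.)
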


The proof of Proposition \ref{thm:looo} is given in Section \ref{sec:looo}. For the special case of $\ell_2$ regularizer, a similar upper bound is obtained for $\sup_i\|\tvepsilon^i\|$ in Theorem 2.2 of \cite{el2018impact}. We employ a similar proof strategy. However, due to the lack of lower bound for the curvature of the regularizer, our argument is more involved. Given the definitions of $\tvepsilon^i$ and $\oA_i$, we have
\BE
	y_i-\vrx{i}^{\t}\loovb{i}_{\lambda}
	&=&
	y_i-\vrx{i}^{\t}\hvb_{\lambda}+\fl'(y_i-\vrx{i}^{\t}\hvb_{\lambda})\vrx{i}^{\t}\oA_i^{-1}\vrx{i}-\vrx{i}^{\t}\tvepsilon^i
	,	\label{eq:loomain1}
\EE
and hence 
\begin{equation}
\lo \spaceequal \sum_{i=1}^n l \left (y_i-\vrx{i}^{\t}\hvb_{\lambda}+\fl'(y_i-\vrx{i}^{\t}\hvb_{\lambda})\vrx{i}^{\t}\oA_i^{-1}\vrx{i}-\vrx{i}^{\t}\tvepsilon^i \right).\n
\end{equation}

Next, with the aid of Proposition \ref{thm:looo}, we prove that  the AMP-based residuals $y_i-\vrx{i}^{\t}\hvb_{\lambda}+ \hat{\theta}\cdot\fl'(y_i-\vrx{i}^{\t} \hvb_{\lambda})$ in \eqref{eq:amprhat} are close to the leave-$i$-out residuals $y_i-\vrx{i}^{\t}\lamloovb{i}$ in \eqref{eq:modelLO}. In that vein,
\BE
	\lefteqn{\left| (y_i-\vrx{i}^{\t}\hvb_{\lambda}+ \hat{\theta}\cdot\fl'(y_i-\vrx{i}^{\t} \hvb_{\lambda}))  -(y_i-\vrx{i}^{\t}\loovb{i}_{\lambda})\right|
	=
	\left|(\hat{\theta}-\vrx{i}^{\t}\oA_i^{-1}\vrx{i})\fl'(y_i-\vrx{i}^{\t}\hvb_{\lambda})+\vrx{i}^{\t}\tvepsilon^i\right|}
		\n\\
	&\leq&
	|\fl'(y_i-\vrx{i}^{\t}\hvb_{\lambda})|\cdot  |\hat{\theta}-\vrx{i}^{\t}\oA_i^{-1}\vrx{i}| \n
	+\|\vrx{i}\|\cdot \op{\frac{\plogn }{n^{\frac{\alpha}{2}}\cdot \ac^{9\rho+10}}}
	,	\n \ \ \ \ \ \ \ \ \ \ \ \ \ \ \ \ \ \ \ \ \ \ \ \ \ \ \ \ 
\EE
where the last inequality is due to Proposition \ref{thm:looo}. Recall that based on Assumption O.\ref{ass:True}, the entries of $\vrx{i}$ are independent mean $0$ subGaussian random variables with covariance matrix $\vSigma/n$, resulting in $\sup_i\|\vrx{i}\| = \op{1}$, as proved in Lemma \ref{lem:xwnorm}.  Hence, our next main objective is to bound
\BE
	\sup_{i=1,\cdots,n} |\fl'(y_i-\vrx{i}^{\t}\hvb_{\lambda})|\cdot  |\hat{\theta}-\vrx{i}^{\t}\oA_i^{-1}\vrx{i}| \n
	\EE
Towards this goal, we prove
\begin{eqnarray}
\sup_i\left|\hat{\theta}-\vrx{i}^{\t}\oA_i^{-1}\vrx{i}\right| &=& 	\op{\frac{\plogn \cdot \bc^{1+\alpha}}{n^{\frac{\alpha^2}{2}}\cdot \ac^{67\rho+19}}}, \label{eq:firstboundAMP1}\\
\sup_i|\fl'(y_i-\vrx{i}^{\t}\hvb_{\lambda})|&=& \op{\frac{\plogn}{\ac^{4\rho+2}}}.\label{eq:firstboundAMP2}
\end{eqnarray}

Our first lemma bounds $\sup_i|\fl'(y_i-\vrx{i}^{\t}\hvb_{\lambda})|$.

\begin{lemma}\label{lem:supnormDi}
Under Assumptions O.\ref{ass:Convex}-O.\ref{ass:True}, for large enough $n$, we have
\BE
	\sup_i|y_i-\vrx{i}^{\t}\loovb{i}_{\lambda}|
	&=&
	\op{\frac{\ln n}{\ac}}
	,	\n\\
	\sup_{i}\|\hvb_{\lambda}-\loovb{i}_{\lambda}\|
	&=&
	\op{\frac{\plogn}{\ac^{\rho+2}}}
	,	\n\\
	\sup_i|y_i-\vrx{i}^{\t}\hvb_{\lambda}|
	&=&
	\op{\frac{\plogn}{\ac^{\rho+2}}}
	.	\n
\EE
\end{lemma}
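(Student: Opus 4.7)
The plan is to exploit two structural facts: first, $\loovb{i}$ is independent of the pair $(\vrx{i}, w_i)$, so residuals and inner products involving $\loovb{i}$ admit sharp Gaussian and subGaussian concentration conditional on $\loovb{i}$; second, Assumption O.\ref{ass:Smoothness} combined with the Marchenko--Pastur lower bound $\sigma_{\min}(\vX^{\t}\vX) \geq c(\delta) > 0$ (valid since $\delta > 1$) makes the full and leave-one-out objectives strongly convex with a quantitative modulus proportional to $\ac$.

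First I would establish the preliminary $L^2$-type bound $\sup_i \|\loovb{i} - \vbeta_0\|/\sqrt{n} = \op{1/\ac}$. Let $G_i(\vbeta) = \sum_{j \neq i} \fl(y_j - \vrx{j}^{\t}\vbeta) + \lambda \sum_k \reg(\beta_k)$ and $\Delta = \loovb{i} - \vbeta_0$. Comparing $G_i(\loovb{i}) \leq G_i(\vbeta_0)$ and using convexity of $\reg$ together with the quadratic lower bound $\fl(a) - \fl(b) \geq \fl'(b)(a-b) + \tfrac{\ac}{2}(a-b)^2$ from Assumption O.\ref{ass:Smoothness} yields
\[
\tfrac{\ac}{2}\|\XI\Delta\|^2 \;\leq\; \Bigl|\Bigl\langle \sum_{j\neq i}\fl'(w_j)\vrx{j} - \lambda \reg'(\vbeta_0),\; \Delta\Bigr\rangle\Bigr|.
\]
Marchenko--Pastur gives $\|\XI\Delta\|^2 \geq c(\delta) \|\Delta\|^2$ with probability $1 - e^{-\Omega(n)}$, and the right-hand side is bounded by $\op{\sqrt{n}}\cdot\|\Delta\|$ by subexponential concentration (applying the polynomial growth of $\fl',\reg'$ from Assumption O.\ref{ass:Holder} to the subGaussian $w_j$ and $\beta_{0,k}$). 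Dividing gives $\|\Delta\| = \op{\sqrt{n}/\ac}$ for each $i$, and the exponential sharpness of the tail bounds permits a union bound over $i$.

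For the first stated bound, decompose $y_i - \vrx{i}^{\t}\loovb{i} = w_i + \vrx{i}^{\t}(\vbeta_0 - \loovb{i})$. By independence, $\vrx{i}^{\t}(\vbeta_0 - \loovb{i})\mid \loovb{i}$ is centered Gaussian with variance $\|\vbeta_0 - \loovb{i}\|^2/n = \op{1/\ac^2}$ by the preceding step, so a Gaussian maximal inequality yields $\sup_i|\vrx{i}^{\t}(\vbeta_0 - \loovb{i})| = \op{\sqrt{\ln n}/\ac}$; combining with $\sup_i |w_i| = \op{\sqrt{\ln n}}$ from subGaussianity gives the stated $\op{\ln n/\ac}$ bound (the weaker $\ln n$ factor is more than sufficient). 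For the second bound, the leave-one-out optimality $\nabla G_i(\loovb{i}) = 0$ implies $\nabla G(\loovb{i}) = -\fl'(y_i - \vrx{i}^{\t}\loovb{i})\vrx{i}$, where $G$ denotes the full objective \eqref{eq:bl}. Since $G$ has strong convexity modulus at least $\ac\,\sigma_{\min}(\vX^{\t}\vX)$,
\[
\|\hvb - \loovb{i}\| \;\leq\; \frac{\|\nabla G(\loovb{i})\|}{\ac\,\sigma_{\min}(\vX^{\t}\vX)} \;\leq\; \frac{C\,|\fl'(y_i - \vrx{i}^{\t}\loovb{i})|\,\|\vrx{i}\|}{\ac}.
\]
Applying $|\fl'(x)| \leq C(1 + |x|^{\rho+1})$ from Assumption O.\ref{ass:Holder} together with the first bound gives $\sup_i|\fl'(y_i - \vrx{i}^{\t}\loovb{i})| = \op{\plogn/\ac^{\rho+1}}$; combined with $\sup_i\|\vrx{i}\| = \op{1}$ from Lemma \ref{lem:xwnorm}, this produces $\sup_i\|\hvb - \loovb{i}\| = \op{\plogn/\ac^{\rho+2}}$. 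The third bound follows from the triangle inequality $|y_i - \vrx{i}^{\t}\hvb| \leq |y_i - \vrx{i}^{\t}\loovb{i}| + \|\vrx{i}\|\,\|\loovb{i} - \hvb\|$.

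The main technical obstacle is the preliminary bound and its $\ac$-dependence. The random vector $\sum_{j\neq i}\fl'(w_j)\vrx{j} \in \bbR^p$ is a mixture of Gaussian $\vrx{j}$ with $\fl'(w_j)$, which is only subexponential (being a polynomial in subGaussian $w_j$), so sharp concentration requires a Bernstein-type estimate; the resulting tail probability must remain exponentially small in $n$ so that the union bound over the $n$ leave-one-out problems does not destroy the $\op{}$ scaling, and the $\ac$-dependence must be tracked carefully through the strong-convexity division that propagates into later steps.
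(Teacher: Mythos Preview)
Your proposal is correct and follows essentially the same route as the paper: both first establish the preliminary bound $\sup_i\|\loovb{i}-\vbeta_0\|/\sqrt{n}=\op{1/\ac}$ via strong convexity of the leave-one-out objective together with the Marchenko--Pastur lower bound on $\sigma_{\min}(\XI^{\t}\XI)$, then use independence of $\vrx{i}$ from $\loovb{i}$ for conditional Gaussian concentration, then the strong-convexity/gradient bound $\|\hvb-\loovb{i}\|\le \|\nabla G(\loovb{i})\|/(\ac\,\sigma_{\min})$ with $\nabla G(\loovb{i})=-\fl'(y_i-\vrx{i}^{\t}\loovb{i})\vrx{i}$, and finally the triangle inequality. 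The only cosmetic difference is that the paper linearizes the first-order conditions via the mean-value theorem and inverts the resulting Hessian, whereas you invoke the equivalent strong-convexity inequalities (objective comparison and gradient bound) directly; both packagings yield the same $\ac$-dependence.
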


The proof can be found in Section \ref{sec:supnormDi}. By Lemma \ref{lem:supnormDi} and Assumption O.\ref{ass:Smoothness2}, we have 
\BE
	\sup_i|\fl'(y_i-\vrx{i}^{\t}\hvb_{\lambda})|
	&\leq&
	O(1)\cdot (\sup_i|y_i-\vrx{i}^{\t}\hvb_{\lambda}|^{\rho+1}+1)
	\qleq
	\op{\frac{\plogn}{\ac^{4\rho+2}}}
	.	\n
\EE	
Hence, \eqref{eq:firstboundAMP2} holds. The final step of the proof is to bound $\sup_i\left|\hat{\theta}-\vrx{i}^{\t}\oA_i^{-1}\vrx{i}\right|$. Toward this goal we first want to prove that 
\begin{eqnarray}
	\sup_i\left|\vrx{i}^{\t}\oA_i^{-1}\vrx{i}-\frac{1}{n}\trace(\oA_i^{-1}\vSigma)\right|
	\spaceequal
	\op{\frac{\plogn}{\sqrt{n}\cdot \ac}}. \label{eq:argument1}
\end{eqnarray}
Note that $\oA_i$ is the Hessian matrix of the objective in \eqref{eq:bli} evaluated at the corresponding leave observation-$i$ out estimate. Therefore it is independent of $\vrx{i}$. Further, entries of $\vrx{i}$ have subGaussian tails. Hence, the following lemma, which is a standard concentration result, can address this issue:
\begin{lemma}\label{lem:TAIL}
Let $\vx_i \in \bbR^p, i\in[n]$ be $n$ mean-zero random vectors with covariance matrix $\frac{1}{n}\vI$. Further all entries of $\vx_i, i\in[n]$ are independent with subGaussian tails and the subGaussian parameters are uniformly bounded by some absolute constant. Let $\vGamma_i\in \bbR^{p\times p}, i=1,\ldots n$ be $n$ random matrices. Each $\vGamma_i$ is independent of $\vx_i$. Further, let $C_{n,\delta}$ be an upper bound for the the maximum eigenvalues of all $\vGamma_i$ with probability $1-\delta'$. Then for large enough $n$, with probability $1-\delta'-\frac{2}{n}$, there exists a constant $c$ independent of $n$ such that
\[
	\sup_i|\vx_i^{\t}\vGamma_i\vx_i-\frac{1}{n}\trace(\vGamma_i)|
	\qleq
	c\cdot\frac{C_{n,\delta}\ln n}{\sqrt{n}}
	.
\]  
\end{lemma}
See the proof of this lemma in Section \ref{sec:tail}. Lemma \ref{lem:TAIL} requires the maximum eigenvalues of all $\oA_i^{-1}$s to be bounded. Note that, for the minimal eigenvalue of $\oA_i$, we have
\BEQ
\label{eq:lowerbound_example_eq1}
\BS
	\inf_i\sigma_{\min}(\oA_i)
	&=
	\inf_i\min_{\|\vu\|=1}\vu^{\t}\left(\vX^{\t}\diag{\dfl''(\vy-\vX\loovb{i}_{\lambda})}\vX-\fl''(y_i-\vrx{i}^{\t}\loovb{i}_{\lambda})\vrx{i}\vrx{i}^{\t}\right)\vu
		\\
	&\quad +\vu^{\t}\lambda\diag{\dreg''(\loovb{i}_{\lambda})}\vu
		\\
	&\stackrel{\text{(i)}}{\geq}
	\inf_i\min_{\|\vu\|=1}\vu^{\t}\left(\XI^{\t}\cdot \ac\vI\cdot \XI\right)\vu
		\\
	&\stackrel{\text{(ii)}}{=}
	\Omega_p\left(\ac\right)
	, 	
\end{split}	
\EEQ
where Inequality (i) is due to Assumption O.\ref{ass:Convex} and O.\ref{ass:Smoothness} and Inequality (ii) is due to Lemma \ref{lem:minev} (stated in Section \ref{sec:Preliminaries}). Hence, the maximum eigenvalue of $\oA_i^{-1}$ is upper bounded by $\op{1/\ac}$. Therefore Lemma \ref{lem:TAIL} implies that
\begin{eqnarray}
	\sup_i\left|\vrx{i}^{\t}\oA_i^{-1}\vrx{i}-\frac{1}{n}\trace(\oA_i^{-1}\vSigma)\right|
	\spaceequal
	\op{\frac{\plogn}{\sqrt{n}\cdot \ac}}
	.	\label{eq:argument1}
\end{eqnarray}
Hence, in order to prove \eqref{eq:firstboundAMP1} we need to prove that
\[
	\sup_i\left|\hat{\theta}-\frac{1}{n}\trace(\oA_i^{-1}\vSigma)\right|
	\spaceequal
	\op{\frac{\plogn \cdot \bc^{1+\alpha}}{n^{\frac{\alpha^2}{2}}\cdot \ac^{67\rho+19}}}
	.
\]
To achieve this goal, let us define
\[
	\pB
	\spaceequal
	\vX^{\t}\diag{\dfl''(\vy-\vX\hvb_{\lambda})}\vX+\lambda \diag{\cdreg''(\hvb_{\lambda})}
	.
\] 

It turns out that $\frac{1}{n}\trace(\pB^{-1}\vSigma)$ is very close to $\frac{1}{n}\trace(\oA_{i}^{-1}\vSigma)$ and hence, we only need to bound $\left|\hat{\theta}-\frac{1}{n}\trace(\pB^{-1}\vSigma)\right|$. The next lemma proves this claim. 
\begin{lemma}\label{lem:ABswitch}
Under Assumptions O.\ref{ass:Convex}-O.\ref{ass:True}, for large enough $n$, we have
\BE
	\sup_i\left|\frac{1}{n}\trace(\pA_{i}^{-1}\vSigma)-\frac{1}{n}\trace(\pB^{-1}\vSigma)\right|
	&=&
	\op{\frac{1}{n\cdot \ac}}
		,\n\\
	\sup_i\left|\frac{1}{n}\trace(\pA_{i}^{-1}\vSigma)-\frac{1}{n}\trace(\oA_{i}^{-1}\vSigma)\right|
	&=&
	\op{\frac{\plogn }{n^{\frac{\alpha^2}{2}}\cdot \ac^{8\rho+9}}}
	,	\n	
\EE
where
\[
	\pA_{i}
	\ :=\
	\vX^{\t}\diag{\dfl''(\vy-\vX\hvb_{\lambda})}\vX-\fl''(y_i-\vrx{i}^{\t}\hvb_{\lambda})\vrx{i}\vrx{i}^{\t}+\lambda \diag{\cdreg''(\hvb_{\lambda})}
	.
\]

\end{lemma}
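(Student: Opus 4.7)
My overall approach uses the resolvent identity $A^{-1}-B^{-1} = A^{-1}(B-A)B^{-1}$ together with uniform eigenvalue lower bounds for both $\pA_{c,i}$ and $\oA_{c,i}$. Replicating the argument in \eqref{eq:lowerbound_example_eq1}, after removing the $i$-th rank-one summand, each of these matrices still majorizes $\ac\XI^\t\XI$, whose minimum eigenvalue is $\Omega_p(\ac)$ by Lemma \ref{lem:minev}, so $\|\pA_{c,i}^{-1}\|_{\rm op}$ and $\|\oA_{c,i}^{-1}\|_{\rm op}$ are both $\op{1/\ac}$ uniformly in $i$. I will also repeatedly use $\sup_i\|\vrx{i}\|=\op{1}$ (Lemma \ref{lem:xwnorm}), together with $\sup_i|y_i-\vrx{i}^{\t}\hvb|= \op{\plogn/\ac^{\rho+2}}$ (Lemma \ref{lem:supnormDi}) and $\sup_i|\fl'(y_i-\vrx{i}^\t\hvb)|=\op{\plogn/\ac^{4\rho+2}}$ (as in \eqref{eq:firstboundAMP2}); through Assumption O.\ref{ass:Holder} these also control $\fl''$ at the same arguments.

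For the first inequality, the difference is a rank-one update: $\pB_c - \pA_{c,i} = \fl''(y_i-\vrx{i}^\t\hvb)\vrx{i}\vrx{i}^\t$. The Sherman--Morrison formula then gives
$$\tfrac{1}{n}\bigl[\trace(\pA_{c,i}^{-1})-\trace(\pB_c^{-1})\bigr] \= \tfrac{1}{n}\cdot\frac{\fl''(y_i-\vrx{i}^\t\hvb)\,\|\pA_{c,i}^{-1}\vrx{i}\|^2}{1+\fl''(y_i-\vrx{i}^\t\hvb)\,\vrx{i}^\t\pA_{c,i}^{-1}\vrx{i}}.$$
The denominator is at least $1$ since $\fl''\geq 0$ and $\pA_{c,i}^{-1}\succeq 0$. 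In the numerator, $\|\pA_{c,i}^{-1}\vrx{i}\|^2 \leq \op{1/\ac^2}$ and $\fl''(y_i-\vrx{i}^\t\hvb) = \op{\plogn/\ac^{\rho(\rho+2)}}$, and since $\rho\in[0,1]$ implies $\rho(\rho+2)+2\leq 3\rho+2$, dividing by $n$ and taking the supremum over $i$ yields the first claim.

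For the second inequality, decompose $\oA_{c,i}-\pA_{c,i} = \vX^\t D_f^{(i)}\vX - g_i\vrx{i}\vrx{i}^\t + \lambda D_r^{(i)}$, where $D_f^{(i)} = \diag{\fl''(\vy-\vX\loovb{i})-\fl''(\vy-\vX\hvb)}$, $g_i=\fl''(y_i-\vrx{i}^\t\loovb{i})-\fl''(y_i-\vrx{i}^\t\hvb)$, and $D_r^{(i)}=\diag{\cdreg''(\loovb{i})-\cdreg''(\hvb)}$. The resolvent identity together with linearity of trace splits $\tfrac{1}{n}[\trace(\pA_{c,i}^{-1})-\trace(\oA_{c,i}^{-1})] = \tfrac{1}{n}\trace(\pA_{c,i}^{-1}(\oA_{c,i}-\pA_{c,i})\oA_{c,i}^{-1})$ into three pieces, which I would handle separately. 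The rank-one piece with $g_i$ is of order $\op{1/(n\cdot\ac^{\text{const}})}$ by the same Sherman--Morrison estimate as in Part 1 together with the H\"older bound $|g_i|\leq C|\vrx{i}^\t(\loovb{i}-\hvb)|^\alpha$; this is smaller than the target rate. For the regularizer piece, cyclicity gives $\tfrac{\lambda}{n}|\trace(D_r^{(i)}\oA_{c,i}^{-1}\pA_{c,i}^{-1})|\leq\lambda\sup_j|D_{r,jj}^{(i)}|\cdot\|\oA_{c,i}^{-1}\|_{\rm op}\|\pA_{c,i}^{-1}\|_{\rm op}$, and since $D_{r,jj}^{(i)}=\cdreg''(\ve_j^\t\loovb{i})-\cdreg''(\ve_j^\t\hvb)$ depends only on the $j$-th coordinate of $\loovb{i}-\hvb$, Proposition \ref{thm:looo} combined with H\"older continuity yields $\sup_{i,j}|D_{r,jj}^{(i)}|=\op{(\plogn/(n^{\alpha/2}\ac^{8\rho+7}))^\alpha}=\op{\plogn/(n^{\alpha^2/2}\ac^{(8\rho+7)\alpha})}$, which supplies the needed $n^{-\alpha^2/2}$ rate.

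The main obstacle is the loss piece: the naive bound $|\vrx{j}^\t(\loovb{i}-\hvb)|\leq\|\vrx{j}\|\|\loovb{i}-\hvb\|$ is only $\op{\plogn/\ac^{\rho+2}}$ and lacks the crucial $n^{-\alpha/2}$ factor. I would recover it via the AMP-style representation $\loovb{i}-\hvb = -\fl'(y_i-\vrx{i}^\t\hvb)\oA_i^{-1}\vrx{i}+\tvepsilon^i$ from Proposition \ref{thm:looo}, so that $\vrx{j}^\t(\loovb{i}-\hvb) = -\fl'(y_i-\vrx{i}^\t\hvb)\vrx{j}^\t\oA_i^{-1}\vrx{i}+\vrx{j}^\t\tvepsilon^i$. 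Crucially, for $j\neq i$, $\vrx{i}$ is independent of $(\vrx{j},\oA_i)$; conditionally on the latter, $\vrx{j}^\t\oA_i^{-1}\vrx{i}$ is centered Gaussian with variance $\|\oA_i^{-1}\vrx{j}\|^2/n=\op{1/(n\ac^2)}$, so a union bound over $(i,j)$ pairs gives $\sup_{i,j:\,j\neq i}|\vrx{j}^\t\oA_i^{-1}\vrx{i}|=\op{\plogn/(\sqrt{n}\ac)}$. Combining this with the bounds on $|\fl'|$ above and $|\vrx{j}^\t\tvepsilon^i|\leq\|\vrx{j}\|\|\tvepsilon^i\|=\op{\plogn/(n^{\alpha/2}\ac^{9\rho+10})}$ (Proposition \ref{thm:looo}), and using $n^{-\alpha/2}\geq n^{-1/2}$, yields $\sup_{j\neq i}|\vrx{j}^\t(\hvb-\loovb{i})|=\op{\plogn/(n^{\alpha/2}\ac^{\text{const}})}$, hence by H\"older $\sup_{j\neq i}|D_{f,jj}^{(i)}|=\op{\plogn/(n^{\alpha^2/2}\ac^{\text{const}})}$. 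Finally, expanding $\trace(\pA_{c,i}^{-1}\vX^\t D_f^{(i)}\vX\oA_{c,i}^{-1}) = \sum_j D_{f,jj}^{(i)}\vrx{j}^\t\oA_{c,i}^{-1}\pA_{c,i}^{-1}\vrx{j}$, treating $j=i$ separately (which contributes $O_p(1/n)$) and bounding the rest by $\sup_{j\neq i}|D_{f,jj}^{(i)}|\cdot\|\pA_{c,i}^{-1}\|_{\rm op}\|\oA_{c,i}^{-1}\|_{\rm op}\cdot\tfrac{1}{n}\sum_j\|\vrx{j}\|^2$, produces the stated rate $\op{\plogn/(n^{\alpha^2/2}\ac^{8\rho+9})}$ once all powers of $\ac$ are collected.
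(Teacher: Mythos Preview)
Your proof is correct and follows essentially the same strategy as the paper. Part 1 is identical: both you and the paper apply Sherman--Morrison to the rank-one difference $\pB_c-\pA_{c,i}$ and bound the resulting quadratic form using the eigenvalue bound \eqref{eq:lowerbound_example_eqpAc} together with $\sup_i\fl''(y_i-\vrx{i}^\t\hvb)=\op{\plogn/\ac^{3\rho}}$.

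For Part 2 the paper takes a slightly different matrix route: rather than the resolvent identity, it factorizes each diagonal perturbation as $\vDelta=|\vDelta|^{1/2}\vI_{\rm sign}|\vDelta|^{1/2}$ and applies the Woodbury identity separately to the loss piece $\vX^\t\vDelta^i_{\fl''}\vX$ and the regularizer piece $\lambda\vDelta^i_{\reg''}$; see \eqref{eq:replaceei1}--\eqref{eq:replaceei2}. This has the cosmetic advantage that the inner inverse $(\vI_{\rm sign}^{-1}+|\vDelta|^{1/2}(\cdot)|\vDelta|^{1/2})^{-1}$ has operator norm $1+o_p(1)$, so the bound collapses directly to $\|(\vDelta^i_{|\fl''|})^{1/2}\vX\oA_{c,i}^{-1}\ve_j\|^2$. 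Your resolvent approach is more elementary and equally valid; both rest on the same key ingredient, namely the $n^{-\alpha/2}$ bound on $\sup_{j\neq i}|\vrx{j}^\t(\hvb-\loovb{i})|$ and on $\sup_{i,j}|\ve_j^\t(\hvb-\loovb{i})|$, which the paper likewise extracts from the internals of the proof of Proposition \ref{thm:looo} (specifically \eqref{eq:suptovbhvb} and \eqref{eq:suptovbij}). One minor remark: the paper's decomposition via $\lootvb{i}$ gives the exponent $\ac^{8\rho+7}$ before applying H\"older, whereas your route via $\tvepsilon^i$ gives $\ac^{9\rho+10}$; after raising to the $\alpha$-th power and multiplying by $\ac^{-2}$ you may not land exactly on the stated $\ac^{8\rho+9}$, but since the paper tacitly uses $\ac\leq 1$ to replace $\ac^{-\alpha c}$ by $\ac^{-c}$ and explicitly flags that the $\ac$ bookkeeping is kept only loosely, this is not a substantive discrepancy.
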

The proofs of these two lemmas can be found in Sections \ref{sec:ABswitch}.  As we described above the goal is to bound $\left|\hat{\theta}-\frac{1}{n}\trace(\pB^{-1}\vSigma)\right|$. We remind the reader that the parameter $\hat{\theta}$ is obtained from \eqref{eq:taudef} and \eqref{eq:thetadef}. In other words, one has to solve the fixed point equation \eqref{eq:taudef} and then plug that in \eqref{eq:thetadef} to obtain $\hat{\theta}$. However, it is also clear that by rearranging \eqref{eq:taudef} and \eqref{eq:thetadef} 
we can see $\hat{\theta}$ as a  solution of a fixed point equation too. More specifically, it is straightforward to plug \eqref{eq:taudef} in \eqref{eq:thetadef} and obtain
\BE
	\frac{\lambda}{\hat{\tau}}
	&=& 
	\dotp{\frac{\fl''(\vy-X\hvb_{\lambda})}{1+\hat{\theta}\fl''(\vy-X\hvb_{\lambda})}}
	.	\label{eq:limiteqc_1}
\EE
We can use this equation to obtain $\hat{\tau}= \lambda \dotp{\frac{\fl''(\vy-X\hvb_{\lambda})}{1+\hat{\theta}\fl''(\vy-X\hvb_{\lambda})}}^{-1}$.  Finally, note that \eqref{eq:taudef} can be expressed in the following form:
\BE
	\frac{1}{\delta}\dotp{\frac{\hat{\vsigma}^2}{\hat{\vsigma}^2+\hat{\tau}\reg''(\hvb_{\lambda})}}
	&=&
	\dotp{\frac{\frac{1}{\delta\lambda}\dotp{\frac{\hat{\vsigma}^2}{\hat{\vsigma}^2+\hat{\tau}\reg''(\hvb_{\lambda})}}\cdot\fl''(\vy-\vX\hvb_{\lambda})}{\frac{1}{\hat{\tau}}+\frac{1}{\delta\lambda}\dotp{\frac{\hat{\vsigma}^2}{\hat{\vsigma}^2+\hat{\tau}\reg''(\hvb_{\lambda})}}\cdot \fl''(\vy-\vX\hvb_{\lambda})}}
	,	\label{eq:62920_eq1}
\EE
which is equivalent to  
\BE
	\frac{1}{\delta}\dotp{\frac{\hat{\vsigma}^2}{\hat{\vsigma}^2+\hat{\tau}\reg''(\hvb_{\lambda})}}
	&=&
	1-\dotp{\frac{1}{\frac{1}{\delta\lambda}\dotp{\frac{\hat{\tau}\hat{\vsigma}^2}{\hat{\vsigma}^2+\hat{\tau}\reg''(\hvb_{\lambda})}}\fl''(\vy-\vX\hvb_{\lambda})+1}}
	.	\label{eq:tauconditionc}
\EE
By plugging $\hat{\tau}= \lambda \dotp{\frac{\fl''(\vy-X\hvb_{\lambda})}{1+\hat{\theta}\fl''(\vy-X\hvb_{\lambda})}}^{-1}$ and \eqref{eq:thetadef} in this equation we obtain
\begin{equation}\label{eq:thetahateq}
	\dotp{\frac{1}{1+\hat{\theta}\fl''(\vy-\vX\hvb_{\lambda})}}+\frac{1}{\delta}\dotp{\frac{1}{1+\lambda\dotp{\frac{\fl''(\vy-\vX\hvb_{\lambda})}{1+\hat{\theta}\fl''(\vy-\vX\hvb_{\lambda})}}^{-1}\frac{\reg''(\hvb_{\lambda})}{\hat{\vsigma}^2}}}=1.
\end{equation}
Given that the solution for $\hat{\tau}$ is unique  (according to Lemma \ref{lem:AMP=GLM1}), the solution for $\hat{\theta}$ shall be unique as well. Define
\begin{equation}\label{eq:defG}
	G(\theta)
	\spaceequal
	\dotp{\frac{1}{1+\theta\fl''(\vy-\vX\hvb_{\lambda})}}+\frac{1}{\delta}\dotp{\frac{1}{1+\lambda\dotp{\frac{\fl''(\vy-\vX\hvb_{\lambda})}{1+\theta\fl''(\vy-\vX\hvb_{\lambda})}}^{-1}\frac{\reg''(\hvb_{\lambda})}{\hat{\vsigma}^2}}}
	.
\end{equation}
Since we would like to prove that $\left|\hat{\theta}-\frac{1}{n}\trace(\pB^{-1}\vSigma)\right|$ is small, we expect $G(\frac{1}{n}\trace(\pB^{-1}\vSigma))$ to be close to $G(\hat{\theta})=1$. Our next lemma shows how we can obtain an upper bound on  $|G(\frac{1}{n}\trace(\pB^{-1}\vSigma)) - G(\hat{\theta})|$. The next step will be to use the mean value theorem to obtain an upper bound on $\left|\hat{\theta}-\frac{1}{n}\trace(\pB^{-1}\vSigma)\right|$. 
\begin{lemma}\label{lem:G}
Suppose Assumptions O.\ref{ass:Convex}-O.5 hold. Consider the function $G$ defined in \eqref{eq:defG}
\BE
	\left|G(\hat{\theta})-G\left(\frac{1}{n}\trace(\pB^{-1}\vSigma)\right)\right|
	\spaceequal
	\op{\frac{\plogn\cdot \bc^{1+\alpha}}{n^{\frac{\alpha^2}{2}}\cdot \ac^{64\rho+16}}},
	\label{eq:add_goal_21}
\EE
and
\BE
	\left|G'(\theta)\right|\geq\Omega_p\left(\frac{\ac^{3\rho+1}}{(1+\theta^2)\plogn}\right)
	.	\label{eq:add_goal_22}
\EE
\end{lemma}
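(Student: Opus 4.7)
I would split the lemma into its two claims and handle them by, respectively, a self-consistency argument for the trace and a direct derivative bound.

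For \eqref{eq:add_goal_21}, let $\phi(\theta):=\dotp{1/(1+\theta\fl''(\vy-\vX\hvb))}$ and $\nu(\theta):=\dotp{\fl''(\vy-\vX\hvb)/(1+\theta\fl''(\vy-\vX\hvb))}$, so that $\phi(\theta)=1-\theta\nu(\theta)$ and \eqref{eq:defG} rewrites as
\[
G(\theta)-1 \= \nu(\theta)\Bigl[\tfrac{1}{\delta}\dotp{\tfrac{1}{\nu(\theta)+\lambda\reg''(\hvb)}}-\theta\Bigr].
\]
Since $G(\hat{\theta})=1$ by the defining equation of $\hat{\theta}$, proving \eqref{eq:add_goal_21} reduces to showing that $\theta^{\star}:=\tfrac{1}{n}\trace(\pB_{c}^{-1})$ satisfies an approximate version of the same fixed-point equation (the prefactor $|\nu(\theta^{\star})|$ is $\op{\plogn/\ac^{O(\rho)}}$ by Lemma \ref{lem:supnormDi} and Assumption O.\ref{ass:Holder}). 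I would obtain this by applying the Schur-complement formula to each diagonal entry,
\[
(\pB_{c}^{-1})_{jj} \= \Bigl(\lambda\cdreg''(\hb_{j})+\vcx{j}^{\t}\vD\vcx{j}-\vcx{j}^{\t}\vD\bar{\vX}_{/j}\bigl(\bar{\vX}_{/j}^{\t}\vD\bar{\vX}_{/j}+\vLambda_{/j}\bigr)^{-1}\bar{\vX}_{/j}^{\t}\vD\vcx{j}\Bigr)^{-1},
\]
where $\vD:=\diag{\dfl''(\vy-\vX\hvb)}$ and $\vLambda_{/j}$ is $\lambda\diag{\cdreg''(\hvb)}$ with row and column $j$ deleted. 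Because the middle matrix depends on $\hvb$ (and hence on $\vcx{j}$), it must first be replaced by its leave-predictor-$j$-out counterpart through H\"older continuity (Assumption O.\ref{ass:Holder}) together with a column-analogue of Proposition \ref{thm:looo}. Once $\vcx{j}$ is independent of the remaining matrix, Lemma \ref{LEM:TAIL} collapses the quadratic form to $\tfrac{1}{n}\trace(\vD-\vD\bar{\vX}_{/j}(\bar{\vX}_{/j}^{\t}\vD\bar{\vX}_{/j}+\vLambda_{/j})^{-1}\bar{\vX}_{/j}^{\t}\vD)=\tfrac{1}{n}\trace((\vD^{-1}+\bar{\vX}_{/j}\vLambda_{/j}^{-1}\bar{\vX}_{/j}^{\t})^{-1})$ by Woodbury, and a second Woodbury applied to $\pB_{c}$ rewrites this trace as $\tfrac{1}{n}\sum_{i}\fl''_{i}/(1+\fl''_{i}m_{i})$ with $m_{i}:=\vrx{i}^{\t}(\pB_{c}-\fl''_{i}\vrx{i}\vrx{i}^{\t})^{-1}\vrx{i}$. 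One more invocation of Lemma \ref{LEM:TAIL}, combined with the trace-swap Lemmas \ref{lem:replace} and \ref{lem:ABswitch}, replaces $m_{i}$ by $\theta^{\star}$ uniformly in $i$, so the trace becomes $\nu(\theta^{\star})$ up to the claimed order of error. Averaging over $j$ and swapping $\cdreg''$ for $\reg''$ (at cost $O(\bc/\ac)$ since $\cdreg-\reg=\tfrac{\bc}{2}x^{2}$ and $\bc=n^{-1/2}$) yields
\[
\theta^{\star} \= \tfrac{1}{\delta}\dotp{\tfrac{1}{\nu(\theta^{\star})+\lambda\reg''(\hvb)}}+\op{\plogn\,\bc^{1+\alpha}/(n^{\alpha^{2}/2}\ac^{64\rho+16})},
\]
which, substituted into the rewriting of $G-1$, proves \eqref{eq:add_goal_21}. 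The exponent $64\rho+16$ comes from stacking the $\ac$-factors contributed by the max-eigenvalue bound \eqref{eq:lowerbound_example_eq1} fed into Lemma \ref{LEM:TAIL}, by the $\sup_{i}\fl''_{i}=\op{\plogn/\ac^{O(\rho)}}$ and $\sup_{j}\reg''(\hb_{j})=\op{\bc}$ bounds, and by the H\"older-replacement increment raised to the power $\alpha$.

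For \eqref{eq:add_goal_22}, I would differentiate \eqref{eq:defG} directly. We have $\phi'(\theta)=-\dotp{\fl''/(1+\theta\fl'')^{2}}\leq 0$. Writing the second term of $G$ as $\tfrac{1}{\delta}\dotp{\nu(\theta)/(\nu(\theta)+\lambda\reg''(\hvb))}$, its derivative equals $\tfrac{\lambda}{\delta}\nu'(\theta)\dotp{\reg''(\hvb)/(\nu(\theta)+\lambda\reg''(\hvb))^{2}}$, which is also $\leq 0$ because $\nu'(\theta)\leq 0$ and $\reg''\geq 0$. Hence both summands of $G'(\theta)$ have the same sign and $|G'(\theta)|\geq|\phi'(\theta)|$. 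Using $\fl''_{i}\geq\ac$ from Assumption O.\ref{ass:Smoothness} together with $\sup_{i}\fl''_{i}=\op{\plogn/\ac^{O(\rho)}}$ (from Lemma \ref{lem:supnormDi} and O.\ref{ass:Holder}), we have $(1+\theta\fl''_{i})^{2}\leq O_{p}((1+\theta^{2})\plogn/\ac^{O(\rho)})$ and therefore $|\phi'(\theta)|\geq\Omega_{p}(\ac^{3\rho+1}/((1+\theta^{2})\plogn))$, which is \eqref{eq:add_goal_22}.

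The main obstacle is the chain of replacements for \eqref{eq:add_goal_21}: the H\"older-continuity swap of $\hvb$ for the leave-column-$j$-out estimator inside the middle matrix has to be done before Lemma \ref{LEM:TAIL} can be applied, and the resulting error must be combined with the quadratic-form concentration and all $\ac$- and $\bc$-factors so that the stated exponents come out exactly. Everything else---rewriting $G$, the Woodbury steps, and the derivative computation---is bookkeeping once Lemmas \ref{lem:supnormDi}, \ref{LEM:TAIL}, \ref{lem:replace}, \ref{lem:ABswitch} and Proposition \ref{thm:looo} are available.
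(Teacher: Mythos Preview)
Your proposal is correct and follows essentially the same route as the paper: the paper establishes \eqref{eq:add_goal_21} by computing $\tfrac{1}{n}\trace(\hX\pB_c^{-1}\hX^\t)$ in two ways (row-wise via \eqref{eq:xBx} and column-wise via \eqref{eq:xBx2}), where the column-wise part (Lemma \ref{lem:trxbx}) needs exactly the leave-one-predictor-out theory you anticipate (your ``column analogue of Proposition \ref{thm:looo}'' is the paper's Theorem \ref{thm:lopo}), the quadratic-form concentration of Lemma \ref{LEM:TAIL}, and the trace swaps of Lemmas \ref{lem:replace}--\ref{lem:ABswitch}; your direct Schur-complement expansion of $(\pB_c^{-1})_{jj}$ is an equivalent packaging of the same identities, and your sign-based derivative bound for \eqref{eq:add_goal_22} matches the paper's \eqref{eq:g'}. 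One small slip: the extra quadratic in $\creg$ carries coefficient $\cc=n^{-1/2}$, not $\bc$ (the latter is the constant from Assumption O.\ref{ass:boundhvb}).
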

The proof of Lemma \ref{lem:G} can be found in Section \ref{sec:G}. As we discussed before, the next step is to use \eqref{eq:add_goal_21} and the mean value theorem to obtain an upper bound on $\left|\hat{\theta}-\frac{1}{n}\trace(\pB^{-1}\vSigma)\right|$. The main issue however, is that $\theta$ appears in the lower bound of the derivative in \eqref{eq:add_goal_22}. Hence, before applying the mean value theorem we have to prove that both $\hat{\theta}$ and $\frac{1}{n}\trace(\pB^{-1}\vSigma)$ are bounded. Note that, for the minimal eigenvalue of $\pB$, we have
\BEQ
\label{eq:lowerbound_example_eq2}
\BS
	\sigma_{\min}(\pB) 
	&=
	\min_{\|\vu\|=1}\vu^{\t}\left(\vX^{\t}\diag{\dfl''(\vy-\vX\hvb_{\lambda})}\vX+\lambda \diag{\cdreg''(\hvb_{\lambda})}
\right)\vu
		\\
	&\stackrel{\text{(i)}}{\geq}
	\min_{\|\vu\|=1}\vu^{\t}\left(\vX^{\t}\cdot \ac\vI\cdot \vX\right)\vu
		\stackrel{\text{(ii)}}{\geq}
	\Omega_p\left(\ac\right)
	, 
\end{split}	
\EEQ
where Inequality (i) is due to Assumption O.\ref{ass:Smoothness}, and Inequality (ii) is due to Lemma \ref{lem:minev}. Hence, the eigenvalues of $\pB^{-1}$ are upper bounded by $\op{1/\ac}$ and therefore, we have 
\BE
	\frac{1}{n}\trace(\pB^{-1}\vSigma)
	&=&
	\op{\frac{1}{\ac}}
	.	\label{eq:upperbound}
\EE
To bound $\hat{\theta}$, we plug \eqref{eq:thetadef} in the RHS of \eqref{eq:62920_eq1} and obtain
\BE
	\dotp{\frac{\hat{\theta}\fl''(\vy-\vX\hvb_{\lambda})}{1+\hat{\theta}\fl''(\vy-\vX\hvb_{\lambda})}}
	&=&
	\frac{1}{\delta}\dotp{\frac{\hat{\vsigma}^2}{\hat{\vsigma}^2+\hat{\tau}\reg''(\hvb_{\lambda})}}
	.	\label{eq:amp=glmc11}
\EE
Hence, by \eqref{eq:amp=glmc11} and Assumption O.\ref{ass:Convex}, we have
\BE
	\frac{1}{\delta}
	&\geq&
	\dotp{\frac{\hat{\theta}\fl''(\vy-\vX\hvb_{\lambda})}{1+\hat{\theta}\fl''(\vy-\vX\hvb_{\lambda})}}
	.	\label{eq:limiteqc_41} \n
\EE
This implies that
\[
	\frac{1}{1+\hat{\theta}\inf_i\fl''(y_i-\vrx{i}^{\t}\hvb_{\lambda})}
	\qgeq
	\dotp{\frac{1}{1+\hat{\theta}\fl''(\vy-\vX\hvb_{\lambda})}}
	\geq
	1-\frac{1}{\delta}
	.
\]
Therefore, with Assumption O.\ref{ass:Smoothness}, we have\footnote{If we replace Assumption O.\ref{ass:Smoothness} by $\inf_{x\in \bbR} \reg''(x) \geq \ac$, we can upper bound $\hat{\theta}$ by $\op{\frac{1}{\ac}}$ via its construction in \eqref{eq:thetadef}}
\BE
	\hat{\theta}
	&\leq&
	\op{\frac{1}{\ac}}
	.	\label{eq:upperbound2}
\EE
 Lemma \ref{lem:G} and the mean value theorem will then imply that
\[
	\left|\hat{\theta}-\frac{1}{n}\trace(\pB^{-1}\vSigma)\right| 
	\spaceequal 
	\op{\frac{\plogn\cdot \bc^{1+\alpha}}{n^{\frac{\alpha^2}{2}}\cdot \ac^{67\rho+19}}}
	.
\] 
Hence, if we define $\hat{z}_i  = y_i-\vrx{i}^{\t}\hvb_{\lambda}+ \hat{\theta}\cdot\fl'(y_i-\vrx{i}^{\t} \hvb_{\lambda})$, then we have 
\[
	\sup_i\left|\hat{z}_i-(y_i-\vrx{i}^{\t}\loovb{i}_{\lambda})\right|
	\spaceequal
	\op{\frac{\plogn\cdot \bc^{1+\alpha}}{n^{\frac{\alpha^2}{2}}\cdot \ac^{71\rho+21}}}
	.
\]
Finally, by combining the Mean Value Theorem, Lemma \ref{lem:supnormDi} and Assumption O.\ref{ass:Smoothness2}, we have
\BE
	\lefteqn{\left|\lo-\wampr\right|}
		\n\\
	&=&
	\sup_i \left|\hat{z}_i-(y_i-\vrx{i}^{\t}\loovb{i}_{\lambda})\right| \cdot \op{1+\sup_i \max(|\hat{z}_i|^{\rho+1},|y_i-\vrx{i}^{\t}\loovb{i}_{\lambda}|^{\rho+1})}
		\n\\
	&=&
	\op{\frac{\plogn \cdot \bc^{1+\alpha}}{n^{\frac{\alpha^2}{2}}\cdot \ac^{72\rho+22}}}
	. \n
\EE
This completes the proof of the theorem.
$\hfill \square$


\subsubsection{Proof sketch of Theorem \ref{thm:jixu}} \label{ssec:disclo_lao}

First we remind the reader that according to Proposition \ref{thm:looo}, we have
\[
	\loovb{i}_{\lambda}
	\spaceequal
	\hvb_{\lambda}-\fl'(y_i-\vrx{i}^{\t}\hvb_{\lambda})\oA_i^{-1}\vrx{i}+\tvepsilon^i
	,
\]
where 
\[
	\oA_i
	\spaceequal
	\vX^{\t}\diag{\dfl''(\vy-\vX\loovb{i}_{\lambda})}\vX+\lambda\diag{\dreg''(\loovb{i}_{\lambda})}-\fl''(y_i-\vrx{i}^{\t}\loovb{i}_{\lambda})\vrx{i}\vrx{i}^{\t}.
\] 
Furthermore in the same proposition we proved
\[
	\sup_i\|\tvepsilon^i\|
	\spaceequal
	\op{\frac{\plogn }{n^{\frac{\alpha}{2}}\cdot \ac^{9\rho+10}}}. 
\]
By comparing this formula with \eqref{eq:aloformula1}, which was the main formula that led to $\alo$, it is straightforward to confirm that if we obtain a bound on the difference $\vrx{i}^{\t}\pA_i^{-1}\vrx{i}-\vrx{i}^{\t}\oA_i^{-1}\vrx{i}$, with
\[
	\pA_i
	\spaceequal
	\vX^{\t}\diag{\dfl''(\vy-\vX\hvb_{\lambda})}\vX-\fl''(y_i-\vrx{i}^{\t}\hvb_{\lambda})\vrx{i}\vrx{i}^{\t}+\lambda\diag{\dreg''(\hvb_{\lambda})},
\] 
 then we can obtain a bound between $\alo$ and $\lo$. We will show that
 
\begin{lemma}\label{lem:replace3}
Under Assumptions O.\ref{ass:Convex}-O.\ref{ass:True}, for large enough $n$ (note that $n/p = \delta$ remains fixed), we have
\[
	\sup_i\left|\vrx{i}^{\t}(\oA_{i}^{-1}-\pA_{i}^{-1})\vrx{i}\right|
	\spaceequal
	\op{\frac{\plogn }{n^{\frac{\alpha^2}{2}}\cdot \ac^{8\rho+9}}}
	.
\]
\end{lemma}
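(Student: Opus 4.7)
The plan is to start from the resolvent identity
$$\oA_i^{-1} - \pA_i^{-1} = \pA_i^{-1}\bigl(\pA_i - \oA_i\bigr)\oA_i^{-1},$$
and to estimate $\vrx{i}^\t(\oA_i^{-1} - \pA_i^{-1})\vrx{i}$ through
$$|\vrx{i}^\t(\oA_i^{-1} - \pA_i^{-1})\vrx{i}| \leq \|\vrx{i}\|^2 \cdot \|\pA_i^{-1}\|_{\mathrm{op}} \cdot \|\oA_i^{-1}\|_{\mathrm{op}} \cdot \|\pA_i - \oA_i\|_{\mathrm{op}}.$$
First I would reprise the argument of \eqref{eq:lowerbound_example_eq1} (using Assumptions O.\ref{ass:Convex} and O.\ref{ass:Smoothness} and Lemma \ref{lem:minev}): dropping the rank-one downdates and lower-bounding by $\ac\cdot\XI^\t\XI$ yields $\sigma_{\min}(\pA_i), \sigma_{\min}(\oA_i) = \Omega_p(\ac)$, so that $\|\pA_i^{-1}\|_{\mathrm{op}}, \|\oA_i^{-1}\|_{\mathrm{op}} = \op{1/\ac}$. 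Combined with $\sup_i\|\vrx{i}\| = \op{1}$ from Lemma \ref{lem:xwnorm}, this reduces the task to showing $\|\pA_i - \oA_i\|_{\mathrm{op}} = \op{\plogn/(n^{\alpha^2/2}\ac^{8\rho+7})}$ or better.

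Next I would decompose $\pA_i - \oA_i = T_\ell + T_r + T_{\mathrm{rk}}$, where the loss piece is $T_\ell = \vX^\t \diag{\fl''(\vy - \vX\hvb) - \fl''(\vy - \vX\loovb{i})}\vX$, the regularizer piece is $T_r = \lambda\diag{\reg''(\hvb) - \reg''(\loovb{i})}$, and the rank-one correction is $T_{\mathrm{rk}} = (\fl''(y_i-\vrx{i}^\t\loovb{i}) - \fl''(y_i-\vrx{i}^\t\hvb))\vrx{i}\vrx{i}^\t$. The regularizer piece is the cleanest: H\"older continuity of $\reg''$ (Assumption O.\ref{ass:Holder}) together with the sharp component-wise estimate $\sup_{i,j}|\ve_j^\t(\hvb - \loovb{i})| = \op{\plogn/(n^{\alpha/2}\ac^{8\rho+7})}$ from Proposition \ref{thm:looo} give
$$\|T_r\|_{\mathrm{op}} \leq C\lambda\sup_k|\ve_k^\t(\hvb - \loovb{i})|^\alpha = \op{(\plogn)^\alpha/(n^{\alpha^2/2}\ac^{\alpha(8\rho+7)})},$$
in which the $n^{-\alpha^2/2}$ factor arises precisely because the H\"older exponent $\alpha$ is applied to a component-wise bound already carrying $n^{-\alpha/2}$.

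For the loss piece, $\|T_\ell\|_{\mathrm{op}} \leq \|\vX\|_{\mathrm{op}}^2 \cdot \sup_j|\fl''(y_j-\vrx{j}^\t\hvb) - \fl''(y_j-\vrx{j}^\t\loovb{i})|$; since $\|\vX\|_{\mathrm{op}}^2 = \op{1}$ and $\fl''$ is H\"older, everything hinges on controlling $\sup_j|\vrx{j}^\t(\hvb - \loovb{i})|$, and the goal is to get an $\op{\plogn/(\sqrt{n}\,\ac^{c})}$ rate so that the $\alpha$-th power produces an $n^{-\alpha/2}\leq n^{-\alpha^2/2}$ factor. For $j = i$, $\vrx{i}$ is independent of $\loovb{i}$, so Gaussian/subgaussian concentration of $\vrx{i}^\t(\hvb - \loovb{i})$ together with the $L^2$-stability $\sup_i\|\hvb - \loovb{i}\| = \op{\plogn/\ac^{\rho+2}}$ (Lemma \ref{lem:supnormDi}) delivers $|\vrx{i}^\t(\hvb - \loovb{i})| = \op{\plogn/(\sqrt n\,\ac^{\rho+2})}$. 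For $j\neq i$, the same rate is obtained by inserting a leave-$\{i,j\}$-out estimator that is independent of $\vrx{j}$, writing $\hvb - \loovb{i}$ as the sum of its discrepancies with this leave-two-out estimator, and applying the perturbation identity of Proposition \ref{thm:looo} once more to each discrepancy. The rank-one piece is bounded by $\|T_{\mathrm{rk}}\|_{\mathrm{op}} \leq C|\vrx{i}^\t(\hvb-\loovb{i})|^\alpha\cdot\op{1}$ and directly inherits the $j=i$ estimate above.

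The main obstacle is the loss piece. For $j \neq i$ the vector $\vrx{j}$ is coupled to $\loovb{i}$, so Gaussian concentration is not directly available; without the leave-two-out substitution one only obtains the crude Cauchy--Schwarz bound $|\vrx{j}^\t(\hvb - \loovb{i})| \leq \|\vrx{j}\|\,\|\hvb - \loovb{i}\| = \op{\plogn/\ac^{\rho+2}}$, which has no $n$-decay and fails to deliver the $n^{-\alpha^2/2}$ rate. Performing the leave-two-out substitution correctly, and quantifying the replacement error by a secondary application of the perturbation identity from the proof of Proposition \ref{thm:looo}, is the most delicate bookkeeping step. Once this pointwise bound on $\vrx{j}^\t(\hvb - \loovb{i})$ is secured, the three operator-norm estimates are combined with $\op{1/\ac^2}$ from the two resolvents and $\|\vrx{i}\|^2 = \op{1}$ to conclude.
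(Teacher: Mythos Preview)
Your overall resolvent-plus-operator-norm strategy is sound, and for $T_r$ it works exactly as you describe. But the treatment of $T_\ell$ and $T_{\mathrm{rk}}$ has a genuine gap. Your claim that $|\vrx{i}^{\t}(\hvb-\loovb{i})| = \op{\plogn/(\sqrt n\,\ac^{\rho+2})}$ via Gaussian concentration is false: while $\vrx{i}$ is independent of $\loovb{i}$, it is \emph{not} independent of $\hvb$, so $\vrx{i}^{\t}(\hvb-\loovb{i})$ is not a conditionally Gaussian linear form. In fact, from Proposition~\ref{thm:looo} one has $\vrx{i}^{\t}(\hvb-\loovb{i})=\fl'(y_i-\vrx{i}^{\t}\hvb)\,\vrx{i}^{\t}\oA_i^{-1}\vrx{i}-\vrx{i}^{\t}\tvepsilon^i$, and the quadratic form $\vrx{i}^{\t}\oA_i^{-1}\vrx{i}$ is of order $1/\ac$ with no $n$-decay. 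Consequently neither $\|T_{\mathrm{rk}}\|_{\mathrm{op}}$ nor the $j=i$ contribution to $\|T_\ell\|_{\mathrm{op}}$ carries any power of $n$, and bounding them separately cannot produce the $n^{-\alpha^2/2}$ rate.

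The fix is to observe that by the very definitions of $\oA_i$ and $\pA_i$, the $j=i$ diagonal contribution in $T_\ell$ cancels $T_{\mathrm{rk}}$ exactly, so that $T_\ell+T_{\mathrm{rk}}=\XI^{\t}\vDelta^i_{\fl''}\XI$ with $\vDelta^i_{\fl''}$ the diagonal of differences $\fl''(y_j-\vrx{j}^{\t}\hvb)-\fl''(y_j-\vrx{j}^{\t}\loovb{i})$ for $j\neq i$ only (this is precisely how the paper sets up $\vDelta^i_{\fl''}$ in the proof of Lemma~\ref{lem:ABswitch}). For $j\neq i$ you then need $\sup_{j\neq i}|\vrx{j}^{\t}(\hvb-\loovb{i})|=\op{\plogn/(n^{\alpha/2}\ac^{c})}$. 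Your leave-two-out detour would work but is unnecessary: the paper instead reuses the representation already established in Proposition~\ref{thm:looo}, namely $\hvb-\loovb{i}=\fl'(y_i-\vrx{i}^{\t}\hvb)\oA_i^{-1}\vrx{i}-\tvepsilon^i$. For $j\neq i$ the term $\vrx{j}^{\t}\oA_i^{-1}\vrx{i}$ is conditionally Gaussian because $\vrx{i}$ (not $\vrx{j}$) is independent of $(\oA_i,\vrx{j})$, yielding $\op{1/(\ac\sqrt n)}$; the remainder $|\vrx{j}^{\t}\tvepsilon^i|\leq\|\vrx{j}\|\,\|\tvepsilon^i\|$ already has the $n^{-\alpha/2}$ rate. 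Equivalently, the paper cites the intermediate estimates \eqref{eq:suptovbhvb} and \eqref{eq:suptovbij} (via the surrogate $\lootvb{i}$) to obtain \eqref{eq:AB1dif1}, and then, rather than the crude operator-norm bound, applies the Woodbury identity as in \eqref{eq:replaceei1}--\eqref{eq:replaceei2} with $\ve_j$ replaced by $\vrx{i}$; but your simpler operator-norm route would suffice once the cancellation and the correct $j\neq i$ bound are in place.
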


Notice that the proof of Lemma \ref{lem:replace3} can be easily obtained from the proof of Lemma \ref{lem:ABswitch}. The rest of the proof follows the above lemma immediately. 

$\hfill \square$. \\


\subsubsection{Proof sketch of Theorem \ref{thm:LOconsistency}}\label{ssec:disc_lo_extra}

The main idea is to break the difference $\lo-\extra$ into the following two pieces:
\BE
	\text{part 1:}
	&&
	P_1 \spaceequal \frac{1}{n}\sum_{i=1}^n\fl(y_i-\vrx{i}^{\t}\loovb{i}_{\lambda})-\frac{1}{n}\sum_{i=1}^n\bbE[\fl(y_{new}-\vrx{new}^{\t}\loovb{i}_{\lambda})\big|\mathcal{D}]
	,	\n\\
	\text{part 2:}
	&&
	P_2 \spaceequal \frac{1}{n}\sum_{i=1}^n\bbE[\fl(y_{new}-\vrx{new}^{\t}\loovb{i}_{\lambda})\big|\mathcal{D}]-\bbE[\fl(y_{new}-\vrx{new}^{\t}\hvb_{\lambda})\big|\mathcal{D}]
	.	\n
\EE
For part 1, we note that  
\[
	\bbE[\fl(y_{new}-\vrx{new}^{\t}\loovb{i}_{\lambda})\big|\mathcal{D}]
	\spaceequal \bbE[\fl(y_{new}-\vrx{new}^{\t}\loovb{i}_{\lambda})\big|\mathcal{D}_i] = 
	\bbE[\fl(y_i-\vrx{i}^{\t}\loovb{i}_{\lambda})\big|\mathcal{D}_i]
	,
\]
where $\mathcal{D}_i=\mathcal{D}\backslash\{(\vrx{i},y_i)\}$. Hence, part 1 is equal to 
\begin{eqnarray}
	P_1 \spaceequal \frac{1}{n} \sum_{i=1}^n (\fl(y_i-\vrx{i}^{\t}\loovb{i}_{\lambda})- \bbE[\fl(y_i-\vrx{i}^{\t}\loovb{i}_{\lambda})\big|\mathcal{D}_i])
	\n
\end{eqnarray}
It is clear that the expected value of $P_1$ is equal to zero. Furthermore, we claim that since the correlations among the different terms in the summation are small enough, we can bound the variance by $O\left(\frac{1}{n\ac^{8\rho+10}}\right)$. The following lemma clarifies this claim:

\begin{lemma}\label{lem:crossterm}
Under Assumptions O.\ref{ass:Convex}-O.\ref{ass:True}, for all $i\neq j\in \{1,2,\ldots, n\}$, we have 
\BE
	\bbE \left[\left(\fl(y_i-\vrx{i}^{\t}\loovb{i}_{\lambda})-\bbE[\fl(y_i-\vrx{i}^{\t}\loovb{i}_{\lambda})\big|\mathcal{D}_i]\right)\left(\fl(y_j-\vrx{j}^{\t}\loovb{j}_{\lambda})-\bbE[\fl(y_j-\vrx{j}^{\t}\loovb{j}_{\lambda})\big|\mathcal{D}_j]\right)\right]
		\n
\EE
is at most $O\left(\frac{1}{n\ac^{8\rho+10}}\right)$. Furthermore, we have
\[
	\text{var}(P_1)
	\spaceequal
	O\left(\frac{1}{n\ac^{8\rho+10}}\right)
	.
\]
\end{lemma}
The proof of this lemma can be found in Section \ref{sec:crossterm}. Lemma \ref{lem:crossterm} combined with Markov inequality imply that part 1 is bounded by $\op{\frac{1}{\ac^{4\rho+5}\sqrt{n}}}$. 
Hence, the next step of the proof is to bound $P_2$ by $$\op{\frac{1}{\ac^{3\rho+4}\sqrt{n}}}.$$ By applying the mean value theorem we have 
\[
	|P_2|
	\qleq
	\sup_{i,\xi}\left|\bbE\left[\vx_{\new}^{\t}(\loovb{i}_{\lambda}-\hvb_{\lambda})\fl'\left(y_{\new}-\vx_{\new}^{\t}((1-\xi)\hvb_{\lambda}+\xi\loovb{i}_{\lambda})\right)\big|\mathcal{D}\right]\right|
\]
Then by using the Cauchy-Schwarz inequality and independency between the new copy $(\vx_{\new},y_{\new})$ and the data set $\mathcal{D}$, we have
\BE
	|P_2|^2
	&\leq&
	\sup_{i,\xi}\bbE\left[\left(\vx_{\new}^{\t}(\loovb{i}_{\lambda}-\hvb_{\lambda})\right)^2\big|\mathcal{D}\right]\cdot \bbE\left[\left(\fl'\left(y_{\new}-\vx_{\new}^{\t}((1-\xi)\hvb_{\lambda}+\xi\loovb{i}_{\lambda})\right)\right)^2\big|\mathcal{D}\right]
		\n\\
	&=&
	\sup_{i,\xi}\frac{\|\loovb{i}_{\lambda}-\hvb_{\lambda}\|^2}{n}\cdot \bbE\left[\left(\fl'\left(y_{\new}-\vx_{\new}^{\t}((1-\xi)\hvb_{\lambda}+\xi\loovb{i}_{\lambda})\right)\right)^2\big|\mathcal{D}\right]
		\n\\
	&\leq&
	\sup_{i}\frac{\|\loovb{i}_{\lambda}-\hvb_{\lambda}\|^2}{n}\cdot O(1)\cdot \left(1+\sup_i\frac{\|\loovb{i}_{\lambda}-\vbeta_0\|^{4\rho+4}}{n^{2\rho+2}}+\sup_i \frac{\|\hvb_{\lambda}-\loovb{i}_{\lambda}\|^{4\rho+4}}{n^{2\rho+2}}\right)	
	,	\n
\EE
where the last inequality is due to Assumption O.\ref{ass:Smoothness2}. From the proof of Lemma \ref{lem:supnormDi} in Section \ref{sec:supnormDi} (See \eqref{eq:add_tue_2.33pm} and \eqref{eq:add_tue_2.35pm}), we have
\begin{eqnarray}
	\sup_{i}\frac{\|\loovb{i}_{\lambda}-\hvb_{\lambda}\|^2}{n}
	&=&
	\op{\frac{\plogn}{n \cdot \ac^{2\rho+4}}}
	\n \\
	\sup_i\frac{\|\loovb{i}_{\lambda}-\vbeta_0\|^2}{n}
	&=&
	\op{\frac{1}{\ac^2}}
	.
\end{eqnarray}
Hence, $P_2$ is bounded by $\op{\frac{1}{\ac^{3\rho+4}\sqrt{n}}}$. 
$\hfill \square$.


\section{Discussion and Future Directions}

By developing a unified approach for studying the out-of-sample prediction error, under the high-dimensional asymptotics $n,p \rightarrow \infty$, $n/p \rightarrow \delta>1$, we obtained the first rigorous proof for the consistency of the leave-one-out cross-validation, approximate leave-one-out, and the approximate message passing risk estimate. The main challenge of the rigorous theory presented here was the high-dimensional setting of our framework. To provide practical justification for the success of these risk estimates, we have also obtained upper bounds for their convergence rates, confirming a fast convergence when both the loss function and regularizer are smooth.

Despite the progress that has been made in this paper, several important aspects of the risk estimation and model selection in high-dimensional settings have remained open. We list some of these challenges below:
\begin{enumerate}
\item Uniform convergence over an infinite number of models: The most important application of the risk estimation problem is ``model selection". The consistency result that we obtained in this paper ensures the consistency of the model that is obtained from ALO, AMP and LOOCV based methods among a finite number of models. However, one  may argue that in order to obtain the optimal choice of $\lambda$ in \eqref{eq:model} we need the uniform consistency of  our estimates over $\lambda \in [0, \infty)$. We should first emphasize that under the asymptotic setting of the paper $n/p =\delta >1$, the optimal choice of $\lambda$ converges to a fixed number \cite{dobriban2018high,mousavi2018consistent} and its dependance on $n$ and $p$ will be mild. Hence, practically speaking one would consider a finite partition of the $\lambda$ values and find the value that returns the minimum risk. If a practitioner uses this strategy for picking the optimal $\lambda$ (which we believe is often the case), then our results will imply the consistency of the selected model. However, it is still a mathematically important question whether the uniform consistency holds over $\lambda \in [0, \infty)$. This problem is left for future research. 
\item Non-differentiable regularizers and loss functions: As mentioned in Assumption \ref{ass:Smoothness2}, we only consider twice differentiable losses and regularizers. As was discussed in the paper, one can apply smoothing techniques to convert non-differentiable losses and regularizers to differentiable ones for which our consistency results hold. While smoothing techniques are usually appealing for speeding up optimization algorithms \cite{nesterov2005smooth,becker2011templates}, there are many occasions, such as in variable selection, in which a researcher may prefer to work with non-differentiable functions directly. Hence, it would be more appealing to have consistent risk estimators that work on both differentiable and non-differentiable problems. In the derivation of ALO and AMP risk estimates the twice differentiability of the loss and the regularizer are assumed. However, there has been some work in extending these formulas for non-differentiable losses and regularizers. For instance, \cite{rad2018scalable,wang2018approximate} showed how one can obtain ALO formulas for many non-differentiable losses and regularizers, and confirmed the accuracy of these formulas through extensive simulations. Similarly, \cite{mousavi2018consistent} showed how in the case of LASSO, one can obtain a consistent risk estimate through AMP. Generalizing such results and proving the consistency of the estimates is an important direction that is left for future research.
\item Imperfect models and dependent features: There are two more assumptions we have made in our proofs that can limit the applicability of our results in practical settings. The first one is that we have assumed the underlying model is linear and that the noise in the system is independent of the features. While this is considered to be a standard assumption in the theoretical analysis of linear models, it can be violated in many applications. Hence, proving consistency of risk estimates under more general settings seems to be one of the most important open questions on high-dimensional risk estimation. Another assumption that has been made in our analysis is the independence of features. Simulation results confirm that dependence does not affect the accuracy of ALO and LOOCV risk estimates. However, it can affect the accuracy of AMP-based estimates. Again the theoretical analysis of these estimates under more general assumptions is of great interest and is left for future research. 
\end{enumerate}

\section{Acknowledgement}
The work of Arian Maleki is partially supported by the grant DMS1810888 from the National Science Foundation.  The work of Kamiar Rahnamarad is supported by DMS1810888 from the National Science Foundation and the Eugene M. Lang Fellowship.

\bibliography{paper}
\bibliographystyle{unsrt}

\newpage

\section{Proof of Theorem \ref{THM:MSE}}\label{sec:MSE}

\subsection{Preliminaries} \label{sec:Preliminaries}

In this section, we gather the existing results (or their straightforward corollaries ) that are required in multiple proofs throughout our manuscript. The first result is concerned with the eigenvalues of several matrices which will appear in our proofs. 

\begin{lemma}\label{lem:minev}
Let $\delta = n/p>1$. Under Assumption O.\ref{ass:True}, for large enough $n$, we have all the following statements hold with probability at least $1-2e^{-n}-2e^{-\sqrt{n}}$.
\begin{itemize}
\item [(i)] $\sigma_{\rm max} (\vX^{\t}\vX)\leq 2c_u$.
\item [(ii)] $\max_{1\leq i \leq n}\sigma_{\rm max} (\XI^{\t}\XI) \leq 2c_u$.
\item [(iii)] $\max_{1\leq i \leq p}\sigma_{\rm max} (\XIb\XIb^{\t} ) = \max_{1\leq i \leq p}\sigma_{\rm max} (\XIb^{\t}\XIb ) \leq 2c_u$, where $\XIb$ is matrix $X$ without $i$th column. 
\item [(iv)] $\sigma_{\rm min} (\vX^{\t}\vX) \geq \frac{c_l}{2}(1-\frac{1}{\delta})\triangleq \sd$.
\item [(v)] $\min_{1 \leq i \leq n} \sigma_{\rm min}(\XI^{\t}\XI) \geq \frac{c_l}{2}(1-\frac{1}{\delta})$.
\item[(vi)] $\min_{1 \leq i \leq p} \sigma_{\rm min}( \XIb^{\t}\XIb) \geq \frac{c_l}{2}(1-\frac{1}{\delta})$
\end{itemize}
\end{lemma}

\begin{proof}
Note that from Assumption O.\ref{ass:True}, we have
\[
\sigma_{\rm max}(\vX^{\t}\vX) \leq c_u\cdot \sigma_{\rm max}(\vX^{\t}\vSigma^{-1}\vX) \qand \sigma_{\rm min}(\vX^{\t}\vX) \geq c_l\cdot \sigma_{\rm min}(\vX^{\t}\vSigma^{-1}\vX).
\]
From Lemma 10 of \cite{bartlett2019benign}, we have with probability $1-2e^{-p}$ that 
\[
\sigma_{\rm min}(\vX^{\t}\vSigma^{-1}\vX) \geq \frac{n-p}{n}=1-\frac{1}{\delta} \qand  
\sigma_{\rm max}(\vX^{\t}\vSigma^{-1}\vX) \leq \frac{p+n}{n} = 1+ \frac{1}{\delta}.
\]
Hence, we have (i) and (iv) hold. Then note that for all $i\in[n]$,
\begin{eqnarray}
\sigma_{\rm max} (\XIb^{\t}\XIb) &=& \max_{\|\vu\|=1,\vu\in\bbR^{p-1}}\vu^{\t} \XIb^{\t}\XIb\vu\ =\   \max_{\|\vu\|=1,\vu\in\bbR^{p}, u_p=0}\vu^{\t} \vX^{\t}\vX\vu\n\\
&\leq&\max_{\|\vu\|=1,\vu\in\bbR^{p}}\vu^{\t} \vX^{\t}\vX\vu \ = \ \sigma_{\rm max} (\vX^{\t}\vX),\n
\end{eqnarray}
and
\begin{eqnarray}
\sigma_{\rm min} (\XIb^{\t}\XIb) &=& \min_{\|\vu\|=1,\vu\in\bbR^{p-1}}\vu^{\t} \XIb^{\t}\XIb\vu\ =\   \min_{\|\vu\|=1,\vu\in\bbR^{p}, u_p=0}\vu^{\t} \vX^{\t}\vX\vu\n\\
&\geq&\min_{\|\vu\|=1,\vu\in\bbR^{p}}\vu^{\t} \vX^{\t}\vX\vu \ = \ \sigma_{\rm min} (\vX^{\t}\vX).\n
\end{eqnarray}
Hence we have (iii) and (vi) hold. For (ii), note that for all $i\in[n]$,
\[
\sigma_{\rm max} (\XI^{\t}\XI) \ = \ \sigma_{\rm max} (\vX^{\t}\vX-\vrx{i}\vrx{i}^{\t}) \ \leq \ \sigma_{\rm max} (\vX^{\t}\vX).
\]
Hence we have (ii) holds. Finally, for (v), note that for all $i\in[n]$, let $\vu_{/i}$ be the eigenvector that corresponds to the minimum eigenvalue of $\XI^{\t}\XI$. Then we have
\[
\sigma_{\rm min} (\XI^{\t}\XI) \ =\  \vu_{/i}^{\t}\XI^{\t}\XI\vu_{/i} \ = \ \vu_{/i}^{\t}\vX^{\t}\vX\vu_{/i} - (\vu_{/i}^{\t}\vrx{i})^2 \geq \sigma_{\rm min} (\vX^{\t}\vX) - (\vu_{/i}^{\t}\vrx{i})^2.
\] 
Note that $\vrx{i}$ is independent of $\vu_{/i}$ and from Assumption O.\ref{ass:True}, elements of $\vrx{i}$ are mean 0 independent random variables with subGaussian tails. Therefore, from Hanson-Wright inequality, we have for all $t>0$,
\[
\Pr\left(\sup_i (\vu_{/i}^{\t}\vrx{i})^2\geq \frac{\ln n}{n}t\right) \leq n e^{-c\min( (t\ln n)^2, t\ln n )},
\]
where $c>0$ is some constant independent of $n$. Hence we have $\min_{1\leq i\leq n}\sigma_{\rm min} (\XI^{\t}\XI) \geq \sigma_{\rm min} (\vX^{\t}\vX)-O(\frac{\ln n}{\sqrt{n}}) \geq \frac{1}{2}(1-\frac{1}{\delta})$ with probability at least $1-2e^{-\sqrt{n}}$. Therefore (v) holds.

\end{proof}

Our second lemma reviews the different concentrations for subGaussian random vectors.  

\begin{lemma}\label{lem:xwnorm}
Under Assumption O.\ref{ass:True}, for large enough $n$, we have
\BE
	\sup_i\|\vrx{i}\|^2
	\spaceequal
	\op{1}
	, 
	&&
	\sup_i|\|\vcx{i}\|^2-\sigma_i^2| \spaceequal \sup_i|\hat{\sigma}_i^2-\sigma_i^2| 
	\spaceequal 
	\op{\frac{\ln n}{\sqrt{n}}}
	,	\n\\
	\sup_i|w_i|
	\spaceequal
	\op{\ln n}
	,
	&& 
	\sup_i|\beta_{0,i}|
	\spaceequal
	\op{\ln n} 
	\qand 
	\sup_{i,j}|x_{ij}|
	\spaceequal
	\op{\frac{\ln n}{\sqrt{n}}}
	.	\n
\EE
\end{lemma}
\begin{proof}
The proof is straightforward due to Hanson-Wright inequality and subGaussian assumption on $\vw$ and $\vX(\vSigma/n)^{-\frac{1}{2}}$ due to Assumption O.\ref{ass:True}. When $\vbeta_0$ is a random vector, $\sup_i|\beta_{0,i}|=\op{\ln n}$ holds due to Hanson-Wright inequality and the subGaussian assumption. When $\vbeta_0$ is deterministic, $\sup_i|\beta_{0,i}|=\op{\ln n}$ holds directly due to Assumption O.\ref{ass:True}.  
\end{proof}

Finally, since we will apply Matrix Inversion Lemma repeatedly, we formally state it here.
\begin{lemma}\label{lem:MIL}(Matrix Inversion Lemma)
Let $\vW\in \bbR^{n_1\times n_1}, \vU\in \bbR^{n_1\times n_2}, \vT\in \bbR^{n_2\times n_2}, \vV\in \bbR^{n_2\times n_1}$. If $\vW^{-1}$ and $\vT^{-1}$ exists, we have
\[
	(\vW+\vU\vT\vV)^{-1}
	\spaceequal 
	\vW^{-1}-\vW^{-1}\vU(\vT^{-1}+\vV\vW^{-1}\vU)^{-1}\vV\vW^{-1}
	.
\]
\end{lemma}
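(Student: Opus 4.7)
The plan is to prove this classical identity by direct verification: multiply $(\vW+\vU\vT\vV)$ on the left by the claimed inverse on the right, simplify algebraically, and show the product is the identity matrix. Since the statement does not claim anything beyond the explicit formula (no asymptotics, no probability, no bounds), there is no need for any of the high-dimensional machinery developed earlier in the paper. The only conceptual ingredients are the two explicit invertibility hypotheses ($\vW^{-1}$ and $\vT^{-1}$ exist) together with the implicit assumption that the inner matrix $\vT^{-1}+\vV\vW^{-1}\vU$ is invertible, which must be included (or assumed) for the right-hand side to even be defined.

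The key steps, in order, are as follows. First, set $\vM := \vW^{-1}\vU(\vT^{-1}+\vV\vW^{-1}\vU)^{-1}\vV\vW^{-1}$ and compute
\[
  (\vW+\vU\vT\vV)(\vW^{-1}-\vM)
  \= \vI + \vU\vT\vV\vW^{-1} - (\vW+\vU\vT\vV)\vM.
\]
Second, expand $(\vW+\vU\vT\vV)\vM$ and factor the common $\vU$ on the left and $(\vT^{-1}+\vV\vW^{-1}\vU)^{-1}\vV\vW^{-1}$ on the right, yielding
\[
  (\vW+\vU\vT\vV)\vM
  \= \vU\bigl(\vI+\vT\vV\vW^{-1}\vU\bigr)(\vT^{-1}+\vV\vW^{-1}\vU)^{-1}\vV\vW^{-1}.
\]
Third, pull out a factor of $\vT$ on the left inside the bracket, writing $\vI+\vT\vV\vW^{-1}\vU = \vT(\vT^{-1}+\vV\vW^{-1}\vU)$, so that the middle factor cancels against its inverse and the expression collapses to $\vU\vT\vV\vW^{-1}$. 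Substituting back gives
\[
  (\vW+\vU\vT\vV)(\vW^{-1}-\vM)
  \= \vI + \vU\vT\vV\vW^{-1} - \vU\vT\vV\vW^{-1}
  \= \vI,
\]
which is the desired identity. A symmetric computation on the other side (or invoking that a square matrix with a right inverse is invertible with that right inverse being the two-sided inverse) completes the proof.

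There is essentially no obstacle here beyond careful bookkeeping: the whole argument is one line of algebra once one spots the factorization $\vI+\vT\vV\vW^{-1}\vU = \vT(\vT^{-1}+\vV\vW^{-1}\vU)$, which is the only non-mechanical step. I would state the result for square $\vW$ (since the right inverse then suffices) and note explicitly that the proof implicitly requires $\vT^{-1}+\vV\vW^{-1}\vU$ to be invertible, a condition the paper will verify in each invocation of the lemma via the eigenvalue bounds supplied by Lemma \ref{lem:minev} together with Assumptions O.\ref{ass:Convex} and O.\ref{ass:Smoothness}.
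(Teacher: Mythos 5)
Your proof is correct. The paper does not actually supply a proof of Lemma~\ref{lem:MIL}: it simply states the classical Woodbury matrix inversion identity for reference (``since we will apply Matrix Inversion Lemma repeatedly, we formally state it here''), so there is nothing in the paper to compare against. Your direct verification is the standard argument; the factorization $\vI+\vT\vV\vW^{-1}\vU=\vT(\vT^{-1}+\vV\vW^{-1}\vU)$ is exactly the step that makes it go through, and your observation that the invertibility of $\vT^{-1}+\vV\vW^{-1}\vU$ is an implicit extra hypothesis (guaranteed in the paper's applications by the eigenvalue bounds from Lemma~\ref{lem:minev} and Assumptions O.\ref{ass:Convex}, O.\ref{ass:Smoothness}) is an accurate and worthwhile caveat.
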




\subsection{Proof of Lemma \ref{lem:AMP=GLM1}}\label{sec:uniquesolution}

We aim to show that given $(\vX,\vy,\vbeta)$, the mapping between $\gamma$ and $\tau$ defined in  \eqref{eq:amp=glm_eq1} is one-to-one. Note that by multiplying $\frac{1}{\delta\gamma}\dotp{\frac{\hat{\vsigma}^2}{\hat{\vsigma}^2+\tau\reg''(\vbeta)}}$ by both sides of \eqref{eq:amp=glm_eq1}, we have: 
\BE
	\frac{1}{\delta}\dotp{\frac{\hat{\vsigma}^2}{\hat{\vsigma}^2+\tau\reg''(\vbeta)}}
	\spaceequal
	\dotp{\frac{\frac{1}{\delta\lambda}\dotp{\frac{\hat{\vsigma}^2}{\hat{\vsigma}^2+\tau\reg''(\vbeta)}}\fl''(\vy-\vX\vbeta)}{\frac{1}{\tau}+\frac{1}{\delta\gamma}\dotp{\frac{\hat{\vsigma}^2}{\hat{\vsigma}^2+\tau\reg''(\vbeta)}}\cdot \fl''(\vy-\vX\vbeta)}}
	,	\n
\EE
which is equivalent to
\BE
	1-\dotp{\frac{1}{ \frac{1}{\delta\gamma}\dotp{\frac{\tau\hat{\vsigma}^2}{\hat{\vsigma}^2+\tau\reg''(\vbeta)}}\fl''(\vy-\vX\vbeta)+1}}
	&=&
	\frac{1}{\delta}\dotp{\frac{\hat{\vsigma}^2}{\hat{\vsigma}^2+\tau\reg''(\vbeta)}}
	.	\label{eq:taulambdacondition}
\EE
Hence, we just need to show that the mapping defined in \eqref{eq:taulambdacondition} is a bijection. First, for all fixed $\tau>0$, the righthand side of \eqref{eq:taulambdacondition} will be a constant between $(0,1)$. Let the lefthand side be a function of $\gamma$, i.e.,
\[
	g_{\tau}(\gamma)
	\ \triangleq \ 
	1-\dotp{\frac{1}{\frac{1}{\delta\gamma}\dotp{\frac{\tau\hat{\vsigma}^2}{\hat{\vsigma}^2+\tau\reg''(\vbeta)}}\fl''(\vy-\vX\vbeta)+1}} 
	\spaceequal
	\dotp{\frac{\dotp{\frac{\tau\hat{\vsigma}^2}{\hat{\vsigma}^2+\tau\reg''(\vbeta)}}\fl''(\vy-\vX\vbeta)}{\dotp{\frac{\tau\hat{\vsigma}^2}{\hat{\vsigma}^2+\tau\reg''(\vbeta)}}\fl''(\vy-\vX\vbeta)+\delta\gamma}}
	.
\]
Then, we have 
\[
	\frac{\dif g_{\tau}(\gamma)}{\dif \gamma}
	\spaceequal
	-\delta\dotp{\frac{\dotp{\frac{\tau\hat{\vsigma}^2}{\hat{\vsigma}^2+\tau\reg''(\vbeta)}}\fl''(\vy-\vX\vbeta)}{\left(\dotp{\frac{\tau\hat{\vsigma}^2}{\hat{\vsigma}^2+\tau\reg''(\vbeta)}}\fl''(\vy-\vX\vbeta)+\delta\gamma \right)^2}}
	\  < \ 
	0
	.
\]
Further, $g_{\tau}(0)=1$ and $\lim_{\gamma\rightarrow \infty}g_{\tau}(\gamma)=0$. Hence, we know for all fixed $\tau>0$, there exists unique $\gamma>0$ satisfying \eqref{eq:taulambdacondition}. On the other hand, for all fixed $\gamma>0$, we can rewrite \eqref{eq:taulambdacondition} in the following way:
\BE
	1
	&=&
	\dotp{\frac{1}{\frac{1}{\delta\gamma}\dotp{\frac{\tau\hat{\vsigma}^2}{\hat{\vsigma}^2+\tau\reg''(\vbeta)}}\fl''(\vy-\vX\vbeta)+1}}+\frac{1}{\delta}\dotp{\frac{\hat{\vsigma}^2}{\hat{\vsigma}^2+\tau\reg''(\vbeta)}}
	.	\label{eq:limiteqc_2}
\EE
Let $\tilde{g}_{\gamma}(\tau)$ be the righthand side of \eqref{eq:limiteqc_2}, i.e.,
\[
	\tilde{g}_{\gamma}(\tau)
	\ \triangleq\ 
	\frac{1}{\delta}\dotp{\frac{\hat{\vsigma}^2}{\hat{\vsigma}^2+\tau\reg''(\vbeta)}}+\dotp{\frac{1}{\frac{1}{\delta\gamma}\dotp{\frac{\tau\hat{\vsigma}^2}{\hat{\vsigma}^2+\tau\reg''(\vbeta)}} \fl''(\vy-\vX\vbeta)+1}}
	.
\] 
Then we have
\BE
	\frac{\dif \tilde{g}_{\gamma}(\tau)}{\dif \tau}
	&=&
	-\frac{1}{\delta}\dotp{\frac{\reg''(\vbeta)\hat{\vsigma}^2}{(\hat{\vsigma}^2+\tau\reg''(\vbeta))^2}}
		\n\\
	& &
	-\dotp{\frac{\fl''(\vy-\vX\vbeta)}{\left(\frac{1}{\delta\gamma}\dotp{\frac{\tau\hat{\vsigma}^2}{\hat{\vsigma}^2+\tau\reg''(\vbeta)}} \fl''(\vy-\vX\vbeta)+1\right)^2}}\cdot \frac{1}{\delta\gamma}\dotp{\frac{\hat{\vsigma}^4}{(\hat{\vsigma}^2+\tau\reg''(\vbeta))^2}}
	\ <\ 
	0
	,	\n
\EE
Further, $\tilde{g}_{\gamma}(0)>1$ and $\lim_{\tau\rightarrow \infty}\tilde{g}_{\gamma}(\tau)<1$. Hence, for all fixed $\gamma>0$, there exists a unique $\tau>0$ satisfying \eqref{eq:taulambdacondition}. 
$\hfill \square$


\subsection{Proof of Lemma \ref{lem:supnormDi}}\label{sec:supnormDi}

We will first show that
\[
	\sup_i|y_i-\vrx{i}^{\t}\loovb{i}|
	\spaceequal
	\op{\frac{\ln n}{\ac}}
	.
\]
Note that $y_i=\vrx{i}^{\t}\vbeta_0+w_i$, we have
\[
	\sup_i|y_i-\vrx{i}^{\t}\loovb{i}|
	\qleq
	\sup_i|\vrx{i}^{\t}(\vbeta_0-\loovb{i})|+\sup_i|w_i|
	\spaceequal
	\sup_i|\vrx{i}^{\t}(\vbeta_0-\loovb{i})|+\op{\ln n}
	,
\]
where last equation is due to Lemma \ref{lem:xwnorm}. Hence, we just need to bound $\sup_i|\vrx{i}^{\t}(\vbeta_0-\loovb{i})|$. Note that
\BE
	\lefteqn{\Pr\left(\sup_i|\vrx{i}^{\t}(\vbeta_0-\loovb{i})|\geq C^*\ln n\right)}
		\n\\
	&\leq&
	\Pr\left(\sup_i|\vrx{i}^{\t}(\vbeta_0-\loovb{i})|\geq C^*\ln n, \sup_i\frac{\|\vbeta_0-\loovb{i}\|}{\sqrt{n}}<C^*\right)+\Pr\left(\sup_i\frac{\|\vbeta_0-\loovb{i}\|}{\sqrt{n}}\geq C^*\right)
		\n\\
	&=&
	\Pr\left(\sup_i|\vrx{i}^{\t}(\vbeta_0-\loovb{i})|\geq C^*\ln n, \frac{\|\vbeta_0-\loovb{i}\|}{\sqrt{n}}<C^*, \forall i\right)+\Pr\left(\sup_i\frac{\|\vbeta_0-\loovb{i}\|}{\sqrt{n}}\geq C^*\right)
		\n\\
	&\leq&
	\sum_{i=1}^n \Pr\left(|\vrx{i}^{\t}(\vbeta_0-\loovb{i})|\geq C^*\ln n, \frac{\|\vbeta_0-\loovb{i}\|}{\sqrt{n}}<C^*, \forall i\right)+\Pr\left(\sup_i\frac{\|\vbeta_0-\loovb{i}\|}{\sqrt{n}}\geq C^*\right)
		\n\\
	&\leq&
	\sum_{i=1}^n\Pr\left(|\vrx{i}^{\t}(\vbeta_0-\loovb{i})|\geq C^*\ln n,\frac{\|\vbeta_0-\loovb{i}\|}{\sqrt{n}}<C^*\right)+\Pr\left(\sup_i\frac{\|\vbeta_0-\loovb{i}\|}{\sqrt{n}}\geq C^*\right)
		\n\\
	&\leq&
	\sum_{i=1}^n\Pr\left(|\vrx{i}^{\t}(\vbeta_0-\loovb{i})|\geq C^*\ln n \  \Big| \ \frac{\|\vbeta_0-\loovb{i}\|}{\sqrt{n}}<C^*\right)+\Pr\left(\sup_i\frac{\|\vbeta_0-\loovb{i}\|}{\sqrt{n}}\geq C^*\right)
	,	\n
\EE
where $C^*>0$ will be determined later. Since $\vrx{i}$ is independent of $\vbeta_0-\loovb{i}$, from Assumption O.\ref{ass:True} and Hanson-Wright inequality, we have
\BE
	\Pr\left(\sup_i|\vrx{i}^{\t}(\vbeta_0-\loovb{i})|\geq C^*\ln n\right)
	&\leq&
	o(1)+\Pr\left(\sup_i\frac{\|\vbeta_0-\loovb{i}\|}{\sqrt{n}}\geq C^*\right)
	.	\n
\EE
Therefore,  to obtain an upped bound for $\Pr\left(\sup_i|\vrx{i}^{\t}(\vbeta_0-\loovb{i})|\geq C^*\ln n\right)$ we only need to bound $\sup_i\frac{\|\vbeta_0-\loovb{i}\|}{\sqrt{n}}$. From the first order conditions we have
\BE
	\vzero
	&=&
	-\vX^{\t}\fl'(\vy-\vX\loovb{i})+\fl'(y_i-\vrx{i}^{\t}\loovb{i})\vrx{i}+\lambda\reg'(\loovb{i})
	.	\label{eq:ithout}
\EE
Plugging $\vy = \vX \vbeta_0 + \vw$ we have 
\[
	-\vX^{\t}\fl'(\vw+\vX(\vbeta_0-\loovb{i}))+\fl'(w_i+\vrx{i}^{\t}(\vbeta_0-\loovb{i}))\vrx{i}+\lambda\reg'(\loovb{i})
	\spaceequal
	\vzero
	.
\]
Using the mean value theorem for $\fl'$ at $\vw$ and $\reg'$ at $\vbeta_0$, we have
\BE
	\lefteqn{\vX^{\t}\fl'(\vw)-\fl'(w_i)\vrx{i}-\lambda\reg'(\vbeta_0)}
		\n\\
	&=&
	\left(\vX^{\t}\diag{\dfl''(\vw_{\vxi})}\vX-\fl''(w_{\vxi,i})\vrx{i}\vrx{i}^{\t}+\lambda\diag{\dreg''(\vbeta^i_{\vxi'})}\right)(\loovb{i}-\vbeta_0)
	,	\n
\EE
where $\ve_j^{\t}\vbeta^i_{\vxi'}=\xi'_{i,j} \ve_j^{\t} \vbeta_0+(1-\xi'_{i,j})\ve_j^{\t} \loovb{i}$ for some $\xi'_{i,j}\in [0,1]$ and $j$th diagonal component of $\diag{\dfl''(\vw_{\vxi})}$ is $\fl''(\xi_{i,j}w_j+(1-\xi_{i,j}) \vrx{j}^{\t}(\vbeta_0-\loovb{i}))$ for some $\xi_{i,j}\in [0,1]$. Hence, we have
\BEQ
\label{eq:add_eq2}
\BS
	\lefteqn{\sup_i\frac{1}{\sqrt{n}}\|\loovb{i}-\vbeta_0\|}
		\\
	&=
	\frac{1}{\sqrt{n}}\sup_i\left\|\left(\vX^{\t}\diag{\dfl''(\vw_{\vxi})}\vX-\fl''(w_{\vxi,i})\vrx{i}\vrx{i}^{\t}+\lambda\diag{\dreg''(\vbeta^i_{\vxi'})}\right)^{-1}\right.
		\\
	&\quad \times\left.(\vX^{\t}\fl'(\vw)-\fl'(w_i)\vrx{i}-\lambda\reg'(\vbeta_0))\right\|
		\\
	&\stackrel{\text{(i)}}{\leq}
	\op{\frac{1}{\ac \sqrt{n}}}\sup_i\|\vX^{\t}\fl'(\vw)-\fl'(w_i)\vrx{i}-\lambda\reg'(\vbeta_0)\|
		\\
	&\leq
	\op{\frac{1}{\ac\sqrt{n}}}(\|\vX^{\t}\fl'(\vw)\|+\sup_i|\fl'(w_i)|\|\vrx{i}\|+\lambda\|\reg'(\vbeta_0)\|)
		\\
	&\stackrel{\text{(ii)}}{\leq}
	\op{\frac{1}{\ac\sqrt{n}}}(\|\vX^{\t}\fl'(\vw)\|+\lambda\|\reg'(\vbeta_0)\|)+\op{\frac{\plogn}{\ac\sqrt{n}}}
	,	
\end{split}
\EEQ
 where Inequality (i) is due to Lemma \ref{lem:minev} and Assumption O.\ref{ass:Smoothness}, and Inequality (ii) holds due to Lemma \ref{lem:xwnorm} and Assumption O.\ref{ass:Smoothness2}. To bound \eqref{eq:add_eq2}, note that since $\vX$ is independent of $\vw$, from Assumption O.\ref{ass:True} and Hanson-Wright inequality we have
\[
	\|\vX^{\t}\fl'(\vw)\|
	\spaceequal
	\op{\frac{\|\fl'(\vw)\|}{\sqrt{n}}\sqrt{p}}
	\spaceequal
	\op{\sqrt{n}}
	,
\]
where the last equality is due to Assumption O.\ref{ass:Smoothness2} and Assumption O.\ref{ass:True}. For $\|\reg'(\vbeta_0)\|$, from Assumption O.\ref{ass:True}, we have $\frac{1}{p}\sum_{i}\beta_{0,i}^{2(\rho+1)}=\op{1}$ for both the case of $\vbeta_0$ being a random vector or the case of $\vbeta_0$ being deterministic. Hence, with Assumption O.\ref{ass:Smoothness2}, we have for some universal constant $C>0$,
\BE
	\frac{1}{n}\|\reg'(\vbeta_0)\|^2
	\qleq
	\frac{C}{n}\sum_{i}(1+\beta_{0,i}^{2(\rho+1)})
	\spaceequal
	\op{1}
	.\label{eq:review_eq1}
\EE
Hence, for \eqref{eq:add_eq2}, we have
\BE
	\sup_i\frac{1}{\sqrt{n}}\|\loovb{i}-\vbeta_0\|
	&\leq&
	\op{\frac{1}{\ac}}
	,	\label{eq:add_tue_2.33pm}
\EE
i.e., for all $\epsilon>0$, there exists constant $C_\epsilon,N_{\epsilon}>0$ such that
\[
	\Pr\left(\sup_i\frac{1}{\sqrt{n}}\|\loovb{i}-\vbeta_0\|\qgeq \frac{C_{\epsilon}}{\ac}\right)
	\qleq 
	\epsilon
	, \ \ \forall n>N_{\epsilon}
	. 
\]
Therefore, by choosing $C^*=C_{\epsilon}/\ac$, we have
\BE
	\sup_i|y_i-\vrx{i}^{\t}\loovb{i}|
	&=&
	\op{\frac{\ln n}{\ac}}
	.	\label{eq:suploor}
\EE
Now we switch to the proof of 
\[
	\sup_{i}\|\hvb-\loovb{i}\|
	\spaceequal 
	\op{\frac{\plogn}{\ac^{\rho+2}}}
	. 
\]
Let $\vDelta^i=\loovb{i}-\hvb$. $\hvb,\loovb{i}$ satisfy the following equations:
\BE
	\vzero
	&=&
	-\vX^{\t}\fl'(\vy-\vX\hvb)+\lambda\reg'(\hvb)
	,	\label{eq:full}\\
	\vzero
	&=&
	-\vX^{\t}\fl'(\vy-\vX\loovb{i})+\fl'(y_i-\vrx{i}^{\t}\loovb{i})\vrx{i}+\lambda\reg'(\loovb{i})
	.	\n
\EE
By subtracting the above two terms and applying the mean value theorem for $\fl'$ and $\reg'$, we have
\BEQ
\label{eq:add_eq3}
\BS
	-\fl'(y_i-\vrx{i}^{\t}\loovb{i})\vrx{i}
	&=
	\lambda\left(\reg'(\hvb)-\reg'(\loovb{i})\right)
	-\sum_{j=1}^n\vrx{j}\left(\fl'(y_j-\vrx{j}^{\t}\hvb)-\fl'(y_j-\vrx{j}^{\t}\loovb{i})\right)
		\\
	&=
	\lambda\diag{\dreg''(\vbeta^{i}_{\vxi'})}\vDelta^i+\vX^{\t}\diag{\dfl''(\vy-\vX\vbeta^i_{\vxi})}\vX\vDelta^i
	,
\end{split}	
\EEQ
where $\ve_j^{\t}\vbeta^i_{\vxi'}=\xi'_{i,j} \ve_j^{\t} \vbeta_0+(1-\xi'_{i,j})\ve_j^{\t} \loovb{i}$ for some $\xi'_{i,j}\in [0,1]$ and the $j$th diagonal component of $\diag{\dfl''(\vy-\vX\vbeta^i_{\vxi})}$ is $\fl''(y_j-\vrx{j}^{\t}(\xi_{i,j}\hvb+(1-\xi_{i,j})\loovb{i}))$ for some $\xi_{i,j}\in [0,1]$. From Lemma \ref{lem:minev} and Assumption O.\ref{ass:Smoothness}, we know that matrix
\[
	\left(\vX^{\t}\diag{\dfl''(\vy-\vX\vbeta^i_{\xi})}\vX+\lambda\diag{\dreg''(\vbeta^{i}_{\vxi'})}\right)^{-1}
\] 
exists and its maximum eigenvalue is at most $1/(\ac\sd)=\op{\frac{1}{\ac}}$, where $\sigma_\delta$ is defined in Lemma \ref{lem:minev}. Hence, if we define 
\[
	\vB^i_{\vxi,\vxi'}
	\ := \
	\vX^{\t}\diag{\dfl''(\vy-\vX\vbeta^i_{\vxi})}\vX+\lambda\diag{\dreg''(\vbeta^{i}_{\vxi'})}
	,
\]
then from \eqref{eq:add_eq3} we have
\[
	\vDelta^i
	\spaceequal
	-\fl'(y_i-\vrx{i}\loovb{i})(\vB^i_{\vxi,\vxi'})^{-1}\vrx{i}
	.
\]
Hence, we have
\BEQ
\label{eq:add_tue_2.35pm}
\BS
	\sup_i\|\loovb{i}-\hvb\|
	&=
	\sup_{i,\vxi,\vxi'}\left|\fl'(y_i-\vrx{i}\loovb{i})\right|\sqrt{\vrx{i}^{\t}\left(\vB^i_{\vxi,\vxi'}\right)^{-2}\vrx{i}}
		\\
	&\leq
	\op{\frac{1}{\ac}}\sup_i|\fl'(y_i-\vrx{i}\loovb{i})|\sup_i\|\vrx{i}\|
		\\
	&\leq
	\op{\frac{\plogn}{\ac^{\rho+2}}}
	,
\end{split}	
\EEQ
where the last inequality is due to Lemma \ref{lem:xwnorm}, Assumption O.\ref{ass:Smoothness2} and \eqref{eq:suploor}.
Finally, apply \eqref{eq:suploor} and Lemma \ref{lem:xwnorm} again, we immediately have
\BE
	\sup_i|y_i-\vrx{i}^{\t}\hvb|
	&\leq&
	\sup_i|y_i-\vrx{i}^{\t}\loovb{i}|+\sup_i|\vrx{i}^{\t}(\loovb{i}-\hvb)|
		\n\\
	&\leq&
	\op{\frac{\ln n }{\ac}}+\sup_{i}\|\vrx{i}\|\sup_i\|\loovb{i}-\hvb\|
		\n\\
	&=&
	\op{\frac{\plogn}{\ac^{\rho+2}}}
	.	\n
\EE


\subsection{Proof of Lemma \ref{lem:TAIL}}\label{sec:tail}
For all $t>0$, 
\BEQ
\label{eq:tail_bound_add_eq_eq1}
\BS
	\lefteqn{\Pr\left(\sup_i\left|\vx_i^{\t}\vGamma_i\vx_i\right|\geq t\right)}
		\\
	&=
	\Pr\left(\sup_i\left|\vx_i^{\t}\vGamma_i\vx_i-\frac{1}{n}\trace(\vGamma_i)\right|\geq t,\ \sup_i\sigma_{\max}(\vGamma_i)\leq C_{n,\delta}\right)
		\\
	&\quad
	+\Pr\left(\sup_i\left|\vx_i^{\t}\vGamma_i\vx_i-\frac{1}{n}\trace(\vGamma_i)\right|\geq t,\ \sup_i\sigma_{\max}(\vGamma_i)\geq C_{n,\delta}\right)
		\\
	&\leq
		\sum_{i=1}^n\Pr\left(\left|\vx_i^{\t}\vGamma_i\vx_i-\frac{1}{n}\trace(\vGamma_i)\right|\geq t,\ \sup_i\sigma_{\max}(\vGamma_i)\leq C_{n,\delta}\right)+\Pr\left(\sup_i\sigma_{\max}(\vGamma_i)\geq C_{n,\delta}\right)
		\\
	&\leq
	\sum_{i=1}^n\Pr\left(\left|\vx_i^{\t}\vGamma_i\vx_i-\frac{1}{n}\trace(\vGamma_i)\right|\geq t,\ \sigma_{\max}(\vGamma_i)\leq C_{n,\delta}\right)+\Pr\left(\sup_i\sigma_{\max}(\vGamma_i)\geq C_{n,\delta}\right)
		\\
	&\leq
	\sum_{i=1}^n\Pr\left(\left|\vx_i^{\t}\vGamma_i\vx_i-\frac{1}{n}\trace(\vGamma_i)\right|\geq t \Big| \sigma_{\max}(\vGamma_i)\leq C_{n,\delta}\right)+\Pr\left(\sup_i\sigma_{\max}(\vGamma_i)\geq C_{n,\delta}\right)
	.
\end{split}	
\EEQ
Hence, we just need to bound $\left|\vx_i^{\t}\vGamma_i\vx_i-\frac{1}{n}\trace(\vGamma_i)\right|$ individually given that $\sigma_{\max}(\vGamma_i)\leq C_{n,\delta}$. From Hanson-Wright inequality, we have for all $t'>0$
\[
\Pr\left(|\vx_i^{\t}\vGamma_i\vx_i-\frac{1}{n}\trace(\vGamma_i)|\geq C_{n,\delta} \frac{\ln n}{\sqrt{n}}t'\right) \leq 2e^{-c (t'\ln n)^2},
\]
where constant $c>0$ is independent of $n$. Hence taking a union bound over all $i\in [n]$, we have Lemma \ref{lem:TAIL} holds.


\subsection{Proof of Proposition \ref{thm:looo}}\label{sec:looo}

It is straightforward to see that  $\loovb{i}$ satisfies 
\BE
	\vzero
	&=&
	-\vX^{\t}\fl'(\vy-\vX\loovb{i})+\fl'(y_i-\vrx{i}^{\t}\loovb{i})\vrx{i}+\lambda\reg'(\loovb{i})
	.	\label{eq:ithout}
\EE
Recall the definition of $\oA_i$, i.e.,
\[
	\oA_i
	\spaceequal
	\vX^{\t}\diag{\dfl''(\vy-\vX\loovb{i})}\vX+\lambda\diag{\dreg''(\loovb{i})}-\fl''(y_i-\vrx{i}^{\t}\loovb{i})\vrx{i}\vrx{i}^{\t}
	.
\]
Note that according to \eqref{eq:lowerbound_example_eq1}, the inverse of $\oA_i$ exists and thus all values are well defined in the theorem.  For a given $i$, let $\tr_i$ be the minimizer of the following optimization:
\[
	\tr_i := \argmin_{r} \frac{1}{2}(r-y_i+\vrx{i}^{\t}\loovb{i})^2+\vrx{i}^{\t}\oA_i^{-1}\vrx{i} \fl(r).
\]
By Assumption O.\ref{ass:Convex}, we know this is a convex optimization and hence $\tr_i$ is unique and satisfies the following equation:
\BE
	\tr_i 
	&=&
	y_i-\vrx{i}^{\t}\loovb{i}-\fl'(\tr_i)\vrx{i}^{\t}\oA_i^{-1}\vrx{i}
	.	\label{eq:tri}
\EE
Now, let
\BE
	\lootvb{i}
	&=&
	\loovb{i}+\fl'(\tr_i)\oA_i^{-1}\vrx{i}
	.	\label{eq:tovb}
\EE
Then, by plugging \eqref{eq:tovb} in \eqref{eq:tri} we have
\BE
	\tr_i
	&=&
	y_i-\vrx{i}^{\t}\lootvb{i}
	.	\label{eq:tri2}
\EE
The important feature of $	\lootvb{i}$ is that if we plug \eqref{eq:tri2} in \eqref{eq:tovb}, then we will obtain
\[
	\loovb{i}
	\spaceequal
	\lootvb{i}-\fl'(y_i-\vrx{i}^{\t}\lootvb{i})\oA_i^{-1}\vrx{i}
	.
\]
By Taylor expansion for $\fl'$ at $y_i-\vrx{i}^{\t}\hvb$, we have 
\[
	\loovb{i}
	\spaceequal
	\hvb-\fl'(y_i-\vrx{i}^{\t}\hvb)\oA_i^{-1}\vrx{i}+\tvepsilon^i
	,
\]
where
\[
	\tvepsilon^i
	\spaceequal
	(\vI+\fl''(y_i-\vrx{i}^{\t}\vbeta^i_{\xi_i})\oA_i^{-1}\vrx{i}\vrx{i}^{\t})(\lootvb{i}-\hvb)
	,
\]
with $\vbeta^i_{\xi_i}=\xi_i\lootvb{i}+(1-\xi_i)\hvb$ for some $\xi_i\in [0,1]$. So far, we have obtained an expression for $\tvepsilon^i$. Next, we want to bound $\sup_i\|\tvepsilon^i\|$. Note that
\BE
	\sup_i\|\tvepsilon^i\|
	&\leq&
	\sup_i\|\lootvb{i}-\hvb\|\left(1+\fl''(y_i-\vrx{i}^{\t}\vbeta^i_{\xi_i})\max_{\|\vu\|=1}\sqrt{\vu^{\t}\vrx{i}\vrx{i}^{\t}\oA_i^{-1}\oA_i^{-1}\vrx{i}\vrx{i}^{\t}\vu}\right)
	.	\n
\EE
Hence,
\BEQ
\label{eq:tvepsilon_eq1_p1}
\BS
	\frac{\sup_i\|\tvepsilon^i\|}{\sup_i\|\lootvb{i}-\hvb\|}
	&\stackrel{\text{(i)}}{\leq}
	 \left(1+\op{1+\max\{|\tr_i|^{\rho},|y_i-\vrx{i}^{\t}\hvb|^{\rho}\}}\cdot \sup_i\|\vrx{i}\| \cdot \sup_i\|\oA_i^{-1}\vrx{i}\|\right)
		\\
	&\stackrel{\text{(ii)}}{\leq}
	 \left(1+\op{1+\max\{|\tr_i|^{\rho},|y_i-\vrx{i}^{\t}\hvb|^{\rho}\}}\cdot \sup_i\|\vrx{i}\|^2\cdot \op{\frac{1}{\ac}}\right)
		\\
	&\stackrel{\text{(iii)}}{\leq}
	 \left(1+\op{1+\max\{|\tr_i|^{\rho},|y_i-\vrx{i}^{\t}\hvb|^{\rho}\}}\cdot \op{\frac{1}{\ac}}\right)
	,
\end{split}
\EEQ
where Inequality (i) is due to Assumption O.\ref{ass:Smoothness2}, Inequality (ii) holds due to \eqref{eq:lowerbound_example_eq1} and Inequality (iii) is due to Lemma \ref{lem:xwnorm}. To bound $|\tr_i|$, let us define $U(r) = r+\fl'(r)\vrx{i}^{\t}\oA_i^{-1}\vrx{i}$, then we have $U'(r)>1$ and $U(\tr_i) = y_i-\vrx{i}^{\t}\loovb{i}$ due to \eqref{eq:tri}. Hence, we have
\begin{eqnarray}
	|y_i-\vrx{i}^{\t}\loovb{i}|
	&=& 
	|U(\tr_i)|
	\spaceequal
	\left|U(0)+\int_0^{\tr_i}U'(r)\dif r\right|
		\n\\
	&\geq&
	\left|\int_0^{\tr_i}U'(r)\dif r\right|-|U(0)|
	\qgeq
	|\tr_i|-|\fl'(0)|\vrx{i}^{\t}\oA_i^{-1}\vrx{i}
	.	\n
\end{eqnarray}
Due to \eqref{eq:lowerbound_example_eq1} and Lemma \ref{lem:xwnorm}, we have $\vrx{i}^{\t}\oA_i^{-1}\vrx{i}= \op{1/\ac}$ and thus
\BE
	\sup_i|\tr_i|
	\ \in\ 
	\left[0,\sup_i|y_i-\vrx{i}^{\t}\loovb{i}|+\op{\frac{1}{\ac}}\right]
	.	\label{eq:suptri}
\EE
Therefore, by \eqref{eq:tvepsilon_eq1_p1}, \eqref{eq:suptri} and Lemma \ref{lem:supnormDi}, we have 
\BE
	\sup_i\|\tvepsilon^i\|
	&\leq&
	\sup_i\|\lootvb{i}-\hvb\|\cdot \op{\frac{\plogn}{\ac^{\rho+3}}}
	.	\n
\EE
Hence, to bound $\sup_i\|\tvepsilon^i\|$, we should show that 
\BE
	\sup_i\|\lootvb{i}-\hvb\|
	&\leq&
	\op{\frac{\plogn }{n^{\frac{\alpha}{2}}\cdot \ac^{8\rho+7}}}
	.	\label{eq:suptovbhvb}
\EE
Let 
\begin{eqnarray}
	L(\vbeta)
	\spaceequal
	-\vX^{\t}\fl'(\vy-\vX\vbeta)+\lambda\reg'(\vbeta)
	.	\label{eq:defL_eq1}
\end{eqnarray}
From \eqref{eq:full}, we know that 
\[
	L(\hvb)
	\spaceequal
	\vzero
	.
\]
Note that the Jacobian matrix of $L(\vbeta)$ is 
\[
	\vX^{\t}\diag{\dfl''(\vy-\vX\vbeta)}\vX+\lambda\diag{\dreg''(\vbeta)},
\] 
and by Lemma \ref{lem:minev} and Assumption O.\ref{ass:Smoothness}, its minimum eigenvalue is at least $\Omega_p(\ac)$. Hence, we have
\BEQ
\label{eq:L111}
\BS
	\sup_i\|\hvb-\lootvb{i}\|
	&\leq
	\op{\frac{1}{\ac}}\cdot \sup_i\|L(\lootvb{i})-L(\hvb)\|
		\\
	&=
	\op{\frac{1}{\ac}} \cdot \sup_i\|L(\lootvb{i})\|
	.	\\
\end{split}
\EEQ
Therefore, we just need to show that 
\[
	\sup_i\|L(\lootvb{i})\|
	\qleq
	\op{\frac{\plogn }{n^{\frac{\alpha}{2}}\cdot \ac^{8\rho+6}}}
	.
\]
Note that, we have
\BEQ
\label{eq:revision_add_eq1}
\BS
	\lefteqn{L(\lootvb{i})\spaceequal-\vX^{\t}\fl'(\vy-\vX\lootvb{i})+\lambda\reg'(\lootvb{i})}
		\\
	&\stackrel{\text{(i)}}{=}
	-\fl'(\tr_i)\vrx{i}-\sum_{j\neq i}(\fl'(y_j-\vrx{j}^{\t}\lootvb{i})-\fl'(y_j-\vrx{j}^{\t}\loovb{i}))\vrx{j}+\lambda (\reg'(\lootvb{i})-\reg'(\loovb{i}))
		\\
	&\stackrel{\text{(ii)}}{=}
	-\fl'(\tr_i)\vrx{i}-\sum_{j\neq i}\fl''(y_j-\vrx{j}^{\t}\vbeta^{i}_{\xi_{i,j}})\vrx{j}\vrx{j}^{\t}(\lootvb{i}-\loovb{i})+\lambda\diag{\dreg''(\vbeta^i_{\vxi'})}(\lootvb{i}-\loovb{i})
		\\
	&=
	-\fl'(\tr_i)\vrx{i}+\left(\sum_{j\neq i}\fl''(y_j-\vrx{j}^{\t}\loovb{i})\vrx{j}\vrx{j}^{\t}+\lambda \diag{\dreg''(\loovb{i})}\right)(\lootvb{i}-\loovb{i})
		\\
	&\quad
	-\sum_{j\neq i}\left(\fl''(y_j-\vrx{j}^{\t}\vbeta^{i}_{\xi_{i,j}})-\fl''(y_j-\vrx{j}^{\t}\loovb{i})\right)\vrx{j}\vrx{j}^{\t}(\lootvb{i}-\loovb{i})
		\\
	&\quad
	+\lambda\diag{\dreg''(\vbeta^i_{\vxi'})-\dreg''(\loovb{i})}(\lootvb{i}-\loovb{i})
		\\
	&\stackrel{\text{(iii)}}{=}
	-\sum_{j\neq i}\left(\fl''(y_j-\vrx{j}^{\t}\vbeta^{i}_{\xi_{i,j}})-\fl''(y_j-\vrx{j}^{\t}\loovb{i})\right)\vrx{j}\vrx{j}^{\t}(\lootvb{i}-\loovb{i})
		\\
	&\quad
	+\lambda\diag{\dreg''(\vbeta^i_{\vxi'})-\dreg''(\loovb{i})}(\lootvb{i}-\loovb{i})
	,	
\end{split}	
\EEQ
where $\vbeta^i_{\xi_{i,j}}=\xi_{i,j}\lootvb{i}+(1-\xi_{i,j})\loovb{i}, \ve_j^{\t}\vbeta^i_{\vxi'}=\xi'_{i,j}\lootb{i}_j+(1-\xi'_{i.j})\loob{i}_j$ for some $\xi_{i,j}, \xi'_{i,j}\in [0,1]$, and Equality (i) is due to \eqref{eq:ithout} and \eqref{eq:tri2}, Equality (ii) is obtained from a Taylor expansion and Equality (iii) holds due to \eqref{eq:tovb}. Next, let $\vu^{\backslash i}\in \bbR^{n-1}$ be the vector defined by the following:
\BE
	\ve_j^{\t}\vu^{\backslash i}=\left\{\begin{aligned}
	&\left(\fl''(y_j-\vrx{j}^{\t}\vbeta^{i}_{\xi_{i,j}})-\fl''(y_j-\vrx{j}^{\t}\loovb{i})\vrx{j}^{\t}\right)(\lootvb{i}-\loovb{i}),	&&j\in [1,i-1]\\
	&\left(\fl''(y_{j+1}-\vrx{j+1}^{\t}\vbeta^{i}_{\xi_{i,j+1}})-\fl''(y_{j+1}-\vrx{j+1}^{\t}\loovb{i})\vrx{j+1}^{\t}\right)(\lootvb{i}-\loovb{i}),	&&j\in[i,n-1]
	\end{aligned}\right.
	\n.
\EE
Then, by Assumption O.\ref{ass:Holder}, we known each component of $\vu^{\backslash i}$ is upper bounded by the following:
\BE
	\sup_j|\ve_j^{\t}\vu^{\backslash i}|
	&\leq&
	\sup_{i,j}\left|\left(\fl''(y_j-\vrx{j}^{\t}\vbeta^{i}_{\xi_{i,j}})-\fl''(y_j-\vrx{j}^{\t}\loovb{i})\right)\vrx{j}^{\t}(\lootvb{i}-\loovb{i})\right|
		\n\\
	&\leq&
	C_r\left|\vrx{j}^{\t}(\lootvb{i}-\loovb{i})\right|^{1+\alpha}
	.	\n
\EE
Also, by Assumption O.\ref{ass:Holder}, we have
\BE
	\sup_{i}\left|\reg''(\vbeta^i_{\vxi'})-\reg''(\loovb{i})\right|
	&\leq&
	C_r\left|\ve_j^{\t}(\lootvb{i}-\loovb{i})\right|^{\alpha}
	.	\n
\EE
Hence, with \eqref{eq:revision_add_eq1}, we have
\BEQ
\label{eq:revision_add_eq3}
\BS
	\lefteqn{\sup_i\|L(\lootvb{i})\|}
		\\
	&\leq
	\sup_i \|\XI\vu^{\backslash i}\| + \lambda \sup_i\left|\reg''(\vbeta^i_{\vxi'})-\reg''(\loovb{i})\right| \cdot \sup_i\|\lootvb{i}-\loovb{i}\|
		\\
	&\stackrel{\text{(i)}}{\leq}
	\op{1}\cdot \sup_i \|\vu^{\backslash i}\|+\lambda\sqrt{p}\sup_{i,j}\left|C_r\left(\ve_j^{\t}(\lootvb{i}-\loovb{i})\right)^{1+\alpha}\right|
		\\
	&\leq
	\op{1}\cdot \sqrt{n}\sup_{j\neq i}\left|C_l\left(\vrx{j}^{\t}(\lootvb{i}-\loovb{i})\right)^{1+\alpha}\right|+	\lambda\sqrt{p}\sup_{i,j}\left|C_r\left(\ve_j^{\t}(\lootvb{i}-\loovb{i})\right)^{1+\alpha}\right|
	,	
\end{split}
\EEQ
where inequality (i) is due Lemma \ref{lem:minev}. Next, we claim that
\BEQ
\label{eq:suptovbij}
\BS
	\sup_{i,j}|\ve_j ^{\t}(\lootvb{i}-\loovb{i})|
	&=
	\op{\frac{\plogn}{\sqrt{n}\cdot \ac^{4\rho+3}}} 
		\\ 
	\sup_{j\neq i}|\vrx{j}^{\t} (\lootvb{i}-\loovb{i})|
	&=
	\op{\frac{\plogn}{\sqrt{n}\cdot \ac^{4\rho+3}}}
	.
\end{split}
\EEQ
If these two claims are true, from \eqref{eq:revision_add_eq3} we have
\BE
	\sup_i\|L(\lootvb{i})\|
	&\leq&
	\op{\frac{\plogn }{n^{\frac{\alpha}{2}}\cdot \ac^{8\rho+6}}}
	,	\n
\EE
which completes the proof of \eqref{eq:suptovbhvb}. To show \eqref{eq:suptovbij}, by \eqref{eq:tovb} and Assumption O.\ref{ass:Smoothness2}, we have
\BE
	\sup_{i,j}|\ve_j ^{\t}(\lootvb{i}-\loovb{i})|
	&=&
	\sup_{i,j} |\fl'(\tr_i)\ve_j^{\t}\oA_i^{-1}\vrx{i}|
		\n\\
	&\leq&
	\op{1+\sup_{i}|\tr_i|^{\rho+1}} \cdot \sup_{i,j}|\ve_j^{\t}\oA_i^{-1}\vrx{i}|
	,	\n
\EE
and
\BE
	\sup_{j\neq i}|\vrx{j}^{\t} (\lootvb{i}-\loovb{i})|
	&=&
	\sup_{j\neq i} |\fl'(\tr_i)\vrx{j}^{\t}\oA_i^{-1}\vrx{i}|
		\n\\
	&\leq& 
	\op{1+\sup_{i} |\tr_i|^{\rho+1}} \cdot \sup_{j\neq i}|\vrx{j}^{\t}\oA_i^{-1}\vrx{i}|
	.	\n
\EE
Then, by \eqref{eq:suptri} and Lemma \ref{lem:supnormDi}, we have
\BEQ
\label{eq:suptovbij12}
\BS
	\sup_{i,j}|\ve_j ^{\t}(\lootvb{i}-\loovb{i})|
	&\leq
	\op{\frac{\plogn}{\ac^{(\rho+2)(\rho+1)}}}\cdot \sup_{i,j}|\ve_j^{\t}\oA_i^{-1}\vrx{i}|
		,\\
	\sup_{i\neq j}|\vrx{j} ^{\t}(\lootvb{i}-\loovb{i})|
	&\leq
	\op{\frac{\plogn}{\ac^{(\rho+2)(\rho+1)}}}\cdot \sup_{i\neq j}|\vrx{j}^{\t}\oA_i^{-1}\vrx{i}|
	.	
\end{split}
\EEQ
Recall that the minimal eigenvalue of $\oA_i$ is at least $\Omega_p(\ac)$ due to \eqref{eq:lowerbound_example_eq1}. Since $\vrx{j}$ (for $j \neq i$) and $\oA_i$ is independent of $\vrx{i}$, from Assumption O.\ref{ass:True} and Hanson-Wright inequality, we have for some constant $c>0$ independent of $n$,
\BE
	\lefteqn{\Pr\left(\sup_{i,j}|\ve_j^{\t}\oA_i^{-1}\vrx{i}|> \epsilon\right)}
		\n\\
	&\leq&
	\sum_{i,j}\Pr\left(|\ve_j^{\t}\oA_i^{-1}\vrx{i}|> \epsilon\Big| \sup_{i,j}\|\ve_j^{\t}\oA_i^{-1}\|\leq \frac{1}{\ac\sd}\right)+\Pr\left(\sup_{i,j}\|\ve_j^{\t}\oA_i^{-1}\|> \frac{1}{\ac\sd}\right)
		\n\\
	&\leq&n^2e^{-c\min(\epsilon^2\ac^2 n, \epsilon \ac n)} + o(1)
	,	\n
\EE
and together with Lemma \ref{lem:xwnorm}, we have
\BE
	\lefteqn{\Pr\left(\sup_{i\neq j}|\vrx{j}^{\t}\oA_i^{-1}\vrx{i}|> \epsilon\right)}
		\n\\
	&\leq&
	\sum_{i,j}\Pr\left(|\vrx{j}^{\t}\oA_i^{-1}\vrx{i}|> \epsilon\Big| \sup_{i,j}\|\vrx{j}^{\t}\oA_i^{-1}\|\leq \frac{2}{\ac\sd}\right)+\Pr\left(\sup_{i, j}\|\vrx{j}^{\t}\oA_i^{-1}\|> \frac{2}{\ac\sd}\right)
		\n\\
	&\leq&n^2e^{-c\min(\epsilon^2\ac^2 n, \epsilon \ac n)} + o(1)
	.	\n
\EE
Hence, we have
\[
	\sup_{i,j}|\ve_j^{\t}\oA_i^{-1}\vrx{i}|
	\spaceequal
	\op{\frac{1}{\ac\sqrt{n}}} 
	\qand 
	\sup_{j\neq i}|\vrx{j}^{\t}\oA_i^{-1}\vrx{i}|
	\spaceequal
	\op{\frac{1}{\ac\sqrt{n}}}
	.
\]
Therefore, if we plug these two equations in \eqref{eq:suptovbij12}, then obtain \eqref{eq:suptovbij}. As a result of \eqref{eq:suptovbhvb} and \eqref{eq:suptovbij}, we have for large enough $n$,
\[
	\sup_{i,j}\left|\ve_j^{\t}(\hvb-\loovb{i})\right|
	\spaceequal
	\op{\frac{\plogn }{n^{\frac{\alpha}{2}}\cdot \ac^{8\rho+7}}}
	.
\]


\subsection{Proof of Lemma \ref{lem:ABswitch}}\label{sec:ABswitch}

Recall 
\[
	\pA_{i}
	\ :=\
	\vX^{\t}\diag{\dfl''(\vy-\vX\hvb)}\vX-\fl''(y_i-\vrx{i}^{\t}\hvb)\vrx{i}\vrx{i}^{\t}+\lambda \diag{\cdreg''(\hvb)}
	.
\]
We first show that 
\BE
	\sup_i\frac{1}{n}|\trace(\pA_{i}^{-1}\vSigma-\pB^{-1}\vSigma)|
	&=&
	\op{\frac{1}{n\cdot \ac}}
	.	\label{eq:ABswitch1step1}
\EE
To make the equations more readable in the rest of the proof, we use the simplified notation $\hfl''_i=\fl''(y_i-\vrx{i}^{\t}\hvb)$.  By Lemma \ref{lem:MIL} and the fact that $\pA_i$ is semi-positive definite, we have 
\BE
	\frac{1}{n}\sup_i\left|\trace(\pA_{i}^{-1}\vSigma-\pB^{-1}\vSigma)\right|
	&=&
	\frac{1}{n}\sup_i\left|\trace(\vSigma^{\frac{1}{2}}(\pA_{i}^{-1}-(\pA_{i}+\hfl''_i\vrx{i}\vrx{i}^{\t})^{-1})\vSigma^{\frac{1}{2}})\right|
		\n\\
	&=&
	\frac{1}{n}\sup_i\left|\frac{\hfl''_i}{1+\hfl''_i\vrx{i}^{\t}\pA_{i}^{-1}\vrx{i}}\trace(\vSigma^{\frac{1}{2}}\pA_{i}^{-1}\vrx{i}\vrx{i}^{\t}\pA_{i}^{-1}\vSigma^{\frac{1}{2}})\right|
		\n\\
	&\leq&
	\frac{c_u}{n}\sup_i\left|\frac{\hfl''_i}{1+\hfl''_i\vrx{i}^{\t}\pA_{i}^{-1}\vrx{i}}\vrx{i}^{\t}\pA_{i}^{-2}\vrx{i}\right|
		\n\\
	&\leq&
	\frac{c_u}{n}\sup_i\frac{\hfl''_i\vrx{i}^{\t}\pA_{i}^{-1/2}\pA_i^{-1}\pA_{i}^{-1/2}\vrx{i}}{\hfl''_i\vrx{i}^{\t}\pA_{i}^{-1}\vrx{i}}
	\qleq \frac{c_u}{n\inf_i\sigma_{\min}(\pA_{i}) }.	\label{eq:revision_add_eq5}
\EE 

Further, for the minimal eigenvalue of $\pA_{i}$, we have
\BEQ
\label{eq:lowerbound_example_eqpAc}
\BS
	&\inf_i\sigma_{\min}(\pA_{i})\\
	&=
	\inf_i\min_{\|\vu\|=1}\vu^{\t}\left(\vX^{\t}\diag{\dfl''(\vy-\vX\hvb)}\vX-\fl''(y_i-\vrx{i}^{\t}\hvb)\vrx{i}\vrx{i}^{\t}+\lambda \diag{\cdreg''(\hvb)}\right)\vu
		\\
	&\stackrel{\text{(i)}}{\geq}
	\inf_i\min_{\|\vu\|=1}\vu^{\t}\left(\XI^{\t}\cdot \ac\vI\cdot \XI\right)\vu
		\\
	&\stackrel{\text{(ii)}}{\geq}
	\ac\sd \spaceequal \Omega_p\left(\ac\right)
	, 	\\
\end{split}
\EEQ
where Inequality (i) is due to Assumption O.\ref{ass:Smoothness}, and Inequality (ii) is due to Lemma \ref{lem:minev}. Hence, with \eqref{eq:revision_add_eq5}, we have
\[
	\frac{1}{n}\sup_i\left|\trace(\pA_{i}^{-1}\vSigma-\pB^{-1}\vSigma)\right|
	\qleq
	\op{\frac{1}{n\cdot \ac}}
	.
\]
This completes the proof of \eqref{eq:ABswitch1step1}. Now we want to bound $\sup_i\frac{1}{n}|\trace(\pA_{i}^{-1}\vSigma-\oA_{i}^{-1}\vSigma)|$. Let $\vDelta^i_{\fl''}$ and $\vDelta^i_{\reg''}$ denote two diagonal matrices where $\vDelta^i_{\reg''}=\diag{\dreg''(\hvb)-\dreg''(\loovb{i})}$ and $j$th diagonal component of $\vDelta^i_{\fl''}$ is defined by the following:
\begin{eqnarray}
	\ve_j^{\t}\vDelta^i_{\fl''}\ve_j
	\ := \
	\left\{\begin{aligned}
	&\fl''(y_j-\vrx{j}^{\t}\hvb)-\fl''(y_j-\vrx{j}^{\t}\loovb{i}), &&j\neq i\\
	&0, &&j=i
	\end{aligned}\right.
	.	\n
\end{eqnarray}
Hence, we have
\[
	\pA_{i}-\oA_i
	\spaceequal
	\vX^{\t}\vDelta^i_{\fl''}\vX+\lambda \vDelta^i_{\reg''}
	.
\]
Let $\vDelta^i_{|\fl''|}$ and $\vDelta^i_{|\reg''|}$ denote the diagonal matrices that include the absolute values of the element of $\vDelta^i_{\fl''}$ and $\vDelta^i_{\reg''}$ respectively. Define
\[
	\vI_{\fl''}
	\spaceequal
	\diag{\tilde{\sign}(\vDelta^i_{\fl''})}
	\qand
	\vI_{\reg''}
	\spaceequal
	\diag{\tilde{\sign}(\vDelta^i_{\reg''})}
	,
\]
where $\tilde{\sign}$ function is defined as follow:
\BE
	\tilde{\sign}(x)
	&=& \left\{\begin{aligned}
				&1,	&& x\geq 0,
					\\
				&-1,	&& x<0.
			\end{aligned}\right.
		\n
\EE 
Hence, we have
\[
	\vDelta^i_{\fl''}
	\spaceequal
	\left(\vDelta^i_{|\fl''|}\right)^{\frac{1}{2}}\vI_{\fl''}\left(\vDelta^i_{|\fl''|}\right)^{\frac{1}{2}} 
	\qand
	\vDelta^i_{\reg''}
	\spaceequal
	\left(\vDelta^i_{|\reg''|}\right)^{\frac{1}{2}}\vI_{\reg''}\left(\vDelta^i_{|\reg''|}\right)^{\frac{1}{2}}
	.
\]
Then by \eqref{eq:suptovbhvb}, \eqref{eq:suptovbij}, Proposition \ref{thm:looo}, Lemma \ref{lem:xwnorm} and Assumption O.\ref{ass:Holder}, we know that
\BEQ\label{eq:AB1dif1}
\BS
	\sup_{i,j}|\ve_j^{\t}\vDelta^i_{|\fl''|}\ve_j|
	&=
	\op{\frac{\plogn }{n^{\frac{\alpha^2}{2}}\cdot \ac^{8\rho+7}}}
		\\
	\sup_{i,j}|\ve_j^{\t}\vDelta^i_{|\reg''|}\ve_j|
	&=
	\op{\frac{\plogn }{n^{\frac{\alpha^2}{2}}\cdot \ac^{8\rho+7}}}
	.	
\end{split}
\EEQ
Then by Matrix Inversion Lemma, we have
\BE
	\lefteqn{\sup_{i}\frac{1}{n}\left|\trace\left(\oA_i^{-1}\vSigma-\left(\oA_i+\vX^{\t}(\vDelta^i_{|\fl''|})^{\frac{1}{2}}\vI_{\fl''}(\vDelta^i_{|\fl''|})^{\frac{1}{2}}\vX\right)^{-1}\vSigma\right)\right|}
		\n\\
	&=&
	\sup_{i,j}\left|\trace\left(\vSigma^{\frac{1}{2}}\oA_i^{-1}\vX^{\t}(\vDelta^i_{|\fl''|})^{\frac{1}{2}}\left((\vI^i_{\fl''})^{-1}+(\vDelta^i_{|\fl''|})^{\frac{1}{2}}\vX\oA_i^{-1}\vX^{\t}(\vDelta^i_{|\fl''|})^{\frac{1}{2}}\right)^{-1}(\vDelta^i_{|\fl''|})^{\frac{1}{2}}\vX\oA_i^{-1}\vSigma^{\frac{1}{2}}\right)\right|
		\n\\
	&\leq&
	c_u\cdot\sup_{i,j}\left|\trace\left(\oA_i^{-1}\vX^{\t}(\vDelta^i_{|\fl''|})^{\frac{1}{2}}\left((\vI^i_{\fl''})^{-1}+(\vDelta^i_{|\fl''|})^{\frac{1}{2}}\vX\oA_i^{-1}\vX^{\t}(\vDelta^i_{|\fl''|})^{\frac{1}{2}}\right)^{-1}(\vDelta^i_{|\fl''|})^{\frac{1}{2}}\vX\oA_i^{-1}\right)\right|\n\\
	&\leq&
	c_u\cdot\sup_{i,j}\left|\ve_j^{\t}\oA_i^{-1}\vX^{\t}(\vDelta^i_{|\fl''|})^{\frac{1}{2}}\left((\vI^i_{\fl''})^{-1}+(\vDelta^i_{|\fl''|})^{\frac{1}{2}}\vX\oA_i^{-1}\vX^{\t}(\vDelta^i_{|\fl''|})^{\frac{1}{2}}\right)^{-1}(\vDelta^i_{|\fl''|})^{\frac{1}{2}}\vX\oA_i^{-1}\ve_j\right|
		\n \\
	&\leq&
	c_u\cdot\sup_{i,j}\|(\vDelta^i_{|\fl''|})^{\frac{1}{2}}\vX\oA_i^{-1}\ve_j\|^2\frac{1}{\inf_i\min\left|\text{eigenvalue of $(\vI^i_{\fl''})^{-1}+(\vDelta^i_{|\fl''|})^{\frac{1}{2}}\vX\oA_i^{-1}\vX^{\t}(\vDelta^i_{|\fl''|})^{\frac{1}{2}}$}\right|}
		\n\\
	&\leq&
	\frac{c_u}{1+o_p(1)}\cdot \sup_i\|(\vDelta^i_{|\fl''|})^{\frac{1}{2}}\vX\oA_i^{-1}\ve_j\|^2
		\n\\
	&\leq&
	\op{\frac{\plogn }{n^{\frac{\alpha^2}{2}}\cdot \ac^{8\rho+9}}}
	,	\n \\
	\label{eq:replaceei1}
\EE
where the last two inequalities hold due to \eqref{eq:lowerbound_example_eqpAc} and \eqref{eq:AB1dif1}. Similarly, by Matrix Inversion Lemma, we have
\BE
	\lefteqn{\sup_{i}\frac{1}{n}|\trace(\pA_{i}^{-1}\vSigma-(\pA_{i}-\lambda \vDelta^i_{\reg''})^{-1}\vSigma)|}
		\n\\
	&=&
	\sup_{i}\left|\trace\left(\vSigma^{\frac{1}{2}}\pA_{i}^{-1}(\vDelta^i_{|\reg''|})^{\frac{1}{2}}\left(\frac{1}{\lambda} \vI_{\reg''}^{-1}+(\vDelta^i_{|\reg''|})^{\frac{1}{2}}\pA_{i}^{-1}(\vDelta^i_{|\reg''|})^{\frac{1}{2}}\right)^{-1}(\vDelta^i_{|\reg''|})^{\frac{1}{2}}\pA_{i}^{-1}\vSigma^{\frac{1}{2}}\right)\right|
		\n\\
	&\leq&c_u\cdot
	\sup_{i}\left|\trace\left(\pA_{i}^{-1}(\vDelta^i_{|\reg''|})^{\frac{1}{2}}\left(\frac{1}{\lambda} \vI_{\reg''}^{-1}+(\vDelta^i_{|\reg''|})^{\frac{1}{2}}\pA_{i}^{-1}(\vDelta^i_{|\reg''|})^{\frac{1}{2}}\right)^{-1}(\vDelta^i_{|\reg''|})^{\frac{1}{2}}\pA_{i}^{-1}\right)\right|\n\\
	&\leq& c_u\cdot
	\sup_{i,j}\left|\ve_j^{\t}\pA_{i}^{-1}(\vDelta^i_{|\reg''|})^{\frac{1}{2}}\left(\frac{1}{\lambda} \vI_{\reg''}^{-1}+(\vDelta^i_{|\reg''|})^{\frac{1}{2}}\pA_{i}^{-1}(\vDelta^i_{|\reg''|})^{\frac{1}{2}}\right)^{-1}(\vDelta^i_{|\reg''|})^{\frac{1}{2}}\pA_{i}^{-1}\ve_j\right|
		\label{eq:replaceei2}\\			
	&\leq&
	c_u\cdot \sup_{i,j}\|(\vDelta^i_{|\reg''|})^{\frac{1}{2}}\pA_{i}^{-1}\ve_j\|^2\cdot \frac{1}{\inf_{i}\min\left|\text{eigenvalue of $\frac{1}{\lambda} \vI_{\reg''}^{-1}+(\vDelta^i_{|\reg''|})^{\frac{1}{2}}\pA_{i}^{-1}(\vDelta^i_{|\reg''|})^{\frac{1}{2}}$}\right|}
		\n\\
	&\leq&
	\frac{c_u}{1+o_p(1)}\cdot \sup_{i,j}\|(\vDelta^i_{|\reg''|})^{\frac{1}{2}}\pA_{i}^{-1}\ve_j\|^2
		\n\\
	&\leq&
	\op{\frac{\plogn }{n^{\frac{\alpha^2}{2}}\cdot \ac^{8\rho+9}}} \n
	,
\EE
where the last two inequalities hold due to \eqref{eq:lowerbound_example_eqpAc} and \eqref{eq:AB1dif1}.



\subsection{Proof for Lemma \ref{lem:G}}\label{sec:G}

First note that, by using Assumptions O.\ref{ass:Smoothness}, O.\ref{ass:Smoothness2} and Lemma \ref{lem:supnormDi}, it is straightforward to conclude that
\BEQ
\label{eq:add_G_eq1}
	\Omega_p\left(\ac\right)
	\spaceequal 
	\inf_i \fl''(y_i-\vrx{i}^{\t}\hvb) 
	\qleq
	\sup_i\fl''(y_i-\vrx{i}^{\t}\hvb)
	\spaceequal
	\op{\frac{\plogn}{\ac^{3\rho}}}
	.
\EEQ
Calculating $G'(\theta)$ directly, we have
\BE
	\left|G'(\theta)\right|
	&\geq&
	\dotp{\frac{\hfl''}{(1+\theta \hfl'')^2}}
	\qgeq 
	\frac{1}{4}\min\left\{\inf_i \hfl''_i,\frac{1}{\theta^2\sup_{i}\hfl''_i}\right\}
		\n\\
	&=& 
	\Omega_p\left(\frac{\ac^{3\rho+1}}{(1+\theta^2)\plogn}\right)
	,	\label{eq:g'}
\EE
where $\hfl'',\hfl''_i$ are the shorthands for $\fl''(\vy-\vX\hvb),\fl''(y_i-\vrx{i}^{\t}\hvb)$ respectively, and the last inequality is due to \eqref{eq:add_G_eq1}. We remind the reader that as discussed in Section \ref{sec:assume}, this is the only place that Assumption O.\ref{ass:Smoothness} can not be replaced with $\inf_{x\in \bbR} \lambda\reg''(x) \qgeq \ac$. However, suppose that we make the following assumption
\BI

\item[O.6] a constant fraction $\gamma$ of the residuals $\{y_i-\vrx{i}^{\t}\hvb\}$ fall in the regions at which the curvature of $\ell$ is lower bounded by $\ac$.
\EI
Then, from \eqref{eq:g'} we can lower bound $|G'(\theta)|$ by
\[
	\left|G'(\theta)\right|
	\qgeq
	\frac{\dotp{\hfl''}}{(1+\theta \sup_i\hfl''_i)^2}
	\qgeq
	\gamma \frac{\ac^{6\rho+1}}{(1+\theta^2)\plogn}
	.
\]  
Hence, our results will hold even when we replace Assumption O.\ref{ass:Smoothness} by $\inf_{x\in \bbR} \lambda\reg''(x) \qgeq \ac$ and Assumption O.6.

Back to the proof of Lemma \ref{lem:G}, the next step is to prove that 
\BE
	\left|G(\hat{\theta})-G\left(\frac{1}{n}\trace(\pB^{-1}\vSigma)\right)\right|
	\spaceequal
	\op{\frac{\plogn\cdot \bc^{1+\alpha}}{n^{\frac{\alpha^2}{2}}\cdot \ac^{64\rho+16}}},
	\label{eq:add_goal_21prime}
\EE
First note that according to \eqref{eq:thetahateq} we have
\BE
	G(\hat{\theta})
	&=&
	1
	.	 \label{eq:onehand}
\EE 
To calculate $G(\frac{1}{n}\trace(\pB^{-1}\vSigma))$, let $\hX=\diag{\dfl''(\vy-\vX\hvb)}^{\frac{1}{2}}\vX$. Recall
\[
	\pA_{i}
	\ :=\
	\vX^{\t}\diag{\dfl''(\vy-\vX\hvb)}\vX-\fl''(y_i-\vrx{i}^{\t}\hvb)\vrx{i}\vrx{i}^{\t}+\lambda \diag{\cdreg''(\hvb)}
	.
\]
Using \eqref{eq:add_G_eq1} and  the matrix inversion lemma, we have
\BE
	\frac{1}{n}\trace(\hX\pB^{-1}\hX^{\t})
	&=&
	\frac{1}{n}\sum_{i=1}^n\hfl''_i\vrx{i}^{\t}\pB^{-1}\vrx{i}
		\n\\
	&=&
	\frac{1}{n}\sum_{i=1}^n\hfl''_i\left(\vrx{i}^{\t}\pA_{i}^{-1}\vrx{i}-\frac{\hfl''_i(\vrx{i}^{\t}\pA_{i}^{-1}\vrx{i})^2}{1+\hfl''_i\vrx{i}^{\t}\pA_{i}^{-1}\vrx{i}}\right)
		\n\\
	&=&
	1-\dotp{\frac{1}{1+\vrx{i}^{\t}\pA_{i}^{-1}\vrx{i}\cdot \hfl''}}
		\n\\
	&\stackrel{\text{(i)}}{=} &
	1-\dotp{\frac{1}{1+\vrx{i}^{\t}\oA_{i}^{-1}\vrx{i}\cdot \hfl''}}+\op{\frac{\plogn }{n^{\frac{\alpha^2}{2}}\cdot \ac^{11\rho+9}}}
		\n\\
	&\stackrel{\text{(ii)}}{=}&
	1-\dotp{\frac{1}{1+\frac{1}{n}\trace(\oA_{i}^{-1}\vSigma)\cdot \hfl''}}+\op{\frac{\plogn }{n^{\frac{\alpha^2}{2}}\cdot \ac^{11\rho+9}}}
		\n\\
	&\stackrel{\text{(iii)}}{=}&
	1-\dotp{\frac{1}{1+\frac{1}{n}\trace(\pB^{-1}\vSigma)\cdot \hfl''}}+\op{\frac{\plogn }{n^{\frac{\alpha^2}{2}}\cdot \ac^{11\rho+9}}}
	,	\label{eq:xBx}
\EE
where Equality (ii) is due to Lemma \ref{lem:minev} and the independency between  $\vrx{i}$ and $\oA_{i}$, Equality (iii) is due to Lemma \ref{lem:ABswitch}, and finally Equality (i) holds because of the following lemma:
\begin{lemma}\label{lem:ABswitch2}
Suppose Assumption O.\ref{ass:Convex}-O.\ref{ass:True} hold. For large enough $n$, we have
\[
	\sup_i\left|\vrx{i}^{\t}\pA_{i}^{-1}\vrx{i}-\vrx{i}^{\t}\oA_{i}^{-1}\vrx{i}\right|
	\spaceequal
	\op{\frac{\plogn }{n^{\frac{\alpha^2}{2}}\cdot \ac^{8\rho+9}}}
	.
\]
\end{lemma}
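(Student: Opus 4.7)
The plan is to mirror the proof of Lemma \ref{lem:ABswitch}, replacing the coordinate vectors $\ve_j$ with the random vector $\vrx{i}$ and absorbing the extra factor $\|\vrx{i}\|^2$ via Lemma \ref{lem:xwnorm}. Recall the decomposition $\pA_{c,i}-\oA_{c,i}=\vX^{\t}\vDelta^i_{\fl''}\vX+\lambda\vDelta^i_{\reg''}$ from Section \ref{sec:ABswitch}, along with the sign factorizations $\vDelta^i_{\fl''}=(\vDelta^i_{|\fl''|})^{1/2}\vI_{\fl''}(\vDelta^i_{|\fl''|})^{1/2}$ and $\vDelta^i_{\reg''}=(\vDelta^i_{|\reg''|})^{1/2}\vI_{\reg''}(\vDelta^i_{|\reg''|})^{1/2}$. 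I will move from $\oA_{c,i}^{-1}$ to $\pA_{c,i}^{-1}$ in two steps: first introduce $B:=\oA_{c,i}+\vX^{\t}\vDelta^i_{\fl''}\vX$, and then add $\lambda\vDelta^i_{\reg''}$. The matrix inversion lemma is applied at each step.

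For the first step, Woodbury yields
\[
\vrx{i}^{\t}(B^{-1}-\oA_{c,i}^{-1})\vrx{i}=-\,\vrx{i}^{\t}\oA_{c,i}^{-1}\vX^{\t}(\vDelta^i_{|\fl''|})^{1/2}\bigl[\vI_{\fl''}+(\vDelta^i_{|\fl''|})^{1/2}\vX\oA_{c,i}^{-1}\vX^{\t}(\vDelta^i_{|\fl''|})^{1/2}\bigr]^{-1}(\vDelta^i_{|\fl''|})^{1/2}\vX\oA_{c,i}^{-1}\vrx{i},
\]
and an analogous identity (with $B^{-1}$ replacing $\oA_{c,i}^{-1}$, $\lambda\vDelta^i_{\reg''}$ replacing $\vX^{\t}\vDelta^i_{\fl''}\vX$, and $\tfrac{1}{\lambda}\vI_{\reg''}$ replacing $\vI_{\fl''}$) handles the second step. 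Thanks to \eqref{eq:AB1dif1}, the diagonal entries of $\vDelta^i_{|\fl''|}$ and $\vDelta^i_{|\reg''|}$ are uniformly $o_p(1)$, so the bracketed matrix in each identity has its $|\cdot|$-eigenvalues bounded below by a positive constant with high probability (the sign matrix has eigenvalues in $\{\pm 1\}$, and the PSD perturbation has operator norm $o_p(1)$ by combining \eqref{eq:AB1dif1} with $\sigma_{\max}(\vX\oA_{c,i}^{-1}\vX^{\t})=\op{1/\ac}$ from Lemma \ref{lem:minev} and \eqref{eq:lowerbound_example_eqpAc}, and the analogous bound $\sigma_{\max}(B^{-1})=\op{1/\ac}$). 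It then remains to bound the flanking quantities $\|(\vDelta^i_{|\fl''|})^{1/2}\vX\oA_{c,i}^{-1}\vrx{i}\|^2$ and $\|(\vDelta^i_{|\reg''|})^{1/2}B^{-1}\vrx{i}\|^2$. For each I will use the operator-norm estimate
\[
\|(\vDelta)^{1/2}\vM\vrx{i}\|^2\ \leq\ \sup_j(\ve_j^{\t}\vDelta\ve_j)\cdot\sigma_{\max}(\vM)^2\cdot\|\vrx{i}\|^2,
\]
and combine \eqref{eq:AB1dif1}, the eigenvalue lower bounds $\sigma_{\min}(\oA_{c,i}),\sigma_{\min}(B)=\Omega_p(\ac)$, and $\|\vrx{i}\|^2=\op{1}$ from Lemma \ref{lem:xwnorm} to conclude that each flanking quantity is $\op{\plogn/(n^{\alpha^2/2}\ac^{8\rho+9})}$, matching the rate of Lemma \ref{lem:ABswitch}. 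A union bound over $i\in\{1,\ldots,n\}$ is absorbed into $\plogn$.

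The main subtlety is that $\vDelta^i_{\fl''}$ is \emph{not} independent of $\vrx{i}$ (it is built through $\hvb$, which depends on the $i$th sample), and consequently $B$ is not independent of $\vrx{i}$ either. This rules out any concentration-of-quadratic-form strategy (such as Lemma \ref{LEM:TAIL}) aimed directly at $\vrx{i}^{\t}B^{-1}\vrx{i}$. Sidestepping this dependence is exactly what the operator-norm estimate above accomplishes: it pays the mild price of an extra factor $\|\vrx{i}\|^2=\op{1}$ in exchange for needing no independence at all, which is the same device used in the proof of Lemma \ref{lem:ABswitch}. Apart from this, the rest is bookkeeping: tracking the powers of $\ac$ and the factor $n^{-\alpha^2/2}$ inherited from the H\"older continuity of $\fl''$ and $\reg''$ through \eqref{eq:AB1dif1}, which propagate verbatim into the target rate.
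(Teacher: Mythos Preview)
Your proposal is correct and follows essentially the same route as the paper: the paper's proof explicitly says to ``replace $\ve_j$ with $\vrx{i}$ in \eqref{eq:replaceei1} and \eqref{eq:replaceei2} and follow similar steps as the ones presented in the proof of Lemma \ref{lem:ABswitch},'' which is exactly your two-step Woodbury argument with $B=\oA_{c,i}+\vX^{\t}\vDelta^i_{\fl''}\vX$ as the intermediate matrix. Your remark that independence between $\vrx{i}$ and the perturbation matrices is not needed (because operator-norm bounds plus $\|\vrx{i}\|^2=\op{1}$ suffice) is precisely why the paper's replacement of $\ve_j$ by $\vrx{i}$ goes through without modification.
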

\begin{proof}
By replacing $\ve_j$ with $\vrx{i}$ in \eqref{eq:replaceei1} and \eqref{eq:replaceei2} and following similar steps as the ones presented in the proof of Lemma \ref{lem:ABswitch} for bounding $\sup_i\frac{1}{n}|\trace(\pA_{i}^{-1}\vSigma-\oA_{i}^{-1}\vSigma)|$, we can show that
\[
	\sup_i|\vrx{i}^{\t}(\pA_{i}^{-1}-\oA_{i}^{-1})\vrx{i}|
	\qleq
	\op{\frac{\plogn }{n^{\frac{\alpha^2}{2}}\cdot \ac^{8\rho+9}}}
	.
\]
\end{proof}

On the other hand, we can calculate $\frac{1}{n}\trace(\hX\pB^{-1}\hX^{\t})$ in a different way. We define
\[
	\vD\spaceequal\left(\lambda \diag{\cdreg''(\hvb)}\right)^{-1}.
\]
 Then we have
\BE
	\frac{1}{n}\trace(\hX\pB^{-1}\hX^{\t})
	&=&
	\frac{1}{n}\trace((I+\hX \vD\hX^{\t})^{-1}\hX \vD\hX^{\t})
		\n\\
	&=&
	\frac{1}{n}\sum_{i=1}^pD_i\hvcx{i}^{\t}(I+\hX \vD\hX^{\t})^{-1}\hvcx{i}
		\n\\
	&=&
	\frac{1}{n}\sum_{i=1}^pD_i\hvcx{i}^{\t}(\hQ_i+D_i\hvcx{i}\hvcx{i}^{\t})^{-1}\hvcx{i}
		\n\\
	&\stackrel{\text{(i)}}{=}&
	\frac{1}{n}\sum_{i=1}^pD_i\hvcx{i}^{\t}\left(\hQ_i^{-1}-\frac{D_i\hQ_i^{-1}\hvcx{i}\hvcx{i}^{\t}\hQ_i^{-1}}{1+D_i\hvcx{i}^{\t}\hQ_i^{-1}\hvcx{i}}\right)\hvcx{i}
		\n\\
	&=&
	\frac{p}{n}\cdot \frac{1}{p} \sum_{i=1}^p\frac{D_i\hvcx{i}^{\t}\hQ_i^{-1}\hvcx{i}}{1+D_i\hvcx{i}^{\t}\hQ_i^{-1}\hvcx{i}}
		\n\\
	&\stackrel{\text{(ii)}}{=}&
	\frac{1}{\delta}\dotp{\frac{1}{1+ \frac{\lambda \creg''(\hvb)}{\hvcx{i}^{\t}\hQ_i^{-1}\hvcx{i}}}}
	,	\label{eq:xBx2}
\EE
where $\hQ_i=\vI+\hX \vD\hX^{\t}-D_i\hvcx{i}\hvcx{i}^{\t}$, and $\hvcx{i}=\diag{\dfl''(\vy-\vX\hvb)}^{\frac{1}{2}}\vcx{i}$. Further, Equality (i) is due to Matrix Inversion Lemma and Equality (ii) is due to $D_i=\frac{1}{\lambda \creg''(\hb_i)}$ by definition. We claim that
\begin{lemma}\label{lem:trxbx}
Suppose Assumption O.\ref{ass:Convex}-O.\ref{ass:True} hold. For large enough $n$, we have
\[
	\sup_i\left|\hvcx{i}^{\t}\hQ_i^{-1}\hvcx{i}-\sigma_i^2\dotp{\frac{\hfl''}{1+\frac{1}{n}\trace(\pB^{-1}\vSigma)\cdot\hfl''}}\right|
	\spaceequal
	\op{\frac{\plogn\cdot \bc^{1+\alpha}}{n^{\frac{\alpha^2}{2}}\cdot \ac^{64\rho+15}}}
	.
\]
\end{lemma}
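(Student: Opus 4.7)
Proof plan. The obstruction is that $\hvcx{i}$ and $\hQ_i$ are coupled through their common dependence on $\hvb$, which blocks a direct application of Lemma \ref{LEM:TAIL}. I decouple them using the leave-predictor-$i$-out estimate $\lopvb{i}$ from \eqref{eq:blib}. Since $\lopvy{i}=\XIb\lopvtb{i}+\vw$ is independent of $\vcx{i}$ under Assumption O.\ref{ass:True}, so is $\lopvb{i}$. Define the surrogate weight vector $\bvd_i:=\fl''(\lopvy{i}-\XIb\lopvb{i})$, the surrogate column $\bvz_i:=\diag{\bvd_i}^{1/2}\vcx{i}$, and the surrogate matrix $\bvQ_i$ obtained by replacing every appearance of $\hvb$ in the definition of $\hQ_i$ by $\lopvb{i}$. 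Then $\bvd_i$ and $\bvQ_i$ depend only on $(\XIb,\lopvtb{i},\vw)$ and are therefore independent of $\vcx{i}$.

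The first task is to prove a leave-predictor analogue of Proposition \ref{thm:looo}: $\sup_i\|\hvb-\lopvb{i}\|$ and $\sup_{i,j}|\hfl''_j-[\bvd_i]_j|$ are both $\op{\plogn/(n^{\alpha/2}\ac^{\mathrm{const}})}$. The argument mirrors Section \ref{sec:looo} with the roles of rows and columns swapped: a one-step Newton expansion around $\hvb$, with the curvature remainder controlled via Assumption O.\ref{ass:Holder}. Combined with the spectral lower bound \eqref{eq:lowerbound_example_eqpAc}, these bounds let me replace $\hvcx{i}^{\t}\hQ_i^{-1}\hvcx{i}$ by $\bvz_i^{\t}\bvQ_i^{-1}\bvz_i$ and $\tfrac{1}{n}\trace(\diag{\hfl''}\hQ_i^{-1})$ by $\tfrac{1}{n}\trace(\diag{\bvd_i}\bvQ_i^{-1})$ at negligible cost. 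Applying Lemma \ref{LEM:TAIL} to the now-independent pair $(\vcx{i},\diag{\bvd_i}^{1/2}\bvQ_i^{-1}\diag{\bvd_i}^{1/2})$, whose operator norm is $\op{\plogn/\ac^{3\rho+1}}$ by Lemma \ref{lem:supnormDi} and \eqref{eq:lowerbound_example_eqpAc}, then yields
\[
\sup_i\bigl|\bvz_i^{\t}\bvQ_i^{-1}\bvz_i-\tfrac{1}{n}\trace(\diag{\bvd_i}\bvQ_i^{-1})\bigr|\=\op{\tfrac{\plogn}{\sqrt{n}\,\ac^{3\rho+1}}}.
\]
Reverting the substitution reduces the claim to bounding $\sup_i\bigl|\tfrac{1}{n}\trace(\diag{\hfl''}\hQ_i^{-1})-T\bigr|$, where $T$ denotes the target $\dotp{\hfl''/(1+\tfrac{1}{n}\trace(\pB_c^{-1})\hfl'')}$.

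The main obstacle, and the source of the large exponent $\ac^{-(64\rho+15)}$, is this final trace identification. Writing $\hA:=\hX\vD\hX^{\t}$, a single Sherman--Morrison step converts $\hQ_i^{-1}=(\vI+\hA-D_i\hvcx{i}\hvcx{i}^{\t})^{-1}$ into $(\vI+\hA)^{-1}$ at tolerable cost. Woodbury's identity then gives $(\vI+\hA)^{-1}=\vI-\hX\pB_c^{-1}\hX^{\t}$, so $[(\vI+\hA)^{-1}]_{jj}=1-\hfl''_j\vrx{j}^{\t}\pB_c^{-1}\vrx{j}$. A further Sherman--Morrison step on the $j$th observation in $\pB_c=\sum_k\hfl''_k\vrx{k}\vrx{k}^{\t}+\lambda\diag{\creg''(\hvb)}$ extracts a leave-observation-$j$-out matrix that is close to $\pB_c$ and, up to perturbations controlled via the machinery of Lemmas \ref{lem:replace}--\ref{lem:ABswitch}, independent of $\vrx{j}$; Lemma \ref{LEM:TAIL} then gives $\vrx{j}^{\t}\pB_c^{-1}\vrx{j}\approx\tfrac{1}{n}\trace(\pB_c^{-1})/(1+\hfl''_j\tfrac{1}{n}\trace(\pB_c^{-1}))$. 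Rearranging identifies $[(\vI+\hA)^{-1}]_{jj}$ with $1/(1+\hfl''_j\tfrac{1}{n}\trace(\pB_c^{-1}))$; weighting by $\hfl''_j/n$ and summing reproduces $T$. The hardest part is the bookkeeping: each of the nested replacements contributes a factor of $\plogn/\ac^{\Theta(\rho)}$, and propagating the product through the three Sherman--Morrison steps and two applications of Lemma \ref{LEM:TAIL} is what produces the stated exponent.
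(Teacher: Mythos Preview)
Your approach is essentially the same as the paper's: decouple via the leave-predictor-out estimate (the paper's Theorem \ref{thm:lopo}), swap $\hQ_i$ for the independent surrogate $\bQ_i$ (Lemma \ref{lem:q1}), apply the concentration Lemma \ref{LEM:TAIL}, and then identify the resulting trace with $T$ via Woodbury plus a second leave-observation-out/concentration step (Lemma \ref{lem:q2} together with the final display in Section \ref{ssec:roadmaplemmatxbx}). The only understatement is the leave-predictor analogue, which is not a simple row/column swap of Proposition \ref{thm:looo}: the paper needs an auxiliary proximal quantity $b_i$ to close the argument, and this is exactly where Assumption O.\ref{ass:boundhvb} enters and the factor $\bc^{1+\alpha}$ in the stated rate originates.
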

We will prove this lemma in the next section. In the rest of this section, we show how this lemma implies 
\[
	G\left(\frac{1}{n}\trace(\pB^{-1}\vSigma)\right)\spaceequal1+\op{\frac{\plogn\cdot \bc^{1+\alpha}}{n^{\frac{\alpha^2}{2}}\cdot \ac^{64\rho+16}}},
\]
and therefore with \eqref{eq:onehand}, it completes the proof of Lemma \ref{lem:G}. Note that if Lemma \ref{lem:trxbx} holds, then according to \eqref{eq:xBx2}, we have 
\BE
	\frac{1}{n}\trace(\hX\pB^{-1}\hX^{\t})
	&=&
	\frac{1}{\delta}\dotp{\frac{1}{1+\lambda\dotp{\frac{\hfl''}{1+\frac{1}{n}\trace(\pB^{-1}\vSigma)\hfl''}}^{-1}\frac{\creg''(\hvb)}{\sigma_i^2}}}\nonumber \\
	&&+\op{\frac{\plogn\cdot \bc^{1+\alpha}}{n^{\frac{\alpha^2}{2}}\cdot \ac^{64\rho+15}}\cdot \dotp{\frac{\hfl''}{1+\frac{1}{n}\trace(\pB^{-1}\vSigma)\cdot \hfl''}}^{-1}}
		\n\\
	&\stackrel{\text{(i)}}{=}&
	\frac{1}{\delta}\dotp{\frac{1}{1+\lambda\dotp{\frac{\hfl''}{1+\frac{1}{n}\trace(\pB^{-1}\vSigma)\hfl''}}^{-1}\frac{\creg''(\hvb)}{\sigma_i^2}}}+\op{\frac{\plogn\cdot \bc^{1+\alpha}}{n^{\frac{\alpha^2}{2}}\cdot \ac^{64\rho+16}}}
		\n\\
	&\stackrel{\text{(ii)}}{=}&
	\frac{1}{\delta}\dotp{\frac{1}{1+\lambda\dotp{\frac{\hfl''}{1+\frac{1}{n}\trace(\pB^{-1}\vSigma)\hfl''}}^{-1}\frac{\reg''(\hvb)}{\hat{\vsigma}^2}}}+\op{\frac{\plogn\cdot \bc^{1+\alpha}}{n^{\frac{\alpha^2}{2}}\cdot \ac^{64\rho+16}}}
	,	\label{eq:xBxsecondeq}
\EE
where Equality (i) holds due to \eqref{eq:upperbound} and \eqref{eq:add_G_eq1}, and Equality (iii) holds due to Lemma \ref{lem:xwnorm} and Assumption O.\ref{ass:True}. Note that we have obtained two different expressions for $\frac{1}{n}\trace(\hX\pB^{-1}\hX^{\t})$ in \eqref{eq:xBx} and \eqref{eq:xBxsecondeq}. By combining the two we obtain
\[
	G(\frac{1}{n}\trace(\pB^{-1}\vSigma))
	\spaceequal
	1+\op{\frac{\plogn\cdot \bc^{1+\alpha}}{n^{\frac{\alpha^2}{2}}\cdot \ac^{64\rho+16}}}
	.
\]
This completes the proof. Hence, the only claim that we have not proved yet is Lemma \ref{lem:trxbx}. This lemma will be proved in the next section. 

\subsection{Proof of Lemma \ref{lem:trxbx}}

Since the proof of this lemma is long, we first mention the roadmap of the proof in Section \ref{ssec:roadmaplemmatxbx} and then present the details in the subsequent sections. 
\subsubsection{Roadmap of the proof of Lemma \ref{lem:trxbx}}\label{ssec:roadmaplemmatxbx}

 Note that the goal of this lemma is to connect $\hvcx{i}^{\t}\hQ_i^{-1}\hvcx{i}$ with $\sigma_i^2\dotp{\frac{\hfl''}{1+\frac{1}{n}\trace(\pB^{-1}\vSigma)\cdot\hfl''}}$. In other words, we expect that $\hvcx{i}^{\t}\hQ_i^{-1}\hvcx{i}$ concentrates around $\sigma_i^2\dotp{\frac{\hfl''}{1+\frac{1}{n}\trace(\pB^{-1}\vSigma)\cdot\hfl''}}$ for all different values of $i$. One of the main challenges in proving this concentration is that since in the calculation of $\hQ_i^{-1}$, $\hvb$ is used, $\hQ_i^{-1}$ is dependent on $\hvcx{i}$. Hence, as the first step in our calculations we find a copy of $\hQ_i^{-1}$ from which $\hvcx{i}$ is removed. This requires us to first explain what happens if we remove one of the predictors from our model. 
 Hence, as the first step we study leave-one-predictor-out estimates (LOP) which. We remind the reader that the notations for the leave-one-predictor-out estimate are presented in Section \ref{sec:notation}.

\begin{theorem}\label{thm:lopo}
Let $\lophvb{i}$ be the original estimate $\hvb$ without $i$th component. Then under Assumptions O.\ref{ass:Convex}-O.\ref{ass:boundhvb}, we have 
\[
	\sup_{i,j}\ve_j^{\t}(\lopvy{i}-\XIb\lopvb{i})
	\spaceequal
	\op{\frac{\plogn}{\ac^{\rho+2}}}
	,
\]
and
\[
	\lopvb{i}
	\spaceequal 
	\lophvb{i}+(b_i-\beta_{0,i})\bA_i^{-1}\XIb^{\t}\diag{\dfl''(\lopvy{i}-\XIb\lopvb{i})}\vcx{i}+\bvepsilon^i,
\]
where
\[
	\bA_i
	\spaceequal
	\XIb^{\t}\diag{\dfl''(\lopvy{i}-\XIb\lopvb{i})}\XIb+\lambda \diag{\dreg''(\lopvb{i})}
	,
\]
and
\[ 
	b_i
	\spaceequal
	\argmin_{b\in\bbR} \frac{1}{2}\left(b-\beta_{0,i}-\frac{1}{a_i}\vcx{i}^{\t}\fl'(\lopvy{i}-\XIb\lopvb{i})\right)^2+\frac{\lambda}{a_i}\reg(b)
	.
\]
In the last equation, $a_i$ is defined as
\[
	a_i
	\spaceequal
	\vcx{i}^{\t}\left(\diag{\dfl''(\lopvy{i}-\XIb\lopvb{i})}^{-1}+\lambda\XIb\diag{\dreg''(\lopvb{i})}\XIb^{\t}\right)^{-1}\vcx{i}
	.
\] 
Moreover, for large enough $n$,
\begin{eqnarray}
	\sup_i\|\bvepsilon^i\| 
	&\spaceequal&
	\op{\frac{\plogn\cdot \bc^{1+\alpha}}{n^{\frac{\alpha}{2}}\cdot \ac^{32\rho+7}}}, \nonumber \\
	\sup_{i,j}\left|\ve_j^{\t}(\lophvb{i}-\lopvb{i})\right|
	&\spaceequal&
	\op{\frac{\plogn\cdot \bc^{1+\alpha}}{n^{\frac{\alpha}{2}}\cdot \ac^{32\rho+7}}}
	.
\end{eqnarray}
\end{theorem}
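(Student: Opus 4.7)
The proof plan mirrors Proposition \ref{thm:looo}, with the perturbation now applied columnwise (removing a predictor) rather than rowwise. First I would establish the residual bound $\sup_{i,j}|\ve_j^{\t}(\lopvy{i}-\XIb\lopvb{i})|=\op{\plogn/\ac^{\rho+2}}$ by applying Lemma \ref{lem:supnormDi} verbatim to the leave-one-predictor-out problem: the optimization defining $\lopvb{i}$ has the same structure as \eqref{eq:full} with $\XIb$ in place of $\vX$ and $\lopvy{i}=\XIb\lopvtb{i}+\vw$ in place of $\vy$, so the chain \eqref{eq:add_tue_2.33pm}--\eqref{eq:suploor} transfers unchanged, yielding both $\sup_i\|\lopvb{i}-\lopvtb{i}\|/\sqrt{n}=\op{1/\ac}$ and the claimed elementwise residual bound.

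Next I would construct the Newton-step approximation. Implicit differentiation of the FOC for the reduced problem parameterized by a frozen $i$-th coefficient $b$ gives $\partial_b\hat{\vbeta}_{\backslash i}(b)=-\bA_i^{-1}\XIb^{\t}\diag{\fl''}\vcx{i}$, and since $\lopvb{i}=\hat{\vbeta}_{\backslash i}(\beta_{0,i})$ and $\lophvb{i}=\hat{\vbeta}_{\backslash i}(\hb_i)$, this motivates setting
\[
    \loptvb{i}:=\lopvb{i}-(b_i-\beta_{0,i})\bA_i^{-1}\XIb^{\t}\diag{\dfl''(\lopvy{i}-\XIb\lopvb{i})}\vcx{i},
\]
and lifting to $\eloptvb{i}\in\bbR^p$ by placing $b_i$ in coordinate $i$. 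The stated scalar problem defining $b_i$ is designed precisely so that the $i$-th FOC of the full problem vanishes to leading order at $\eloptvb{i}$: eliminating the non-$i$ coordinates via $\bA_i^{-1}$ (a Schur complement of the full-problem Hessian) and applying the matrix inversion lemma to the resulting quadratic form in $\vcx{i}$ produces the weight $a_i$ in the theorem. Thus $\eloptvb{i}$ plays the role analogous to $\lootvb{i}$ in Proposition \ref{thm:looo}.

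The heart of the argument is to bound $\|L(\eloptvb{i})\|$ where $L(\vbeta):=-\vX^{\t}\fl'(\vy-\vX\vbeta)+\lambda\reg'(\vbeta)$. Since $L(\hvb)=\v0$ and the Jacobian of $L$ has minimum eigenvalue $\Omega_p(\ac)$ (by Lemma \ref{lem:minev} and Assumption O.\ref{ass:Smoothness}), such a bound converts into control on $\|\eloptvb{i}-\hvb\|$ by strong convexity, exactly as in \eqref{eq:L111}. To compute $L(\eloptvb{i})$ I would write $\vy-\vX\eloptvb{i}=\vr^i+\vDelta^i$ where $\vr^i=\lopvy{i}-\XIb\lopvb{i}$ and $\vDelta^i$ is the explicit columnwise perturbation, then Taylor expand $\fl'$ around $\vr^i$ and $\reg'$ around $\lopvb{i}$. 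By design of $\loptvb{i}$ and of the $b_i$ equation, the linear Taylor terms cancel; what remains are Hölder remainders of $\fl''$ and $\reg''$ together with the boundary contribution from the FOC for $\lopvb{i}$.

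The main obstacle will be those Hölder remainders. Unlike the leave-one-observation case, the $i$-th coordinate of $\eloptvb{i}$ differs from $\hb_i$ by an amount that need not tend to zero; it is only controlled through Assumption O.\ref{ass:boundhvb}, which caps $|\hb_i|^\rho$ by $\op{\bc}$. When bounding $|\fl''(x+\Delta)-\fl''(x)|$ with $|\Delta|$ possibly exceeding one, the pure Hölder estimate $C_l|\Delta|^\alpha$ no longer applies, so I would split into small- and large-gap regimes and interpolate via the polynomial envelope $\max\{\fl'',\reg''\}\leq C(1+|x|^\rho)$ of Assumption O.\ref{ass:Holder}; the resulting remainder scales like $|\Delta|^\alpha(1+|x|^\rho)$, and the $|x|^\rho$ factor absorbs a power of $\bc$ to yield the $\bc^{1+\alpha}$ in the final rate. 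The large denominator $\ac^{32\rho+7}$ simply tracks the accumulated $1/\ac$ factors from repeated applications of the strong-convexity bound. Once $\|\eloptvb{i}-\hvb\|$ is controlled in $\ell_2$, the coordinatewise bound on $\ve_j^{\t}(\lopvb{i}-\lophvb{i})$ follows by contracting $\bA_i^{-1}\XIb^{\t}\fl''\vcx{i}$ against the independent Gaussian $\ve_j^{\t}\XIb$, mirroring the derivation of \eqref{eq:suptovbij}.
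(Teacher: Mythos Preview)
Your overall architecture is correct and closely tracks the paper: define $\loptvb{i}$ as a first-order correction to $\lopvb{i}$, insert $b_i$ at coordinate $i$ to form $\eloptvb{i}$, and bound $\|L(\eloptvb{i})\|$ to control $\|\eloptvb{i}-\hvb\|$ via strong convexity. You are also right that the choice of $b_i$ via the proximal problem is precisely what kills the leading-order term in the $i$-th row of $L(\eloptvb{i})$.

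There is one genuine gap. All of your H\"older remainders are proportional to powers of $|b_i-\beta_{0,i}|$, so the argument is circular until you control $b_i$ itself. Writing $b_i=\eta\bigl(\beta_{0,i}+a_i^{-1}\vcx{i}^{\t}\fl'(\lopvy{i}-\XIb\lopvb{i}),\lambda/a_i\bigr)$ and using the $1$-Lipschitz property of $\eta$, a bound on $|b_i|$ reduces to a \emph{lower} bound on $a_i$. But $a_i$ involves $\reg''(\lopvb{i})$, whose growth is governed by $|\lopb{i}_j|^\rho$, and Assumption O.\ref{ass:boundhvb} bounds only $|\hb_j|^\rho$, not $|\lopb{i}_j|^\rho$. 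The paper breaks this loop with a two-pass argument: first run the whole machinery with the trivial choice $b_0=0$ (where $|b_0-\beta_{0,i}|=\op{\plogn}$ is available from O.\ref{ass:True} alone), obtaining a coarse bound $\sup_{i,j}|\ve_j^{\t}(\lophvb{i}-\lopvb{i})|=\op{\plogn/\ac^{6\rho+3}}$; this transfers O.\ref{ass:boundhvb} from $\hvb$ to $\lopvb{i}$, hence bounds $\reg''(\lopvb{i})$, hence lower-bounds $a_i$, hence yields $|b_i-\beta_{0,i}|\leq \op{\bc\cdot\plogn/\ac^{13\rho+2}}$. Only then is the refined pass with the true $b_i$ run. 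Your proposal goes straight to $b_i$ without this bootstrap.

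A related misattribution: the factor $\bc^{1+\alpha}$ does not enter through the polynomial envelope $(1+|x|^\rho)$ on $\fl'',\reg''$ as you suggest. The perturbations in the arguments of $\fl''$ (e.g.\ $\ve_j^{\t}\XIb(\loptvb{i}-\lopvb{i})+(b_i-\beta_{0,i})x_{ij}$) are of order $|b_i-\beta_{0,i}|/\sqrt{n}$ and hence small, so the plain H\"older estimate applies directly with no regime splitting needed; the $\bc^{1+\alpha}$ arises from the factor $|b_i-\beta_{0,i}|^{1+\alpha}$ in the remainder, and the link to $\bc$ is exactly through the $a_i$ bound above. Finally, for the coordinatewise bound at the end, the independent Gaussian to concentrate against is $\vcx{i}$ (which is independent of $\bA_i$, $\XIb$, $\lopvb{i}$), not $\ve_j^{\t}\XIb$, which is entangled with $\bA_i$.
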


The proof of this theorem is presented in Section \ref{sec:lopo}.

Now based on the leave-one-predictor-out estimate, $\lopvb{i}$, we construct a new copy of $\hQ_i^{-1}$, called $\bQ_i $ in the following way:
\[
	\bQ_i 
	\spaceequal
	\vI+\diag{\dfl''(\lopvy{i}-\XIb\lopvb{i})}^{1/2}\XIb \bD^i\XIb^{\t}\diag{\dfl''(\lopvy{i}-\XIb\lopvb{i})}^{1/2},
\] 
where
\[
	\bD^i
	\spaceequal
	\left(\lambda\diag{\dreg''(\lopvb{i})}\right)^{-1}
	.
\]
Note that an $\bQ_i$ has two major properties: (i) It is independent of $\bvcx{i}$, and (ii) it is close to $\hQ_i$. The second property is confirmed in the following lemam:

\begin{lemma}\label{lem:q1}
Suppose Assumption O.\ref{ass:Convex}-O.\ref{ass:True} hold. Let  $\bvcx{i}=\diag{\dfl''(\lopvy{i}-\XIb\lopvb{i})}^{\frac{1}{2}}\vcx{i}$. Then 
\[
	\sup_i\left|\hvcx{i}^{\t}\hQ_i^{-1}\hvcx{i}-\bvcx{i}^{\t}\bQ_i^{-1}\bvcx{i}\right|
	\spaceequal
	\op{\frac{\plogn\cdot \bc^{1+\alpha}}{n^{\frac{\alpha^2}{2}}\cdot \ac^{64\rho+15}}}
	,
\]
\end{lemma}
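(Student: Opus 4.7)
The plan is to apply the Woodbury identity to both $\hQ_i^{-1}$ and $\bQ_i^{-1}$, reducing the problem to comparing quadratic forms in the inverses of two well-conditioned leave-one-predictor-out Hessians, and then to propagate the closeness of $\hvb$ to $\lopvb{i}$ established in Theorem \ref{thm:lopo} through a H\"older-continuity argument. Setting $\hat{\vW}:=\diag{\dfl''(\vy-\vX\hvb)}^{1/2}$ and $\tilde{\vW}_i:=\diag{\dfl''(\lopvy{i}-\XIb\lopvb{i})}^{1/2}$, the Woodbury identity yields
\[
	\hQ_i^{-1} \= \vI - \hat{\vW}\XIb \hat{\vM}_i^{-1}\XIb^{\t}\hat{\vW},\qquad \bQ_i^{-1} \= \vI - \tilde{\vW}_i\XIb \bar{\vM}_i^{-1}\XIb^{\t}\tilde{\vW}_i,
\]
where
\[
	\hat{\vM}_i \= \XIb^{\t}\diag{\dfl''(\vy-\vX\hvb)}\XIb + \lambda\diag{\cdreg''(\lophvb{i})},\quad \bar{\vM}_i \= \XIb^{\t}\diag{\dfl''(\lopvy{i}-\XIb\lopvb{i})}\XIb + \lambda\diag{\cdreg''(\lopvb{i})}.
\]
The same computation that produced \eqref{eq:lowerbound_example_eqpAc} shows, via Assumption O.\ref{ass:Smoothness} and Lemma \ref{lem:minev}, that $\sigma_{\min}(\hat{\vM}_i),\sigma_{\min}(\bar{\vM}_i)\geq \Omega_p(\ac)$, so both inverses have operator norm $\op{1/\ac}$. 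This is the critical structural point: folding the regularizer into the inverse through Woodbury avoids the prohibitive $1/\cc=\sqrt{n}$ factor that a direct bound from $\creg''\geq\cc$ would generate.

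A direct expansion then yields $\hvcx{i}^{\t}\hQ_i^{-1}\hvcx{i}=\|\hvcx{i}\|^2-\vu_i^{\t}\hat{\vM}_i^{-1}\vu_i$ with $\vu_i:=\XIb^{\t}\diag{\dfl''(\vy-\vX\hvb)}\vcx{i}$, and similarly $\bvcx{i}^{\t}\bQ_i^{-1}\bvcx{i}=\|\bvcx{i}\|^2-\vv_i^{\t}\bar{\vM}_i^{-1}\vv_i$ with $\vv_i:=\XIb^{\t}\diag{\dfl''(\lopvy{i}-\XIb\lopvb{i})}\vcx{i}$. I would then split the target difference into $(\|\hvcx{i}\|^2-\|\bvcx{i}\|^2)$ and $(\vu_i^{\t}\hat{\vM}_i^{-1}\vu_i-\vv_i^{\t}\bar{\vM}_i^{-1}\vv_i)$. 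The perturbation driving both pieces is the vector identity
\[
	(\vy-\vX\hvb)-(\lopvy{i}-\XIb\lopvb{i}) \= (\beta_{0,i}-\hb_i)\vcx{i} + \XIb(\lopvb{i}-\lophvb{i}),
\]
whose first summand is controlled entrywise by Assumption O.\ref{ass:boundhvb}, Lemma \ref{lem:xwnorm} and Lemma \ref{lem:supnormDi}, and whose second summand is controlled by Theorem \ref{thm:lopo}.

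The first piece $\|\hvcx{i}\|^2-\|\bvcx{i}\|^2=\sum_j[\fl''(y_j-\vrx{j}^{\t}\hvb)-\fl''((\lopvy{i}-\XIb\lopvb{i})_j)]x_{ji}^2$ then follows immediately from H\"older continuity of $\fl''$ applied to the above identity. For the second piece I would use
\[
	\vu_i^{\t}\hat{\vM}_i^{-1}\vu_i-\vv_i^{\t}\bar{\vM}_i^{-1}\vv_i \= \vu_i^{\t}\hat{\vM}_i^{-1}(\bar{\vM}_i-\hat{\vM}_i)\bar{\vM}_i^{-1}\vu_i + (\vu_i-\vv_i)^{\t}\bar{\vM}_i^{-1}\vu_i + \vv_i^{\t}\bar{\vM}_i^{-1}(\vu_i-\vv_i),
\]
bounding $\|\bar{\vM}_i-\hat{\vM}_i\|_{\text{op}}$ via H\"older continuity of $\fl''$ and $\reg''$ applied to the two perturbations (residuals via the identity above, coefficients via Theorem \ref{thm:lopo}), and controlling $\|\vu_i\|,\|\vv_i\|,\|\vu_i-\vv_i\|$ through $\|\XIb\|_{\text{op}}=O_p(1)$ (Lemma \ref{lem:minev}), the uniform bound $\sup_j\fl''(y_j-\vrx{j}^{\t}\hvb)=\op{\plog n/\ac^{3\rho}}$ (Lemma \ref{lem:supnormDi} with Assumption O.\ref{ass:Holder}), and $\|\vcx{i}\|=O_p(1)$.

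The main obstacle is careful bookkeeping of rates. The target $n^{-\alpha^2/2}$ arises as $(n^{-\alpha/2})^{\alpha}$ when the H\"older exponent $\alpha$ acts on a perturbation that itself carries $n^{-\alpha/2}$ (from Theorem \ref{thm:lopo} and Proposition \ref{thm:looo}). The factor $\bc^{1+\alpha}$ enters through Assumption O.\ref{ass:boundhvb} by inflating the H\"older constants of $\fl''$ and $\reg''$ via the bound on $\sup_i|\hb_i|^\rho$. The exponent $\ac^{-(64\rho+15)}$ accumulates from two factors of $1/\ac$ coming from $\hat{\vM}_i^{-1}\bar{\vM}_i^{-1}$, the $\ac^{-3\rho}$ in $\sup_j\fl''$, and the $\ac^{-(32\rho+7)}$ factor imported from Theorem \ref{thm:lopo}, combined multiplicatively to produce the stated rate.
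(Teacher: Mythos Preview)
Your approach is essentially the same as the paper's: apply Woodbury to convert both quadratic forms into $\|\cdot\|^2$ minus a quadratic form in the inverse of a leave-one-predictor-out Hessian, then compare via H\"older continuity. The paper organizes the comparison by inserting an intermediate matrix $\check{\vM}_i$ (same outer factor as $\hat{\vM}_i$, same inner inverse as $\bar{\vM}_i$), whereas you telescope $\vu_i^{\t}\hat{\vM}_i^{-1}\vu_i-\vv_i^{\t}\bar{\vM}_i^{-1}\vv_i$ directly; and where you bound $\|\bar{\vM}_i-\hat{\vM}_i\|_{\mathrm{op}}$ by a crude operator-norm estimate, the paper reuses the sign-decomposition trick from Lemma~\ref{lem:ABswitch}. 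Both routes yield the same rate.

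One point to be careful about: when you write that the second summand $\XIb(\lopvb{i}-\lophvb{i})$ is ``controlled by Theorem~\ref{thm:lopo}'', the coordinate bound $\sup_{i,j}|\ve_j^{\t}(\lophvb{i}-\lopvb{i})|$ from that theorem is \emph{not} enough by itself, because passing through $\XIb$ via Cauchy--Schwarz would cost you a factor $\sqrt{p}$. You must instead use the explicit decomposition in Theorem~\ref{thm:lopo},
\[
\lopvb{i}-\lophvb{i}=(b_i-\beta_{0,i})\bA_i^{-1}\XIb^{\t}\diag{\dfl''(\lopvy{i}-\XIb\lopvb{i})}\vcx{i}+\bvepsilon^i,
\]
handle $\XIb\bvepsilon^i$ via the norm bound on $\bvepsilon^i$, and handle the first piece entrywise using the independence of $\vcx{i}$ from $(\XIb,\lopvy{i},\lopvb{i},\bA_i)$ (this is the content of \eqref{eq:add_lopo_1_eq4} in the paper). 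The paper packages this step by routing through the auxiliary point $\eloptvb{i}$ from the proof of Theorem~\ref{thm:lopo}; your identity-based path is equivalent but needs this extra unpacking to go through.
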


The proof of this lemma is presented in Section \ref{sec:q1}.
The independence of $\bQ_i$ on $\bvcx{i}$ enables us to prove the concentration of $\bvcx{i}^{\t}\bQ_i^{-1}\bvcx{i}$; 
Due to Assumption O.\ref{ass:Smoothness2} and Theorem \ref{thm:lopo}, we have
\BE
	\sup_{i,j}\fl''(\ve_j^{\t}(\lopvy{i}-\XIb\lopvb{i}))
	&\leq&
	\op{\frac{\plogn}{\ac^{3\rho}}}
	.	\label{eq:add_trxbx_eq1}
\EE
Hence, with the facts that $\bQ_i$ and $\dfl''(\lopvy{i}-\XIb\lopvb{i})$ are independent of $\vcx{i}$, the minimal eigenvalue of $\bQ_i$ is at least $1$ and $\vcx{i}$ has i.i.d.~subGaussian components, from Hanson-Wright inequality, we have
\BE
	\sup_i\left|\bvcx{i}^{\t}\bQ_i^{-1}\bvcx{i}-\frac{\sigma_i^2}{n}\trace\left((\dbfl^{i})^{\frac{1}{2}}\bQ_i^{-1}(\dbfl^{i})^{\frac{1}{2}}\right)\right|
	&=&
	\op{\frac{\ln n}{\sqrt{n}}\cdot \sup_{i,j}\fl''(\ve_j^{\t}(\lopvy{i}-\XIb\lopvb{i}))}
		\n\\
	&=&
	\op{\frac{\plogn}{\sqrt{n}\cdot \ac^{3\rho}}}
	,	\n
\EE
where $\dbfl^{i}$ is a short hand for $\diag{\dfl''(\lopvy{i}-\XIb\lopvb{i})}$. To obtain the first equality we use similar argument as the ones used in the derivation of \eqref{eq:argument1}. 
Note that even though we have finally proved that $\bvcx{i}^{\t}\bQ_i^{-1}\bvcx{i}$ is concentrating, we have not proved that it is concentrating around $\sigma_i^2\dotp{\frac{\hfl''}{1+\frac{1}{n}\trace(\pB^{-1}\vSigma)\cdot\hfl''}}$ as required by Lemma \ref{lem:trxbx}.  Hence, our last step is to prove
\[
\sup_i\left|\frac{1}{n}\trace\left((\dbfl^{i})^{\frac{1}{2}}\bQ_i^{-1}(\dbfl^{i})^{\frac{1}{2}}\right)-\dotp{\frac{\hfl''}{1+\frac{1}{n}\trace(\pB^{-1}\vSigma)\cdot\hfl''}}\right| \ = \ \op{\frac{\plogn\cdot \bc^{1+\alpha}}{n^{\frac{\alpha^2}{2}}\cdot \ac^{64\rho+15}}}
.
\] 
We prove this in two steps. Our next lemma simplifies the expression  $\frac{1}{n}\trace\left((\dbfl^{i})^{\frac{1}{2}}\bQ_i^{-1}(\dbfl^{i})^{\frac{1}{2}}\right)$. 

\begin{lemma}\label{lem:q2}
Suppose Assumption O.\ref{ass:Convex}-O.\ref{ass:True} hold. For large enough $n$, we have
\[
	\sup_i\left|\frac{1}{n}\trace\left((\dbfl^{i})^{\frac{1}{2}}\bQ_i^{-1}(\dbfl^{i})^{\frac{1}{2}}\right)-\frac{1}{n} \trace\left( \diag{\dhfl''}^{\frac{1}{2}}(I+\hX \vD\hX^{\t})^{-1}\diag{\dhfl''}^{\frac{1}{2}}\right)\right|
\]
is at most
\[
	\op{\frac{\plogn\cdot \bc^{1+\alpha}}{n^{\frac{\alpha^2}{2}}\cdot \ac^{64\rho+15}}}
	.
\]
\end{lemma}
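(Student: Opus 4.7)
By the cyclicity of trace, the two quantities equal $\frac{1}{n}\trace(\bQ_i^{-1}\dbfl^i)$ and $\frac{1}{n}\trace((I+\hX\vD\hX^{\t})^{-1}\diag{\dhfl''})$ respectively. The plan is to interpolate between them via three substitutions, bounding the trace discrepancy at each step: (a) replace $\lopvb{i}$ by the leave-predictor-out restriction $\lophvb{i}$ of $\hvb$ inside all evaluations of $\dfl''$ and $\creg''$; (b) use the identity $\lopvy{i} - \XIb\lophvb{i} = (\vy-\vX\hvb)+(\hb_i-\beta_{0,i})\vcx{i}$ to swap the LOP residual arguments of $\dfl''$ for the full residuals $\vy-\vX\hvb$; and (c) reinstate the removed $i$-th column of $\vX$ as a rank-one Sherman--Morrison update, using $\vX\vD\vX^{\t} = \XIb\vD_{/i}\XIb^{\t}+D_{ii}\vcx{i}\vcx{i}^{\t}$ with $\vD_{/i}$ denoting $\vD$ without row/column $i$.

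\textbf{Steps (a) and (b).} Theorem \ref{thm:lopo} supplies $\sup_{i,j}|\ve_j^{\t}(\lophvb{i}-\lopvb{i})| = \op{\plogn\cdot \bc^{1+\alpha}/(n^{\alpha/2}\ac^{32\rho+7})}$ and $\sup_{i,j}|\ve_j^{\t}(\lopvy{i}-\XIb\lopvb{i})| = \op{\plogn/\ac^{\rho+2}}$. Feeding these into the H\"older continuity of $\fl''$ and $\reg''$ (Assumption O.\ref{ass:Holder}) produces coordinatewise perturbations of the diagonal entries of $\dbfl^i$ and of $\creg''(\lopvb{i})$ raised to the H\"older exponent $\alpha$; the composition of the two substitutions accounts for the characteristic $n^{-\alpha^2/2}$ factor in the target rate. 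Converting coordinatewise diagonal perturbations into trace differences uses the resolvent identity $A^{-1}-B^{-1} = A^{-1}(B-A)B^{-1}$. Because every intermediate matrix dominates $I$, the inverses have operator norm at most one; and since $B-A$ has the form $\vX^{\t}\vDelta_{\fl}\vX+\lambda\vDelta_{\reg}$ with small diagonal $\vDelta_{\fl},\vDelta_{\reg}$, its operator norm is at most $\|\vX\|_{\rm op}^{2}\sup_j|(\vDelta_{\fl})_{jj}| + \lambda\sup_j|(\vDelta_{\reg})_{jj}|$, which is controlled by Lemma \ref{lem:minev} and the bounds above. Combining yields a trace difference of the claimed order at each of stages (a) and (b).

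\textbf{Step (c).} After (a) and (b), the residual discrepancy is between $\tilde A = I+\diag{\dhfl''}^{1/2}\XIb\vD_{/i}\XIb^{\t}\diag{\dhfl''}^{1/2}$ and $I+\hX\vD\hX^{\t} = \tilde A + D_{ii}\vu_i\vu_i^{\t}$ with $\vu_i = \diag{\dhfl''}^{1/2}\vcx{i}$. Sherman--Morrison gives the exact normalized-trace correction
\[
\frac{1}{n}\,\frac{D_{ii}\,\vu_i^{\t}\tilde A^{-1}\diag{\dhfl''}\tilde A^{-1}\vu_i}{1+D_{ii}\vu_i^{\t}\tilde A^{-1}\vu_i}.
\]
Using $D_{ii} \leq \sqrt n/\lambda$ (from $\creg''(\hb_i) \geq 1/\sqrt n$), $\|\tilde A^{-1}\|_{\rm op}\leq 1$, the operator-norm bound $\sup_j\hfl''_j = \op{\plogn/\ac^{3\rho}}$ (Assumption O.\ref{ass:Smoothness2}, Lemma \ref{lem:supnormDi}), and $\|\vcx{i}\|^2 = O_p(1)$ (Lemma \ref{lem:xwnorm}), this correction is at most $\op{\plogn/(\sqrt n\,\lambda\,\ac^{6\rho})}$, which is dominated by the target.

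\textbf{Main obstacle.} The principal difficulty is the careful propagation of powers of $\ac$, $\bc$, and fractional powers of $n$ through the three substitutions so that the combined rate aligns with $\op{\plogn\cdot\bc^{1+\alpha}/(n^{\alpha^2/2}\ac^{64\rho+15})}$; in particular, each H\"older step compounds with the polynomial envelopes of $\fl''$ and $\reg''$ from Assumption O.\ref{ass:Holder}, and each resolvent expansion $A^{-1}(B-A)B^{-1}$ consumes an additional $\ac$ factor through the operator-norm bound on the inverse. A secondary obstacle is that all bounds must hold \emph{uniformly in} $i$: this is handled by the uniform spectral control of $\XIb^{\t}\XIb$ supplied by Lemma \ref{lem:minev} together with the uniform-in-$i$ concentration bounds established for the leave-one-out and leave-predictor-out residuals in Lemma \ref{lem:supnormDi} and Theorem \ref{thm:lopo}.
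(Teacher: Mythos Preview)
Your approach is correct and mirrors the paper's: the paper handles your steps (a)--(b) in one stroke by pointing to the proof of Lemma~\ref{lem:q1} with $\vcx{i}$ replaced by $\ve_j$ (which performs the same $\lopvb{i}\to\hvb$ swap via the H\"older bounds \eqref{eq:difl}--\eqref{eq:difl2} derived from Theorem~\ref{thm:lopo}), and it executes your step (c) via Sherman--Morrison using the crucial lower bound $\creg''(\hb_i)\geq c_3=n^{-1/2}$. The only imprecision is that in step (a) the H\"older control of $\fl''$ actually requires $\sup_{i,j}\bigl|\ve_j^{\t}\XIb(\lophvb{i}-\lopvb{i})\bigr|$ rather than the coordinatewise bound you cite from Theorem~\ref{thm:lopo}; this residual-projected bound is available from the internal structure of that theorem's proof (via \eqref{eq:add_lopo_1_eq4} and the bound on $\|\bvepsilon^i\|$), exactly as the paper uses in establishing \eqref{eq:difl}.
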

The proof of this lemma is presented in Section \ref{sec:q2}. Finally, we show that
\[
	\sup\left|\frac{1}{n} \trace\left( \diag{\dhfl''}^{\frac{1}{2}}(I+\hX \vD\hX^{\t})^{-1}\diag{\dhfl''}^{\frac{1}{2}}\right)-\dotp{\frac{\hfl''}{1+\frac{1}{n}\trace(\pB^{-1}\vSigma)\cdot\hfl''}}\right| \ = \ \op{\frac{\plogn }{n^{\frac{\alpha^2}{2}}\cdot \ac^{14\rho+9}}}
	.
\]
By Matrix Inversion Lemma, Assumption O.\ref{ass:Smoothness} and \eqref{eq:add_trxbx_eq1}, we have
\BE
	\lefteqn{\frac{1}{n}\trace\left( \diag{\dhfl''}^{\frac{1}{2}}(I+\hX \vD\hX^{\t})^{-1}\diag{\dhfl''}^{\frac{1}{2}}\right)}
		\n\\
	&=&
	\frac{1}{n}\trace\left(\diag{\dhfl''}-\diag{\dhfl''}\vX\pB^{-1}\vX^{\t}\diag{\dhfl''}\right)
		\n\\
	&=&
	\frac{1}{n}\sum_{i=1}^n\hfl''_i-\hfl_i''^2\vrx{i}^{\t}\pB^{-1}\vrx{i}
		\n\\
	&=&
	\frac{1}{n}\sum_{i=1}^n\frac{\hfl_i''}{1+\hfl_i''\cdot \vrx{i}^{\t}\pA_{i}^{-1}\vrx{i}}
		\n\\
	&\stackrel{\text{(i)}}{=}&
	\frac{1}{n}\sum_{i=1}^n\frac{\hfl_i''}{1+\hfl_i''\cdot \vrx{i}^{\t}\oA_{i}^{-1}\vrx{i}}+\op{\frac{\plogn }{n^{\frac{\alpha^2}{2}}\cdot \ac^{14\rho+9}}}
		\n\\
	&\stackrel{\text{(ii)}}{=}&
	\frac{1}{n}\sum_{i=1}^n\frac{\hfl_i''}{1+\hfl_i''\cdot \frac{1}{n}\trace(\oA_{i}^{-1}\vSigma)}+\op{\frac{\plogn }{n^{\frac{\alpha^2}{2}}\cdot \ac^{14\rho+9}}}
		\n\\
	&\stackrel{\text{(iii)}}{=}&
	\frac{1}{n}\sum_{i=1}^n\frac{\hfl_i''}{1+\hfl_i''\cdot \frac{1}{n}\trace(\pB^{-1}\vSigma)}+\op{\frac{\plogn }{n^{\frac{\alpha^2}{2}}\cdot \ac^{14\rho+9}}}
	,	\n
\EE
where Equality (i) holds due to Lemma \ref{lem:ABswitch2}, Equality (ii) holds due to Lemma \ref{lem:minev} and independency between  $\vrx{i}$ and $\oA_{i}$, and Equality (iii) holds due to Lemma \ref{lem:ABswitch}. This completes the proof. 

$\hfill \square$


\subsubsection{Proof of Theorem \ref{thm:lopo}}\label{sec:lopo}

First note that by the definition of $\bA_i$, we have
\BEQ
\label{eq:lowerbound_example_eq6}
\BS
	\inf_i\sigma_{\min}(\bA_i) 
	&=
	\inf_i\min_{\|\vu\|=1}\vu^{\t}\left(\XIb^{\t}\diag{\dfl''(\lopvy{i}-\XIb\lopvb{i})}\XIb+\lambda \diag{\dreg''(\lopvb{i})}\right)\vu
		\\
	&\stackrel{\text{(i)}}{\geq}
	\inf_i\min_{\|\vu\|=1}\vu^{\t}\left(\XIb^{\t}\cdot \ac\vI\cdot \XIb\right)\vu
		\\
	&\stackrel{\text{(ii)}}{\geq}
	\ac\sd \spaceequal \Omega_p\left(\ac\right)
	,
\end{split} 	
\EEQ
where Inequality (i) is due to Assumption O.\ref{ass:Convex} and O.\ref{ass:Smoothness} and Inequality (ii) is due to Lemma \ref{lem:minev}. Hence, the inverse of $\bA_i$ exists and the minimal eigenvalue of $\bA_i$ is at least $\Omega_p\left(\ac\right)$. Then note that since $\lopvb{i}$ can be considered as the solution for the generalized linear regression problem with data given by $(\XIb,\lopvy{i})$, we can follow the same proof of bounding $y_j-\vrx{j}^{\t}\hvb$ in Lemma \ref{lem:supnormDi} and obtain
\BE
	\sup_{i,j}\ve_j^{\t}(\lopvy{i}-\XIb\lopvb{i})
	&\leq&
	\op{\frac{\plogn}{\ac^{\rho+2}}}
	.	\label{eq:add_lopo_eq1}
\EE
To prove the rest of Theorem \ref{thm:lopo}, we first prove the following weaker result:

\begin{lemma}\label{lem:weakresult}
 Under Assumptions O.\ref{ass:Convex}-O.\ref{ass:True}, we have  
\[
	\lopvb{i}
	\spaceequal
	\lophvb{i}+(-\beta_{0,i})\bA_i^{-1}\XIb^{\t}\diag{\dfl''(\lopvy{i}-\XIb\lopvb{i})}\vcx{i}+\bvepsilon^i_{\tw}
	,
\]
where  
\[
	\bA_i
	\spaceequal
	\XIb^{\t}\diag{\dfl''(\lopvy{i}-\XIb\lopvb{i})}\XIb+\lambda \diag{\dreg''(\lopvb{i})}
	.
\]
Moreover, for large enough $n$,
\BEQ
	\sup_i\|\bvepsilon^i_{\tw}\| 
	\spaceequal
	\op{\frac{\plogn}{\ac^{6\rho+3}}}
	\qand
	\sup_{i,j}\left|\ve_j^{\t}(\lophvb{i}-\lopvb{i})\right|
	\spaceequal
	\op{\frac{\plogn}{\ac^{6\rho+3}}}
	.	\label{eq:add_goal_1}
\EEQ
\end{lemma}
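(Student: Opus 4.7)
\noindent\textbf{Proof sketch for Lemma \ref{lem:weakresult}.}
The plan mirrors the leave-one-observation-out analysis of Proposition \ref{thm:looo}, but with the perturbation now in the predictor direction. We will first write down the two relevant first-order systems: $\lopvb{i}$ satisfies $-\XIb^{\t}\fl'(\lopvy{i}-\XIb\lopvb{i})+\lambda\reg'(\lopvb{i})=\v0$, while the non-$i$ block of the first-order condition for the full estimate $\hvb$ reads $-\XIb^{\t}\fl'(\vy-\XIb\lophvb{i}-\hb_i\vcx{i})+\lambda\reg'(\lophvb{i})=\v0$, where $\hb_i=\ve_i^{\t}\hvb$. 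Subtracting and applying the mean value theorem to $\fl'$ and $\reg'$ yields
\[
\bigl[\XIb^{\t}\diag{\dfl''(\vxi)}\XIb+\lambda\diag{\dreg''(\vzeta)}\bigr](\lopvb{i}-\lophvb{i}) \= (\beta_{0,i}-\hb_i)\,\XIb^{\t}\diag{\dfl''(\vxi)}\vcx{i},
\]
with $\vxi,\vzeta$ consisting of entry-wise convex combinations. Lemma \ref{lem:minev} together with Assumption O.\ref{ass:Smoothness} show that the matrix on the left is positive definite with minimum eigenvalue $\Omega_p(\ac)$, so inverting gives the preliminary bound $\|\lopvb{i}-\lophvb{i}\|=|\hb_i-\beta_{0,i}|\cdot\op{\plogn/\ac^{3\rho+1}}$ using Lemma \ref{lem:xwnorm} and Assumption O.\ref{ass:Smoothness2} to bound $\|\XIb^{\t}\diag{\dfl''(\vxi)}\vcx{i}\|$.

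Next, we will convert the identity into the stated form by two substitutions: (i) replace the midpoint matrix on the left by $\bA_i$, which uses $\dfl''$ at the LOP residuals and $\dreg''$ at $\lopvb{i}$, using Holder continuity (Assumption O.\ref{ass:Holder}) and the preliminary bound above; and (ii) absorb the piece $\hb_i\,\bA_i^{-1}\XIb^{\t}\diag{\dfl''(\lopvy{i}-\XIb\lopvb{i})}\vcx{i}$ into $\bvepsilon^i_{\tw}$ so that the leading coefficient becomes $-\beta_{0,i}$ rather than $\beta_{0,i}-\hb_i$. Both substitutions ultimately reduce to controlling $\sup_i|\hb_i|$ by a polylog factor of $n$ times a benign power of $1/\ac$, and this is the main technical obstacle since Assumption O.\ref{ass:boundhvb} is not yet available. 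The crucial tool will be the independence of $\vcx{i}$ from $(\XIb,\lopvy{i})$: since $\lopvy{i}=\XIb\lopvtb{i}+\vw$ does not involve $\vcx{i}$, the LOP estimator $\lopvb{i}$ is independent of $\vcx{i}$. We will use the $i$th row of the full optimality, $\lambda\reg'(\hb_i)=\vcx{i}^{\t}\fl'(\vy-\vX\hvb)$, rewrite $\vy-\vX\hvb$ as $(\lopvy{i}-\XIb\lopvb{i})+\XIb(\lopvb{i}-\lophvb{i})-(\hb_i-\beta_{0,i})\vcx{i}$, and Taylor expand $\fl'$ around the LOP residuals. Gaussian concentration of the dominant term $\vcx{i}^{\t}\fl'(\lopvy{i}-\XIb\lopvb{i})$, combined with the LOP analogue of Lemma \ref{lem:supnormDi} (which controls the LOP residuals by $\op{\plogn/\ac^{\rho+2}}$) and the preliminary $\|\lopvb{i}-\lophvb{i}\|$ estimate, then closes a self-consistent polylog bound $\sup_i|\hb_i|=\op{\plogn/\ac^{c}}$ with a constant $c$ matching the tolerance of the target error.

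Once this uniform bound on $\hb_i$ is in hand, the estimate $\sup_i\|\bvepsilon^i_{\tw}\|=\op{\plogn/\ac^{6\rho+3}}$ is assembled from $\|\bA_i^{-1}\|=\op{1/\ac}$, $\sigma_{\max}(\XIb)=\op{1}$ (Lemma \ref{lem:minev}), Assumption O.\ref{ass:Smoothness2} applied to $\dfl''$ at the LOP residuals, and $\|\vcx{i}\|=\op{1}$ from Lemma \ref{lem:xwnorm}. For the entry-wise bound $\sup_{i,j}|\ve_j^{\t}(\lopvb{i}-\lophvb{i})|$, we use the explicit form
\[
\ve_j^{\t}(\lopvb{i}-\lophvb{i})\=(\hb_i-\beta_{0,i})\cdot\ve_j^{\t}\bA_i^{-1}\XIb^{\t}\diag{\dfl''(\lopvy{i}-\XIb\lopvb{i})}\vcx{i}+\text{(Holder remainder)},
\]
observe that for each fixed $j$ the vector multiplying $\vcx{i}$ is independent of $\vcx{i}$ since all its ingredients are LOP-measurable, and invoke a Gaussian tail bound together with a union bound over $(i,j)$ to gain a factor of $\sqrt{\ln n/n}$ over the $\ell_2$ estimate. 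Combined with the polylog bound on $|\hb_i-\beta_{0,i}|$ from the previous paragraph, this yields the claimed entry-wise rate.
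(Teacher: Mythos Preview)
Your approach diverges from the paper's in a way that creates a genuine gap. The crucial difficulty is the step where you propose to ``close a self-consistent polylog bound $\sup_i|\hb_i|=\op{\plogn/\ac^{c}}$'' using the $i$th optimality condition. Writing out your implicit equation exactly (substituting the mean-value identity for $\lopvb{i}-\lophvb{i}$ back into the $i$th score) gives
\[
\bigl[\lambda\reg''(\zeta)+\tilde a_i\bigr](\hb_i-\beta_{0,i})\;=\;\vcx{i}^{\t}\fl'(\lopvy{i}-\XIb\lopvb{i})-\lambda\reg'(\beta_{0,i}),
\]
with $\tilde a_i=\vcx{i}^{\t}D_\xi\vcx{i}-\vcx{i}^{\t}D_\xi\XIb\vM_\xi^{-1}\XIb^{\t}D_\xi\vcx{i}$, where $D_\xi,\vM_\xi$ are evaluated at the midpoints $\vxi,\vzeta$. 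To extract a bound on $|\hb_i-\beta_{0,i}|$ you need a \emph{uniform lower bound} on $\tilde a_i$. But $\tilde a_i$ is precisely the midpoint analogue of the paper's $a_i$, and its lower bound is exactly where the paper later invokes Assumption O.\ref{ass:boundhvb} (via \eqref{eq:add_10.49}--\eqref{eq:add_10.48}); the midpoints depend on $\hvb$ and hence on $\vcx{i}$, so the Gaussian concentration you rely on elsewhere is unavailable here. That the paper needs extra structural conditions on $\reg$ in Lemma~\ref{LEM:ABOUTASSUMPO5} to obtain a bound on $\sup_i|\hb_i|$ --- and that its proof of Lemma~\ref{LEM:ABOUTASSUMPO5} \emph{uses} Lemma~\ref{lem:weakresult} --- is a strong indication that bounding $\sup_i|\hb_i|$ is not a consequence of O.\ref{ass:Convex}--O.\ref{ass:True} alone.

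The paper avoids this circularity entirely: instead of trying to identify $\hb_i$, it fixes $b_0=0$, builds the surrogate $\loptvb{i}=\lopvb{i}+\beta_{0,i}\,\bA_i^{-1}\XIb^{\t}\diag{\dfl''(\lopvy{i}-\XIb\lopvb{i})}\vcx{i}$, pads it with $0$ in the $i$th slot to form $\eloptvb{i}\in\bbR^p$, and then shows $\|L(\eloptvb{i})\|$ is small. The point is that with $b_0=0$ the $i$th score component $L_i(\eloptvb{i})=\lambda\reg'(0)-\vcx{i}^{\t}\fl'(\cdots)+\ldots$ involves the \emph{known} constant $\reg'(0)$ rather than the unknown $\reg'(\hb_i)$, so it can be bounded directly (accepting a merely polylog bound, which is exactly the target). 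The non-$i$ block vanishes to higher order because $\loptvb{i}$ is designed so that the linearization of the LOP score at $\lopvb{i}$ cancels. This is the missing idea in your sketch: rather than solving for $\hb_i$, one should compare $\hvb$ to a synthetic point whose $i$th coordinate is a known constant.
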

Before we prove this result, let us explain some of its main features and the role it will play in our overall proof of Theorem  \ref{thm:lopo}. First, note that there are two main differences between this result and the proof of Theorem \ref{thm:lopo}. 
\BI

\item[(i)] $b_i$ is replaced with $0$. 

\item[(ii)]  Lemma \ref{lem:weakresult} requires $\|\bvepsilon^i_{\tw}\|$ and $\left|\ve_j^{\t}(\lophvb{i}-\lopvb{i})\right|$ to be $\op{\frac{\plogn}{\ac^{6\rho+3}}}$ rather than \\
$\op{\frac{\plogn\cdot \bc^{1+\alpha}}{n^{\frac{\alpha}{2}}\cdot \ac^{32\rho+7}}}$ which is required by Theorem \ref{thm:lopo}.

\EI
We can use the same strategy to prove both Lemma \ref{lem:weakresult} and Theorem \ref{thm:lopo}. We first prove Lemma \ref{lem:weakresult}. This result helps us bound the value of $b_i$. This bound on $b_i$ will then enable us to  prove Theorem \ref{thm:lopo}. Let us define $b_0=0$ and first show the weaker result for $b_0$. Later, we will replace $b_0$ with $b_i$ for $i\geq 1$ and prove Theorem \ref{thm:lopo} at the end of this subsection.

\begin{proof}[Proof of Lemma \ref{lem:weakresult}]
 Define
\BE 
	\loptvb{i}
	&=&
	\lopvb{i}-(b_0-\beta_{0,i})\bA_i^{-1}\XIb^{\t}\diag{\dfl''(\lopvy{i}-\XIb\lopvb{i})}\vcx{i}
	,	\label{eq:deftovb21}
\EE
and $\eloptvb{i}$ be $\loptvb{i}$ with $b_0$ inserted at $i$th component, i.e,
\BE
	\eloptb{i}_j
	&=&
	\left\{\begin{aligned}
			&\loptib{i}_j, 	&& j<i
				\\
			&b_0, 	&& j=i
				\\
			&\loptib{i}_{j-1}, 	&& j>i
		\end{aligned}\right.
	.	\n
\EE
Note that
\begin{eqnarray}
	\|\bvepsilon^i_{\tw}\|
	&\spaceequal& \|\lopvb{i} -	\lophvb{i}-(-\beta_{0,i})\bA_i^{-1}\XIb^{\t}\diag{\dfl''(\lopvy{i}-\XIb\lopvb{i})}\vcx{i}\| \nonumber \\
	&=& \|\lopvb{i} - (\lopvb{i} - \loptvb{i} + \lophvb{i} )\|=  \|\loptvb{i}-\hvb^{\backslash i}\|
	\qleq 
	\|\eloptvb{i}-\hvb\|
	.
\end{eqnarray}
To bound $\|\bvepsilon^i_{\tw}\|$, we use a trick similar to the one used in the proof of Proposition \ref{thm:looo} in Section \ref{sec:looo}. Define
\begin{eqnarray}
	L(\vbeta)
	\spaceequal
	-\vX^{\t}\fl'(\vy-\vX\vbeta)+\lambda\reg'(\vbeta)
	.	\label{eq:defL_eq1}
\end{eqnarray}
Similar to the proof of Proposition \ref{thm:looo} in Section \ref{sec:looo} it is straightforward to show that
\BEQ
	\sup_i\|\hvb-\eloptvb{i}\|
	\qleq
	\op{\frac{1}{\ac}}\cdot \sup_i\|L(\hvb)-L(\eloptvb{i})\|
	\spaceequal
	\op{\frac{1}{\ac}}\cdot \sup_i\|L(\eloptvb{i})\|
	.	\label{eq:suptovbhvb21}
\EEQ
Hence, we would like to show that
\BE
	\sup_i\|L(\eloptvb{i})\|
	&\leq&
	\op{\frac{\plogn}{\ac^{6\rho+2}}}
	.	\label{eq:L21}
\EE
Toward this goal we first define $L^{\backslash i}(\eloptvb{i})$ the entire $L(\eloptvb{i})$ without  its $i$th component, and prove that $\sup_i\|L^{\backslash i}(\eloptvb{i})\|$. Then, we will look at the $\i^{\rm th}$ component of  $L(\eloptvb{i})$ and find an upper bound for that component too. \\

\noindent Let us start with bounding $\sup_i\|L^{\backslash i}(\eloptvb{i})\|$. According to the definition of $\lopvb{i}$, we have 
\BE
	-\XIb^{\t}\fl'(\lopvy{i}-\XIb\lopvb{i})+\lambda\reg'(\lopvb{i})
	&=&
	\vzero
	. 	\label{eq:jthout21}
\EE
Furthermore, from \eqref{eq:deftovb21} we have
\BEQ
\label{eq:add_lopo_eq21}
\BS
	\vzero
	&=
	\XIb^{\t}\diag{\dfl''(\lopvy{i}-\XIb\lopvb{i})}\left(\XIb(\loptvb{i}-\lopvb{i})+(b_0-\beta_{0,i})\vcx{i}\right)
		\\
	&\quad
	+\lambda \diag{\dreg''(\lopvb{i})}(\loptvb{i}-\lopvb{i})
	.
\end{split}
\EEQ
Hence, 
\BE
	L^{\backslash i}(\eloptvb{i})
	&=&
	-\XIb^{\t}\fl'\left(\lopvy{i}-\XIb\loptvb{i}-(b_0-\beta_{0,i})\vcx{i}\right)+\lambda \reg'(\loptvb{i})
		\n\\
	&=&
	\XIb^{\t}\left(\fl'(\lopvy{i}-\XIb\lopvb{i})-\fl'\left(\lopvy{i}-\XIb\loptvb{i}-(b_0-\beta_{0,i})\vcx{i}\right)\right)-\lambda\left(\reg'(\lopvb{i})- \reg'(\loptvb{i})\right)
		\n\\
	&=&
	\XIb^{\t}\diag{\dfl''\left(\vbeta_{\Xi}\right)}\left(\XIb(\loptvb{i}-\lopvb{i})+(b_0-\beta_{0,i})\vcx{i}\right)+\lambda \diag{\dreg''(\vbeta^i_{\vxi'})}(\loptvb{i}-\lopvb{i})
		\n\\
	&=&
	\XIb^{\t}\diag{\dfl''\left(\vbeta_{\Xi}\right)-\dfl''(\lopvy{i}-\XIb\lopvb{i})}\left(\XIb(\loptvb{i}-\lopvb{i})+(b_0-\beta_{0,i})\vcx{i}\right)
		\n\\
	&&
	+\lambda \diag{\dreg''(\vbeta^i_{\vxi'})-\dreg''(\lopvb{i})}(\loptvb{i}-\lopvb{i}).
		\n
\EE
In these equations, we have used the definitions $\vbeta_{\Xi}=\lopvy{i}-(\Xi\XIb\loptvb{i}+(I-\Xi)\XIb\lopvb{i})-\Xi(b_0-\beta_{0,i})\vcx{i}$, $\Xi=\diag{\xi_1,\cdots \xi_n}$ and $\beta^i_{\vxi',j}=\xi'_j\loptib{i}_j+(1-\xi'_j)\lopb{i}_j$ for some $\xi_j,\xi'_j\in [0,1]$. Furthermore, to obtain the last equality we have used \eqref{eq:add_lopo_eq21}. By Assumption O.\ref{ass:Holder} and Lemma \ref{lem:minev} with similar proof for \eqref{eq:revision_add_eq3}, we have
\BEQ
\BS
	\sup_i\|L^{\backslash i}(\eloptvb{i})\|
	&\leq
	\op{4\delta}\cdot \sqrt{n}\sup_{i,j}C_l\left|\ve_j^{\t}\XIb(\loptvb{i}-\lopvb{i})+(b_0-\beta_{0,i})x_{ij}\right|^{1+\alpha}
		\\
	&\quad
	+\lambda\sqrt{p}\sup_{i,j}C_r\left|\loptib{i}_j-\lopb{i}_j\right|^{1+\alpha}
		\n
\end{split}
\EEQ
Our next goal is to show that
\begin{subequations}
\begin{align}
	\sup_{i,j}|\ve_j^{\t}\XIb(\loptvb{i}-\lopvb{i})|
	&\spaceequal
	\op{\frac{\plogn}{\sqrt{n}\cdot \ac^{3\rho+1}}\sup_i|b_0-\beta_{0,i}|}
	,	\label{eq:suptovbijeq21}\\
	\sup_{i,j}|\loptib{i}_j-\lopb{i}_j|
	&\spaceequal
	\op{\frac{\plogn}{\sqrt{n}\cdot \ac^{3\rho+1}}\sup_i|b_0-\beta_{0,i}|}
	.	\label{eq:suptovbijeq31}
\end{align}
\end{subequations}
Note that if we prove these two claims, then we can combine them with Lemma \ref{lem:xwnorm} and obtain 
\begin{equation}\label{eq:add_lopo_1_eq1}
	\sup_i\|L^{\backslash i}(\eloptvb{i})\| \leq
	\op{\frac{\plogn}{n^{\frac{\alpha}{2}}}\cdot \frac{\sup_i|b_0-\beta_{0,i}|^{1+\alpha}}{\ac^{6\rho+2}}}.
\end{equation}
Since $b_0=0$, according to Lemma \ref{lem:xwnorm}, $\sup_i|b_0-\beta_{0,i}|=\op{\plogn}$, which proves an upper bound for $\sup_i\|L^{\backslash i}(\eloptvb{i})\|$. Hence, let us discuss how \eqref{eq:suptovbijeq21} and \eqref{eq:suptovbijeq31} can be proved. To prove these equations, note that, by \eqref{eq:deftovb21}, we just need to show
\BEQ
\label{eq:add_lopo_1_eq4}
\BS
	\sup_{i,j}\left|\ve_j^{\t}\XIb\bA_i^{-1}\XIb^{\t}\diag{\dfl''(\lopvy{i}-\XIb\lopvb{i})}\vcx{i}\right|
	&=
	\op{\frac{\plogn}{\sqrt{n}\cdot \ac^{3\rho+1}}}
	,	\\
	\sup_{i,j}\left|\ve_j^{\t}\bA_i^{-1}\XIb^{\t}\diag{\dfl''(\lopvy{i}-\XIb\lopvb{i})}\vcx{i}\right|
	&=
	\op{\frac{\plogn}{\sqrt{n}\cdot \ac^{3\rho+1}}}
	.
\end{split}	
\EEQ
We use a technique similar to the one used for proving \eqref{eq:suptovbij} in Section \ref{sec:looo}. Recall that, according to \eqref{eq:lowerbound_example_eq6}, the minimal eigenvalue of $\bA_i$ is $\Omega_p\left(\ac\right)$. Hence, with Lemma \ref{lem:minev}, \eqref{eq:add_lopo_eq1} and Assumption O.\ref{ass:Smoothness2}, we have
\BE
	\sup_{i,j}\left\|\diag{\dfl''(\lopvy{i}-\XIb\lopvb{i})}\XIb\bA_i^{-1}\XIb^{\t}\ve_j\right\|
	&\leq&
	\op{\frac{1}{\ac}}\cdot \sup_{i,j}\fl''(\lopy{i}_j-\ve_j^{\t}\XIb\lopvb{i})
		\n\\
	&\leq& 
	\op{\frac{\plogn}{\ac^{3\rho+1}}}
	,	\n\\
	\sup_{i,j}\left\|\diag{\dfl''(\lopvy{i}-\XIb\lopvb{i})}\XIb\bA_i^{-1}\ve_j\right\|
	&\leq&
	\op{\frac{1}{\ac}} \cdot \sup_{i,j}\fl''(\lopy{i}_j-\ve_j^{\t}\XIb\lopvb{i})
		\n\\
	&\leq& 
	\op{\frac{\plogn}{\ac^{3\rho+1}}}
	.	\n
\EE
Then, since $\vcx{i}$ is independent of $\lopvy{i},\lopvb{i},\XIb$ and $\bA_i$, we conclude that \eqref{eq:add_lopo_1_eq4} holds, which in turn implies \eqref{eq:suptovbijeq21} and \eqref{eq:suptovbijeq31}.

Now let us find an upper bound for the $i^{\rm th}$ component of $L(\eloptvb{i})$ denoted as $L_i(\eloptvb{i})$. By Taylor expansion, we have
\BEQ
\label{eq:add_lopo_1_eq2}
\BS
	L_{i}(\eloptvb{i})
	&=
	-\vcx{i}^{\t}\fl'\left(\lopvy{i}-\XIb\loptvb{i}-(b_0-\beta_{0,i})\vcx{i}\right)+\lambda \reg'(b_0)
		\\
	&=
	-\vcx{i}^{\t}\fl'\left(\lopvy{i}-\XIb\loptvb{i}-(b_0-\beta_{0,i})\vcx{i}\right)+\vcx{i}^{\t}\fl'(\lopvy{i}-\XIb\lopvb{i})
		\\
	&\quad
	+\lambda \reg'(b_0)-\vcx{i}^{\t}\fl'(\lopvy{i}-\XIb\lopvb{i})
		\\
	&=
	\vcx{i}^{\t}\diag{\dfl''\left(\vbeta_{\Xi}\right)}\left(\XIb(\loptvb{i}-\lopvb{i})+(b_0-\beta_{0,i})\vcx{i}\right)+\lambda \reg'(b_0)-\vcx{i}^{\t}\fl'(\lopvy{i}-\XIb\lopvb{i})
		\\
	&=
	\underbrace{\vcx{i}^{\t}\diag{\dfl''\left(\vbeta_{\Xi}\right)-\dfl''(\lopvy{i}-\XIb\lopvb{i})}\left(\XIb(\loptvb{i}-\lopvb{i})+(b_0-\beta_{0,i})\vcx{i}\right)}_{\text{part 1}}
		\\
	&\quad
	+\underbrace{\lambda \reg'(b_0)-\vcx{i}^{\t}\fl'(\lopvy{i}-\XIb\lopvb{i})}_{\text{part 2}}
		\\
	&\quad
	+\underbrace{\vcx{i}^{\t}\diag{\dfl''(\lopvy{i}-\XIb\lopvb{i})}\left(\XIb(\loptvb{i}-\lopvb{i})+(b_0-\beta_{0,i})\vcx{i}\right)}_{\text{part 3}}
	.	\n\\
\end{split}		
\EEQ
In the rest of the proof, we obtain separate upper bounds for part 1, part 2, and part 3. For part 1, similar to the proof of \eqref{eq:add_lopo_1_eq1}, we have that by \eqref{eq:suptovbijeq21}-\eqref{eq:suptovbijeq31}, Lemma \ref{lem:xwnorm} and Assumption O.\ref{ass:Holder}, we have
\BE
	\text{part 1}
	&\leq&
	\sup_i\|\vcx{i}\|\cdot \sqrt{n}\sup_{i,j}C_l\left|\ve_j^{\t}\XIb(\loptvb{i}-\lopvb{i})+(b_0-\beta_{0,i})x_{ij}\right|^{1+\alpha}
		\n\\
	&\leq&
	\op{\frac{\plogn}{n^{\frac{\alpha}{2}}}\cdot \frac{\sup_i|b_0-\beta_{0,i}|^{1+\alpha}}{\ac^{6\rho+2}}}
	.	\n
\EE
For part 2, note that $\reg'(b_0)=\reg'(0)=O(1)$. Then, since $\vcx{i}$ is independent of $\lopvy{i}-\XIb\lopvb{i}$, and from Assumption O.\ref{ass:True}, $\vcx{i}$ has i.i.d.~subGaussian components. Hence, with Hanson Wright inequality,  Assumption O.\ref{ass:Smoothness2} and \eqref{eq:add_lopo_eq1}, we have 
\BE
	\text{part 2}
	&\leq&
	\sup_i |\lambda \reg'(b_0)-\vcx{i}^{\t}\fl'(\lopvy{i}-\XIb\lopvb{i})|
		\n\\
	&\leq&
	\op{1}+\op{\frac{\|\fl'(\lopvy{i}-\XIb\lopvb{i})\|}{\sqrt{n}}\cdot \ln n}
		\n\\
	&=&
	\op{\frac{\plogn}{\ac^{(\rho+2)(\rho+1)}}}
	.	\n
\EE
For part 3, we have 
\[
	\text{part 3}
	\qleq
	\sup_i\|\vcx{i}\|\cdot \sup_{i,j}\fl''(\ve_j^{\t}(\lopvy{i}-\XIb\lopvb{i}))\cdot \sqrt{n}\cdot \sup_{i,j}|\ve_j^{\t}\XIb(\loptvb{i}-\lopvb{i})+(b_0-\beta_{0,i})x_{ij}|
	.
\]
Apply Lemma \ref{lem:xwnorm}, Assumption O.\ref{ass:Smoothness2}, \eqref{eq:add_lopo_eq1} and \eqref{eq:suptovbijeq21} squentially, we have
\BE
	\text{part 3}
	&\leq&
	\op{1} \cdot \op{\frac{\plogn}{\ac^{(\rho+2)\rho}}} \cdot \sqrt{n}\cdot \op{\frac{\plogn}{\sqrt{n}}\cdot \frac{\sup_i|b_0-\beta_{0,i}|}{\ac^{3\rho+1}}}
		\n\\
	&=&
	\op{\frac{\plogn}{\ac^{(\rho+2)\rho}}\cdot \frac{\sup_i|b_0-\beta_{0,i}|}{\ac^{3\rho+1}}}
	.	\n
\EE
Note that since $b_0=0$, by Lemma \ref{lem:xwnorm} we know $\sup_i|b_0-\beta_{0,i}|=\op{\plogn}$. Hence, by combining  the above three upper bounds we conclude that
\[
	\sup_i|L_{i}(\eloptvb{i})|
	\qleq
	\op{\frac{\plogn}{\ac^{6\rho+2}}}
	.
\]
Note that by combining this result with \eqref{eq:add_lopo_1_eq1}, we obtain
\[
	\sup_i\|L(\eloptvb{i})\|
	\qleq
	\op{\frac{\plogn}{\ac^{6\rho+2}}}
	.
\]
Therefore, according to \eqref{eq:suptovbhvb21}, we have
\[
	\|\bvepsilon^i_{\tw}\|
	\qleq
	\sup_i\|\hvb-\eloptvb{i}\|
	\qleq
	\op{\frac{\plogn}{\ac^{6\rho+3}}}
	.
\]
Combine with \eqref{eq:suptovbijeq31}, we have
\[
 	\sup_{i,j}\left|\ve_j^{\t}(\lophvb{i}-\lopvb{i})\right|
	\spaceequal
	\op{\frac{\plogn}{\ac^{6\rho+3}}}
	.
\]
This completes the proof of Lemma \ref{lem:weakresult}. \end{proof}

Now we would like to prove Theorem \ref{thm:lopo}. As discussed before we replace $b_0$ with $b_i$ in the proof of Lemma \ref{lem:weakresult} and update the proof accordingly with additional Assumption O.\ref{ass:boundhvb}. With a slight abuse of notation we redefine $\loptvb{i}$ and $\eloptvb{i}$ by replacing $b_0$  with $b_i$. In other words, in the rest of the proof we have
\BE
	\loptvb{i}
	&=&
	\lopvb{i}-(b_i-\beta_{0,i})\bA_i^{-1}\XIb^{\t}\diag{\dfl''(\lopvy{i}-\XIb\lopvb{i})}\vcx{i}
	,	\label{eq:deftovb22}
\EE
and
\BE
	\eloptb{i}_j
	&=&
	\left\{\begin{aligned}
			&\loptib{i}_j, 	&& j<i
				\\
			&b_i, 	&& j=i
				\\
			&\loptib{i}_{j-1}, 	&& j>i
		\end{aligned}\right.
	.	\n
\EE
Then, we can follow the same steps as the ones in the proof of Lemma \ref{lem:weakresult} and conclude that 
\begin{subequations}
\begin{align}
	\sup_{i,j}|\ve_j^{\t}\XIb(\loptvb{i}-\lopvb{i})|
	&\spaceequal
	\op{\frac{\plogn}{\sqrt{n}\cdot \ac^{3\rho+1}}\sup_i|b_i-\beta_{0,i}|}
	,	\label{eq:suptovbijeq22}\\
	\sup_{i,j}|\loptib{i}_j-\lopb{i}_j|
	&\spaceequal
	\op{\frac{\plogn}{\sqrt{n}\cdot \ac^{3\rho+1}}\sup_i|b_i-\beta_{0,i}|}
	,	\label{eq:suptovbijeq32}
\end{align}
\end{subequations} 
and
\BE
	\sup_i\|L^{\backslash i}(\eloptvb{i})\|
	&\leq&
	\op{\frac{\plogn}{n^{\frac{\alpha}{2}}}\cdot \frac{\sup_i|b_i-\beta_{0,i}|^{1+\alpha}}{\ac^{6\rho+2}}}
	.	\label{eq:add_9.38}
\EE
Similarly, we want to obtain an upper bound for the $i^{\rm th}$ component of $L(\eloptvb{i})$ denoted with $L_i(\eloptvb{i})$. By Taylor expansion, we have
\BE
	\lefteqn{L_{i}(\eloptvb{i})}
		\n\\
	&=&
	-\vcx{i}^{\t}\fl'\left(\lopvy{i}-\XIb\loptvb{i}-(b_i-\beta_{0,i})\vcx{i}\right)+\lambda \reg'(b_i)
		\n\\
	&=&
	\vcx{i}^{\t}\diag{\dfl''\left(\vbeta_{\Xi}\right)}\left(\XIb(\loptvb{i}-\lopvb{i})+(b_i-\beta_{0,i})\vcx{i}\right)+\lambda \reg'(b_i)-\vcx{i}^{\t}\fl'(\lopvy{i}-\XIb\lopvb{i})
		\n\\
	&=&
	\vcx{i}^{\t}\diag{\dfl''\left(\vbeta_{\Xi}\right)-\dfl''(\lopvy{i}-\XIb\lopvb{i})}\left(\XIb(\loptvb{i}-\lopvb{i})+(b_i-\beta_{0,i})\vcx{i}\right)
		\n\\
	&&-a_i(b_i-\beta_{0,i})+\vcx{i}^{\t}\diag{\dfl''(\lopvy{i}-\XIb\lopvb{i})}\left(\XIb(\loptvb{i}-\lopvb{i})+(b_i-\beta_{0,i})\vcx{i}\right)
	.	\n
\EE
For the last equality we have used the following equality which is a simple conclusion of the definition of $b_i$ in \eqref{eq:bcondition2}:
\[
	a_i(b_i-\beta_{0,i})+\lambda \reg'(b_i)
	\spaceequal
	\vcx{i}^{\t}\fl'(\lopvy{i}-\XIb\lopvb{i})
	.
\]
In the following calculations, we use $\bdfl_i''$ as a shorthand for the matrix $\diag{\dfl''(\lopvy{i}-\XIb\loptvb{i})}$. According to the matrix inversion lemma, we have 
\BE
	\vcx{i}^{\t}\bdfl''_i\left(\XIb(\loptvb{i}-\lopvb{i})+(b_i-\beta_{0,i})\vcx{i}\right)
	&=&
	(b_i-\beta_{0,i})\left(\vcx{i}^{\t}\bdfl''_i\vcx{i}-\vcx{i}^{\t}\bdfl''_i\XIb\bA_i^{-1}\XIb^{\t}\bdfl''_i\vcx{i}\right)
		\n\\
	&=&
	(b_i-\beta_{0,i})a_i
	.	\n
\EE
Hence, when we replace $b_0$ with $b_i$, then part 2 and part 3 in \eqref{eq:add_lopo_1_eq2} cancel each other and only part 1 remains. In other words, we have 
\BE
	\sup_i|L_{i}(\eloptvb{i})|
	&=&
	\sup_{i,\Xi}\left|\vcx{i}^{\t}\diag{\dfl''(\vbeta_{\Xi})-\dfl''(\lopvy{i}-\XIb\loptvb{i})}\left(\XIb(\loptvb{i}-\lopvb{i})+(b_i-\beta_{0,i})\vcx{i}\right)\right|
		\n\\
	&\stackrel{\text{(i)}}{\leq}&
	\sup_i\|\vcx{i}\|\cdot \sqrt{n}\sup_{i,j}C_l\left|\ve_j^{\t}\XIb(\loptvb{i}-\lopvb{i})+(b_i-\beta_{0,i})x_{ij}\right|^{1+\alpha}
		\n\\
	&\stackrel{\text{(ii)}}{\leq}&
	\op{\frac{\plogn}{n^{\frac{\alpha}{2}}}\cdot \frac{\sup_i|b_i-\beta_{0,i}|^{1+\alpha}}{\ac^{6\rho+2}}}
	,	\n
\EE
where Inequality (i) is due to Assumption O.\ref{ass:Holder} and Inequality (ii) is due to \eqref{eq:suptovbijeq22}-\eqref{eq:suptovbijeq32} and Lemma \ref{lem:xwnorm}. Hence, if we combine this equation with \eqref{eq:add_9.38}, then we obtain
\[
	\sup_i\|L(\eloptvb{i})\|
	\qleq
	\op{\frac{\plogn}{n^{\frac{\alpha}{2}}}\cdot \frac{\sup_i|b_i-\beta_{0,i}|^{1+\alpha}}{\ac^{6\rho+2}}}
	.
\]
Therefore, similar to \eqref{eq:suptovbhvb21}, we have
\BE
	\|\bvepsilon^i\|
	\qleq
	\sup_i\|\hvb-\eloptvb{i}\|
	&\leq& \op{\frac{1}{\ac}}\cdot \sup_i\|L(\eloptvb{i})\|
		\n\\
	&\leq& \op{\frac{\plogn}{n^{\frac{\alpha}{2}}}\cdot \frac{\sup_i|b_i-\beta_{0,i}|^{1+\alpha}}{\ac^{6\rho+3}}}
	.	\label{eq:add_lopo_eq5}
\EE
Note that \eqref{eq:add_lopo_eq5} and \eqref{eq:suptovbijeq32} together imply that
\[
	\sup_{i,j}\left|\ve_j^{\t}(\lophvb{i}-\lopvb{i})\right|
	\spaceequal
	\op{\frac{\plogn}{n^{\frac{\alpha}{2}}}\cdot \frac{\sup_i|b_i-\beta_{0,i}|^{1+\alpha}}{\ac^{6\rho+3}}}
	.
\]
As a corollary of \eqref{eq:add_lopo_eq5}, we have 
\begin{corollary}\label{cor:bibound}
Under Assumption O.\ref{ass:Convex}-O.\ref{ass:True}, as $n \rightarrow \infty$, we have
\[
	\sup_i|b_i-\hb_i|
	\qleq
	\sup_i\|\hvb-\eloptvb{i}\|
	\qleq
	 \op{\frac{\plogn}{n^{\frac{\alpha}{2}}}\cdot \frac{\sup_i|b_i-\beta_{0,i}|^{1+\alpha}}{\ac^{6\rho+3}}}
	.
\]
\end{corollary}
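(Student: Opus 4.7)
The plan is to view Corollary \ref{cor:bibound} as an immediate repackaging of two facts already in place from the proof of Theorem \ref{thm:lopo}. By construction, $\eloptvb{i}$ is obtained from $\loptvb{i}$ by inserting the scalar $b_i$ at coordinate $i$, so that $\ve_i^{\t}\eloptvb{i} = b_i$ while $\ve_i^{\t}\hvb = \hb_i$. Therefore $|b_i - \hb_i| = |\ve_i^{\t}(\eloptvb{i} - \hvb)| \leq \|\eloptvb{i} - \hvb\|$, and taking the supremum over $i$ yields the first inequality $\sup_i|b_i - \hb_i| \leq \sup_i\|\hvb - \eloptvb{i}\|$ at once.

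The second inequality is exactly \eqref{eq:add_lopo_eq5}, which was established in the paragraph preceding the corollary by combining the Newton-type stability bound $\sup_i\|\hvb - \eloptvb{i}\| \leq \op{1/\ac}\cdot\sup_i\|L(\eloptvb{i})\|$ with the H\"older-continuity estimate on $\sup_i\|L(\eloptvb{i})\|$ that uses Assumption O.\ref{ass:Holder}, the coordinatewise bounds \eqref{eq:suptovbijeq22}--\eqref{eq:suptovbijeq32}, and Lemma \ref{lem:xwnorm}. So no fresh calculation is needed for the corollary itself; it is purely a bookkeeping statement.

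The reason to isolate this statement as a standalone corollary is that its right-hand side still involves the unknown quantity $\sup_i|b_i - \beta_{0,i}|$, making the bound implicitly self-referential in $b_i$. In the subsequent step of the proof of Theorem \ref{thm:lopo}, one will close the loop by writing $\sup_i|b_i - \beta_{0,i}| \leq \sup_i|\hb_i - \beta_{0,i}| + \sup_i|b_i - \hb_i|$, invoking Assumption O.\ref{ass:boundhvb} together with Lemma \ref{lem:xwnorm} to control $\sup_i|\hb_i - \beta_{0,i}|$ by a polylogarithmic factor times $\bc$, and then resolving the resulting self-bounding inequality for $\sup_i|b_i - \hb_i|$. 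That circular closure, rather than anything in the present corollary, is the real obstacle, and it is precisely what will generate the exponents $\bc^{1+\alpha}$ and $\ac^{32\rho + 7}$ appearing in the final conclusion of Theorem \ref{thm:lopo}.
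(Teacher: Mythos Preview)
Your proof of the corollary itself is correct and matches the paper exactly: the first inequality is the trivial observation that $b_i$ is the $i$th coordinate of $\eloptvb{i}$, and the second inequality is precisely \eqref{eq:add_lopo_eq5}.

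One correction to your closing commentary, though it is not part of the proof proper: the paper does \emph{not} close the loop via the self-bounding triangle inequality you sketch. Instead, it bounds $\sup_i|b_i|$ directly from the proximal-operator representation \eqref{eq:bcondition2}, using the $1$-Lipschitz property of $\eta$ (Lemma~\ref{lem:eta'}) together with a lower bound on $a_i$; Assumption~O.\ref{ass:boundhvb} enters only through controlling $\sup_{i,j}\reg''(\lopb{i}_j)$ in that lower bound, via the weak result \eqref{eq:add_goal_1}. The corollary is actually invoked later, in Appendix~\ref{sec:disbi}, in the reverse direction: once $\sup_i|b_i|$ is bounded independently, Corollary~\ref{cor:bibound} transfers this to a bound on $\sup_i|\hb_i|$.
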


Finally, to complete the proof of Theorem \ref{thm:lopo}, we just need to bound $\sup_{i}|b_i-\beta_{0,i}|$ with $\op{\frac{\plogn}{\ac^{13\rho+2}}\cdot\bc}$ under additional Assumption O.\ref{ass:boundhvb}. By Lemma \ref{lem:xwnorm}, we know we just need to bound $|b_i|$ by $\op{\frac{\plogn}{\ac^{13\rho+2}}\cdot \bc}$. Let $\eta(\cdot)$ denote the proximal operator of $\reg$, defined as
\[
	\eta(x,\lambda)
	\spaceequal
	\argmin_{y\in \bbR}\frac{1}{2}(x-y)^2+\lambda \reg(y)
	.
\]
Then recall the definition of $b_i$, we have
\BE 
	b_i
	&=&
	\eta\left(\beta_{0,i}+\frac{1}{a_i}\vcx{i}^{\t}\fl'(\lopvy{i}-\XIb\lopvb{i}),\frac{\lambda}{a_i}\right)
	,	\label{eq:bcondition2}
\EE
where 
\[
	a_i
	\spaceequal
	\vcx{i}^{\t}(\diag{\dfl''(\lopvy{i}-\XIb\lopvb{i})}^{-1}+\lambda\XIb\diag{\dreg''(\lopvb{i})}\XIb^{\t})^{-1}\vcx{i}
	.
\] 
Our first lemma summarizes a few properties of the prox function $\eta$.
\begin{lemma}\label{lem:eta'}
Let $f(x)$ be a convex function. If $f$ is twice-differentiable, then we have 
\[
	\frac{\partial \eta_f(x,\theta)}{\partial x}
	\spaceequal
	\frac{1}{1+\theta f''(\eta_f(x,\theta))}
	,
\]
where $\eta_f$ is the proximity operator of $f$, satisfying
\[
	\eta_{f}(x,\theta)
	\spaceequal
	\argmin_{y\in \bbR}\frac{1}{2}(x-y)^2+\theta f(y)
	.
\]
Hence, $\eta_f(x,\theta)$ is Lipchitz continuous with constant 1.
\end{lemma}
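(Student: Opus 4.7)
The plan is to derive the derivative formula by implicit differentiation of the first-order optimality condition for the proximity operator, and then read off the Lipschitz bound from the sign of $f''$.

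First, since $f$ is convex, the objective $\tfrac{1}{2}(x-y)^2 + \theta f(y)$ is strictly convex in $y$ (the quadratic term alone is $1$-strongly convex), so the minimizer $\eta_f(x,\theta)$ exists and is unique for every $x$. Because $f$ is twice differentiable, the minimizer is characterized by the stationarity condition
\[
    \eta_f(x,\theta) - x + \theta f'\bigl(\eta_f(x,\theta)\bigr) \= 0.
\]
Define $F(x,y) := y - x + \theta f'(y)$, so that $F(x,\eta_f(x,\theta)) = 0$ identically in $x$. Then $\partial_y F(x,y) = 1 + \theta f''(y) \geq 1$ by convexity of $f$ (assuming $\theta \geq 0$, which is implicit in the way $\eta_f$ is used in \eqref{eq:bcondition2}). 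Since this partial is strictly positive and continuous, the implicit function theorem applies and yields that $\eta_f(\cdot,\theta)$ is continuously differentiable, with
\[
    \frac{\partial \eta_f(x,\theta)}{\partial x}
    \= -\frac{\partial_x F(x,\eta_f(x,\theta))}{\partial_y F(x,\eta_f(x,\theta))}
    \= \frac{1}{1 + \theta f''\bigl(\eta_f(x,\theta)\bigr)},
\]
which is exactly the claimed formula.

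For the Lipschitz assertion, note that convexity of $f$ gives $f''(\eta_f(x,\theta)) \geq 0$, so the derivative computed above lies in $(0,1]$. Thus for any $x_1,x_2$, the fundamental theorem of calculus gives
\[
    |\eta_f(x_1,\theta) - \eta_f(x_2,\theta)|
    \= \left| \int_{x_2}^{x_1} \frac{1}{1 + \theta f''(\eta_f(s,\theta))}\,\dif s \right|
    \qleq |x_1 - x_2|,
\]
proving $1$-Lipschitz continuity in $x$.

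I do not expect any real obstacle: the argument is a one-line implicit differentiation, and the Lipschitz conclusion follows from $f'' \geq 0$. The only point that requires a word of care is verifying the hypotheses of the implicit function theorem, namely that $\partial_y F$ is continuous and nonvanishing, both of which are immediate from the twice-differentiability and convexity of $f$. (If one wished to weaken the twice-differentiability assumption, a direct monotonicity/variational argument using firm non-expansiveness of the proximal map would still yield the Lipschitz bound, but the displayed derivative formula itself genuinely needs $f''$ to exist.)
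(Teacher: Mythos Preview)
Your proof is correct and follows essentially the same approach as the paper: both write the first-order optimality condition $\eta_f(x,\theta)-x+\theta f'(\eta_f(x,\theta))=0$ and differentiate implicitly in $x$ to obtain the formula. Your version is slightly more careful in explicitly invoking the implicit function theorem and spelling out the fundamental-theorem-of-calculus step for the Lipschitz bound, but the underlying argument is identical.
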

\begin{proof}
Since $f$ is convex, we know that $\eta_f(x,\theta)$ is uniquely defined for each $\theta$, and satisfies 
\[
	\eta_f(x,\theta)-x+\theta f'(\eta_f(x,\theta))
	\spaceequal
	0
	.
\]
Since $f$ is twice-differentiable, by taking a derivative with respect to $x$ from both sides of the above equation we obtain
\[
	\frac{\partial \eta_f(x,\theta)}{\partial x}-1+\theta f''(\eta_f(x,\theta))\cdot \frac{\partial \eta_f(x,\theta)}{\partial x}
	\spaceequal
	0
	,
\]
which completes the proof of the lemma.
\end{proof}
According to Assumption O.\ref{ass:Convex}, there exists a constant $\mu_{\min}$ such that $\reg$ achieves its minimum at $\mu_{\min}$. Hence,  $\eta(\mu_{\min};\cdot)\equiv \mu_{\min}$. Further, by Lemma \ref{lem:eta'}, we have $|\eta'|\leq 1$. Hence, \eqref{eq:bcondition2} implies that
\BE
	|b_i|
	&\leq&
	|\beta_{0,i}+\frac{1}{a_i}\vcx{i}^{\t}\fl'(\lopvy{i}-\XIb\lopvb{i})-\mu_{\min}|+|\eta(\mu_{\min},\lambda/a_i)|
		\n\\
	&\leq&
	|\beta_{0,i}|+|\frac{1}{a_i}\vcx{i}^{\t}\fl'(\lopvy{i}-\XIb\lopvb{i})|+2|\mu_{\min}|
		\n\\
	&\stackrel{\text{(i)}}{\leq}&
	\op{\ln n}+\frac{1}{|a_i|}\op{\frac{\|\fl'(\lopvy{i}-\XIb\lopvb{i})\|}{\sqrt{n}}\cdot \ln n}
		\n\\
	&\stackrel{\text{(ii)}}{\leq}&
	\op{\ln n}+\frac{1}{|a_i|}\op{\frac{\plogn}{\ac^{(\rho+2)(\rho+1)}}}
	,	\label{eq:add_9.57}
\EE
where Inequality (i) holds due to Lemma \ref{lem:xwnorm} and the facts that $\vcx{i}$ is independent of $\lopvy{i}-\XIb\lopvb{i}$, and Inequality (ii) is due to Assumption O.\ref{ass:Smoothness2} and \eqref{eq:add_lopo_eq1}. Hence, we just need to lower bound $|a_i|$. Note that, by definition of $a_i$, we have
\[
	\inf_i|a_i|
	\qgeq
	\frac{\inf_i\|\vcx{i}\|}{\max_i  \sigma_{\max}\left(\diag{\dfl''(\lopvy{i}-\XIb\lopvb{i})}^{-1}+\lambda\XIb\diag{\dreg''(\lopvb{i})}\XIb^{\t}\right)}
	.
\]
Note that by Assumption O.\ref{ass:Smoothness}, we know the maximum eigenvalue of $\diag{\dfl''(\lopvy{i}-\XIb\lopvb{i})}^{-1}$ is at most $\op{1/\ac}$. For the maximum eigenvalue of $\lambda\XIb\diag{\dreg''(\lopvb{i})}\XIb^{\t}$, note that by Assumption O.\ref{ass:Smoothness2}, we have
\[
	\sup_{i,j}\reg''(\ve_j^{\t}\lopvb{i})
	\qleq
	\op{1+\sup_{i,j}(\lopb{i}_j)^{\rho}}
	.
\]
According to Assumption O.\ref{ass:boundhvb} and \eqref{eq:add_goal_1} stated in Lemma \ref{lem:weakresult}, we have
\BE
	\sup_{i,j}\reg''(\ve_j^{\t}\lopvb{i})
	&\leq&
	\op{\frac{\plogn}{\ac^{9\rho}}+\bc}
	.	\label{eq:add_10.49}
\EE
Hence, due to Lemma \ref{lem:minev}, we have
\BE
	\inf_i|a_i|
	&\geq&
	\frac{\inf_i\|\vcx{i}\|}{1+\op{\frac{\plogn}{\ac^{9\rho}}+\bc}}
	\qgeq
	\Omega_p\left(\frac{1}{\frac{\plogn}{\ac^{9\rho}}+\bc}\right)
	,	\label{eq:add_10.48}
\EE
where the last inequality is due to Lemma \ref{lem:xwnorm}. Hence, by using Lemma \ref{lem:xwnorm} again, we have
\BEQ
\label{eq:add_eq_10.40pm}
\BS
	\sup_i|b_i-\beta_{0,i}|
	&\leq
	\op{\ln n}+\op{\frac{\plogn}{\ac^{9\rho}}+\bc}\cdot \op{\frac{\plogn}{\ac^{(\rho+2)(\rho+1)}}}
		\\
	&\leq
	\op{\frac{\plogn}{\ac^{13\rho+2}}\cdot\bc}
	.	
\end{split}
\EEQ
This completes the proof of Theorem \ref{thm:lopo}.


\subsubsection{Proof of Lemma \ref{lem:q1}}\label{sec:q1}

By Matrix Inversion Lemma, we have
\BEQ
	\hvcx{i}^{\t}\hQ_i^{-1}\hvcx{i}
	\spaceequal
	\hvcx{i}^{\t}\hvcx{i}-\vcx{i}^{\t}\hat{M}_i\vcx{i}
	\qand
	\bvcx{i}^{\t}\bQ_i^{-1}\bvcx{i}
	\spaceequal
	\bvcx{i}^{\t}\bvcx{i}-\vcx{i}^{\t}\bar{M}_i\vcx{i}
	,	\label{eq:Q}
\EEQ
where
\BEQ
\BS
	\hat{\vM}_i
	&=
	\diag{\dfl''(\vy-\vX\hvb)}\XIb\left(\XIb^{\t}\diag{\dfl''(\vy-\vX\hvb)}\XIb+\lambda\diag{\cdreg''(\lophvb{i})}\right)^{-1}
		\\
	&\quad
	\times\XIb^{\t}\diag{\dfl''(\vy-\vX\hvb)}
	,	\n
\end{split}
\EEQ
and
\[
	\bar{\vM}_i
	\spaceequal
	\vv^{\t}\left(\XIb^{\t}\diag{\dfl''(\lopvy{i}-\XIb\lopvb{i})}\XIb+\lambda\diag{\cdreg''(\lopvb{i})}\right)^{-1}\vv
	,
\]
where $\vv=\XIb^{\t}\diag{\dfl''(\lopvy{i}-\XIb\lopvb{i})}$. Hence, we need to show 
\[
	\sup_i\left|\vcx{i}^{\t}\left(\hat{\vM}_i-\diag{\dfl''(\vy-\vX\hvb)}-\bar{\vM}_i+\diag{\dfl''(\lopvy{i}-\XIb\lopvb{i})}\right)\vcx{i}\right|
\]
is at most
\[
	\op{\frac{\plogn\cdot \bc^{1+\alpha}}{n^{\frac{\alpha^2}{2}}\cdot \ac^{64\rho+15}}}
	.
\]
Let
\BEQ
\BS
	\check{\vM}_i
	&=
	\diag{\dfl''(\vy-\vX\hvb)}\XIb\left(\XIb^{\t}\diag{\dfl''(\lopvy{i}-\XIb\lopvb{i})}\XIb+\lambda\diag{\cdreg''(\lopvb{i})}\right)^{-1}
		\\
	&\quad
	\times \XIb^{\t}\diag{\dfl''(\vy-\vX\hvb)}
	.	\n
\end{split}
\EEQ
We just need to show the following three equations:
\BEQ
\label{eq:add_q1_eq1}
\BS
	\sup_i\left|\vcx{i}^{\t}\diag{\dfl''(\lopvy{i}-\XIb\lopvb{i})-\dfl''(\vy-\vX\hvb)}\vcx{i}\right|
	&=
	\op{\frac{\plogn\cdot \bc^{1+\alpha}}{n^{\frac{\alpha^2}{2}}\cdot \ac^{32\rho+7}}}
	,	\\
	\sup_i\left|\vcx{i}^{\t}(\hat{\vM}_i-\check{\vM}_i)\vcx{i}\right|
	&=
	\op{\frac{\plogn\cdot \bc^{1+\alpha}}{n^{\frac{\alpha}{2}}\cdot \ac^{38\rho+9}}}
	,	\\
	\sup_i\left|\vcx{i}^{\t}(\check{\vM}_i-\bar{\vM}_i)\vcx{i}\right|
	&=
	\op{\frac{\plogn\cdot \bc^{1+\alpha}}{n^{\frac{\alpha^2}{2}}\cdot \ac^{64\rho+15}}}
	.
\end{split}	
\EEQ
To show the first equation, recall the proof of Theorem \ref{thm:lopo} at the end of Section \ref{sec:lopo}. We have
\BEQ
\label{eq:difl}
\BS
	\lefteqn{\sup_{i,j}|\fl''(y_j-\vrx{j}^{\t}\hvb)-\fl''(\lopy{i}_j-\ve_j^{\t}\XIb\lopvb{i})|}
		\\
	&\leq
	\sup_{i,j}|\fl''(y_j-\vrx{j}^{\t}\hvb)-\fl''(y_j-\vrx{j}^{\t}\eloptvb{i})|+\sup_{i,j}|\fl''(y_j-\vrx{j}^{\t}\eloptvb{i})-\fl''(\lopy{i}_j-\ve_j^{\t}\XIb\lopvb{i})|
		\\
	&\stackrel{\text{(i)}}{\leq}
	C_l\left(\sup_{i,j}|\vrx{j}^{\t}(\hvb-\eloptvb{i})|^{\alpha}+\sup_{i,j}|\ve_j^{\t}\XIb(\loptvb{i}-\lopvb{i})+(b_i-\beta_{0,i})x_{ij}|^{\alpha}\right)
		\\
	&\stackrel{\text{(ii)}}{\leq}
	\op{\frac{\plogn\cdot \bc^{1+\alpha}}{n^{\frac{\alpha^2}{2}}\cdot \ac^{32\rho+7}}}
	+C_l\left(\sup_{i,j}|\ve_j^{\t}\XIb(\loptvb{i}-\lopvb{i})+(b_i-\beta_{0,i})x_{ij}|^{\alpha}\right)
		\\
	&\stackrel{\text{(iii)}}{\leq}
	\op{\frac{\plogn\cdot \bc^{1+\alpha}}{n^{\frac{\alpha^2}{2}}\cdot \ac^{32\rho+7}}}
	,
\end{split}	
\EEQ
where Inequality (i) is due to Assumption O.\ref{ass:Holder}, Inequality (ii) is due to \eqref{eq:add_lopo_eq5}, \eqref{eq:add_eq_10.40pm} and Lemma \ref{lem:xwnorm}, and inequality (iii) is due to \eqref{eq:suptovbijeq22}, \eqref{eq:suptovbijeq32}, \eqref{eq:add_eq_10.40pm} and Lemma \ref{lem:xwnorm}. Hence, according to Lemma \ref{lem:xwnorm}, we have
\[
	\sup_i\left|\vcx{i}^{\t}\diag{\dfl''(\lopvy{i}-\XIb\lopvb{i})-\dfl''(\vy-\vX\hvb)}\vcx{i}\right|
	\spaceequal
	\op{\frac{\plogn\cdot \bc^{1+\alpha}}{n^{\frac{\alpha^2}{2}}\cdot \ac^{32\rho+7}}}
	.
\]
To show the second equation in \eqref{eq:add_q1_eq1}, by Theorem \ref{thm:lopo} and Assumption O.\ref{ass:Holder}, we have
\BEQ
	\sup_{i,j}|\creg''(\lophb{i}_j)-\creg''(\lopb{i}_j)|
	\qleq
	C_r\sup_{i,j}|\lophb{i}_j-\lopb{i}_j|^{\alpha}
	\qleq
	\op{\frac{\plogn\cdot \bc^{1+\alpha}}{n^{\frac{\alpha^2}{2}}\cdot \ac^{32\rho+7}}}
	.	\label{eq:difl2}
\EEQ
Based on \eqref{eq:difl} and \eqref{eq:difl2}, we replace $\ve_j$ by $\XIb^{\t}\diag{\dfl''(\vy-\vX\hvb)}\vcx{i}$ in \eqref{eq:replaceei1}, \eqref{eq:replaceei2} and follow similar steps as the ones presented in the proof of Lemma \ref{lem:ABswitch} to obtain
\BE
	\lefteqn{\sup_i|\vcx{i}^{\t}(\hat{\vM}_i-\check{\vM}_i)\vcx{i}|}
		\n\\
	&\leq&
	\sup_{i}\left\|\XIb^{\t}\diag{\dfl''(\vy-\vX\hvb)}\vcx{i}\right\|^2\cdot \op{\frac{\plogn\cdot \bc^{1+\alpha}}{n^{\frac{\alpha}{2}}\cdot \ac^{33\rho+9}}}
		\n\\
	&\leq&
	\op{\frac{\plogn\cdot \bc^{1+\alpha}}{n^{\frac{\alpha}{2}}\cdot \ac^{38\rho+9}}}
	,	\n
\EE
where the last inequality is due to Assumption O.\ref{ass:Smoothness2}, Lemma \ref{lem:minev} and Lemma \ref{lem:xwnorm}.  To obtain the last equation in \eqref{eq:add_q1_eq1}, note that $\check{\vM}_i$ and $\bar{\vM}_i$ have the following forms:
\[
	\check{\vM}_i
	\spaceequal
	\diag{\dhfl''}\vW_i\diag{\dhfl''} \qand \bar{\vM}_i\spaceequal (\diag{\dhfl''}+\cDelta_{i})\vW_i(\diag{\dhfl''}+\cDelta_{i})
	,
\]
where $\dhfl''$ is a shorthand for $\dfl''(\vy-\vX\hvb)$ and $\cDelta_{i}, \vW_i$ are defined in the following way: 
\begin{eqnarray}
	\cDelta_{i}
	&\spaceequal&
	\diag{\dfl''(\lopvy{i}-\XIb\lopvb{i})-\dfl''(\vy-\vX\hvb)}, \nonumber \\
	\vW_i
	&\spaceequal&
	\XIb\left(\XIb^{\t}\diag{\dfl''(\lopvy{i}-\XIb\lopvb{i})}\XIb+\lambda\diag{\cdreg''(\lopvb{i})}\right)^{-1}\XIb^{\t}
	.	\n
\end{eqnarray}
Hence, we have
\BE
	\lefteqn{\sup_i|\vcx{i}^{\t}(\bar{\vM}_i-\check{\vM}_i)\vcx{i}|}
		\n\\
	&\leq&
	\sup_i|\vcx{i}^{\t}\cDelta_i\vW_i\cDelta_i\vcx{i}|+2\sup_i|\vcx{i}^{\t}\diag{\dhfl''}\vW_i\cDelta_i\vcx{i}|
		\n\\
	&\leq&
	\sup_{\|\vu\|=1}\vu^{\t}\vW_i\vu\cdot \sup_i\|\cDelta_i\vcx{i}\|^2+2\sup_i\|\vW_i\diag{\dhfl''}\vcx{i}\|\cdot \sup_i\|\cDelta_i\vcx{i}\|
	.	\n
\EE
By \eqref{eq:difl}, we have
\[
	\sup_{i,j}|\ve_j^{\t}\cDelta_i\ve_j|
	\spaceequal
	\sup_{i,j}|\fl''(y_j-\vrx{j}^{\t}\hvb)-\fl''(\lopy{i}_j-\ve_j^{\t}\XIb\lopvb{i})|
	\qleq
	\op{\frac{\plogn\cdot \bc^{1+\alpha}}{n^{\frac{\alpha^2}{2}}\cdot \ac^{32\rho+7}}}
	.
\]
Due to Assumption O.\ref{ass:Smoothness} and Lemma \ref{lem:minev}, the maximum eigenvalue of $\vW_i$ is at most $\op{1/\ac}$. Hence, with Lemma \ref{lem:xwnorm} and the fact that $\cDelta_i$s are diagonal matrices, we have
\BE
	\lefteqn{\sup_i|\vcx{i}^{\t}(\bar{\vM}_i-\check{\vM}_i)\vcx{i}|}
		\n\\
	&\leq&
	\op{\frac{1}{\ac}}\cdot\op{\frac{\plogn\cdot \bc^{2+2\alpha}}{n^{\alpha^2}\cdot \ac^{64\rho+14}}}\cdot \op{1}
		\n\\
	&&
	+\op{\frac{1}{\ac}}\sup_{i}\fl''(y_i-\vrx{i}^{\t}\hvb)\cdot \op{1}\cdot \op{\frac{\plogn\cdot \bc^{1+\alpha}}{n^{\frac{\alpha^2}{2}}\cdot \ac^{32\rho+7}}}
		\n\\
	&=&
	\op{\frac{\plogn\cdot \bc^{1+\alpha}}{n^{\frac{\alpha^2}{2}}\cdot \ac^{64\rho+15}}}
	,	\n
\EE
where the last equality is due to Assumption O.\ref{ass:Smoothness2} and Lemma \ref{lem:supnormDi}. Hence, we have completed the proof of this lemma.


\subsubsection{Proof of Lemma \ref{lem:q2}}\label{sec:q2}

Note that by replacing $\vcx{i}$ by $\ve_j$ in the proof of Lemma \ref{lem:q1}, we can follow the same steps and show that
\[
	\sup_i\left|\frac{1}{n}\trace\left((\dbfl^{i})^{\frac{1}{2}}\bQ_i^{-1}(\dbfl^{i})^{\frac{1}{2}}\right)-\frac{1}{n} \trace\left( \diag{\dhfl''}^{\frac{1}{2}}\hQ_i^{-1}\diag{\dhfl''}^{\frac{1}{2}}\right)\right|
\]
is at most
\[
	\op{\frac{\plogn\cdot \bc^{1+\alpha}}{n^{\frac{\alpha^2}{2}}\cdot \ac^{64\rho+15}}}
	.
\]
Hence, since
\BE
	&&\sup_i\left|\frac{1}{n}\trace\left(\diag{\dhfl''}^{\frac{1}{2}}\hQ_i^{-1}\diag{\dhfl''}^{\frac{1}{2}}\right)-\frac{1}{n} \trace\left(\diag{\dhfl''}^{\frac{1}{2}}(\vI+\hX \vD\hX^{\t})^{-1}\diag{\hfl''}^{\frac{1}{2}}\right)\right|
		\n\\
	&=&
	\sup_i\left|\frac{1}{n}\trace\left(\diag{\dhfl''}^{\frac{1}{2}}\left(\hQ_i^{-1}-(\hQ_i+D_i\hvcx{i}\hvcx{i}^{\t})^{-1}\right)\diag{\dhfl''}^{\frac{1}{2}}\right)\right|
	,	\n
\EE
we just need to show that
\BEQ
\label{eq:add_q2_eq1}
\BS
	\sup_i\left|\frac{1}{n}\trace\left(\diag{\dhfl''}^{\frac{1}{2}}\left(\hQ_i^{-1}-(\hQ_i+D_i\hvcx{i}\hvcx{i}^{\t})^{-1}\right)\diag{\dhfl''}^{\frac{1}{2}}\right)\right|
	\leq \op{\frac{\plogn}{n^{\alpha^2/2}}}. 
\end{split}		
\EEQ
By Matrix Inversion Lemma, we have
\BE
	\lefteqn{\sup_i\left|\frac{1}{n}\trace\left(\diag{\dhfl''}^{\frac{1}{2}}\left(\hQ_i^{-1}-(\hQ_i+D_i\hvcx{i}\hvcx{i}^{\t})^{-1}\right)\diag{\dhfl''}^{\frac{1}{2}}\right)\right|}
		\n\\
	&=&
	\sup_i\left|\frac{1}{n}\trace\left(\frac{D_i\hQ_i^{-1}\hvcx{i}\hvcx{i}^{\t}\hQ_i^{-1}}{1+D_i\hvcx{i}^{\t}\hQ_i^{-1}\hvcx{i}}\diag{\dhfl''}\right)\right|
		\n\\
	&=&
	\sup_i\left|\frac{1}{n}\trace\left(\frac{\hQ_i^{-1}\hvcx{i}\hvcx{i}^{\t}\hQ_i^{-1}}{\lambda \creg''(\hb_i)+\hvcx{i}^{\t}\hQ_i^{-1}\hvcx{i}}\diag{\dhfl''}\right)\right|
		\n\\
	&=&
	\sup_i\left|\frac{1}{n}\cdot \frac{\trace\left(\hQ_i^{-1}\hvcx{i}\hvcx{i}^{\t}\hQ_i^{-1}\diag{\dhfl''}\right)}{\lambda \creg''(\hb_i)+\hvcx{i}^{\t}\hQ_i^{-1}\hvcx{i}}\right|
		\n\\
	&\leq&
	\sup_i\left|\frac{1}{n}\cdot \frac{\hvcx{i}^{\t}\hQ_i^{-2}\hvcx{i}}{\lambda \creg''(\hb_i)+\hvcx{i}^{\t}\hQ_i^{-1}\hvcx{i}}\right|\cdot \sup_i\fl''(y_i-\vrx{i}^{\t}\hvb)
	.	\n
\EE
Hence, due to the definition of $\hQ_i$, we know that the minimal eigenvalue of $\hQ_i$ is at least $1$ and $\hQ_i$ is a semi-positive definite. Therefore, we have 
\BE
	\lefteqn{\sup_i\left|\frac{1}{n}\trace\left(\diag{\dhfl''}^{\frac{1}{2}}\left(\hQ_i^{-1}-(\hQ_i+D_i\hvcx{i}\hvcx{i}^{\t})^{-1}\right)\diag{\dhfl''}^{\frac{1}{2}}\right)\right|}
		\n\\
	&\leq &
	\frac{\sup_i\left|\fl''(y_i-\vrx{i}^{\t}\hvb)\right|\cdot \hvcx{i}^{\t}\hQ_i^{-1/2}\hQ_i^{-1}\hQ_i^{-1/2}\hvcx{i} }{n\hvcx{i}^{\t}\hQ_i^{-1}\hvcx{i}} 
	\qleq
	\frac{\sup_i\left|\fl''(y_i-\vrx{i}^{\t}\hvb)\right|\cdot \hvcx{i}^{\t}\hQ_i^{-1}\hvcx{i} \cdot \op{1}}{n\hvcx{i}^{\t}\hQ_i^{-1}\hvcx{i}} 
	\n\\
	&\leq&
	\op{\frac{\sup_i\left|\fl''(y_i-\vrx{i}^{\t}\hvb)\right|}{n}}
	.	\n
\EE
Note that, due to Lemma \ref{lem:supnormDi} and Assumption O.\ref{ass:Smoothness2}, we have 
\[
	\sup_i\fl''(y_i-\vrx{i}^{\t}\hvb)
	\qleq
	\op{\frac{\plogn}{\ac^{3\rho}}}
	.
\]
Hence,  \eqref{eq:add_q2_eq1} holds. 

\section{Consistency of $\text{LOOCV}$ estimate}

\subsection{Proof of Lemma \ref{lem:crossterm}}\label{sec:crossterm}
Note that
\BE
	\text{var}(P_1)
	&=&
	\frac{1}{n^2}\sum_{i=1}^n\bbE\left(\fl(y_i-\vrx{i}^{\t}\loovb{i})-\bbE[\fl(y_i-\vrx{i}^{\t}\loovb{i})\big|\mathcal{D}_i]\right)^2
		\n\\
	&&
	+\frac{1}{n^2}\sum_{i\neq j}\bbE\left(\fl(y_i-\vrx{i}^{\t}\loovb{i})-\bbE[\fl(y_i-\vrx{i}^{\t}\loovb{i})\big|\mathcal{D}_i]\right)
		\n\\
	&&
	\quad \times \left(\fl(y_j-\vrx{j}^{\t}\loovb{j})-\bbE[\fl(y_j-\vrx{j}^{\t}\loovb{j})\big|\mathcal{D}_j]\right)
	.	\n
\EE
Our goal is to bound $\text{var}(P_1)$ by $O\left(\frac{1}{n\ac^{8\rho+10}}\right)$. We have
\BEQ
\label{eq:Con_eq_2}
\BS
	\lefteqn{\text{var}(P_1)
	\leq
	\underbrace{\frac{1}{n}\bbE\fl(y_1-\vrx{1}^{\t}\loovb{1})^2}_{\text{part 3}}}
		\\
	&+\underbrace{\bbE\left(\fl(y_1-\vrx{1}^{\t}\loovb{1})-\bbE[\fl(y_1-\vrx{1}^{\t}\loovb{1})\big|\mathcal{D}_1]\right)\left(\fl(y_2-\vrx{2}^{\t}\loovb{2})-\bbE[\fl(y_2-\vrx{2}^{\t}\loovb{2})\big|\mathcal{D}_2]\right)}_{\text{part 4}}
	.
\end{split}			
\EEQ
To bound part 3, note that, according to Assumptions O.\ref{ass:Smoothness2} and O.\ref{ass:True}, we have
\BE
	\bbE\left(\fl(y_1-\vrx{1}^{\t}\loovb{1})\right)^2
	&\leq&
	O(1)\left(1+\bbE\left(y_1-\vrx{1}^{\t}\loovb{1}\right)^{2\rho+4}\right)
		\n\\
	&\leq&
	O(1)\left(1+\bbE\frac{\|\loovb{1}-\vbeta_0\|^{4\rho+8}}{n^{2\rho+4}}+\bbE w_1^{4\rho+8}\right)
		\n\\
	&=&
	O\left(1+\frac{1}{n^{2\rho+4}}\bbE\|\loovb{1}-\vbeta_0\|^{4\rho+8}\right)
	.	\n
\EE
Hence, to bound part 3 by $O\left(\frac{1}{n\ac^{4\rho+8}}\right)$, we will first show that $\bbE\|\hvb-\vbeta_0\|^{2r}$ is bounded by $O(n^{r}/\ac^{2r})$ for every integer number $r$. Bounding $\bbE\|\loovb{1}-\vbeta_0\|^{2r}$ will be similar. Note that $\hvb$ should satisfy the following
\BE
	0
	&=&
	-\vX^{\t}\fl'(\vy-\vX\hvb)+\lambda \reg'(\hvb)
	.	\n
\EE
Hence, by applying Taylor expansion for $\fl'$ at $\vw$ and $\reg'$ at $\vbeta_0$, we have
\BE
	\vX^{\t}\fl'(\vw)-\lambda\reg'(\vbeta_0)
	&=&
	\left(\vX^{\t}\diag{\dfl''(\vw_{\vxi})}\vX+\lambda\diag{\dreg''(\vbeta_{\vxi'})}\right)(\hvb-\vbeta_0)
	,	\n
\EE
where $\ve_j^{\t}\vbeta_{\vxi'}=\xi'_j\ve_j^{\t} \vbeta_0+(1-\xi'_j)\ve_j^{\t} \hvb$ for some $\xi'_j \in [0,1]$ and $j$th diagonal component of $\diag{\dfl''(\vw_{\vxi})}$ is $\fl''(\xi_{j}w_j+(1-\xi_{j}) \vrx{j}^{\t}(\vbeta_0-\hvb))$ for some $\xi_{j}\in [0,1]$. Then, by Matrix Inversion Lemma and Assumption O.\ref{ass:Smoothness}, it is straightforward to show that
\BEQ
\label{eq:Con_eq_3}	
\BS
	\|\hvb-\vbeta_0\|
	&=
	\left\|\left(\vX^{\t}\diag{\dfl''(\vw_{\vxi})}\vX+\lambda\diag{\dreg''(\vbeta_{\vxi'})}\right)^{-1}(\vX^{\t}\fl'(\vw)-\lambda\reg'(\vbeta_0))\right\|
		\\
	&\leq
	\left\|\left(\vX^{\t}\diag{\dfl''(\vw_{\vxi})}\vX\right)^{-1}(\vX^{\t}\fl'(\vw)-\lambda\reg'(\vbeta_0))\right\|
		\\
	&\leq
	\ac^{-1}\left\|\left(\vX^{\t}\vX\right)^{-1}(\vX^{\t}\fl'(\vw)-\lambda\reg'(\vbeta_0))\right\|
		\\
	&\leq
	\ac^{-1}\left\|\left(\vX^{\t}\vX\right)^{-1}\vX^{\t}\fl'(\vw)\right\|+\frac{\lambda}{\ac}\left\|\left(\vX^{\t}\vX\right)^{-1}\reg'(\vbeta_0)\right\|
	.
\end{split}		
\EEQ
Hence, we have 
\BEQ
	\bbE \left(\frac{\|\hvb-\vbeta_0\|^2}{n}\right)^r
	\qleq
	O\left(\frac{1}{\ac^{2r}}\right)\cdot \left(\underbrace{\bbE\frac{\left\|\left(\vX^{\t}\vX\right)^{-1}\vX^{\t}\fl'(\vw)\right\|^{2r}}{n^r}}_{\text{part 5}}+ \underbrace{\bbE\frac{\left\|\left(\vX^{\t}\vX\right)^{-1}\reg'(\vbeta_0)\right\|^{2r}}{n^r}}_{\text{part 6}}\right)
	. 	\label{eq:Con_eq_4}
\EEQ
For part 5, we have
\BE
	\bbE\frac{\left\|\left(\vX^{\t}\vX\right)^{-1}\vX^{\t}\fl'(\vw)\right\|^{2r}}{n^r}
	&\leq&
	\bbE\left(\frac{\sigma_{\max}\left(\vX(\vX^{\t}\vX)^{-2}\vX^{\t}\right)\|\fl'(\vw)\|^2}{n}\right)^r
		\n\\
	&=&
	\bbE\left(\frac{\sigma_{\max}\left(\left(\vX^{\t}\vX\right)^{-1}\right)\|\fl'(\vw)\|^2}{n}\right)^r
		\n\\
	&\stackrel{\text{(i)}}{=}&
	\bbE\frac{1}{\sigma_{\min}^r(\vX^{\t}\vX)}\bbE\left(\frac{\sum_{i=1}^n\fl'(w_i)^2}{n}\right)^r
		\n\\
	&\leq&
	\bbE\frac{1}{\sigma_{\min}^r(\vX^{\t}\vX)}\bbE \fl'(w)^{2r}
		\n\\
	&\leq&
	\bbE\frac{1}{\sigma_{\min}^r(\vX^{\t}\vX)}\bbE (1+|w|^{\rho+1})^{2r}\cdot O(1)
		\n\\
	&\leq&
	\bbE\frac{1}{\sigma_{\min}^r(\vX^{\t}\vX)}\cdot O(1)
			,\n
\EE
where $\sigma_{\max}(\bm{M})$ and $\sigma_{\min}(\bm{M})$ denote the maximum and minimum eigenvalues of matrix $\bm{M}$ respectively. Furthermore, Equality (i) holds since $\vw$ and $\vX$ are independent. Finally, the  last two inequalities are due to Assumptions O.\ref{ass:Smoothness2} and O.\ref{ass:True}.

To bound $\bbE\frac{1}{\sigma_{\min}^r(\vX^{\t}\vX)}$, we claim the following lemma:
\begin{lemma}\label{lem:socomp}
For all fixed $r\geq 0$, we have 
\BE
	\bbE\frac{1}{\sigma_{\min}^r(\vX^{\t}\vX)}
	\spaceequal
	O(1)
	.	\label{eq:socomp_eq1}
\EE
\end{lemma}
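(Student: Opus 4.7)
The strategy is to split the expectation at the high-probability event $E = \{\sigma_{\min}(\vX^\top\vX) \geq \sigma_\delta\}$ from Lemma \ref{lem:minev} and control the two pieces separately. On $E$ the integrand is deterministically bounded by $\sigma_\delta^{-r} = O(1)$, so $\bbE[\sigma_{\min}^{-r}(\vX^\top\vX)\mathbf{1}_E] \leq \sigma_\delta^{-r}$. All the work is in controlling $\bbE[\sigma_{\min}^{-r}(\vX^\top\vX) \mathbf{1}_{E^c}]$, since on $E^c$ the smallest eigenvalue can be arbitrarily small; the exponentially small probability $\Pr(E^c)$ from Lemma \ref{lem:minev} is not directly enough because $\sigma_{\min}^{-r}$ is unbounded.

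The missing ingredient is a quantitative small-ball bound for $\sigma_{\min}(\vX)$. I would use the classical Edelman-type estimate for Gaussian rectangular matrices: for $\vX \in \bbR^{n\times p}$ with i.i.d.~$\mathcal{N}(0,1/n)$ entries and $n > p$, there is a constant $C(\delta)$ such that
\begin{equation}
	\Pr\bigl(\sigma_{\min}(\vX^\top\vX) \leq s\bigr) \;\leq\; C(\delta)\, s^{(n-p+1)/2}, \qquad s \geq 0. \nonumber
\end{equation}
This follows from the explicit joint density of Wishart eigenvalues (or from a $\epsilon$-net argument on the unit sphere combined with Gaussian anti-concentration of $\|\vX\vu\|^2$), and the constant $C(\delta)$ depends only on $\delta$ once one uses Stirling's formula on the $\Gamma$ factors, since $n-p = p(\delta-1)$ grows linearly.

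With this bound in hand, the layer-cake formula gives
\begin{equation}
	\bbE\bigl[\sigma_{\min}^{-r}(\vX^\top\vX)\bigr]
	\= \int_0^\infty r s^{-r-1}\, \Pr\bigl(\sigma_{\min}(\vX^\top\vX)\leq s\bigr)\, \dif s,	\nonumber
\end{equation}
which I split at $s=\sigma_\delta$. The tail piece $\int_{\sigma_\delta}^\infty r s^{-r-1}\,\dif s = \sigma_\delta^{-r}$ is trivially $O(1)$. The small-$s$ piece is bounded by
\begin{equation}
	C(\delta)\, r \int_0^{\sigma_\delta} s^{(n-p+1)/2 - r - 1}\, \dif s \= C(\delta)\, r\, \frac{\sigma_\delta^{(n-p+1)/2 - r}}{(n-p+1)/2 - r},	\nonumber
\end{equation}
which is finite once $(n-p+1)/2 > r$, i.e.~for all $n$ large enough with $r$ fixed. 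Because $\sigma_\delta < 1$ (see the definition in Lemma \ref{lem:minev}) and the exponent $(n-p+1)/2 - r$ tends to $+\infty$, this contribution is in fact $o(1)$. Combining the two pieces yields the claim $\bbE[\sigma_{\min}^{-r}(\vX^\top\vX)] = O(1)$.

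The principal obstacle is establishing the small-ball bound with a constant $C(\delta)$ independent of $n$; everything downstream is a short calculus exercise. The cleanest route, in my view, is to invoke the explicit Wishart density and show via Stirling that the normalizing constant behaves as $(\delta-1)^{O(n)}$, which is absorbed by the factor $\sigma_\delta^{(n-p+1)/2}$ appearing in the bound, since $\sigma_\delta$ is a fixed function of $\delta$ only. An alternative is to bound $\sigma_{\min}(\vX)$ below by a net argument combined with the Gaussian bound $\Pr(|Z| \leq \epsilon) \leq \sqrt{2/\pi}\,\epsilon$ for $Z \sim \mathcal{N}(0,1)$, which avoids random matrix prerequisites but requires some bookkeeping for the net cardinality.
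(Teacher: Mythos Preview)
Your approach is essentially the same as the paper's. The paper also splits the expectation at a fixed threshold and controls the small-eigenvalue contribution via a tail bound on the smallest Wishart eigenvalue, then finishes with Stirling. The only cosmetic differences are that the paper works with the density bound $f_p(\lambda_p)\le \kappa_{n,p}\,e^{-\lambda_p/2}\lambda_p^{(n-p-1)/2}$ from \cite{chen2005condition} rather than the CDF, integrates directly instead of using layer cake, and makes the specific threshold choice $\sigma_{\min}(\vX^\top\vX)\ge 1/c_\delta$ with $c_\delta=(\delta/(\delta-1))^2$, which is tuned so that after Stirling the exponential base $\frac{\delta}{(\delta-1)\sqrt{c_\delta}}$ equals $1$.

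One point worth tightening in your write-up: the small-ball bound as you state it, $\Pr(\sigma_{\min}\le s)\le C(\delta)\,s^{(n-p+1)/2}$ with $C(\delta)$ independent of $n$, cannot literally hold, since at $s$ equal to the Marchenko--Pastur lower edge the left side is $\Theta(1)$ while the right side would vanish (or diverge) with $n$. What is true is that the constant depends on $n$ and grows exponentially, but with a base that can be killed by the factor $s^{(n-p+1)/2}$ provided the threshold $s$ is chosen correctly; this is exactly the balancing act the paper carries out explicitly by matching $c_\delta$ to the growth rate. You clearly see this in your ``principal obstacle'' paragraph, so it is just a matter of stating the bound with an $n$-dependent constant $C_n$ and then showing $C_n\cdot\sigma_\delta^{(n-p+1)/2}=O(1)$ via Stirling, rather than asserting $C(\delta)$ is dimension-free.
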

\noindent The proof of this lemma can be found in Section \ref{sec:socomp}.
Hence, with Assumption O.\ref{ass:True}, we have
\[
	\bbE\frac{\left\|\left(\vX^{\t}\vX\right)^{-1}\vX^{\t}\fl'(\vw)\right\|^{2r}}{n^r}
	\qleq
	O(1)
	.
\]
Similarly, part 6 is $O(1)$ as well. Therefore, according to \eqref{eq:Con_eq_4}, for all $r\in \bbN$ we have
\BE
	\bbE \left(\frac{\|\hvb-\vbeta_0\|^2}{n}\right)^r
	\qleq 
	O\left(\frac{1}{\ac^{2r}}\right)
	.	\label{eq:Con_eq_5}
\EE
Similarly, for all $r\in \bbN$ we have
\BE
	\bbE \left(\frac{\|\loovb{1}-\vbeta_0\|^2}{n}\right)^r
	\qleq 
	O\left(\frac{1}{\ac^{2r}}\right)
	.	\label{eq:Con_eq_6}
\EE
Hence, part 3 in \eqref{eq:Con_eq_2} is bounded by $O\left(\frac{1}{n\ac^{4\rho+8}}\right)$. To bound part 4 in \eqref{eq:Con_eq_2}, consider the following definitions:
\BE
	\delta_1
	&=&
	\fl(y_1-\vrx{1}^{\t}\loovb{1})-\bbE[\fl(y_1-\vrx{1}^{\t}\loovb{1})\big|\mathcal{D}_1]
	,	\n\\
	\delta_2
	&=&
	\fl(y_2-\vrx{2}^{\t}\loovb{2})-\bbE[\fl(y_2-\vrx{2}^{\t}\loovb{2})\big|\mathcal{D}_2]
	,	\n\\
	\delta_{12}
	&=&
	\fl(y_1-\vrx{1}^{\t}\loovb{\{1,2\}})-\bbE[\fl(y_1-\vrx{1}^{\t}\loovb{\{1,2\}})\big|\mathcal{D}_{1}]
	,	\n\\
	\delta_{21}
	&=&
	\fl(y_2-\vrx{2}^{\t}\loovb{\{1,2\}})-\bbE[\fl(y_2-\vrx{2}^{\t}\loovb{\{1,2\}})\big|\mathcal{D}_2]
	,	\n	
\EE
where $\loovb{\{1,2\}}$ is the minimizer of \eqref{eq:model} without the first and second observations $(\vrx{1},y_1)$ and $(\vrx{2},y_2)$, i.e.,
\[
	\loovb{\{1,2\}}
	\spaceequal
	\argmin_{\vbeta \in \bbR^{p}}\sum_{i=3}^n \fl(y_i-\vrx{i}^{\t}\vbeta)+\lambda \sum_{i=1}^p \reg(\beta_i)
	.
\]
Since $\loovb{\{1,2\}}$ is independent of both the first and second observations, it is straightforward to show that
\BE
	0
	&=&
	\bbE \delta_1\delta_{21}
	\spaceequal
	\bbE \delta_{12}\delta_2
	\spaceequal
	\bbE \delta_{12}\delta_{21}
	.	\n
\EE
Hence, we have 
\BE
	\lefteqn{\text{part 4}
	\spaceequal
	\bbE \delta_1\delta_2
	\spaceequal
	\bbE (\delta_1-\delta_{12})(\delta_2-\delta_{21})}
		\n\\
	&=&
	\text{part 7}+\text{part 8}+\text{part 9}+\text{part 10}
	,	\n
\EE
where
\BE
	\text{part 7}
	&=&
	\bbE \left(\fl(y_1-\vrx{1}^{\t}\loovb{1})-\fl(y_1-\vrx{1}^{\t}\loovb{\{1,2\}})\right)
		\n\\
	&&
	\times \left(\fl(y_2-\vrx{2}^{\t}\loovb{2})-\fl(y_2-\vrx{2}^{\t}\loovb{\{1,2\}})\right)
		\n\\
	\text{part 8}
	&=&
	\bbE \left(\fl(y_1^{\text{new}}-(\vrx{1}^{\text{new}})^{\t}\loovb{1})-\fl(y_1^{\text{new}}-(\vrx{1}^{\text{new}})^{\t}\loovb{\{1,2\}})\right)
		\n\\
	&&
	\times \left(\fl(y_2-\vrx{2}^{\t}\loovb{2})-\fl(y_2-\vrx{2}^{\t}\loovb{\{1,2\}})\right)
		\n\\
	\text{part 9}
	&=&
	\bbE \left(\fl(y_1-\vrx{1}^{\t}\loovb{1})-\fl(y_1-\vrx{1}^{\t}\loovb{\{1,2\}})\right)
		\n\\
	&&
	\times \left(\fl(y_2^{\text{new}}-(\vrx{2}^{\text{new}})^{\t}\loovb{2})-\fl(y_2^{\text{new}}-(\vrx{2}^{\text{new}})^{\t}\loovb{\{1,2\}})\right)
		\n\\
	\text{part 10}
	&=&
	\bbE \left(\fl(y_1^{\text{new}}-(\vrx{1}^{\text{new}})^{\t}\loovb{1})-\fl(y_1^{\text{new}}-(\vrx{1}^{\text{new}})^{\t}\loovb{\{1,2\}})\right)
		\n\\
	&&
	\times \left(\fl(y_2^{\text{new}}-(\vrx{2}^{\text{new}})^{\t}\loovb{2})-\fl(y_2^{\text{new}}-(\vrx{2}^{\text{new}})^{\t}\loovb{\{1,2\}})\right)
	,	\n
\EE
where $(\vrx{1}^{\text{new}},y_1^{\text{new}})$ and $(\vrx{2}^{\text{new}},y_2^{\text{new}})$ are two independent copies of $(\vrx{1},y_1)$ and $(\vrx{2},y_2)$. We will show that part 7 can be bounded by $O\left(\frac{1}{n\ac^{8\rho+10}}\right)$ and then part 8, 9 and 10 can be bounded by $O\left(\frac{1}{n\ac^{8\rho+10}}\right)$ following a similar argument. To bound part 7, note that $\loovb{1}, \loovb{2}$ and $\loovb{\{1,2\}}$ should satisfy the following:
\BE
	0
	&=&
	-\vX^{\t}\fl'(\vy-\vX\loovb{1})+\fl'(y_1-\vrx{1}\loovb{1})\vrx{1}+\lambda \reg'(\loovb{1})
	,	\n\\
	0
	&=&
	-\vX^{\t}\fl'(\vy-\vX\loovb{2})+\fl'(y_2-\vrx{2}\loovb{2})\vrx{2}+\lambda \reg'(\loovb{2})
	,	\n\\
	0
	&=&
	-\vX^{\t}\fl'(\vy-\vX\loovb{\{1,2\}})+\fl'(y_1-\vrx{1}\loovb{1,2})\vrx{1}+\fl'(y_2-\vrx{2}\loovb{1,2})\vrx{2}+\lambda \reg'(\loovb{\{1,2\}})
	.	\n
\EE
Hence, applying Taylor expansion, we have
\BE
	\loovb{1}-\loovb{\{1,2\}}
	&=&
	\fl'(y_2-\vrx{2}^{\t}\loovb{1,2})(\vB_{2,\vxi,\vxi'})^{-1}\vrx{2}
	,	\n\\
	\loovb{2}-\loovb{\{1,2\}}
	&=&
	\fl'(y_1-\vrx{1}^{\t}\loovb{1,2})(\vB_{1,\vxi,\vxi'})^{-1}\vrx{1}
	,	\n\\
\EE
and 
\BE
	\lefteqn{\left(\fl(y_1-\vrx{1}^{\t}\loovb{1})-\fl(y_1-\vrx{1}^{\t}\loovb{\{1,2\}})\right)\left(\fl(y_2-\vrx{2}^{\t}\loovb{2})-\fl(y_2-\vrx{2}^{\t}\loovb{\{1,2\}})\right)}
		\n\\
	&=&
	\vrx{1}^{\t}(\loovb{1}-\loovb{\{1,2\}})\fl'(y_1-\vrx{1}^{\t}\loovb{1}_{\xi})\vrx{2}^{\t}(\loovb{2}-\loovb{\{1,2\}})\fl'(y_2-\vrx{2}^{\t}\loovb{2}_{\xi})
		\n\\
	&=&
	\vrx{1}^{\t}(\vB_{1,\vxi,\vxi'})^{-1}\vrx{2}\vrx{2}^{\t}(\vB_{2,\vxi,\vxi'})^{-1}\vrx{1}\fl'(y_2-\vrx{2}^{\t}\loovb{1,2})
		\n\\
	&&
	\times\fl'(y_1-\vrx{1}^{\t}\loovb{1}_{\xi})\fl'(y_1-\vrx{1}^{\t}\loovb{1,2})\fl'(y_2-\vrx{2}^{\t}\loovb{2}_{\xi})
	,	\n
\EE
where $\vB_{1,\vxi,\vxi'},\vB_{2,\vxi,\vxi'}$ are defined by
\BE
	\vB_{1,\vxi,\vxi'}
	&=&
	\vX_{\backslash\{1,2\}}^{\t}\diag{\dfl''(\vy-\vX_{\backslash \{1,2\}}\loovb{1}_{\xi})}\vX_{\backslash \{1,2\}}+\lambda\diag{\dreg''(\loovb{1}_{\xi})}
	,	\n\\
	\vB_{2,\vxi,\vxi'}
	&=&
	\vX_{\backslash\{1,2\}}^{\t}\diag{\dfl''(\vy-\vX_{\backslash \{1,2\}}\loovb{2}_{\xi'})}\vX_{\backslash \{1,2\}}+\lambda\diag{\dreg''(\loovb{2}_{\xi'})}
	,	\n
\EE
and $\loovb{1}_{\xi},\loovb{2}_{\xi'}$ lie between $\loovb{1}$ and $\loovb{\{1,2\}}$ or $\loovb{2}$ and $\loovb{\{1,2\}}$ respectively. Then, by the Cauchy inequality, we have 
\BE
	(\text{part 7})^4
	&\leq&
	\bbE(\vrx{1}^{\t}(\vB_{1,\vxi,\vxi'})^{-1}\vrx{2})^4\times \bbE(\vrx{2}^{\t}(\vB_{2,\vxi,\vxi'})^{-1}\vrx{1})^4
		\n\\
	&&\times \left(\bbE\left(\fl'(y_2-\vrx{2}^{\t}\loovb{1,2})\fl'(y_1-\vrx{1}^{\t}\loovb{1}_{\xi})\fl'(y_1-\vrx{1}^{\t}\loovb{1,2})\fl'(y_2-\vrx{2}^{\t}\loovb{2}_{\xi})\right)^2\right)^2
		\n\\
	&\leq&
	O\left(\frac{1}{\ac^{32\rho+32}}\right)\cdot \bbE(\vrx{1}^{\t}(\vB_{1,\vxi,\vxi'})^{-1}\vrx{2})^4\times \bbE(\vrx{2}^{\t}(\vB_{2,\vxi,\vxi'})^{-1}\vrx{1})^4
	,	\n
\EE   
where the proof of the last inequality is similar to the proof we gave for bounding part 3 above. To bound $\bbE(\vrx{1}^{\t}(\vB_{1,\vxi,\vxi'})^{-1}\vrx{2})^4$, note that $\vrx{1}$ is independent of $(\vB_{1,\vxi,\vxi'})^{-1}\vrx{2}$ and therefore, we have
\[
	\vrx{1}^{\t}(\vB_{1,\vxi,\vxi'})^{-1}\vrx{2}\big| \mathcal{D}_1
	\ide
	\mathcal{N}(0,\frac{\|(\vB_{1,\vxi,\vxi'})^{-1}\vrx{2}\|^2}{n})
	.
\]
Further, by Matrix Inversion Lemma, we can bound $\|(\vB_{1,\vxi,\vxi'})^{-1}\vrx{2}\|$ by $\frac{1}{\ac}\|(\vX_{\backslash\{1,2\}}^{\t}\vX_{\backslash\{1,2\}})^{-1}\vrx{2}\|$. Hence, we have
\BE
	\bbE(\vrx{1}^{\t}(\vB_{1,\vxi,\vxi'})^{-1}\vrx{2})^4
	&=&
	3\bbE \frac{\|(\vB_{1,\vxi,\vxi'})^{-1}\vrx{2}\|^4}{n^2}
		\n\\
	&\leq&
	O\left(\frac{1}{n^2\ac^4}\right)\cdot \bbE\|(\vX_{\backslash\{1,2\}}^{\t}\vX_{\backslash\{1,2\}})^{-1}\vrx{2}\|^4
		\n\\
	&\leq&
	O\left(\frac{1}{n^2\ac^4}\right)\cdot \bbE ~3\cdot \trace\left(\frac{(\vX_{\backslash\{1,2\}}^{\t}\vX_{\backslash\{1,2\}})^{-4} }{n^2}\right)
	,	\n
\EE
where the last equality is due to the fact that $\vrx{2}$ is a Gaussian vector and is independent of $\vX_{\backslash\{1,2\}}$. Due to \cite{graczyk2003complex}[Theorem 4], we have
\[
	\bbE(\vrx{1}^{\t}(\vB_{1,\vxi,\vxi'})^{-1}\vrx{2})^4
	\qleq
	O\left(\frac{1}{n^2\ac^4}\right)
	.
\]
Similarly, we have
\[
	\bbE(\vrx{2}^{\t}(\vB_{2,\vxi,\vxi'})^{-1}\vrx{1})^4
	\qleq
	O\left(\frac{1}{n^2\ac^4}\right)
	.
\]
Hence, we have proved that part 7 is $O\left(\frac{1}{n\ac^{8\rho+10}}\right)$. By using similar techniques we can prove that parts 8 to 10 are bounded by $O\left(\frac{1}{n\ac^{8\rho+10}}\right)$. Therefore, part 4 is bounded by $O\left(\frac{1}{n\ac^{8\rho+10}}\right)$. Together with the fact that part 3 is bounded by $O\left(\frac{1}{n\ac^{4\rho+8}}\right)$, we have shown that the variance of part 1 is bounded by $O\left(\frac{1}{n\ac^{8\rho+10}}\right)$.

\subsection{Proof of Lemma \ref{lem:socomp}}\label{sec:socomp}

Let $\bm{W}$ denote a random matrix drawn from the standard Wishart distribution $W_{p}(n,\vI)$. Let $\lambda_1\geq \lambda_2\ldots\geq\lambda_p$ denote the eigenvalues of $\bm{W}$. Then, it is straightforward to see that bounding $\bbE\frac{1}{\sigma_{\min}^r(\vX^{\t}\vX)}$ is equivalent to bounding $\bbE\frac{n^r}{\lambda_p^r}$. According to \cite{chen2005condition}[Lemma 3.3], offers the following upper bound for the probability density function of $\lambda_p$:
\BE
	f_p(\lambda_p)
	\qleq
	\kappa_{n,p}e^{-\frac{1}{2}\lambda_p}\lambda_p^{(n-p-1)/2}
	,	\label{eq:induction_eq_1}
\EE
where $\kappa_{n,p}=\frac{2^{(n-p-1)/2}\Gamma((n+1)/2)}{\Gamma(p/2)\Gamma(n-p+1)}$ and $\Gamma(x)$ is the Gamma function. Hence, as long as $n$ and $p$ are sufficiently large, we know $\bbE\frac{n^r}{\lambda_p^r}$ exists. Further, we just need to consider $r$ to be integer since we have
\[
	\bbE\left(\frac{n}{\lambda_p}\right)^r 
	\qleq
	\bbE\left(\frac{n}{\lambda_p}\right)^{\lfloor r \rfloor}+\bbE\left(\frac{n}{\lambda_p}\right)^{\lceil r \rceil}
\]
Next, let us denote $c_{\delta}=\left(\frac{\delta}{\delta-1}\right)^2$ where $\delta=\frac{n}{p}$ is defined in Section \ref{sec:obj}. Then by \eqref{eq:induction_eq_1}, we have 
\BE
	\bbE \left(\frac{n}{\lambda_p}\right)^r 
	&=&
	\bbE\left(\frac{n}{\lambda_p}\right)^r \bbI_{\lambda_p\geq n/c_{\delta}}+\bbE\left(\frac{n}{\lambda_p}\right)^r  \bbI_{\lambda_p\leq n/c_{\delta}}
		\n\\
	&\leq& c_{\delta}^r+\int_{0}^{\frac{n}{c_{\delta}}} \left(\frac{n}{\lambda_p}\right)^r \cdot \kappa_{n,p}e^{-\frac{1}{2}\lambda_p}\lambda_p^{(n-p-1)/2} \dif \lambda_p
		\n\\
	&\leq& c_{\delta}^r+n^r\int_{0}^{\frac{n}{c_{\delta}}}  \kappa_{n,p}\lambda_p^{(n-p-1-2r)/2} \dif \lambda_p
		\n\\
	&=&
	c_{\delta}^r +\frac{\Gamma(\frac{n+1}{2})}{\Gamma(\frac{p}{2})\left(\frac{n}{2}\right)^{\frac{n-p+1}{2}}}\cdot \frac{\left(\frac{n}{\sqrt{c_{\delta}}}\right)^{n-p+1}\cdot c_{\delta}^r}{\Gamma(n-p+1)\cdot (n-p+1-2r)}
	.	\n
\EE
From \cite{chen2005condition}[Eq 2.6 and Lemma 4.1], we have
\[
	\frac{\Gamma(\frac{n+1}{2})}{\Gamma(\frac{p}{2})\left(\frac{n}{2}\right)^{\frac{n-p+1}{2}}}
	\qleq
	1
	.
\] 
Hence, with Stirling's approximation, we have
\BE
	\bbE \left(\frac{n}{\lambda_p}\right)^r
	&\leq&
	c_{\delta}^r+\frac{\left(\frac{n}{\sqrt{c_{\delta}}}\right)^{n-p+1}\cdot c_{\delta}^r}{\Gamma(n-p+1)\cdot (n-p+1-2r)}
		\n\\
	&\leq&
	O(c_{\delta}^r)\cdot \left(1+\frac{\left(\frac{n}{\sqrt{c_{\delta}}}\right)^{n-p+1}}{\left(\frac{n-p}{e}\right)^{n-p}\cdot (n-p+1-2r)}\right)
		\n\\
	&\leq&
	O(c_{\delta}^r)\cdot \left(1+\left(\frac{\delta}{(\delta-1)\sqrt{c_{\delta}}}\right)^{n-p+1}\right)
		\n\\
	&=&
	O(1)
	,	\n
\EE
where to obtain the last equality we plugged in the value of $c_{\delta}$. This completes the proof.

\section{Proof of Lemma \ref{lem:ABOUTASSUMPO5}}\label{sec:disbi}

Our proof here uses the proof of Theorem \ref{thm:lopo}. Hence, we suggest that the reader reads the proof of Theorem \ref{thm:lopo} before this. We first remind the quantity $b_i$ was defined in the statement of Theorem \ref{thm:lopo}. 
Our goal here is to first prove that under the assumptions O.\ref{ass:Convex} - O.\ref{ass:True}, we have $\sup_i|b_i|= \op{\frac{\plogn}{\ac^{10\rho+5}}}$. We will connect $\hb_i$ with $b_i$ and bound $\sup_i|\hb_i|$ later in the proof.\\

To show $\sup_{i}|b_i|$ is bounded by $\op{\plogn}$, note that if condition (a) holds, then we have
\BE
	\sup_{i,j}\reg''(\lopb{i}_j)
	&\leq&
	\sup_{i,j}\reg''(\lophb{i}_j)+O(1)\cdot \sup_{i,j}|\lophb{i}_j-\lopb{i}_j|
		\n\\
	&=&
	\sup_{j}\reg''(\hb_j)+O(1)\cdot \sup_{i,j}|\lophb{i}_j-\lopb{i}_j|
		\n\\
	&\leq&
	\op{\frac{\plogn}{\ac^{6\rho+3}}}
	,	\n
\EE
where the last equality is due to \eqref{eq:add_goal_1}. Note that, in the proof of Theorem \ref{thm:lopo}, the only place we use assumption O.\ref{ass:boundhvb} is to obtain an upper bound on $\sup_{i,j}\reg''(\lopb{i}_j)$ in \eqref{eq:add_10.49}. A similar argument shows that Condition (b) proves $\sup_{i,j}\reg''(\lopb{i}_j) = \op{\frac{\plogn}{\ac^{6\rho+3}}}$ as well. Hence, applying this new bound in \eqref{eq:add_10.48}, we have
\[
	\inf_i|a_i|
	\qgeq
	\Omega_p\left(\frac{\ac^{6\rho+3}}{\plogn}\right)
	.
\]  
Then by \eqref{eq:add_9.57}, we have $\sup_i|b_i|\leq \op{\frac{\plogn}{\ac^{10\rho+5}}}$. For the case when condition (c) holds, note that from the definition of $b_i$ we have
\[
	a_i(b_i-\beta_{0,i})+\lambda (\reg'(b_i)-\reg'(\beta_{0,i}))
	\spaceequal
	\vcx{i}^{\t}\fl'(\lopvy{i}-\XIb\lopvb{i})-\lambda \reg'(\beta_{0,i})
	.
\]
By using the Taylor expansion, we obtain
\[
	(a_i+\lambda \reg''(b_{\xi}))(b_i-\beta_{0,i})
	\spaceequal
	\vcx{i}^{\t}\fl'(\lopvy{i}-\XIb\lopvb{i})-\lambda \reg'(\beta_{0,i})
	.
\]
where $b_{\xi}=\xi b_i+(1-\xi)\beta_{0,i}$ for some $\xi\in[0,1]$. Note that by the definition of $a_i$, we know $a_i>0$. Hence, we know $a_i+\lambda \reg''(b_{\xi})= \Omega(1)$. Therefore, we have
\BE
	\sup_{i}|b_i|
	&\leq&
	\sup_i|\beta_{0,i}|+\op{\sup_i|\vcx{i}^{\t}\fl'(\lopvy{i}-\XIb\lopvb{i})|+\lambda |\reg'(\beta_{0,i})|}
		\n\\
	&\stackrel{\text{(i)}}{\leq}&
	\op{\sup_i(1+|\beta_{0,i}|^{\rho+1})}+\op{\sup_i|\vcx{i}^{\t}\fl'(\lopvy{i}-\XIb\lopvb{i})|}
		\n\\
	&\stackrel{\text{(ii)}}{\leq}&
	\op{\plogn}+\op{\sup_i|\vcx{i}^{\t}\fl'(\lopvy{i}-\XIb\lopvb{i})|}
	,	\n
\EE
where Inequality (i) is due to assumption O.\ref{ass:Smoothness2} and Inequality (ii) is due to Lemma \ref{lem:xwnorm}. Then, note that $\vcx{i}$ is independent of $\lopvy{i}-\XIb\lopvb{i}$ and from Assumption O.\ref{ass:True}, components of $\vcx{i}$ are i.i.d.~mean 0 subGaussian random variables. Hence, by Hanson-Wright inequality, Assumption O.\ref{ass:Smoothness2} and \eqref{eq:add_lopo_eq1}, we have
\[
	\sup_{i}|b_i|
	\qleq
	\op{\frac{\plogn}{\ac^{(\rho+2)(\rho+1)}}}
	.
\]
So far we have showed that if one of the conditions (a), (b), or (c) holds, then $\sup_i|b_i| = \op{\frac{\plogn}{\ac^{10\rho+5}}}$.  The next step is to use this fact and prove that $\sup_i|\hb_i| = \op{\frac{\plogn}{\ac^{10\rho+5}}}$. Note that Corollary \ref{cor:bibound} only requires assumptions O.\ref{ass:Convex} - O.\ref{ass:True}, hence, we can apply this corollary and obtain that
\BE
	\sup_i|\hb_i|
	&\leq&
	\sup_i|\hb_i-b_i|+\sup_i|b_i|
		\n\\
	&\leq&
	\op{\frac{\plogn}{n^{\frac{\alpha}{2}}}\cdot \frac{\sup_i|b_i-\beta_{0,i}|^{1+\alpha}}{\ac^{6\rho+3}}}+\sup_i|b_i|
		\n\\
	&=&
	\op{\frac{\plogn}{\ac^{10\rho+5}}}
	,	\n
\EE
where the last equality is due to Lemma \ref{lem:xwnorm}. This shows that Assumption O.\ref{ass:boundhvb} holds with $\bc=\op{\frac{\plogn}{\ac^{15\rho}}}$.










\section{Heuristic derivation of AMP risk estimate}\label{sec:construct}

First, we show how one can heuristically derive the risk estimate formula we presented in \eqref{eq:amprhat} when $\vrx{i}\stackrel{\text{i.i.d.}}{\sim}\mathcal{N}(\boldsymbol{0},\frac{1}{n}\vI)$. This formula is derived from the approximate message passing algorithm (AMP). AMP was first introduced as a fast iterative algorithm for solving regularized least squares problem \cite{DMM09Message}. It has since been extended to more general models and optimization problems \cite{DM16High,ma2018optimization,bradic2015robustness}. We can follow the the strategy proposed in \cite{M11PhD,donoho2010message,rangan2011generalized} and obtain the following AMP algorithm for solving \eqref{eq:model}:
\BI

\item Set initialization $\vbeta^0$ be independent of $\vX$ (usually we set $\vbeta^0=\vzero$).

\item Update $\vz^t$ and $\vbeta^{t+1}$ for $t\geq 0$ by
	\BE
		\vbeta^{t+1}
		&=&
		\eta(\vbeta^t+\vX^{\t}\frac{\psi(\vz^t,\theta^t)}{\dotp{\psi'(\vz^t,\theta^t)}},\tau_t)
		,	\n\\
		\vz^t
		&=&
		\vy-\vX\vbeta^t+\psi(\vz^{t-1},\theta^{t-1})
		,	\label{eq:amptupdaterule}
	\EE
	where $\theta^{t}$ is the solution of the following equation
	\BE
		\dotp{\psi'(\vz^{t},\theta^t)}
		&=&
		\frac{1}{\delta}\dotp{\eta'(\vbeta^{t}+\vX^{\t}\frac{\psi(\vz^t,\theta^t)}{\dotp{\psi'(\vz^t,\theta^t)}},\tau_t)}
		.	\label{eq:thetaexist}
	\EE
\EI
In these equations $\eta$ is the proximal operator of $\reg$, i.e., $\eta(x,\tau) = \argmin_{y\in \bbR}\frac{1}{2}(x-y)^2+\tau \reg(y)$, and $\psi(x,\theta):=\theta\fl'(\eta_{\fl}(x,\theta))$, where $\eta_{\fl}$ is the proximal operator of $\fl$, i.e. $\eta_{\fl}(x,\theta) = \argmin_{y\in \bbR}\frac{1}{2}(x-y)^2+\theta \fl(y).$ Furthermore,  $\psi'(x,\theta)$ denotes the derivative of $\psi(\cdot, \cdot)$ with respect to its first input argument, and $\{\tau_t\}_{t\geq 0}$ is a sequence of tuning parameter. Here, we assume that $\{\tau_t\}_{t\geq 0}$ is a converging sequence. The role of these parameters will be clarified later. We emphasize on a few features of AMP below:

\begin{itemize}

\item The existence of a solution for \eqref{eq:thetaexist} is guaranteed by \cite{DM16High}; by the convexity of the regularizer $\reg(x)$ and Lemma \ref{lem:eta'}, the right hand side (RHS) of \eqref{eq:thetaexist} is always in $[0,1/\delta]\subset [0,1]$, while the left hand side (LHS) is equal to zero for $\theta^t=0$ and is equal to one when $\theta^t=\infty$. Hence, given the continuity of the LHS and RHS functions the existence of a solution is guaranteed.  

\item  An important feature of AMP that has made its asymptotic analysis possible is that, intuitively speaking, $\vz^t$ can be considered as a random vector with Gaussian marginals. Furthermore, to calculate the mean and variance of the marginal distribution of $z_i^t$ it is safe to assume that $\vx_{i}$ is independent of $\vbeta^t$. This independence is in fact happening because of the term $\psi(\vz^{t-1},\theta^{t-1})$ that is added to the residual. This term is known as the Onsager correction term. In the calculation of the mean and variance of $z_i^t$, one can ignore the existence of this term and assume that its only is to make $\vx_{i}$ independent of $\vbeta^t$.  For further discussion regarding these heuristic arguments and the existing rigorous proofs the reader may refer to \cite{metzler2016denoising}. 

\end{itemize}
Suppose that for a converging sequence $\{\tau_t\}_{t\geq 0}$, the AMP estimates converge to $(\vbeta^{\infty}_{\tau^*}, \vz^{\infty}_{\tau^*}, \theta^{\infty}_{\tau^*},\tau^*)$. Also, define
\[
	\gamma^*
	\ :=\ 
	\tau^*\cdot \dotp{\frac{\fl''(\vy-\vX\vbeta^{\infty}_{\tau^*})}{1+\theta^{\infty}_{\tau^*} \fl''(\vy-\vX\vbeta^{\infty}_{\tau^*})}}
	.
\]
Then, $(\vbeta,\vz,\theta,\tau,\gamma)=(\vbeta^{\infty}_{\tau^*}, \vz^{\infty}_{\tau^*}, \theta^{\infty}_{\tau^*},\tau^*, \gamma^*)$ satsfies:
\begin{subequations}\label{eq:main_limitequation}
\begin{align}
	\vbeta
	&\spaceequal 
	\eta(\vbeta+\vX^{\t}\frac{\psi(\vz,\theta)}{\dotp{\psi'(\vz,\theta)}},\tau)
	,	\label{eq:main_limitequation1_1}\\
	\vz
	&\spaceequal 
	\vy-\vX\vbeta+\psi(\vz,\theta)
	,	\label{eq:main_limitequation1_2}\\
	\dotp{\psi'(\vz,\theta)}
	&\spaceequal 
	\frac{1}{\delta}\dotp{\eta'(\vbeta+\vX^{\t}\frac{\psi(\vz,\theta)}{\dotp{\psi'(\vz,\theta)}},\tau)}
	,	\label{eq:main_limitequation1_3}\\
	\gamma
	&\spaceequal
	\tau\cdot \dotp{\frac{\fl''(\vy-\vX\vbeta)}{1+\theta \fl''(\vy-\vX\vbeta)}}
	.	\label{eq:main_limitequation1_4}
\end{align}
\end{subequations}
Our next lemma helps us interpret the fixed point of AMP. 
\begin{lemma}\label{lem:AMP=GLM2}
Under Assumption O.\ref{ass:Convex}, \eqref{eq:main_limitequation} is equivalent to the following set of equations:
\begin{subequations}\label{eq:main_limitequation2}
\begin{align}
	\vzero
	&\spaceequal
	-\vX^{\t}\fl'(\vy-\vX\vbeta)+\gamma\reg'(\vbeta)
	,	\label{eq:main_limitequation2_1} \\
	\gamma
	&\spaceequal
	\dotp{\frac{\fl''(\vy-\vX\vbeta)}{\frac{1}{\tau}+\frac{1}{\delta\gamma}\dotp{\frac{1}{1+\tau\reg''(\vbeta)}}\cdot \fl''(\vy-\vX\vbeta)}}
	,	\label{eq:main_limitequation2_2} \\
	\theta
	&\spaceequal
	\frac{1}{\delta\gamma}\dotp{\frac{\tau}{1+\tau\reg''(\vbeta)}}
	,	\label{eq:main_limitequation2_3}\\
	\vz
	&\spaceequal
	\vy-\vX\vbeta+\theta \cdot \fl'(\vy-\vX\vbeta)
	,	\label{eq:main_limitequation2_4}
\end{align}
\end{subequations}
\end{lemma}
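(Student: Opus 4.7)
The plan is to show the two systems are equivalent by translating each equation of \eqref{eq:main_limitequation} into the corresponding equation of \eqref{eq:main_limitequation2} using the first-order optimality conditions of the proximal operators $\eta$ and $\eta_\fl$. The key auxiliary identity is $\psi(x,\theta) = x - \eta_\fl(x,\theta)$, which follows immediately from the stationarity condition $\eta_\fl(x,\theta) - x + \theta \fl'(\eta_\fl(x,\theta)) = 0$ for $\eta_\fl$. Substituting this into \eqref{eq:main_limitequation1_2} gives $\eta_\fl(\vz,\theta) = \vy - \vX\vbeta$, and then $\psi(\vz,\theta) = \theta \fl'(\vy - \vX\vbeta)$, which is precisely \eqref{eq:main_limitequation2_4}.

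Next, I would use the chain rule together with Lemma \ref{lem:eta'} (applied to $\fl$) to compute
\[
\psi'(\vz,\theta) \= \frac{\theta\fl''(\eta_\fl(\vz,\theta))}{1+\theta\fl''(\eta_\fl(\vz,\theta))} \= \frac{\theta\fl''(\vy-\vX\vbeta)}{1+\theta\fl''(\vy-\vX\vbeta)}.
\]
Combined with \eqref{eq:main_limitequation1_4}, this yields the convenient identity $\gamma = \tau\dotp{\psi'(\vz,\theta)}/\theta$. Now take \eqref{eq:main_limitequation1_1} and apply the first-order condition for $\eta(\cdot,\tau)$: at the fixed point this gives
\[
-\vX^{\t}\frac{\psi(\vz,\theta)}{\dotp{\psi'(\vz,\theta)}} + \tau \reg'(\vbeta) \= \v0.
\]
Substituting $\psi(\vz,\theta) = \theta\fl'(\vy-\vX\vbeta)$ and multiplying by $\dotp{\psi'(\vz,\theta)}/\theta = \gamma/\tau$ produces \eqref{eq:main_limitequation2_1}. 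At the same fixed point, Lemma \ref{lem:eta'} also gives $\eta'(\vbeta+\cdots,\tau) = 1/(1+\tau\reg''(\vbeta))$, so \eqref{eq:main_limitequation1_3} becomes $\dotp{\psi'(\vz,\theta)} = \frac{1}{\delta}\dotp{\frac{1}{1+\tau\reg''(\vbeta)}}$. Combined again with $\gamma = \tau\dotp{\psi'(\vz,\theta)}/\theta$, solving for $\theta$ delivers \eqref{eq:main_limitequation2_3}. Finally, plugging the resulting expression for $\theta$ back into \eqref{eq:main_limitequation1_4} and simplifying the denominator (using $\tau\langle 1/(1+\tau\reg''(\vbeta))\rangle = \langle \tau/(1+\tau\reg''(\vbeta))\rangle$) produces the fixed-point equation \eqref{eq:main_limitequation2_2}.

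For the converse direction, given a tuple satisfying \eqref{eq:main_limitequation2}, I would \emph{define} $\vz := \vy-\vX\vbeta+\theta\fl'(\vy-\vX\vbeta)$. Then one verifies $\eta_\fl(\vz,\theta) = \vy-\vX\vbeta$ directly from the stationarity condition of $\eta_\fl$ (this is where convexity of $\fl$ in Assumption O.\ref{ass:Convex} is used to ensure uniqueness of the minimizer), which gives back $\psi(\vz,\theta) = \theta\fl'(\vy-\vX\vbeta)$ and hence \eqref{eq:main_limitequation1_2}. The remaining equations \eqref{eq:main_limitequation1_1}, \eqref{eq:main_limitequation1_3}, and \eqref{eq:main_limitequation1_4} are then obtained by reversing each algebraic step above; convexity of $\reg$ guarantees that the fixed-point form \eqref{eq:main_limitequation1_1} is equivalent to the stationarity condition \eqref{eq:main_limitequation2_1}.

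This proof is essentially a careful unwinding of definitions, so I do not expect a serious obstacle. The only mildly delicate point is keeping track of the rescaling between $\gamma$, $\tau$, $\theta$, and $\dotp{\psi'(\vz,\theta)}$; in particular the three quantities are linked by the single identity $\theta\gamma = \tau\dotp{\psi'(\vz,\theta)}$, and every other manipulation in both directions is an elementary algebraic rearrangement once that identity and the prox-stationarity conditions are in hand.
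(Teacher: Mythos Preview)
Your proposal is correct and follows essentially the same approach as the paper: both proofs exploit the identity $\psi(x,\theta) = x - \eta_\fl(x,\theta)$, use Lemma~\ref{lem:eta'} to rewrite $\psi'$ and $\eta'$, apply the first-order optimality conditions for the two proximal operators, and then perform the same chain of algebraic substitutions linking $\gamma$, $\tau$, $\theta$, and $\dotp{\psi'(\vz,\theta)}$. Your explicit emphasis on the single identity $\theta\gamma = \tau\dotp{\psi'(\vz,\theta)}$ and your separate treatment of the converse direction are slightly more organized than the paper's presentation, but the underlying argument is the same.
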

\begin{proof}
From the definition of $\psi$ we have $z-\psi(z, \theta)\equiv \eta_{\fl}(z,\theta).$ Hence, \eqref{eq:main_limitequation1_2} is equivalent to
\BE
	\eta_{\fl}\left(\vz,\theta\right)
	&=&
	\vy-\vX\vbeta
	.	\label{eq:limiteqb_1}
\EE
Next, from Lemma \ref{lem:eta'} and the definition of $\psi$, we have 
\BE
	\dotp{\frac{\theta\fl''(\eta_{\fl}(\vz,\theta))}{1+\theta\fl''(\eta_{\fl}(\vz,\theta))}}
	\ \equiv \
	\dotp{\theta\fl''(\eta_{\fl}(\vz,\theta))\frac{\partial \eta_{\fl}(\vz,\theta)}{\partial z}}
	\ \equiv \ 
	\dotp{\psi'(\vz,\theta)}
	.	\label{eq:limiteqb_2}
\EE
Hence, from Lemma \ref{lem:eta'}  we conclude that (\eqref{eq:main_limitequation1_1},\eqref{eq:main_limitequation1_3}) is equivalent to \eqref{eq:main_limitequation1_1} together with the following equation
\BE
	\dotp{\frac{\theta\fl''(\eta_{\fl}(\vz,\theta))}{1+\theta\fl''(\eta_{\fl}(\vz,\theta))}}
	&=&
	\frac{1}{\delta}\dotp{\frac{1}{1+\tau\reg''\left(\eta'\left(\vbeta+\vX^{\t}\frac{\psi(\vz,\theta)}{\dotp{\psi'(\vz,\theta)}},\tau\right)\right)}}
		\n\\
	&=&
	\frac{1}{\delta}\dotp{\frac{1}{1+\tau\reg''\left(\vbeta\right)}}
	.	\label{eq:limiteqb_3}	
\EE
From the definition of $\eta$ and Assumption O.\ref{ass:Convex}, we conclude that \eqref{eq:main_limitequation1_1} is equivalent to 
\BE
	\vzero
	&=&
	\vbeta-\left(\vbeta+\vX^{\t}\frac{\psi(\vz,\theta)}{\dotp{\psi'(\vz,\theta)}}\right)+\tau\reg'(\vbeta)
		\n\\
	&=&
	-\vX^{\t}\frac{\psi(\vz,\theta)}{\dotp{\psi'(\vz,\theta)}}+\tau\reg'(\vbeta)
		\n\\
	&=&
	-\frac{1}{\dotp{\frac{\fl''(\eta_{\fl}(\vz,\theta))}{1+\theta\fl''(\eta_{\fl}(\vz,\theta))}}}\vX^{\t}\fl'(\eta_{\fl}(\vz,\theta))+\tau\reg'(\vbeta)
	.	\label{eq:limiteqb_4}
\EE
Hence, (\eqref{eq:main_limitequation1_1}-\eqref{eq:main_limitequation1_4}) is equivalent to (\eqref{eq:limiteqb_4}, \eqref{eq:limiteqb_1}, \eqref{eq:limiteqb_3}, \eqref{eq:main_limitequation1_4}). If we plug \eqref{eq:limiteqb_1} in \eqref{eq:limiteqb_4} and \eqref{eq:limiteqb_3}, then we conclude that \eqref{eq:main_limitequation2} is equivalent to the following equation:
\begin{subequations}\label{eq:part_b_limitequation2}
\begin{align}
	\frac{1}{\dotp{\frac{\fl''(\vy-\vX\vbeta)}{1+\theta\fl''(\vy-\vX\vbeta)}}}\vX^{\t}\fl'(\vy-\vX\vbeta)
	&\spaceequal
	\tau\reg'(\vbeta)
	,	\label{eq:part_b_limitequation2_1}\\
	\eta_{\fl}\left(\vz,\theta\right)
	&\spaceequal
	\vy-\vX\vbeta
	,	\label{eq:part_b_limitequation2_2}\\
	\dotp{\frac{\theta\fl''(\vy-\vX\vbeta)}{1+\theta\fl''(\vy-\vX\vbeta)}}
	&\spaceequal
	\frac{1}{\delta}\dotp{\frac{1}{1+\tau\reg''\left(\vbeta\right)}}
	,	\label{eq:part_b_limitequation2_3}\\
	\gamma
	&\spaceequal
	\tau\cdot \dotp{\frac{\fl''(\vy-\vX\vbeta)}{1+\theta \fl''(\vy-\vX\vbeta)}}
	.	\label{eq:part_b_limitequation2_4}
\end{align}
\end{subequations}
Then, if we plug \eqref{eq:part_b_limitequation2_4} in \eqref{eq:part_b_limitequation2_1} and  \eqref{eq:part_b_limitequation2_3}, we conclude that \eqref{eq:part_b_limitequation2} is equivalent to the following set of equations:
\begin{subequations}\label{eq:part_b_limitequation3}
\begin{align}
	\vzero
	&\spaceequal
	-\vX^{\t}\fl'(\vy-\vX\vbeta)+\gamma\reg'(\vbeta)
	,	\label{eq:part_b_limitequation3_1}\\
	\eta_{\fl}\left(\vz,\theta\right)
	&\spaceequal
	\vy-\vX\vbeta
	,	\label{eq:part_b_limitequation3_2}\\
	\theta
	&\spaceequal
	\frac{1}{\delta\gamma}\dotp{\frac{\tau}{1+\tau\reg''\left(\vbeta\right)}}
	,	\label{eq:part_b_limitequation3_3}\\
	\gamma
	&\spaceequal
	\tau\cdot \dotp{\frac{\fl''(\vy-\vX\vbeta)}{1+\theta \fl''(\vy-\vX\vbeta)}}
	.	\label{eq:part_b_limitequation3_4}
\end{align}
\end{subequations}
Then, plug \eqref{eq:part_b_limitequation3_3} in \eqref{eq:part_b_limitequation3_4}, we have \eqref{eq:part_b_limitequation3} is equivalent to \eqref{eq:main_limitequation2} which is the following:
\begin{subequations}
\begin{align}
	\vzero
	&\spaceequal
	-\vX^{\t}\fl'(\vy-\vX\vbeta)+\gamma\reg'(\vbeta)
	,	\n \\
	\gamma
	&\spaceequal
	\dotp{\frac{\fl''(\vy-\vX\vbeta)}{\frac{1}{\tau}+\frac{1}{\delta\gamma}\dotp{\frac{1}{1+\tau\reg''(\vbeta)}}\cdot \fl''(\vy-\vX\vbeta)}}
	,	\n \\
	\theta
	&\spaceequal
	\frac{1}{\delta\gamma}\dotp{\frac{\tau}{1+\tau\reg''(\vbeta)}}
	,	\n \\
	\eta_{\fl}\left(\vz,\theta\right)
	&\spaceequal
	\vy-\vX\vbeta
	.	\n
\end{align}
\end{subequations}
Finally, due to the definition of $\eta_{\fl}$ function, we conclude that $\vz=\vy-\vX\vbeta+\theta\fl'(\vy-\vX\vbeta)$ is the unique solution of  
\[
	\eta_{\fl}\left(\vz,\theta\right)
	\spaceequal
	\vy-\vX\vbeta
	.
\]
Hence, we conclude that \eqref{eq:main_limitequation} is equivalent to \eqref{eq:main_limitequation2}. 
\end{proof}

Note that \eqref{eq:main_limitequation2_1} implies the AMP estimate $\vbeta^{\infty}_{\tau^*}$ is the is the solution of \eqref{eq:model} with tuning parameter $\gamma^*$, i.e., $\vbeta^{\infty}_{\tau^*}=\hvb_{\gamma^*}$. Next, from Lemma \ref{lem:AMP=GLM1}, we know that given $(\vX,\vy,\vbeta)$, \eqref{eq:main_limitequation2_2} defines a bijection mapping between $\gamma$ and $\tau$. Then since $(\vbeta^{\infty}_{\tau^*},\gamma^*)=(\hvb_{\lambda},\lambda)$, we know $\hat{\tau}$ defined in \eqref{eq:taudef} exists and $\hat{\tau}=\tau^*$. Finally, since $(\vbeta^{\infty}_{\tau^*},\gamma^*,\tau^*)=(\hvb_{\lambda},\lambda,\hat{\tau})$, according to \eqref{eq:thetadef}, \eqref{eq:main_limitequation2_3} and \eqref{eq:main_limitequation2_4}, we know 
\[
	\vz^{\infty}_{\tau^*}
	\spaceequal
	\vy-\vX\hvb_{\lambda}+ \hat{\theta}\cdot\fl'(\vy-\vX\hvb_{\lambda})
	. 
\]
As is clear from \eqref{eq:main_limitequation2_4}, $\vz =\vy-\vX\vbeta+\theta \cdot \fl'(\vy-\vX\vbeta)$ acts like an estimate of the residual. Also, as described before the main objective of the term $\theta \cdot \fl'(\vy-\vX\vbeta)$ is to make $\vrx{i}$ almost independent of $\vbeta$. Hence, at the intuitive level one would expect $\vz$ to act like a leave-one-out cross validation estimate of the residuals.  The heuristic leads to \eqref{eq:amprhat} as an estimate of the out-of-sample prediction error.

In Examples \ref{ex:LAD_ELAS} and \ref{ex:PH_ELAS} we claimed that a constant fraction of $\vy-\vX\hvb_{\lambda}$ remain bounded. Now, we want to use the AMP framework to heuristically argue that this is in fact the case. Let $\mathcal{S}_{\lambda} = \{\lambda: \forall \tau>0, \text{the AMP estimate $\vbeta^{\infty}_{\tau^*}$ is the solution of \eqref{eq:model} with tuning parameter $\lambda$}\}$. Then for any $\lambda\in\mathcal{S}_{\lambda}$, Lemma \ref{lem:AMP=GLM2} implies that 

\[\vz^{\infty}_{\tau^*} = \vy-\vX^{\t}\hvb_\lambda+ \hat{\theta}\cdot\fl'(\vy-\vrx{i}^{\t} \hvb_{\lambda}).
\]
Note that the loss functions in both Example 3 and Example 4 has bounded first derivatives and the regularization functions in both examples are elastic-net satisfying $\inf_x \reg(x)''\geq 2\gamma$. Therefore, from \eqref{eq:thetadef}, we have $\hat{\theta} = \op{\frac{1}{\gamma}} = \op{1}$ and thus $y_i-\vrx{i}\hvb_{\lambda} = \op{\ve_i^{\t}\vz^{\infty}_{\tau^*}}$ . Since the empirical CDF of $\vz^{\infty}_{\tau^*}$ converges to that of a Gaussian, the fraction of $\vz^{\infty}_{\tau^*}$ that remains in any bounded interval will converge to non-zero number. Hence, a non-zero fraction of the residuals $\vy-\vX\hvb_{\lambda}$ will be bounded.

\end{document}